\documentclass[11pt,oneside]{amsart}
\usepackage[english]{babel}
\usepackage{fullpage}
\usepackage{amssymb,pstricks,amscd,epsfig}
\usepackage{graphicx}



\usepackage{amsmath, amsthm, amssymb}
\usepackage{pdfsync}
\usepackage[latin1]{inputenc}
\usepackage{stmaryrd}
\usepackage{color}
\usepackage{setspace}
\usepackage[matrix,arrow,tips,curve]{xy}

\numberwithin{equation}{section}

\makeatletter
 \renewcommand\section{\@startsection {section}{1}{\z@}%
     {-4.5ex \@plus -1ex \@minus -.2ex}%
     {2.3ex \@plus.8ex}%
    {\centering\scshape}}

\setlength{\parskip}{0.15 cm }
\setlength{\parindent}{0in}
\sloppy

\newcommand{\Z}{\mathbb{Z}}
\newcommand{\Q}{\mathbb{Q}}

\newcommand{\C}{\mathbb{C}}
\newcommand{\PP}{\mathbb{P}}
\renewcommand{\P}{\mathbb{P}}


\newcommand{\CC}{\mathcal C}

\renewcommand{\H}{\mathcal H}
\renewcommand{\O}{\mathcal O}

\newcommand{\E}{\mathcal E }
\newcommand{\F}{\mathcal F }

\newcommand{\dual}{\vee}
\newcommand{\be}{\begin{equation}}
\newcommand{\ee}{\end{equation}}
\newcommand{\tto}{\dashrightarrow}
\newcommand{\wt}{\widetilde}
\newcommand{\wh}{\widehat}
\newcommand{\ov}{\overline}
\newcommand{\Sb}{\overline{S}}
\newcommand{\St}{\widetilde{S}}
\newcommand{\Db}{\overline{D}}

\newcommand{\Sh}{\widehat{S}}
\newcommand{\rh}{\widehat{\rho}}
\newcommand{\eh}{\widehat{\eta}}
\newcommand{\Dh}{\widehat{D}}
\newcommand{\rhs}{\widehat{\rho}^{\, *}}

\newcommand{\D}{\mathcal{D}}
\newcommand{\mc}{\mathcal}

\newcommand{ \ff} { \frac }

\def\be{\begin{equation}}
\def\ee{\end{equation}}


\newtheorem{theorem}{Theorem}[section]
\newtheorem{prop}[theorem]{Proposition}
\newtheorem{lem}[theorem]{Lemma}
\newtheorem{defin}[theorem]{Definition}
\newtheorem{rem}[theorem]{Remark}
\newtheorem{notation}[theorem]{Notation}

\newtheorem{cor}[theorem]{Corollary}
\newtheorem{example}[theorem]{Example}


\def\id{id}
\def\codim{\operatorname{codim}}

\def\Hom{\operatorname{Hom}}
\def\Aut{\operatorname{Aut}}
\def\coker{\operatorname{Coker}}
\def\End{\operatorname{End}}
\def\im{\operatorname{Im}}

\newcommand{\Shext}{\mathcal{E}xt}
\newcommand{\Shhom}{\mathcal{H}om}

\def\Ext{\operatorname{Ext}}
\def\Tor{\operatorname{Tor}}

\def\supp{\operatorname{Supp}}

\def\tr{\operatorname{tr}}

\def\Hilb{\operatorname{Hilb}}
\def\mom0{\mu^{-1}(0)}

\def\Pic{\operatorname{Pic}}
\def\Prym{\operatorname{Prym}}
\def\NS{\operatorname{NS}}
\def\Amp{\operatorname{Amp}}
\def\Fix{\operatorname{Fix}}
\def\Jac{\operatorname{Jac}}
\newcommand{\into}{\hookrightarrow}
\newcommand{\norm}[2]{\mc{N}_{#1/#2}}

\begin{document}

\title[Relative Prym varieties]{
Relative Prym varieties associated to the double cover of an Enriques  surface}

\author{E. Arbarello,   G. Sacc\`a and A. Ferretti}

\email{ea@mat.uniroma1.it, gsacca@math.princeton.edu, ferrettiandrea@gmail.com}

\date{March 22, 2013}

\begin{abstract}
Given an Enriques surface $T$,  its universal K3 cover $f: S\to T$, and a genus $g$ linear system $|C|$ on $T$, we construct the relative Prym variety $P_H=\Prym_{v, H}(\D/\CC)$, where $\CC\to |C|$ and $\D\to |f^*C|$ are the  universal families, $v$ is the Mukai vector $(0,[D], 2-2g)$ and $H$ is a polarization on $S$. The relative Prym variety is a $(2g-2)$-dimensional possibly singular variety, whose smooth locus is endowed with a hyperk\"ahler structure. This variety is constructed as the closure of the fixed locus of a symplectic birational involution defined on the moduli space $M_{v,H}(S)$. There is a natural Lagrangian fibration $\eta: P_H \to |C|$,  that makes the regular locus of $P_H$ into an integrable system whose general fiber is a $(g-1)$-dimensional (principally polarized) Prym variety, which in most cases is not the Jacobian of a curve.
We prove that if $|C|$ is a hyperelliptic linear system, then $P_H$ admits a symplectic resolution which is birational to a hyperk\"ahler manifold of K3$^{[g-1]}$-type, while if $|C|$ is not hyperelliptic, then $P_H$ admits no symplectic resolution. We also prove that  any resolution of $P_H$ is simply connected and, when $g$ is odd,  any resolution of $P_H$  has $h^{2,0}$-Hodge number equal to one.

\end{abstract}

\maketitle

\tableofcontents


\section{Introduction}\label{introduction}

One of the many beautiful features of the Beauville-Mukai integrable systems, is that they can be observed and studied from two quite different perspectives. The starting point is a linear system $|D|$ of genus $h$ curves  on a K3 surface $S$. If $\D\to |D|=\PP^h$ is the universal family one can view the Beauville-Mukai integrable system as the relative compactified Jacobian $\pi: J(\D)\to \PP^h$,  whose fibers are compactified Jacobians of -say- degree zero. But the hyperk\"ahler and Lagrangian nature of this fibration is not unveiled until one 
sees it from the second perspective where  $J(\D)$ is taken to be the moduli space $M_{v, H}(S)$ of 
$H$-semistable rank zero sheaves with Mukai vector $v=(0, [D], 1-h)$. From this point of view, another aspect comes
to the forefront:  the choice of a polarization $H$, which is inherent to the notion of semistabilty. Its interplay with the  Mukai vector $v$  introduces a subdivision of the ample cone of $S$ into chambers,
bordered by walls. Varying $H$ in $\Amp(S)$ leaves unchanged  the birational type of $M_{v, H}(S)$. When $H$ lies on a wall the corresponding moduli space is singular while,  moving $H$ away from a wall, into the various   adjacent chambers,  correspond to distinct smooth birational models of the same moduli space. When smooth, these moduli spaces are as nice as one can think of: they are irreducible symplectic manifolds meaning that they are compact, simply connected, and that their $H^{2,0}$ space is one-dimensional, spanned by a non-degenerate (i.e. symplectic)  form.
\vskip 0.1 cm
Our task is to study to which extent a similar picture presents itself when Jacobians are substituted with more general abelian varieties. The first natural abelian varieties that come to mind are Prym varieties and the way we make them appear is to start from an Enriques surface $T$, look at its universal K3 cover $f:S\to T$  
and take on $S$ a linear system $|D|$ which is the pull-back, via $f$, of a linear system $|C|$ on $T$ of genus $g \ge 2$.
For each smooth curve $C_0\in |C|$ we look at the double cover  $f: D_0=f^{-1}(C_0)\to C_0$ 
and we consider the Prym variety
$\Prym(D_0/C_0)$ which is a $(g-1)$-dimensional abelian variety. If $U\subset |C|$ is  the  locus of smooth curves we see, right away, a fibration 
$\mathcal P\longrightarrow U \subset \PP^{g-1}$ in $(g-1)$-dimensional, principally polarized, abelian varieties. If $\iota$ is the involution on $D_0$ induced by the two-sheeted covering,
the Prym variety $\Prym(D_0/C_0)$ can be viewed as the (identity component of the) fixed locus in $J(D_0)$
of the involution $-\iota^*$. The way to compactify  $\mathcal P$ is now laid out: define the involution $-\iota^*$ on $J(\CC)=M_{v, H}(S)$ and take its fixed locus. 

When $H$ is $\iota^*$-invariant, the involution $\iota$ causes no problem. It acts on $S$ and $\iota^*$ acts  on the set of $H$-semistable coherent sheaves  on $S$  supported on curves belonging to $|D|$, preserving all their good properties (rank, $H$-semistability) and especially their first Chern class,  which is the $\iota^*$-invariant class $[D]$.

 For the involution
``$j=-1$'' matters are more complicated. For a smooth curve $D_0\in |D|$ the involution $j$ on $J(D_0)$
is given by $j([F])=[F^\vee]=\H om_{D_0}(F, {\mc O}_{D_0})$. Thus, for
$[F]\in M_{v, H}(S)$, a   natural choice 
is to set $j(F)=\Shext^1_S(F, {\mc O}_S(-D))$ (see Lemma \ref{dual}). The $\Shext^1$ functor behaves nicely in families and therefore $j$ induces  a well defined morphism of the deformation functor into itself, the Mukai vector is preserved by $j$ and,  at least  if the support of $F$ is irreducible, both $F$ and $j(F)$ are $H$-stable, {\it for any}
 polarization $H$. Therefore we  get a birational ``-1'' involution: $j: M_{v, H}(S)\dasharrow M_{v, H}(S)$
which commutes with $\iota^*$. However, 
the only way to have it preserve stability is to choose a polarization which is a multiple of $D$. Taking $H=D$ we  then get
a regular map $j: M_{v, D}(S)\to M_{v, D}(S)$, but now $M_{v, D}(S)$ is singular. 

To define the relative Prym variety, we consider  the birational involution 
$$
\tau=j\circ\iota^*:  M_{v, H}(S)\dasharrow M_{v, H}(S)
$$
 and we look at the Lagrangian fibration $M_{v, H}(S)\to |D|$. Over the locus of irreducible curves, this fibaration has a {\it zero-section} $s$ whose restriction to $|C|\subset |D|$ lands in the fixed locus of $\tau$. We then
 define the relative Prym variety $P_H=\Prym_{v, H}(\D/\CC)$ to be the irreducible component of the closure of fixed locus of $\tau$ containing the zero section $s$.
Since the birational involutions $j$ and $\iota^*$ are antisymplectic (see Proposition \ref{prop jN}), a Zariski open subset of $P_H$ has a natural symplectic structure.

Taking a slightly longer view we see that, a priori,  there is more leeway in our choices. We could choose another divisor $N$ and define
$j_N(F)=\Shext^1_S(F, {\mc O}_S(N))$. As long as $N$ is $\iota^*$-invariant and $2\, \chi(F)=N\cdot D$,
we still get a birational involution $j_N: M_{v, H}(S)\dasharrow M_{v, H}(S)$
which commutes with $\iota^*$. It is then natural to ask whether it is possible to choose  $H$, $N$, and $v$
in such a way that $j_N$ is  preserves $H$-stability and, at the same time,  $M_{v, H}(S)$ is smooth. In Proposition \ref{never_preserve}
we show that this is not possible. The alternative: regularity of the assignment $[F]\mapsto [\Shext^1_S(F, {\mc O}_S(N))]$ versus smoothness of $M_{v, H}(S)$
is in the nature of this problem. Moreover, the relative Prym varieties that one can construct out of the involution $\tau_N\circ\iota^*:M_{v, H}(S)\dasharrow M_{v, H}(S)$ enjoy, by and large, the same properties of the relative Prym variety we just defined by
taking $N=-D$.  Consequently this will be our choice throughout the present paper.

The next observation is that the geometry of the relative Prym variety strongly depends on whether the linear system $|C|$ on the Enriques surface $T$ is {\it hyperelliptic} or not (cf. Definition \ref{hyp_syst}).

In the non-hyperelliptic case we show that $P_D$ (and hence $P_H$) does not admit a symplectic desingularization. This is seen by 
going to singular point of $P_D$ that is represented by 
a polystable sheaf $F$ which splits into a direct sum $F=F_1\oplus F_2$ of two stable sheaves supported on irreducible and $\iota$-invariant curves $D_1$ and $ D_2$ respectively. We prove that, locally around the point $[F]$,  the relative Prym variety $P_D$ 
is isomorphic to  its tangent cone at $[F]$ and this can be nicely described in terms of the quadratic term of the Kuranishi map (see Proposition \ref{loc_sing}). The result is that, locally around $[F]$, the relative Prym variety $P_D$ looks like the product of a smooth variety times the cone over the degree two Veronese embedding of a projective space $\PP W$, with $\dim W= D_1\cdot D_2=2C_1\cdot C_2$. This cone is $\Q$-factorial and, if $\dim W\geq 3$, it is also terminal.  When $|C|$, and therefore $|D|$, is non-hyperelliptic, we must have $D_1\cdot D_2\geq 4$, and therefore $P_D$ has no symplectic resolution.

In the hyperelliptic case, things go in the opposite direction. In this case the ``$-1$'' involution $j$ can be nicely described in geometrical terms. The key remark is that, in the hyperelliptic case, not only  $\iota^*$ but also  $j$ comes from an  involution defined on $S$.
Indeed the linear system $|D|$ exhibits
$S$ as a two sheeted ramified cover of a rational surface $R\subset \PP^h$. If $\ell$ is the involution associated to this cover, $\ell^*=j$ is the ``$-1$'' involution on each $J(D)$, when $D$ is smooth. The composition $k=\ell \circ \iota$ is a symplectic involution on $S$ and $k^*$ coincides with $\tau$ as birational maps. 
Taking $S/\langle k \rangle$ and resolving the eight singular points yields a K3 surface $\Sh$. We then show that, for any $\iota^*$-invariant polarization $H$, the relative Prym variety $P_{v,H}$ is birational to an appropriate moduli space of sheaves on $\Sh$ and is therefore of $K3^{[g-1]}$-type.
We also show that, choosing $H$ appropriately, $P_{v,H}$ is a symplectic resolution of $P_{v,D}$.

In Sections \ref {fund_group} and \ref{hilb} we prove that the relative Prym variety $P$ shares two fundamental properties with  the moduli spaces $M_{v, H}(S)$. 
The first result is that any resolution of $P$ is simply connected. To show this we prove that the homology of the  fibers of the Prym fibration $P\to \PP^{g-1}$
is generated by vanishing cycles.
The second result  is that, at least when when the genus $g$ is odd, the $(2,0)$-Hodge number of any desingularization of $P$
is equal to one.

Summarizing, the main results that we prove are the following.

\begin{theorem}\label{main} Let $T$ be a general Enriques surface and $f:S\to T$ its K3 double cover. Let $C\subset T$ a primitive curve of  genus $g\geq 2$.
Set $|D|=|f^{-1}C|$. Set $v=(0, [D], 2-2g)$
an let $P_H=\Prym_{v, H}(\D/\CC)\to |C|=\PP^{g-1}$ be the relative Prym variety. Then

i) If $|C|$ is hyperelliptic  $P_H$ is birational to a hyperk\"ahler manifold of K3 $^{[g-1]}$-type.

ii) If $|C|$ is not  hyperelliptic  $P_H$ admits no symplectic resolution.

\end{theorem}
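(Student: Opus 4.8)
The plan is to prove the two dichotomous statements by reducing everything to the singular model $P_D$ (the case $H=D$), since the introduction already tells us that varying $H$ in the $\iota^*$-invariant ample cone does not change the birational type of $P_H$, and that the relative Prym is constructed as a component of the fixed locus of $\tau=j\circ\iota^*$ on $M_{v,H}(S)$. So the first step is to record the birational invariance: any two $P_H$ for $\iota^*$-invariant $H$ are birational, hence for part i) it suffices to produce one hyperk\"ahler model, and for part ii) it suffices to show $P_D$ (equivalently any single model) has no symplectic resolution. Because admitting a symplectic resolution is a birational-geometric property insensitive to the choice of smooth model, this reduction is legitimate.

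For part i), the hyperelliptic case, I would follow the geometric description sketched in the introduction. The key input is that in the hyperelliptic case the ``$-1$'' involution $j$ itself descends from a genuine involution $\ell$ on the surface $S$, realized via the degree-two map $S\to R\subset\PP^h$ associated to $|D|$. Then $k=\ell\circ\iota$ is a \emph{symplectic} involution on $S$ whose action $k^*$ agrees with $\tau$ birationally. The standard Nikulin theory gives that $k$ has eight fixed points, and resolving the eight nodes of $S/\langle k\rangle$ produces a K3 surface $\Sh$. The plan is then to show, via a Fourier--Mukai / push-pull argument along $S\dashrightarrow \Sh$, that the fixed locus of $\tau$ in $M_{v,H}(S)$ is birational to a moduli space $M_{w,\hat H}(\Sh)$ of sheaves on $\Sh$ for an appropriate Mukai vector $w$ and polarization $\hat H$. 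Such a moduli space, when nonempty of the expected dimension and for generic polarization, is a smooth irreducible holomorphic symplectic manifold of K3$^{[n]}$-type by Mukai--O'Grady--Yoshioka; matching dimensions forces $n=g-1$. This yields both the $K3^{[g-1]}$-birational type and (for suitable $H$) a symplectic resolution of $P_D$.

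For part ii), the non-hyperelliptic case, the strategy is the explicit local analysis of a boundary singularity of $P_D$. I would take a polystable point $[F]$ with $F=F_1\oplus F_2$, where $F_i$ is stable, supported on an irreducible $\iota$-invariant curve $D_i$, with $D_1+D_2\in|D|$ and each $D_i\in|f^*C_i|$ for a splitting $C=C_1+C_2$ on $T$. By Proposition \ref{loc_sing}, locally analytically $P_D$ is isomorphic to its tangent cone, and that cone is a product of a smooth factor with the affine cone over the second Veronese embedding $v_2(\PP W)$, where $\dim W=D_1\cdot D_2=2\,C_1\cdot C_2$. The final step is the numerical one: in the non-hyperelliptic case every effective decomposition $C=C_1+C_2$ forces $C_1\cdot C_2\ge 2$, hence $\dim W=2\,C_1\cdot C_2\ge 4$. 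For $\dim W\ge 3$ the cone over $v_2(\PP W)$ is $\Q$-factorial with terminal singularities, and a $\Q$-factorial terminal singularity admits no crepant (in particular no symplectic) resolution; since a symplectic form must be non-degenerate on the smooth locus and restricts to the Veronese cone factor, the presence of such a factor obstructs the existence of a symplectic resolution of $P_D$, and hence of $P_H$.

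The main obstacle, and the heart of the argument, is the hyperelliptic direction: establishing that $j$ actually descends from the surface involution $\ell$ and that the resulting identification of $\Fix(\tau)$ with a moduli space on $\Sh$ is a \emph{birational isomorphism of the correct connected component} (the one containing the zero-section). One must control the Fourier--Mukai transform carefully enough to match Mukai vectors, verify that the symplectic structure is preserved (so that $\tau$ is antisymplectic and its fixed locus is Lagrangian-in-$M$ of the right dimension $2g-2$), and check that the boundary behavior over non-integral curves is compatible; the numerical inequality $C_1\cdot C_2\ge 2$ in the non-hyperelliptic case, by contrast, is expected to be a relatively clean consequence of the definition of a (non-)hyperelliptic linear system on $T$ together with the projection formula for $f$.
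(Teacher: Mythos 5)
Both halves of your plan follow the same route as the paper: for i), the hyperelliptic involution $\ell$ attached to $\varphi_D\colon S\to R$, the symplectic involution $k=\ell\circ\iota$ with eight fixed points, the K3 surface $\Sh$ resolving $S/\langle k\rangle$, and the push--pull map $F\mapsto \wh\eta_*\wh\rho^{\,*}F$ giving a birational map $M_{w,\wh H}(\Sh)\dashrightarrow P_{v,H}$ with $w=(0,[\Dh],-g+2)$ (Proposition \ref{involution_k}, Theorem \ref{smooth symplectic compactification}, Corollary \ref{hyp_K3_type}); for ii) with $H=D$, the polystable point $F=F_1\oplus F_2$ supported on an $\iota$-invariant decomposition $D_1+D_2$, the local model of Proposition \ref{loc_sing}, and the $\Q$-factorial terminal cone over the Veronese with $\dim W=2\nu\ge 4$ (Theorem \ref{no symplectic resolution}). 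Your reduction of part i) to a single model is legitimate, since ``birational to a hyperk\"ahler manifold of K3$^{[g-1]}$-type'' is tautologically a birational invariant; and the component issue you flag as the main obstacle is settled in the paper simply by noting that structure sheaves map to structure sheaves, so the zero-section lands in the right component.

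The genuine gap is in your reduction of part ii) from arbitrary $H$ to $H=D$: the principle that ``admitting a symplectic resolution is a birational-geometric property insensitive to the choice of smooth model'' is false. Birational varieties need not agree on whether they admit symplectic resolutions: if $Y$ is a projective hyperk\"ahler manifold and $X$ is the blow-up of $Y$ at a point, then $X$ and $Y$ are birational and $Y$ is its own symplectic resolution, but $X$ admits none, since for any proper birational $\pi\colon\wt X\to X$ with $\wt X$ smooth symplectic one would get $0=K_{\wt X}=\pi^*K_X+\sum a_iE_i$ with $K_X$ effective and nonzero and all $a_i>0$, which is impossible. So the implication you need (``$P_{v,D}$ has no symplectic resolution, $P_{v,H}$ is birational to $P_{v,D}$, hence $P_{v,H}$ has none'') does not follow from birationality alone. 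The paper instead transfers the \emph{local} singularity into $P_{v,H}$ itself: using the wall-and-chamber decomposition of $\Amp(S)$ relative to $v$, it produces a point $[F']\in P_{v,H}$ and a birational map $\sigma\colon P_{v,H}\dashrightarrow P_{v,D}$ that is \emph{defined near} $[F']$ and sends $[F']$ to $[F]$, by choosing a path of polarizations that crosses only walls not passing through $[F]$ (for $H$ in a chamber adjacent to $D$ one even has a regular birational morphism $P_{v,H}\to P_{v,D}$). This exhibits the same $\Q$-factorial terminal Veronese-cone singularity at a point of $P_{v,H}$, and the discrepancy-plus-Flenner argument is then run on $P_{v,H}$ directly. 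Without this step, or some substitute controlling where the birational maps $P_{v,H}\dashrightarrow P_{v,D}$ are local isomorphisms, your argument only proves part ii) for the single polarization $H=D$.
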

\begin{theorem}\label{main_top}With the same notation as in Theorem \ref{main}, any resolution of the singularities of $P_H$ is simply connected and, when $g$ is odd (or $|D|$ is hyperelliptic) has $(2,0)$-Hodge number equal to 1.
\end{theorem}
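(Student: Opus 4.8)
The plan is to treat the two assertions separately, in each case leaning on the Lagrangian fibration $\eta\colon P_H\to|C|=\PP^{g-1}$ and on the fact that its base is simply connected. Throughout I may replace $P_H$ by any resolution $\pi\colon\wt P\to P_H$, since both statements concern smooth projective birational models: the fundamental group and the Hodge numbers $h^{p,0}=\dim H^0(\Omega^p)$ are birational invariants of smooth projective varieties, so it suffices to prove each assertion for one convenient resolution.

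\emph{Simple connectedness.} The guiding principle is that for a projective Lagrangian fibration admitting a section, over a simply connected base, the fundamental group of the total space is the quotient of $\pi_1$ of a general fiber by the subgroup generated by the vanishing cycles of the singular fibers. Here the general fiber is the Prym variety $A=\Prym(D_0/C_0)$, an abelian variety, so $\pi_1(A)=H_1(A,\Z)$, and the zero section $s$ supplies the required section. The problem thus reduces to the statement announced in the introduction, that the vanishing cycles generate $H_1(A,\Z)$. To produce them I would degenerate $C_0$ to a general point of the discriminant in $|C|$, where it acquires a single node $p$. Because $f\colon S\to T$ is \'etale, $D_0=f^{-1}(C_0)$ then acquires exactly the two nodes $f^{-1}(p)=\{q_1,q_2\}$, which are interchanged by $\iota$; the corresponding Picard--Lefschetz vanishing cycles $\delta_1,\delta_2\in H_1(D_0,\Z)$ satisfy $\iota_*\delta_1=\delta_2$, so that $\delta_1-\delta_2$ is a vanishing class lying in the anti-invariant part $H_1(D_0,\Z)^-=H_1(A,\Z)$.

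The crux is then the monodromy statement: as the node traces out the discriminant, the classes $\delta_1-\delta_2$ together with their monodromy translates should span $H_1(A,\Z)^-$ over $\Z$. I would attack this by restricting to a general pencil in $|C|$, using that the ordinary vanishing cycles of the induced Lefschetz pencil of curves already generate $H_1(C_0,\Z)$, lifting these to $D_0$, and projecting onto the anti-invariant eigenspace; the remaining gap --- that the anti-invariant lifts, and not merely the invariant ones, are reached --- I expect to close by proving that the monodromy representation on $H_1(A,\Z)^-$ is irreducible. This irreducibility is the main obstacle in this half of the argument.

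\emph{The Hodge number.} If $|C|$ is hyperelliptic the claim is immediate from Theorem~\ref{main}~(i): a resolution of $P_H$ is then smooth projective and birational to a manifold of $\mathrm{K3}^{[g-1]}$-type, for which $h^{2,0}=1$, and $h^{2,0}$ is a birational invariant. Suppose now that $g$ is odd. For the lower bound I would check that the symplectic form on the smooth locus of $P_H$ extends to a holomorphic two-form on $\wt P$; this follows once one knows the singularities of $P_H$ are canonical, which the local model of Proposition~\ref{loc_sing} (a product of a smooth factor with a cone over a Veronese) guarantees, so $h^{2,0}\ge 1$. For the upper bound I would analyse $H^2(\wt P)$ through the Leray filtration of $\eta$. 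The graded piece $\mathrm{Gr}^2$ is pulled back from $H^2(\PP^{g-1})$ and is of type $(1,1)$; the piece $\mathrm{Gr}^0=H^0(\PP^{g-1},R^2\eta_*\C)$ consists of the monodromy-invariant classes in $\wedge^2 H^1(A)$. Since the vanishing cycles generate $H_1(A)$ the fiber cohomology $H^1(A)$ carries no nonzero invariants, and by the consequent largeness of the monodromy these invariants in $\wedge^2 H^1(A)$ reduce to multiples of the polarization class $\theta$, which is of type $(1,1)$ and contributes nothing to $H^{2,0}$. Hence every holomorphic two-form restricts to zero on the fibers and all of $H^{2,0}$ comes from $\mathrm{Gr}^1=H^1(\PP^{g-1},R^1\eta_*\C)$, where the symplectic form already lives; the remaining task, and the main obstacle here, is to show that this last group contributes exactly one to $h^{2,0}$, and it is precisely in this cohomological computation over $\PP^{g-1}$ that the parity of $g$ enters.
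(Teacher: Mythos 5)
Your first half rests on a ``guiding principle'' that is not the correct group-theoretic statement, and the discrepancy is exactly where the hard work of the paper lies. The tool that makes the principle precise is Leibman's theorem (Theorem \ref{leib}): for the fibration over the locus of irreducible curves (with its section), $\pi_1$ of the total space is the quotient of $\pi_1(\Prym(D_0/C_0))=H_1(D_0,\Z)_-$ not by the subgroup generated by the vanishing cycles $\alpha_s-\iota\alpha_s$, but by the \emph{commutator} subgroup $R=[\pi_1(\text{fiber}),\pi_1(U)]$, which is generated by the elements $(c\cdot\alpha_s)(\alpha_s-\iota\alpha_s)$ as $c$ runs over the \emph{anti-invariant} classes. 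This $R$ is contained in, but a priori of finite index in, the span of the vanishing cycles: for anti-invariant $c$ one has $(c\cdot(\alpha_s-\iota\alpha_s))=2(c\cdot\alpha_s)$, i.e.\ the intersection form restricted to the anti-invariant lattice is even (the induced polarization on the Prym is twice a principal one), so no formal argument inside $H_1(D_0,\Z)_-$ produces the coefficient $1$. Consequently, proving that the monodromy translates of $\delta_1-\delta_2$ span $H_1(A,\Z)_-$ -- your stated goal -- only shows that $\pi_1(\wt P)$ is a finite abelian group; and your fallback, irreducibility of the monodromy, cannot kill this finite quotient, because the monodromy acts \emph{trivially} on $H_1(D_0,\Z)_-/R$. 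The paper closes exactly this gap geometrically: since a curve $D_0\in|D|$ with two nodes is irreducible (Lemma \ref{prim_irr}), the complement $D_0\setminus\{\alpha_s,\iota\alpha_s\}$ is connected, so one can draw a loop $\beta_s$ with $(\beta_s\cdot\alpha_s)=1$ and $(\beta_s\cdot\iota\alpha_s)=0$, and then $c_s=\beta_s-\iota\beta_s$ is anti-invariant with $(c_s\cdot\alpha_s)=1$, forcing $\alpha_s-\iota\alpha_s\in R$. (Note also that the spanning statement itself needs no monodromy irreducibility: it is inherited from the simple connectivity of the Beauville--Mukai system $M_{v,H}\to|D|$, which says $\{\alpha_s,\iota\alpha_s\}$ generate $H_1(D_0,\Z)$. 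And two set-up points you elide are genuinely needed: the complement of the irreducible-curve locus in $|C|$ must have codimension $\ge 2$ -- Proposition \ref{red_cod_one}, which is where non-hyperellipticity enters and which fails in the hyperelliptic case, handled separately via Theorem \ref{main}(i) -- and one must invoke surjectivity of $\pi_1$ of that open part onto $\pi_1(\wt P)$.)

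The Hodge-number half also has its decisive step missing, and your diagnosis of where the parity of $g$ enters is not the right one. In your Leray argument, the claim that the monodromy invariants of $\wedge^2H^1(A)$ reduce to the polarization class requires a largeness-of-monodromy input you do not supply, and the crucial computation -- that $H^1(\PP^{g-1},R^1\eta_*\C)$ contributes exactly one to $h^{2,0}$ -- is exactly what you leave open; this is not where the paper uses $g$ odd, and there is no evident mechanism by which that sheaf-cohomology group would feel the parity. The paper's upper bound is instead obtained by a global construction: a dominant rational map $\phi:\Hilb^{g-1}(S)\dashrightarrow P$ sending $\{p_1,\dots,p_{g-1}\}$ to $[\Delta-\iota\Delta]$, where $\Delta=p_1+\cdots+p_{g-1}$ lies on the curve cut out by the hyperplane spanned by the points and the center $\PP V_+$ of the projection $S\to T$; since holomorphic two-forms pull back injectively under dominant rational maps, $h^{2,0}(\wt P)\le h^{2,0}(\Hilb^{g-1}(S))=1$. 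The parity of $g$ enters precisely here: by Mumford's description (\ref{components ker norm}) of the components of $\ker(1+\iota^*)$, the class $(1-\iota^*)\Delta$ with $\deg\Delta=g-1$ lies in the identity component (hence in $P$) exactly when $g-1$ is even. Finally, your lower bound via ``the singularities are canonical'' is shakier than you suggest: Proposition \ref{loc_sing} describes the singularity only at the generic (two-summand polystable) points, not along deeper strata, and extension of $2$-forms across canonical singularities is itself a deep theorem; the paper instead uses $\codim P_{sing}\ge 4$ in the non-hyperelliptic case (Proposition \ref{codim_sing}) together with Flenner's extension theorem (or, as in the Remark, a Chern-character correspondence argument).
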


In Section \ref{discriminant}, we compute the degree of the discriminant of the Prym-Lagrangian fibration $P\to |C|$. 
In the case of a compact hyperk\"ahler manifold this degree has been linked by Hitchin and Sawon to the Rozanski-Witten invariant. Comparing the degree of the discriminant of the Prym fibration with the degrees of the discriminants in hyperk\"ahler cases, we highlight a curious numerological phenomenon arising therein.

\subsection*{Acknowledgments} In our work we benefited from the ideas contained in the articles
 by Markushevich  and Tikhomirov,  and  Kaledin, Lehn and Sorger.
During the preparation of this manuscript we had very useful conversations with a number of  people and notably with  D. Huybrechts, J. Koll\'ar, M. Lehn, E. Markman, D. Markushevich, K. O'Grady, E. Sernesi, V. Shende, J. Sawon, C.Voisin. We thank them them all heartedly.
The second named author whishes to express  her gratitude to her advisor G. Tian for his continuous support, and to M. Lehn and J. Sawon for their strong encouragement in an early stage of this project.


\section{Notation}\label{notation}

Let $T$ be an Enriques surface, and let
\be
f: S \to T,\label{univ_cover}
\ee
be the universal cover of $T$. It is well known that $S$ is a K3 surface. We denote by
\be \label{iota}
\iota: S \to S,
\ee
the covering involution. The involution $\iota$ acts as $-1$ on the space $H^0(S, \omega_S)$ of global sections  of the canonical bundle of $S$, i.e., $\iota$ is an anti-symplectic involution. By a result of Namikawa (Proposition (2.3) of \cite{Namikawa1}) the invariant subspace of the involution $\iota^*$ acting on the N\' eron-Severi group\footnote{We follow Definition 1.1.13 of \cite{Lazarsfeld1} and for us the N\'eron-Severi group of a smooth projective variety $X$ is the group of line bundles on $X$ modulo numerical equivalence.} $\NS(S)$ is equal to $f^*(\NS(T))$. Since the pullback
\[
f^*: \NS(T) \to \NS(S),
\]
is injective, we deduce that $f^*\NS(T)$ is a rank $10$ primitive sub-lattice of $\NS(S)$. In particular, the Picard number of $S$ is greater or equal than $10$.
It is well known (Proposition (5.6) of \cite{Namikawa1}) that if $T$ is general in moduli, then
\be \label{Neron-Severi}
\NS(S)\cong f^* \NS(T).
\ee
From now on, when we say that $T$ {\it is general}, we mean that (\ref{Neron-Severi}) holds.
If $T$ is general, then 
\[
\NS(S)\cong \NS(T)(2)\,.\footnote{Given a lattice $\Lambda$ and a non-zero integer $m$, we denote by $\Lambda(m)$ the lattice obtained from $\Lambda$ by multiplying its non-degenerate integral bilinear form by $m$.}
\]
Consequently, the square of any class in $\NS(S)$ is divisible by $4$. In particular, $S$ and $T$ do not contain any algebraic $-2$ classes, i.e., they do not contain any smooth rational curve.
A surface that does not contain any smooth rational curve is called {\it unodal}.

In this paper $C$ will denote a curve on $T$, and we will set
\[
D:=f^{-1}(C).
\]
By abuse of notation, we denote by
\be
\iota: D \to D,
\ee
the induced covering involution.
For any sheaf $F$ on $T$, we set
\be \label{notation F'}
F':=F\otimes \omega_T.
\ee
Then $f^*F\cong f^*F'$. If $C$ is a curve with $C^2 \ge 0$, we usually denote by $C'$ a section of ${\mc O}(C)'$.

Now suppose that $C\subset T$ is an irreducible curve of genus $g$. If $g\ge 2$, it follows from the Hodge index theorem that $D$ is connected and thus $f: D \to C$ is a non trivial \'etale double cover. In particular,
\[
h:=g(D)=2g-1.
\]
 
We say that a curve on a surface is {\it primitive}, if its class is primitive in the N\'eron-Severi group.

If $g\ge 2$, or if $C$ is a primitive elliptic curve, then
\[
\dim |C|=\ff{C^2}{2}=g-1,
\]
while if $C=2C_0$, with $C_0$ a primitive elliptic curve, then $|C|$ is an elliptic pencil with two multiple fibers. 
As for the K3 surface $S$, we have
\[
\dim |L| =\ff{L^2}{2}+1=h.
\]
for every line bundle $L$ on $S$ with $L^2=2h-2\ge 0$.

Consider the covering $f: S \to T$,  a curve $C\subset T$ of genus $g\ge 2$ and  the induced covering $D \to C$. Observe that $\iota^*$ acts on $|D|$, and that the two invariant subspaces are  the $(g-1)$-dimensional spaces
\[
f^*|C|, \,\,\,\, \text{and} \,\,\,\, f^*|C'|.
\]
In the sequel we will drop the symbol $f^*$ and consider $|C|$ and $|C'|$ as subspaces of $|D|$.

In the Appendix we collect a number of facts about linear systems on K3 and Enriques surfaces that are needed in our study.


\section{The relative Prym variety}\label{rel_prym}

\subsection{Pure sheaves of dimension one}\label{pure_sh_dim_one}

The most natural way to compactify the Jacobian variety of an irreducible curve is to consider the moduli space of rank one torsion free sheaves. On reducible curves, however, there is no canonical moduli space to take and, in order to compactify the Jacobian, one has to choose a polarization. For example if the curve is reduced, one can just give a positive integer for every irreducible component of the curve. Different components might appear for different polarizations, and hence the resulting moduli spaces depend on such choice. For (possibly reducible) curves with only nodal singularities this was done by Oda and Seshadri in their fundamental paper \cite{Oda-Seshadri79}. For a survey see \cite{Alexeev04}.

More generally, Simpson \cite{Simpson} showed that it is possible to consider moduli spaces of semi-stable pure sheaves on any polarized smooth projective variety.  Given a sheaf $F$ on a variety $X$, for every $0\le d' \le d$, we denote by $T_{d'}(F) \subset F$ the maximal sub sheaf of $F$ of dimension $d'$.
A sheaf $F$ on  $X$ is said to be {\it pure of dimension $d$} if all the associated prime of $F$ have dimension $d$. Equivalently, if all non-zero sub sheaves of $F$ have support that is of dimension $d$. Compactified Jacobians of arbitrary curves lying on smooth projective surfaces are thus (closed subsets of) special cases of these Simpson moduli spaces.

In this subsection we let $(X, H)$ denote a smooth projective surface together with a polarization $H$ and $F$ a pure sheaf of dimension one on $X$. 

We start by recalling a few important features of pure dimension one sheaves on a smooth surface (cf. \cite{Huybrechts-Lehn} for a more detailed discussion).
By Proposition 1.1.10 of \cite{Huybrechts-Lehn}, the sheaf $F$ has homological dimension one, and hence has a two step locally free resolution.

\be\label{loc_free}
0 \to L_1 \stackrel{a}{\longrightarrow} L_0 \to F \to 0.
\ee
The \emph{determinantal support} of $F$, denoted by $\supp_{\det}(F)$, is the curve  in $X$ defined by the vanishing of the determinant of $a: L_1 \to L_0$. We have,
\[
[\supp_{\det}(F)]=c_1(F),
\]
where $[\Gamma]$ denotes the class in $H^2(X)$ of a curve $\Gamma\subset X$.
The \emph{scheme theoretic support} of $F$, is the sub-scheme whose sheaf of ideals is the kernel of the natural morphism
$
{\mc O}_X \to \mc{E}nd(F).
$
Unless explicitly specified we will use the word {\it support}
to refer to the determinantal support.
Observe that the determinantal support behaves well in families, whereas the scheme theoretic one does not.

In this case, the notion of Gieseker stability in terms of the reduced Hilbert polynomial translates into the following definition

\begin{defin} The slope of $F$ with respect to $H$ is the rational number,
\[
\mu_H(F):=\ff{\chi(F)}{c_1(F)\cdot H}.
\]
A sheaf $F$ is $H$-(semi)-stable if and only if (we follow Notation 1.2.5 of \cite{Huybrechts-Lehn}) it is pure of dimensions one and if, for every quotient sheaf $F\to E$ that is pure of dimension one,
\[
\mu_H(F) (\le) \mu_H(E).
\]
\end{defin}

Clearly, stability can be formulated also in terms of saturated subsheaves of $F$, by reversing the inequality in the definition.

By definition,  any pure sheaf of dimension one and rank one supported on an integral curve is automatically stable with respect to any polarization. More interesting phenomena arise when the support is non integral.

Let $\Gamma\subset X$ be the scheme theoretic support of $F$, and let $\Gamma' \subset \Gamma$ be any sub curve. The restriction $F_{|\Gamma'}:=F \otimes {\mc O}_{\Gamma'}$ is not necessarily pure of dimension one, so We set
\be
F_{\Gamma'}:=F_{|\Gamma'}/T_0(F_{|\Gamma'}).
\ee
so that $F_{\Gamma'}$ is pure of dimension one.

\begin{lem}  \label{stability sub curves}
Let $F$ be a pure sheaf of dimension one and rank one on a reduced curve $\Gamma \subset X$. Then $F$ is $H$-(semi)-stable if and only if for every sub curve $\Gamma' \subset \Gamma$
\[
\mu_H(F) (\le) \mu_{H}(F_{\Gamma'}).
\]
\end{lem}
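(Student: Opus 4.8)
The plan is to reduce the stated criterion—phrased in terms of the pure restrictions $F_{\Gamma'}$ to subcurves—to the definition of $H$-stability, which is phrased in terms of arbitrary pure-dimension-one quotients (equivalently, saturated subsheaves) of $F$. The key observation is that, because $F$ has rank one on the reduced curve $\Gamma$, every saturated subsheaf of $F$ is obtained, up to the quantity that matters for the slope, by restricting $F$ to a subcurve. So the subcurves $\Gamma' \subset \Gamma$ index exactly the destabilizing data one needs to test.

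First I would set up both directions via the additivity of the relevant invariants. Given a subcurve $\Gamma' \subset \Gamma$ with complementary subcurve $\Gamma'' = \overline{\Gamma \setminus \Gamma'}$, restriction gives an exact sequence relating $F$, the pure restriction $F_{\Gamma'}$, and a subsheaf supported on $\Gamma''$; after killing the torsion $T_0$ one gets a short exact sequence
\[
0 \to E'' \to F \to F_{\Gamma'} \to 0
\]
where $E''$ is a pure sheaf of dimension one supported on $\Gamma''$ (it is the saturation of the kernel). Because $F$ has rank one, both $c_1$ and $\chi$ are additive along this sequence: $c_1(F) = c_1(E'') + c_1(F_{\Gamma'})$ and $\chi(F) = \chi(E'') + \chi(F_{\Gamma'})$. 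From these two additivity relations, an elementary ``mediant'' computation with fractions shows that the three slopes $\mu_H(F)$, $\mu_H(E'')$, $\mu_H(F_{\Gamma'})$ satisfy the pattern in which $\mu_H(F)$ lies between the other two; precisely, $\mu_H(F) (\le) \mu_H(F_{\Gamma'})$ holds if and only if $\mu_H(E'') (\le) \mu_H(F)$, with the inequalities pointing the opposite way. This is the arithmetic heart of the argument and is purely formal once additivity is in hand.

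Next I would run both implications. For the ``only if'' direction, assume $F$ is $H$-(semi)-stable; since $F_{\Gamma'}$ is a pure-dimension-one quotient of $F$, the definition of stability immediately gives $\mu_H(F)(\le)\mu_H(F_{\Gamma'})$, which is exactly the asserted inequality—so this direction needs nothing beyond the definition. For the ``if'' direction, suppose the subcurve inequality holds for every $\Gamma' \subset \Gamma$, and let $F \to E$ be an arbitrary pure-dimension-one quotient; I must show $\mu_H(F)(\le)\mu_H(E)$. Let $\Gamma'$ be the support of $E$, regarded as a subcurve of $\Gamma$ (here I use that $\Gamma$ is reduced, so subcurves are just reduced unions of components). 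Because $F$ has rank one, the quotient $F \to E$ factors through the maximal quotient supported on $\Gamma'$, namely $F_{\Gamma'}$, and $E$ is a quotient of $F_{\Gamma'}$ by a zero-dimensional sheaf; such a quotient satisfies $\mu_H(E) \ge \mu_H(F_{\Gamma'})$ since lowering $\chi$ while fixing $c_1$ only decreases the numerator. Combining with the hypothesis $\mu_H(F)(\le)\mu_H(F_{\Gamma'})$ gives $\mu_H(F)(\le)\mu_H(E)$, completing the reduction.

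The step I expect to require the most care is the factorization claim in the ``if'' direction: verifying that an arbitrary pure quotient $E$ of $F$ really is (a zero-dimensional modification of) the canonical pure restriction $F_{\Gamma'}$ to its own support, rather than some more exotic quotient. This is where rank one and the reducedness of $\Gamma$ are genuinely used—one must check that the kernel of $F \to E$, after saturation, is supported on the complementary subcurve and is itself pure, so that the exact sequence above is legitimate. I would treat this by localizing at the generic point of each component of $\Gamma$, where $F$ is a length-one module over the local ring, forcing the quotient to be determined by the components it is supported on; the global statement then follows by comparing $c_1$ and correcting by the torsion $T_0$. Once this identification is secure, the slope inequalities propagate formally and the lemma follows.
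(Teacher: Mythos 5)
Your overall strategy coincides with the paper's: both directions come down to the observation that, for a rank-one sheaf on a reduced curve, the pure dimension-one quotients of $F$ are controlled by the restrictions $F_{\Gamma'}$ to subcurves, and the ``only if'' direction is immediate since each $F_{\Gamma'}$ is itself a pure quotient of $F$. The paper's proof is exactly this reduction, stated in one line: any pure quotient $\alpha\colon F \to G$ with support $\Gamma'$ factors through $F \to F_{\Gamma'}$, and the induced surjection $F_{\Gamma'} \to G$ is an \emph{isomorphism}.

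However, your ``if'' direction contains a step that is false as written. You claim that $E$ is a quotient of $F_{\Gamma'}$ by a zero-dimensional sheaf and that ``such a quotient satisfies $\mu_H(E)\ge \mu_H(F_{\Gamma'})$ since lowering $\chi$ while fixing $c_1$ only decreases the numerator.'' This is backwards: if $0 \to K \to F_{\Gamma'} \to E \to 0$ with $K$ zero-dimensional and nonzero, then $\chi(E)=\chi(F_{\Gamma'})-\chi(K)<\chi(F_{\Gamma'})$ while $c_1(E)\cdot H = c_1(F_{\Gamma'})\cdot H$, so $\mu_H(E) < \mu_H(F_{\Gamma'})$; your own justification proves the opposite of what you assert, and the chain $\mu_H(F)\le\mu_H(F_{\Gamma'})\le\mu_H(E)$ breaks down. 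The correct repair is to note that no such nonzero $K$ can exist: $K$ would be a zero-dimensional subsheaf of $F_{\Gamma'}$, which is pure of dimension one (any nonzero subsheaf of a pure sheaf is pure of the same dimension), hence $K=0$ and $E \cong F_{\Gamma'}$, so the two slopes are in fact equal. This is precisely the isomorphism statement in the paper's one-line proof. With this substitution your argument goes through; note also that your preliminary ``mediant'' discussion, while correct, becomes unnecessary once the identification $E\cong F_{\Gamma'}$ is in place.
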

\begin{proof}
Let $\alpha: F \to G$ be a quotient sub sheaf of pure dimension one, and let $\Gamma'$ be the support of $G$. Then $\alpha$ factor through the natural morphism $F \to F_{\Gamma'}$ and the induced morphism $F_{\Gamma'} \to G$ is an isomorphism.
\end{proof}

\begin{notation}\label{Fi}Let $F$ be a pure sheaf of dimension one on a smooth surface $X$, and let $\Gamma=\Gamma_1+\Gamma_2$ be a decomposition of the support of $F$ in two reduced curves that have no common components. For $i,j=1,2$ and $i\neq j$ we set
\be \label{Fi3}
F^{ \Gamma_j}=\ker[F_{|\Gamma_i} \to F_{\Gamma_i}],
\ee
When no confusion is possible we will use the shorthand notation
\be\label{Fi2}
F_i=F_{\Gamma_i}\,,\qquad F_j=F^{ \Gamma_j}
\ee

\end{notation}

We will  need  the following lemma.

\begin{lem} \label{Fj}
Let $F$ be a pure sheaf of dimension one on a smooth surface $X$, and let $\Gamma=\Gamma_1+\Gamma_2$ be a decomposition of the support of $F$ in two reduced curves that have no common components. Assume that $\Gamma_1$ and $\Gamma_2$ meet transversally. Let $\Lambda_F$ be the subset of $\Gamma_1\cap \Gamma_2$ where $F$ is locally free and set $\Delta_F=\underset{p\in \Lambda_F}\sum p$.
Then, for $j=1,2$
\[
F^{\Gamma_j}\cong F_{\Gamma_j}\otimes {\mc O}_{\Gamma_j}(-\Delta_F),
\]
\end{lem}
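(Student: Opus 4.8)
The plan is to compare the two pure one-dimensional sheaves $F^{\Gamma_j}$ and $F_{\Gamma_j}$ on $\Gamma_j$ through a single natural map, and to reduce the whole statement to a local computation at the nodes $\Gamma_1\cap\Gamma_2$. Throughout I use that the sheaves in question have rank one along each component, which is the relevant case here.

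First I would recall (cf. Notation \ref{Fi}) that $F^{\Gamma_j}$ is the kernel of the natural surjection $F\to F_{\Gamma_i}$, hence a pure one-dimensional subsheaf of $F$ supported on $\Gamma_j$. Composing the inclusion $F^{\Gamma_j}\hookrightarrow F$ with the projection $F\to F_{|\Gamma_j}\to F_{\Gamma_j}$ produces an $\mathcal{O}_{\Gamma_j}$-linear map $\phi\colon F^{\Gamma_j}\to F_{\Gamma_j}$. Away from the finite set $\Gamma_1\cap\Gamma_2$ the sheaf $F$ is supported on $\Gamma_j$ alone, so both $F^{\Gamma_j}$ and $F_{\Gamma_j}$ agree there with $F$ and $\phi$ is an isomorphism. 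Since $F^{\Gamma_j}$ is pure, a map out of it with kernel supported in dimension zero must be injective; hence $\phi$ is injective and $\coker\phi$ is a zero-dimensional sheaf supported on $\Gamma_1\cap\Gamma_2$. The statement then follows once I identify $\im\phi$, i.e. compute $\phi$ locally at each $p\in\Gamma_1\cap\Gamma_2$.

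The heart of the argument is this local analysis, which I expect to be the main obstacle. Transversality makes each $p\in\Gamma_1\cap\Gamma_2$ a node of $\Gamma$, with completed local ring $\widehat{\mathcal O}_{\Gamma,p}\cong k[[u,v]]/(uv)$, where $\Gamma_i=\{v=0\}$ and $\Gamma_j=\{u=0\}$. Since $F$ has rank one along each branch, the classification of torsion-free modules over a node leaves only two possibilities for the stalk: either $\widehat F_p\cong\widehat{\mathcal O}_{\Gamma,p}$, which is exactly the locally free case $p\in\Lambda_F$, or $\widehat F_p\cong\mathcal{O}_{\Gamma_i,p}\oplus\mathcal{O}_{\Gamma_j,p}$, the push-forward from the normalization. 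In the second case a direct computation of $F_{|\Gamma_j}$, of its purification $F_{\Gamma_j}$, and of $F^{\Gamma_j}=\ker[F\to F_{\Gamma_i}]$ shows that $F^{\Gamma_j}_p$ and $F_{\Gamma_j,p}$ are both the second summand and $\phi_p$ is an isomorphism, so $p$ contributes nothing. In the locally free case one finds $F^{\Gamma_j}_p\cong v\cdot\widehat{\mathcal O}_{\Gamma,p}$ while $F_{\Gamma_j,p}\cong\widehat{\mathcal O}_{\Gamma_j,p}$, and under $\phi_p$ the former maps isomorphically onto $\mathfrak{m}_p\,F_{\Gamma_j,p}$, so $\coker\phi_p$ has length one. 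The delicate points are keeping track of the purification, which removes the embedded torsion created by restricting across the node, and verifying that the length of the cokernel is exactly one at each point of $\Lambda_F$; this last fact is where the rank-one hypothesis is genuinely used.

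Finally I would globalize. The local computation shows $\coker\phi\cong\mathcal{O}_{\Delta_F}$, with $\Delta_F=\sum_{p\in\Lambda_F}p$. At each point of $\Lambda_F$ the sheaf $F_{\Gamma_j}$ is locally free of rank one and $\Gamma_j$ is smooth, so the unique colength-one subsheaf of $F_{\Gamma_j}$ at such a point is $\mathfrak{m}_p F_{\Gamma_j}$; hence $\im\phi=\mathcal{I}_{\Delta_F/\Gamma_j}\,F_{\Gamma_j}=F_{\Gamma_j}\otimes\mathcal{O}_{\Gamma_j}(-\Delta_F)$, the twist being by a line bundle near $\Lambda_F$. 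Since $\phi$ is an isomorphism onto its image, this yields $F^{\Gamma_j}\cong F_{\Gamma_j}\otimes\mathcal{O}_{\Gamma_j}(-\Delta_F)$, as claimed; the statement for the other index is identical by symmetry.
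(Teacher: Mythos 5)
Your proof is correct, but it takes a genuinely different route from the paper's. The paper approaches $F^{\Gamma_j}$ \emph{from below}: it tensors the structure sequence $0\to\mathcal O_{\Gamma_j}(-\Gamma_i)\to\mathcal O_\Gamma\to\mathcal O_{\Gamma_i}\to 0$ by $F$, uses purity of $F$ to factor the resulting map through an injection $F_{\Gamma_j}(-\Gamma_i)\hookrightarrow F$ whose image is $\ker(F\to F_{|\Gamma_i})$, and then compares the two exact sequences $0\to F_{\Gamma_j}(-\Gamma_i)\to F\to F_{|\Gamma_i}\to 0$ and $0\to F^{\Gamma_j}\to F\to F_{\Gamma_i}\to 0$: since $\ker(F_{|\Gamma_i}\to F_{\Gamma_i})\cong\oplus_{p\in T_F}\C_p$, where $T_F=\Gamma_1\cap\Gamma_2\setminus\Lambda_F$ is the set of nodes at which $F$ is \emph{not} locally free, a diagram chase gives $F^{\Gamma_j}\cong F_{\Gamma_j}(-\Gamma_i)\bigl(\sum_{p\in T_F}p\bigr)=F_{\Gamma_j}(-\Delta_F)$, the twist $-\Delta_F$ emerging as $-\Gamma_i|_{\Gamma_j}$ corrected by $T_F$. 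You instead approach \emph{from above}: you map $F^{\Gamma_j}$ into $F_{\Gamma_j}$, get injectivity from purity, and compute the cokernel node by node via the classification of rank-one torsion-free modules over a node (free, or pushforward from the normalization), so that $-\Delta_F$ appears directly as a colength-one drop at each point of $\Lambda_F$ and no contribution at $T_F$. The two arguments rest on the same local dichotomy — the paper's assertions $F_{|\Gamma_i,p}\cong F_{\Gamma_i,p}\oplus\C_p$ for $p\in T_F$ are precisely your second local case — but the paper's homological bookkeeping confines the explicit local analysis to the non-locally-free points and yields, as a byproduct used elsewhere, the exact sequence $0\to F_{\Gamma_j}(-\Gamma_i)\to F\to F_{|\Gamma_i}\to 0$; your pointwise computation is more elementary and self-contained, and it makes transparent exactly where transversality, the rank-one hypothesis, and local freeness enter.
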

\begin{proof} For $i,j=1,2$, $i\neq j$, 
look at the exact sequence
\be \label{structure sheaf sequence}
0 \to {\mc O}_{\Gamma_j}(-\Gamma_i)\to {\mc O}_\Gamma \to {\mc O}_{\Gamma_i} \to 0.
\ee

Tensoring by $F$ we get
\[
\Tor^1_{{\mc O}_\Gamma}(F,{\mc O}_{\Gamma_i}) \to F_{|\Gamma_j}(-\Gamma_i) \stackrel{t}{\to} F \to F_{|\Gamma_i} \to 0, 
\]
where the sheaf $\Tor^1_{{\mc O}_\Gamma}(F,{\mc O}_{\Gamma_i})$ is supported on 
\[
T_F:=\Gamma_1\cap \Gamma_2 \setminus \Lambda_F.
\]
For $p\in T_F$,
\[
F_{|\Gamma_i,p}\cong F_{\Gamma_i,p}\oplus \C_p, \,\,\,\, \text{and} \,\,\,\,
F_{|\Gamma_j}(-\Gamma_i)_p\cong F_{\Gamma_j}(-\Gamma_i)_p\oplus \C_p.
\]
Since $F$ is pure, the map $t$ factors through a generically injective (and thus injective) map $F_{\Gamma_j}(-\Gamma_i) \stackrel{s}{\to} F$ which fits into the following exact sequence
\[
0 \to  F_{\Gamma_j}(-\Gamma_i) \stackrel{s}{\to} F \to F_{|\Gamma_i} \to 0.
\]
Consider the following commutative diagram
\[
\xymatrix{
0 \ar[r] &   F_{\Gamma_j}(-\Gamma_i) \ar[r]  \ar[d]_\gamma & F \ar[r] \ar@{=}[d]  & F_{|\Gamma_i} \ar[r] \ar@{->>}[d]^\beta & 0\\
0 \ar[r] & \ker(\alpha) \ar[r] & F\ar[r]^\alpha &  F_{\Gamma_i}  \ar[r] & 0
}
\]
where $\alpha: F \to F_{\Gamma_i} $ is the composition of the restriction $F \to F_{|\Gamma_i}$ with the natural morphism $\beta$.
We know that $\beta$ is an isomorphism at a point $p$ if and only if $F$ is locally free at $p$. Moreover, $\gamma$ is injective and  
\[
\ker(\beta)\cong \oplus_{p\in T_F } \C_p.
\]
It follows that $\coker (\gamma) \cong \oplus_{p\in T_F } \C_p$, and thus
\[
\ker(\alpha)\cong F_{\Gamma_j}(-\Gamma_i)\left(\sum_{p\in T_F }p\right) = F_{\Gamma_j}(-\Delta_F).
 \]
\end{proof}

\vskip 0.5 cm 
\subsection{Relative compactified Jacobians}

Let $(S, H)$ be a polarized K3 surface, and let
\[
v =(0, [D], \chi) \in H^*(S, \Z),
\]
be a Mukai vector, where 
\[
\chi=d-h+1\,,\quad h=g(D)
\]
Following Le Potier \cite{LePotier} and Simpson \cite{Simpson}, we consider the moduli space $M_{v, H}(S)$ of $H$-semi stable sheaves of pure dimension one with $c_1(F)=[D]$ and $\chi(F)=\chi$. 
The moduli space $M_{v, H}(S)$  is a $2h$-dimensional projective variety and by \cite{Mukai} and \cite{Artamkin88} the smooth locus contains the locus $M_{v, H}^s(S)$ of $H$-stable sheaves.

When no confusion is possible we will simply write:
\be\label{simplifyMvH}
M_{v, H}=M_{v, H}(S)
\ee

Let $[F] \in M_{v, H}$ be a point corresponding to an $H$-stable sheaf. By deformation theory, the tangent space to $M_{v, H}$ at the point $[F]$ is canonically identified with $\Ext^1(F, F)$. Mukai showed in \cite{Mukai} that there is a holomorphic symplectic form on $M_{v, H}^s$. On the tangent space $T_{[F]} M_{v, H}^s$ this symplectic form is given by the the cup product
\be\label{cup_ext}
\aligned
\sigma_M: \Ext^1(F, F) \times \Ext^1(F, F) \longrightarrow \Ext^2(F, F)& \stackrel{tr}{\cong} H^2(S, {\mc O}_S)\\
&= H^0(S,\omega_S)^\vee\stackrel{\sigma}{\cong} 
\C,
\endaligned
\ee
Following Le Potier \cite{LePotier} we can define the support morphism
\be
\begin{aligned}\label{lagrang}
\pi: M_{v, H} & \longrightarrow |D|\cong \P^h,\\
F & \longmapsto \supp_{\det}(F).
\end{aligned}
\ee
The fiber of $\pi$ over a point corresponding to a smooth curve $D_0$ is nothing but $\Jac^d(D_0)$, the degree $d$ Jacobian of $D_0$. 
If the curve $D_0$ is not smooth but integral, the fiber of $\pi$ over the point $[D_0]$ is the compactified Jacobian $\overline \Jac^d(D_0)$ whose points represent isomorphism classes of rank one degree $d$ torsion free sheaves on $D_0$.
More generally, if $D_0$ is reduced but possibly reducible, the fiber is the compactified jacobian $\overline \Jac^d_H(D_0)$ parametrizing $\mc S$-equivalence classes of $H$-semistable rank one torsion free sheaves on $D_0$.

For this reason the moduli space $M_{v, H}$ is sometimes denoted with the symbol 
\[
\Jac^d_H(|D|)=M_{v, H}
\]
or else with the symbol 
\[
\Jac^d_H(\D)=M_{v, H}
\]
 where $\D\to |D|$ is the universal family.

 We recall that a polarization $H$ is called \emph{$v$-generic} if every $H$-semi stable sheaf of Mukai vector $v$ is automatically $H$-stable.

Yoshioka \cite{Yoshioka} proved that if $v$ is primitive and if $\chi\neq 0$, the locus of $[H] \in \Amp(S)$ that are not $v$-generic is a finite union if hyperplanes which are called the {\it walls associated to} $v$.
These walls are described as follows. Let $[F] \in M_{v, H}$ and let $D$ be its support. For any sub curve $\Gamma \subset D$,  and any quotient sheaf $F \to G$ with $\supp(G)=\Gamma$ and $\mu_H(G)=\mu_H(F)$, there is a wall containing $[H]$ defined in $\Amp(S)$ by the equation
\[
(\chi(G) D -\chi \, \Gamma )\cdot x=0.
\]
By definition, when $H$ is $v$-generic, the moduli space $M_{v, H}$ is smooth. It is an irreducible symplectic manifold of K3$^{[h]}$-type.

A simple example of a non $v$-generic polarization is the following.

\begin{example} \label{D not v generic}
Let $\chi=-h+1$ or, equivalently, $d=0$. Suppose that $D$ decomposes into the sum $D=D_1+D_2$ of two integral divisors, with even intersection numbers $D\cdot D_i$, $i=1,2$. Then $D$ is not $v$-generic. In fact,  there exists a sheaf $F=F_1\oplus F_2$, where $F_i$ is a sheaf on $D_i$ with $\chi(F_i)= -\ff{1}{2} D_i \cdot D$.
\end{example}

One of the beautiful features of  the map (\ref{lagrang}) is that it exhibits $M_{v, H}$ (or rather its smooth locus) as a Lagrangian fibration. Over the locus of smooth curves this was proved by Beauville in \cite{Beauville88}, and for this reason this Lagrangian fibration is called the Beauville-Mukai integrable system. If $v$ and $H$ are such that $M_{v,H}$ is smooth (eg. if $v$ is primitive and $H$ is general), then by a  general theorem of Matsushita \cite{Matsushita1} every irreducible component of every fiber is Lagrangian\footnote{Following \cite{Matsushita2000} we say that a subvariety $Y $ of a complex manifold $X$ with a holomorphic symplectic form $\sigma$ is called Lagrangian, if $\dim Y=\dim X/2$ and if there is a resolution $r: \wt Y  \to Y$ of the singularities of $Y$ such that $r^*\sigma_{|Y}$ is identically zero.}. In particular, $\pi: M_{v,H} \to |D|$ is \emph{equidimensional}.\\
If $d=0$, then $\pi$ has a rational section,
\be \label{rational section}
s: |D| \dasharrow \Jac^0_H(\D),
\ee
which is defined on an open subset containing the locus of integral curves. Indeed, since any pure sheaf of rank one on an integral curve is stable with respect to any polarization, one can define the section $s$ by  assigning to an integral curve $\Gamma \in |D|$ its structure sheaf.

\subsection{The relative Prym variety}\label{rel_prym_sub}

We recall  the classical definition of Prym variety.
Let $C$ be a smooth genus $g\ge2$ curve, and let
\[
f: D \to C,
\]
be an \'etale double cover. Then $D$ is a smooth curve of genus $h=2g-1$. 
As usual, let $\iota: D \to D$ be the covering involution. Then $\iota^*$  acts on the Jacobian variety $\Jac^0(D)$, and the {\it Prym variety  of $D$ over $C$} is  defined by
\be \label{definition Prym}
\Prym(D/C):=\Fix^\circ(-\iota^*)=\ker(\id+\iota^*)^\circ \subset \Jac^0(D),
\ee
where the superscript ${}^\circ$ stands for the identity component. Following Mumford \cite{MumfordPryms}, one can also define the Prym variety as the identity component of the kernel of norm map,
\[
\begin{aligned}
\operatorname{Nm}: \Jac^0(D) &\longrightarrow \Jac^0(D),\\
L=\mc O_D\left(\sum a_i x_i\right) &\longmapsto \det (f_* L)=\mc O_C\left(\sum a_i f(x_i)\right)
\end{aligned}
\]
or, equivalently,  as the image of $(1-\iota^*): \Jac^0(D) \to \Jac^0(C)$.
Since
\[
f^* (\operatorname{Nm}(L))=(1+\iota^*)L,
\]
and $\ker (\operatorname{Nm})$  and $\ker f^*$ have two connected components (cf. \cite{MumfordPryms} \S 6), $\ker(1+\iota^*)$
has four connected components. In other words,
\be \label{ker norm}
\ker(1+\iota^*)\cong \operatorname{Nm}^{-1}(0)\coprod \operatorname{Nm}^{-1}(\eta),
\ee
where $\eta \in \Pic^0(C)$ is the $2$-torsion line bundle defining the double cover $D \to C$.
In \cite{MumfordTheta} Mumford shows, among other things, that the decomposition in connected components of $\ker(\operatorname{Nm})$ is given by
\be \label{components ker norm}
\ker (\operatorname{Nm})=(1-\iota^*)(\Jac^0(D))\coprod (1-\iota^*)(\Jac^1(D)),
\ee
so that $\Prym(D/C) \cong (1-\iota^*)(\Jac^0(D))$.
The Prym variety is a $(g-1)$-dimensional principally polarized abelian variety(cf \cite{MumfordPryms}).
Going back to our situation, we consider a general Enriques surface $T$ and its universal cover 
\be\label{univ_cov}
f: S\to T
\ee
We fix
 a curve $C\subset T$ of  genus $g \ge 2$ and  we set $D=f^{-1}{(}C)$, so that  $\dim |C|= g-1$ and  $\dim |D|=h$.  Set
\[
W:=f^*|C|\subset |D|.
\]
Let 
$\CC\to |C|$ and $ \D\to W\cong|C|$ be the universal families relative to the two linear systems.  Consider the relative cover
 \[
\xymatrix{\D\ar[rr]^F\ar[dr]&&\CC\ar[dl]\\
&|C|
}
\]
Our aim is to preform the Prym construction  for the relative cover $F$.
Of course, we could also do the relative construction staring with the linear system   $W':=f^*|C'| $ and everything we will say for $W$ works  for $W'$ as well.
As in the case of the double cover of  a fixed  curve, we would like to define the relative Prym variety as the fixed locus of an involution defined on the relative jacobian $\Jac^0_H(\D)=M_{v,H}$ where, {\it throughout}
\be\label{choice_v}
v=(0, [D], -h+1)
\ee

 and $H$ is a suitable polarization. Moreover, this involution should be the composition of a relative version of $\iota^*$ with a relative version of  ``$-1$", the two involutions should commute, and they should  be anti-symplectic so their composition would be a symplectic involution. The only part in this construction which is not straightforward, when not downright impossible,  is the construction of the involution ``$-1$".

As we said, the desired involution on $M_{v, H}$ should be the composition of two commuting anti-symplectic involutions. Let us start by describing the first one.

\begin{lem} \label{lem iota star}
Let $T$ be an Enriques surface. Let $S$ be the covering K3 surface. Let $H$ be a polarization on $S$, and let $v=(0,[D], \chi)$ where $D=f^*(C)$. There is a birational involution
\[
\begin{aligned}
\iota^*: M_{v, H} & \dasharrow M_{v, H},\\
F &\mapsto \iota^* F,
\end{aligned}
\]
This birational involution is anti-symplectic and the projection $\pi: M_{v, H}\to |D|$ is $\iota^*$-equivariant. If $H$ is $\iota^*$-invariant, then $\iota^*$ is a regular morphism.
\end{lem}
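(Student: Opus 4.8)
The plan is to establish the four assertions of the lemma — that $\iota^*$ induces a birational involution, that $\pi$ is equivariant, that the involution is anti-symplectic, and that it becomes regular once $H$ is $\iota^*$-invariant — essentially in that order, since the later claims rest on the set-up of the first. First I would record the elementary compatibilities. Because $D=f^{-1}(C)$ is $\iota$-invariant we have $\iota^*[D]=[D]$, and since $\iota$ is an isomorphism it preserves Euler characteristics; hence the Mukai vector of $\iota^* F$ is $\iota^* v(F)=(0,[D],\chi)=v$, so $F\mapsto \iota^* F$ preserves the Mukai vector, and $(\iota^*)^2=(\iota^2)^*=\id$. To see that this yields a birational self-map it is cleanest to restrict to the open locus $M^{\circ}_{v,H}\subset M_{v,H}$ parametrizing sheaves whose determinantal support is an integral curve: a rank-one pure sheaf on an integral curve is stable with respect to \emph{every} polarization, so $\iota^* F$ is again $H$-stable there, and $\iota^*$ restricts to an involutive automorphism of $M^{\circ}_{v,H}$. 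Since the generic member of $|D|$ is integral, $M^{\circ}_{v,H}$ is dense, and we obtain the asserted birational involution $\iota^*\colon M_{v,H}\dashrightarrow M_{v,H}$.

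The equivariance of $\pi$ is immediate: applying $\iota^*$ to a two-step locally free resolution $0\to L_1\to L_0\to F\to 0$ and taking determinants gives $\supp_{\det}(\iota^* F)=\iota\big(\supp_{\det}(F)\big)$, so $\pi\circ\iota^*=\iota^*\circ\pi$, where on the right $\iota^*$ denotes the induced involution on $|D|$ (well-defined because $\iota^*[D]=[D]$).

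For the anti-symplectic property I would compute the action of $d(\iota^*)$ on the Mukai form (\ref{cup_ext}) at a stable point $[F]$. The differential is the natural isomorphism $\Ext^1(F,F)\to \Ext^1(\iota^* F,\iota^* F)$ induced by the autoequivalence $\iota^*$, and as such it is compatible with the Yoneda product and with the trace, so for $a,b\in \Ext^1(F,F)$ one has $\tr(\iota^* a\cup \iota^* b)=\iota^*\,\tr(a\cup b)$ in $H^2(S,\mathcal{O}_S)$. The final identification $H^2(S,\mathcal{O}_S)=H^0(S,\omega_S)^\vee\xrightarrow{\sigma}\C$ is where the sign enters: since $\iota$ acts as $-1$ on $H^0(S,\omega_S)$, it acts as $-1$ on the dual $H^2(S,\mathcal{O}_S)$, whence $\sigma\circ\iota^*=-\sigma$ and therefore $\sigma_M(\iota^* a,\iota^* b)=-\sigma_M(a,b)$, which is exactly the anti-symplectic statement.

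Finally, if $\iota^* H=H$ then $\iota^*$ preserves $H$-stability globally: for any sheaf $G$ one has $\chi(\iota^* G)=\chi(G)$ and $c_1(\iota^* G)\cdot H=c_1(G)\cdot \iota^* H=c_1(G)\cdot H$, so $\mu_H(\iota^* G)=\mu_H(G)$, and since $\iota^*$ is an exact autoequivalence it carries the (semi)stability inequalities for quotients of $F$ to those for quotients of $\iota^* F$. Hence $\iota^*$ sends $H$-stable sheaves to $H$-stable sheaves and extends to a regular involutive automorphism of all of $M_{v,H}$. The one point demanding real care — beyond the formal functoriality above — is the birational-versus-regular dichotomy when $H$ is \emph{not} $\iota^*$-invariant: there $\iota^*$ genuinely identifies $M_{v,H}$ with $M_{v,\iota^* H}$, and it is only the wall-crossing fact that these two moduli spaces share a birational model, together with the density of $M^{\circ}_{v,H}$ where stability is polarization-independent, that lets one regard $\iota^*$ as a birational involution of $M_{v,H}$ itself. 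I expect the compatibility of the trace map with $\iota^*$ and the precise determination of the sign on $H^2(S,\mathcal{O}_S)$ to be the only places requiring genuine attention.
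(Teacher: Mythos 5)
Your proposal is correct and follows essentially the same route as the paper's proof: birationality via the density of sheaves with integral support (where stability is polarization-independent), regularity from invariance of $H$-slopes under an $\iota^*$-invariant polarization, and the anti-symplectic property by noting that every map in (\ref{cup_ext}) is functorial under $\iota^*$ except the identification $H^0(S,\omega_S)^\vee\cong\C$, which picks up a sign because $\iota^*\sigma=-\sigma$. The paper merely compresses the equivariance and regularity steps to "obvious/clear," and your closing wall-crossing remark is unnecessary (density of the integral locus already suffices), but nothing in your argument is wrong.
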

\begin{proof} Since the general point $[F] \in M_{v, H}$ is supported on an irreducible curve, it is stable with respect to any polarization. This shows that $\iota^*$ is a birational involution, and it is clear that if $H$ is $\iota^*$-invariant, then the involution is biregular. The $\iota^*$-equivariance of $\pi$ is obvious.
The symplectic form on $M_{v, H}$ is given by (\ref{cup_ext}) and there all morphisms  are intrinsic except for the identification $H^0(S, \omega_S)^{\dual}\cong \C$, which is dual to the isomorphism $H^0(S, \omega_S) \cong \C \sigma$. As $\iota$ is an anti-symplectic involution, $\iota^{*}(\sigma) = -\sigma$, and the symplectic form on $M_v(S)$ changes sign under $\iota^{*}$. 
\end{proof}

Every component of the fixed locus of $\iota^*$ is a Lagrangian subvariety of $M_{v,H}$ with trivial canonical bundle,  and their geometry is studied by the second named author in \cite{Sacca12}.

The second involution is more involved. The basic tool one uses to define it is given by the following lemma.

\begin{lem}\label{dual}
Let $F$ be a pure sheaf of dimension one on a K3 surface $S$, and let $\Gamma$ be the support of $F$.
Then,
\[
\Shext^1_S(F, {\mc O}_S(-\Gamma))\cong \Shhom_\Gamma(F, {\mc O}_\Gamma).
\]
\end{lem}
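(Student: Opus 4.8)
The plan is to reduce the statement to Grothendieck--Serre duality along the inclusion of the support curve, exploiting that $S$ is a K3 surface, so $\omega_S\cong\mc O_S$. First I would record, following the paper's convention, that $\Gamma=\supp_{\det}(F)$ is an \emph{effective Cartier divisor} on $S$: locally it is cut out by $\det a$, where $a$ is the map in the resolution \eqref{loc_free}, and since $\mathrm{Fitt}_0(F)$ annihilates $F$, the sheaf $F$ is naturally a module over $\mc O_\Gamma=\mc O_S/\mathrm{Fitt}_0(F)$. In particular $\Shhom_\Gamma(F,\mc O_\Gamma)$ makes sense, and because $\Gamma$ is Cartier (hence a local complete intersection, so Gorenstein) adjunction applies and gives $\omega_\Gamma\cong\mc O_\Gamma(\Gamma)$, with normal bundle $\norm{\Gamma}{S}\cong\mc O_\Gamma(\Gamma)$.

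The heart of the argument is the fundamental local isomorphism for the codimension-one regular embedding $i:\Gamma\hookrightarrow S$. Since $F$ is torsion while $\mc O_S$ is torsion-free, $\Shhom_S(F,\mc O_S)=0$; and since $F$ has homological dimension one on $S$ (as in \eqref{loc_free}), one has $\Shext^{i}_S(F,\mc O_S)=0$ for $i\geq 2$ together with a canonical isomorphism
\[
\Shext^1_S(F,\mc O_S)\cong\Shhom_\Gamma\!\bigl(F,\norm{\Gamma}{S}\bigr)=\Shhom_\Gamma\bigl(F,\mc O_\Gamma(\Gamma)\bigr).
\]
On a K3 surface $\omega_S\cong\mc O_S$, so the right-hand side is $\Shhom_\Gamma(F,\omega_\Gamma)$; this is precisely relative duality in its simplest form, and it is what makes duality hold for the possibly singular, possibly non-reduced curve $\Gamma$.

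To finish, I would twist. As $\mc O_S(-\Gamma)$ is a line bundle, $\Shext^1_S(F,\mc O_S(-\Gamma))\cong\Shext^1_S(F,\mc O_S)\otimes\mc O_S(-\Gamma)$. Substituting the previous isomorphism and using that tensoring an $\mc O_\Gamma$-module by the line bundle $\mc O_S(-\Gamma)$ agrees with tensoring by its restriction $\mc O_\Gamma(-\Gamma)$, together with $\Shhom_\Gamma(F,G)\otimes L\cong\Shhom_\Gamma(F,G\otimes L)$ for a line bundle $L$, yields
\[
\Shext^1_S\bigl(F,\mc O_S(-\Gamma)\bigr)\cong\Shhom_\Gamma\bigl(F,\mc O_\Gamma(\Gamma)\bigr)\otimes\mc O_\Gamma(-\Gamma)\cong\Shhom_\Gamma\bigl(F,\mc O_\Gamma\bigr),
\]
which is the claim. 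A purely computational variant, avoiding duality machinery, is to apply $\Shhom_S(-,\mc O_S(-\Gamma))$ directly to \eqref{loc_free}: vanishing of $\Shhom_S(F,\mc O_S(-\Gamma))$ and of $\Shext^{\geq 2}$ identifies the left-hand side with $\coker\!\bigl(a^\vee\otimes\mc O_S(-\Gamma)\bigr)$, which one matches with $\ker\!\bigl(\bar a^\vee:L_0^\vee|_\Gamma\to L_1^\vee|_\Gamma\bigr)=\Shhom_\Gamma(F,\mc O_\Gamma)$ using the adjugate relation $a\cdot\mathrm{adj}(a)=\mathrm{adj}(a)\cdot a=\det(a)=s_\Gamma$ (a matrix-factorization computation).

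The main obstacle is precisely the singular/non-reduced support: one cannot invoke smoothness of $\Gamma$, so the whole argument must be routed through $\Gamma$ being an effective Cartier divisor (giving $\omega_\Gamma\cong\mc O_\Gamma(\Gamma)$) and through the fundamental local isomorphism. The two points needing genuine care are checking that $F$ is an $\mc O_\Gamma$-module for the determinantal scheme structure and that the local duality isomorphism is canonical enough to glue to the asserted global isomorphism.
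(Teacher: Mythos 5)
Your proof is correct, but it reaches the isomorphism by a different route than the paper. The paper's proof is a short direct computation: apply $\Shhom_S(F,\cdot)$ to the twisted structure sequence $0 \to \mc O_S(-\Gamma)\to \mc O_S\to \mc O_\Gamma\to 0$; since $F$ is torsion, $\Shhom_S(F,\mc O_S)=0$, so the long exact sequence begins
\[
0 \to \Shhom_S(F,\mc O_\Gamma) \to \Shext^1_S(F,\mc O_S(-\Gamma)) \stackrel{u}{\longrightarrow} \Shext^1_S(F,\mc O_S),
\]
and $u=0$ because it is induced by multiplication by the section $\det a$ defining $\Gamma$, which annihilates $F$; the claimed isomorphism then falls out in one step. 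You instead first invoke the untwisted fundamental local isomorphism $\Shext^1_S(F,\mc O_S)\cong\Shhom_\Gamma\bigl(F,\mc O_\Gamma(\Gamma)\bigr)$ for the codimension-one regular embedding $\Gamma\into S$, and then twist by the line bundle $\mc O_S(-\Gamma)$. The two arguments run on the same fuel --- $\Gamma$ is an effective Cartier divisor cut out by $\det a$, and $\mathrm{Fitt}_0(F)=(\det a)$ kills $F$ --- and indeed the paper's vanishing $u=0$ is exactly the Fitting-ideal point you isolate, left implicit there. What your packaging buys: it makes explicit that $F$ is an $\mc O_\Gamma$-module, without which $\Shhom_\Gamma(F,\mc O_\Gamma)$ is not even defined; it makes clear that the K3 hypothesis is irrelevant (neither proof uses $\omega_S\cong\mc O_S$; your adjunction remark $\omega_\Gamma\cong\mc O_\Gamma(\Gamma)$ is interpretation, not a needed step); and your adjugate/matrix-factorization variant gives an explicit local formula for the isomorphism, which is genuinely different from the paper's homological argument and convenient for concrete computations. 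What the paper's version buys is economy and self-containedness: no appeal to the change-of-rings spectral sequence or to duality formalism, just one application of a Hom functor to a three-term sequence.
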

\begin{proof}
Consider the short exact sequence $0 \to {\mc O}_S(-\Gamma)  \to {\mc O}_S\to {\mc O}_\Gamma \to 0$. Applying $\Shhom_S(F, \cdot)$ we get,
\[
0 \to \Shhom_S(F,{\mc O}_\Gamma) \to \Shext^1_S(F, {\mc O}_S(-\Gamma) ) \stackrel{u}{\longrightarrow} \Shext^1_S(F, {\mc O}_S).
\]
Notice, however, that the map $u$ is induced by multiplication by the section defining $\Gamma$. Thus $u=0$ and  $\Shhom_\Gamma(F, {\mc O}_\Gamma) \cong \Shext_S^1(F, {\mc O}_\Gamma(-\Gamma))$.
\end{proof}

We set
$$
j(F):=\Shext^1_S(F, \O(-D))
$$
A pure sheaf of dimension one on a surface is reflexive (Proposition 1.1.10 of \cite{Huybrechts-Lehn}), so that \[
j^2 (F)\cong F
\]
The idea is that the assignment $j$ should be the relative version of the involution 
\[
-1:\Jac^0{(}C)\to \Jac^0{(}C)
\]
so that $j\circ\iota^*$ is   the relative version of the involution 
$-\iota^*:\Jac^0{(}C)\to \Jac^0{(}C)$ whose fixed locus is the Prym variety. From now on we set
\be\label{tau}
\tau:=j\circ\iota^*\,,\qquad \text{i.e.}\quad\tau(F)=\Shext^1(\iota^*F,-D)
\ee

\begin{lem} \label{base change ext}
Let $\F$ be a flat family of pure sheaves of dimension one on $S$ 
parametrized by a scheme $B$. If $p: S \times B \to S$ denotes the natural projection, then 
$\Shext^1(\F, p^*{\mc O}_S)$
is a flat family of pure dimension one sheaves on $S$ parametrized by $B$, and for every $b \in B$, there is an isomorphism 
$ \Shext^1(\F, {\mc O}_{S\times B})_b\cong \Shext^1(\F_b, {\mc O}_{S})$.
\end{lem}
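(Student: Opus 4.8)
The plan is to reduce everything to a fibrewise-exact, two-term locally free resolution of $\F$, and to read off both the base change isomorphism and the $B$-flatness directly from it. Since flatness over $B$ and the asserted fibrewise isomorphism are local statements on $S\times B$, I may work Zariski-locally; the resulting isomorphisms are canonical (independent of the chosen resolution), and therefore glue. Throughout, write $i_b\colon S\times\{b\}\hookrightarrow S\times B$ for the inclusion of the fibre over $b$, and note $p^*\O_S=\O_{S\times B}$.

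First I would produce, locally on $S\times B$, an exact sequence
\[
0 \to \L_1 \xrightarrow{A} \L_0 \to \F \to 0
\]
with $\L_0,\L_1$ locally free on $S\times B$ and restricting fibrewise to a resolution of $\F_b$. Choose a local surjection $\L_0\twoheadrightarrow\F$ from a locally free sheaf and let $\L_1$ be its kernel. Because $\F$ is $B$-flat, $\L_1$ is again $B$-flat and the sequence stays exact after restricting to the fibre over any $b$, giving $0\to i_b^*\L_1\to i_b^*\L_0\to\F_b\to 0$. As $\F_b$ is pure of dimension one on the smooth surface $S$, it has homological dimension one (Proposition 1.1.10 of \cite{Huybrechts-Lehn}), so the first syzygy $i_b^*\L_1$ is locally free on $S$. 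A finitely presented $B$-flat sheaf whose fibres are all locally free is locally free on $S\times B$ (fibrewise flatness criterion over the flat base $S\times B\to B$), so $\L_1$ is locally free. This is the key geometric input.

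Next I dualize. Applying $\Shhom(-,\O_{S\times B})$ and using that $\F$, being fibrewise torsion, admits no nonzero map to the locally free sheaf $\O_{S\times B}$ (so $\Shhom(\F,\O_{S\times B})=0$), I obtain the short exact sequence
\[
0 \to \L_0^\dual \xrightarrow{A^\dual} \L_1^\dual \to \Shext^1(\F,\O_{S\times B}) \to 0 .
\]
Dualizing commutes with restriction to fibres for locally free sheaves, so $i_b^*(\L_k^\dual)=(i_b^*\L_k)^\dual$ and $i_b^*A^\dual=A_b^\dual$, where $A_b:=i_b^*A$. Since $i_b^*$ is right exact, restricting the displayed sequence yields $\Shext^1(\F,\O_{S\times B})_b\cong\coker(A_b^\dual)$. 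On the other hand, dualizing the fibrewise resolution and again using $\Shhom_S(\F_b,\O_S)=0$ gives $\Shext^1_S(\F_b,\O_S)\cong\coker(A_b^\dual)$ as well. These two identifications are the asserted base change isomorphism.

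Finally I check $B$-flatness and purity. The complex $\L_0^\dual\to\L_1^\dual$ consists of $B$-flat sheaves and resolves $\Shext^1(\F,\O_{S\times B})$, so $\Tor_1^{\O_B}(\Shext^1(\F,\O_{S\times B}),\kappa(b))\cong\ker(A_b^\dual)$. But $A_b$ is injective with torsion cokernel $\F_b$ and $\Shhom_S(\F_b,\O_S)=0$, whence $A_b^\dual$ is injective and this kernel vanishes; by the local criterion of flatness the family is $B$-flat. For purity of the fibres, twist by the line bundle $\O_S(\Gamma)$ with $\Gamma=\supp\F_b$: Lemma \ref{dual} identifies $\Shext^1_S(\F_b,\O_S)$ with $\Shhom_\Gamma(\F_b,\O_\Gamma)\otimes\O_S(\Gamma)$, a dual sheaf on the curve $\Gamma$ twisted by a line bundle, hence pure of dimension one. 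The main obstacle is the construction of the fibrewise-exact locally free resolution --- equivalently, the fact that $\F$ has relative homological dimension one and that $A^\dual$ remains injective on every fibre --- since this single fact is what simultaneously produces the base change isomorphism and the flatness.
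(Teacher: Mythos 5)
Your proof is correct, but it takes a genuinely different route from the paper's. The paper disposes of the lemma in two lines: it observes that $\Shext^i_S(\F_b,\O_S)=0$ for $i=0,2$ --- which holds because $\F_b$ is torsion on the integral surface $S$ (so $\Shhom_S(\F_b,\O_S)=0$) and pure of dimension one on a smooth surface, hence of homological dimension one (so $\Shext^2_S(\F_b,\O_S)=0$) --- and then cites point (ii) of Theorem (1.10) of Altman--Kleiman \cite{Altman-Kleiman_Compact}, which outputs exactly the two conclusions: $B$-flatness of $\Shext^1(\F,p^*\O_S)$ and the base-change isomorphism. Your argument feeds on precisely the same two vanishing statements, but instead of quoting the general theorem you unwind it in this special case: the fibrewise two-term locally free resolution, its dualization, and the local criterion of flatness together amount to a proof of the relevant case of Altman--Kleiman. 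What your route buys is self-containedness and transparency --- flatness is traced concretely to the fibrewise injectivity of $A_b^\dual$ --- together with an explicit verification that the fibres of the dual family are pure of dimension one (via Lemma \ref{dual}), a point which is asserted in the statement of the lemma but left implicit in the paper's proof. What the citation buys is brevity, and no need to construct local resolutions and argue that the resulting identifications glue.

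One step of yours needs a sturdier justification, although the claim itself is true. You assert $\Shhom(\F,\O_{S\times B})=0$ on the grounds that $\F$ is ``fibrewise torsion.'' Over a possibly non-reduced base $B$, fibrewise vanishing of a homomorphism does not imply its vanishing (multiplication by a nilpotent section pulled back from $\O_B$ is the standard counterexample), so this inference requires an argument. The correct one runs through supports rather than fibres: the image of any map $\F\to\O_{S\times B}$ is an ideal sheaf whose support, being contained in $\supp\F$, has fibres over $B$ of dimension at most one; on the other hand, since $S$ is integral and $S\times B\to B$ is flat, every associated point of $\O_{S\times B}$ lies over an associated point of $B$ and has all of $S$ as its fibre, so any nonzero ideal sheaf has a two-dimensional fibre in its support. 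Hence the image is zero. This repair is not cosmetic for your argument: the \emph{global} injectivity of $A^\dual$, i.e.\ the exactness of $0\to\L_0^\dual\to\L_1^\dual\to\Shext^1(\F,\O_{S\times B})\to 0$, is what your $\Tor$ computation for flatness rests on, and the fibrewise injectivity of $A_b^\dual$ (which is unproblematic, $S$ being integral) would not by itself suffice there. With this point fixed, the proof is complete.
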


\begin{proof} The lemma follows from \cite{Altman-Kleiman_Compact} in the following way.
First,  for every $b \in B$ we have
$\Shext_S^i(\F_b, {\mc O}_S)=0$, for $i=0,2$. Then
point (ii) of Theorem (1.10) in loc. cit. implies that $\Shext^1(\F, p^*{\mc O}_S)$ is flat and that the base change map $ \Shext^1(\F, {\mc O}_{S\times B})_b\to \Shext^1(\F_b, {\mc O}_{S})$ is an isomorphism.
\end{proof}

Let $v$ be as in (\ref{choice_v}). Using Lemma \ref{base change ext}   the assignment: $F\longmapsto \tau(F)$
yields a well defined involution 
\be\label{tau_N_moduli}
\tau: M_{v, H}\longrightarrow M_{v, H}
\ee

if the following conditions are satisfied:

\be
\aligned\label{cond_abc}
a)&\quad \tau^2(F)\cong F\\
b)&\quad v(\tau(F))=v(F)\\
 c)&\quad\text {\it  $F$  is  $H$-semistable if and only if $\tau(F)$  is $H$-semistable.}
 \endaligned
\ee

\vskip 0.3 cm

a) The condition $\tau^2(F)\cong F$ is equivalent to the condition that $j$ and $\iota^*$ commute and this happens since $D$ is $\iota^*$-invariant.

\vskip 0.3 cm

b) Here we demand
\be\label{mukai_j}
c_1(F)=c_1(j(F))=c_1(\Shext^1_S(F, \O(-D)))\,,\qquad \chi(F)=\chi(j(F))
\ee
 The  first condition is always satisfied.
Indeed, since tensoring by a line bundle does not change the first Chern class of a sheaf supported on a proper subscheme, we may as well replace $\O_S(-D)$ with $\O_S$. Consider, as in (\ref{loc_free}),  a locally free resolution of $F$  so that $c_1(F)$ is the class of the curve defined by the equation $\det a=0$. Dualizing we get:
$
0 \to L_1^\vee  \stackrel{a^\vee}{\longrightarrow} L_0^\vee \to \Shext^1_S(F, {\mc O}_S) \to 0.
$
Since $(\det a=0)$ and $(\det a^\vee=0)$ define the same subscheme of $S$,  the first equality in (\ref{mukai_j_N}) follows. As far as the second equality is concerned, let us compute Hilbert polynomials.
For $m>>0$ and for any pure sheaf of dimension one, we have
\be\label{hilb_j}
\begin{aligned}
p_{j(F)}(m, H)=&\chi(\Shext^1_S(F, {\mc O}(-D+mH))=\dim H^0(\Shext^1_S(F, {\mc O}(-D+mH))\\
=&\dim \Ext^1(F\otimes {\mc O}(D-mH), {\mc O}_S)\\
=&\dim H^1(F\otimes {\mc O}(D) \otimes {\mc O}(-mH))\\
=&-\chi(F\otimes {\mc O}(D) \otimes {\mc O}(-mH))\\=& -p_{F\otimes \O(D)}(-m, H).
\end{aligned}
\ee
In particular we get
\be \label{chi j F1}
\chi(j(F))=-\chi(F\otimes \O(D)  )=-\chi(F)-c_1(F)\cdot D
\ee
When $[F]\in M_{v,H}$ we have
 $c_1(F)=[D]$ and thus $\chi(j(F))=\chi(F)=-h+1$.
\vskip 0.3 cm

c) This is the most delicate and interesting point. Here the 
polarization $H$ comes to the forefront. The question is: for which choice of  $H$ the functor $\tau$ preserves $H$-semistability?
If we only care about the existence
of a birational involution
\be\label{tau_birat}
\aligned
\tau: M_{v, H}&\dashrightarrow M_{v, H}\hskip 3 cm  v=(0,[D],-h+1)\\
[F] &\longmapsto [\tau(F)]
\endaligned
\ee
the question we just raised, is irrelevant, since any pure sheaf of rank one supported on an irreducible curve is automatically stable with respect to any polarization. Hence $\tau$ always exists as a \emph{birational map},  as long as conditions a) and b) are satisfied. 

A second remark is that $\tau$ certainly preserves $D$-stability. For this it 
 suffices to check that $j$ preserves $D$-stability. In fact, on the one hand $j$ establishes a bijection between  pure dimension one  subsheaves of $j(F)$ and  pure dimension one  quotients of $F$ and, on the other it follows from  (\ref{chi j F1}) that,
 given any subsheaf $A$ of $j(F)$, the condition $\mu(A) \geq \mu(j(F))$ is equivalent to the condition
 $\mu(F) \leq \mu(j(A))$.
Thus we have a well defined involution
\be\label{tau_D}
\aligned
\tau: M_{v, D} &\longrightarrow M_{v, D}\hskip 3 cm  v=(0,[D],-h+1)\\
[F]&\longmapsto [\tau(F)]
\endaligned
\ee
The drawback of choosing $D$ as  the polarization, is that $D$ is not $v$-generic (cf Example \ref{D not v generic}) and thus the moduli space $M_{v, D}$ is singular.
This is the price we have to pay in order to have the involution be a regular morphism. If we only care about a birational involution  any choice of $H$ is admissible, in particular, a $v$-generic one.
In the next subsection \ref{choose_pol} we will discuss, in a more general setting, what happens when we vary the polarization $H$.
We now come to the central definition of this section.

\begin{defin} \label{def_rel_prym} Let $T$ be an Enriques surface and $f: S\to T$ its universal cover. Let $C$ be a smooth curve on $T$ of genus $g\geq2$. Let $D=f^{-1}(C{)}$ so that $g(D)=2g-1$. 
 Let $ v=(0,[D],-h+1)$, with $h=g(D)$. Let $H$ be a polarization on $S$ and let $\tau$ be the 
birational involution on $M_{v,H}$ defined by (\ref{tau_birat}).
The  relative Prym variety 
$\Prym_{v, H}(\D/\CC)$ is defined by:
\be\label{prym_N}
\Prym _{v, H}(\D/\CC)=\ov{\operatorname{Fix}^0(\tau)}\subset \operatorname{Jac}^0_H(\mathcal D)=M_{v,H}
\ee
By this we mean that we first look at the fixed locus of the restriction of $\tau$ to an open subset where $\tau$ is a regular morphism, then we consider the closure of the irreducible component of this locus
containing the zero-section.
\end{defin}

When $H=D$, and $\tau$ is regular there is no need to take the closure.
When no confusion is possible we will adopt the shorthand notation:
\be\label{notation_prym}
P_{v,H}=\Prym _{v, H}(\D/\CC)\,.
\ee

The  cases in which $H\neq D$ and $H=D$ are rather 
different in nature. In the first case, at least when $v$ is primitive and $H$ is $v$-generic,
we are taking the closure, in a {\it smooth ambient} space, of a sublocus 
which is defined in a proper open subset and on whose adherence we have no control. In the second case we are taking the fixed locus of a {\it regular  involution defined in a singular space} and, as we will see, this  yields a singular space. 

When $H=D$ we have a good control on the singular locus of $P_{v,D}$. The dimension of this singular locus depends on wether the linear system $|D|$ is hyperelliptic or not (cf. Appendix \ref{appendix} for the relevant definitions).
 
 \begin{prop}\label{codim_sing}Let $P=P_{v,D}$ and let $P_{sing}$ be its singular locus. If $|D|$ is a hyperelliptic linear system then $\codim_P P_{sing}=2$, if  $|D|$ is not hyperelliptic then $\codim_P P_{sing}\geq 4$ .
\end{prop}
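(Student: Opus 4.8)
The plan is to localize the singularities of $P=P_{v,D}$ to the strictly semistable locus, compute the dimension of that locus by a direct count, and then match its transverse local structure against the numerical invariants of the decompositions of $C$. First I would observe that $P$ can only be singular along the locus of polystable (properly semistable) sheaves. Indeed, since $H=D$ the involution $\tau$ is a \emph{regular} morphism of $M_{v,D}$, and on the open stable locus $M^s_{v,D}$ the moduli space is smooth; as $\tau$ is biregular and we work in characteristic zero, its fixed locus inside $M^s_{v,D}$ is smooth, so $P$ is smooth at every $\tau$-fixed stable point. Hence $P_{sing}$ is contained in the closed subset $\Sigma\subset P$ of polystable fixed points.

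Next I would describe and measure $\Sigma$. A general point of a maximal stratum is a polystable sheaf $F=F_1\oplus F_2$, with $F_i$ a stable rank one sheaf on the irreducible $\iota$-invariant curve $D_i=f^{-1}(C_i)$, where $C=C_1+C_2$ is an effective decomposition on $T$ (set $g_i=g(C_i)$) and each $F_i$ is fixed by $\tau$, hence lies in a translate of $\Prym(D_i/C_i)$. Such a stratum fibres over the family of decompositions, of dimension $\dim|C_1|+\dim|C_2|=(g_1-1)+(g_2-1)$, with fibre the product of two Pryms of dimension $(g_1-1)+(g_2-1)$, so it has dimension $2g_1+2g_2-4$. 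Using $g=g_1+g_2+C_1\cdot C_2-1$ and $\dim P=2g-2$, its codimension in $P$ equals $2\,C_1\cdot C_2=D_1\cdot D_2$.

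I would then show these points are genuinely singular and that the codimension is exactly this number, via the Kuranishi description of $M_{v,D}$ near $[F]$. Writing $A=\Ext^1(F_1,F_2)$ and $B=\Ext^1(F_2,F_1)$, stability forces $\Hom(F_i,F_j)=\Ext^2(F_i,F_j)=0$ for $i\neq j$, so $\dim A=\dim B=D_1\cdot D_2=:n$; the reductive stabiliser $\C^*$ acts with weights $\pm1$ on the isotropic subspaces $A\oplus B$, and the quadratic part of the Kuranishi map is the moment map $\mu(a,b)=\omega(a,b)$ for this action. Thus the slice transverse to $\Sigma$ inside $M_{v,D}$ is the affine quotient $\mu^{-1}(0)/\!\!/\C^*$, the variety of rank $\le1$ traceless $n\times n$ matrices. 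Now $\tau$ interchanges $A$ and $B$ and normalises $\C^*$ by inversion; since $\tau$ is symplectic, interchanging the two isotropic pieces forces $\mu\circ\tau=-\mu$, and it is precisely this sign that makes the trace condition automatic on the fixed locus. Consequently the transverse slice of $P$ is the \emph{full} variety of symmetric rank $\le1$ matrices, i.e. the affine cone over the degree two Veronese embedding of $\PP W$ with $\dim W=n=D_1\cdot D_2$. Since $D_1\cdot D_2=2\,C_1\cdot C_2\ge2$ this cone is singular at its vertex, so $[F]\in P_{sing}$, $P_{sing}$ is the closure of $\Sigma$, and $\codim_P P_{sing}=\min 2\,C_1\cdot C_2$, the minimum over admissible decompositions.

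Finally, invoking the characterisation of hyperelliptic linear systems recalled in the Appendix (Definition \ref{hyp_syst}), $|D|$ is hyperelliptic precisely when $C$ admits a decomposition with $C_1\cdot C_2=1$, whence the minimum is $2$; otherwise every decomposition has $C_1\cdot C_2\ge2$ and the minimum is $\ge4$, giving $\codim_P P_{sing}=2$ in the hyperelliptic case and $\ge4$ in the other. The hard part will be the transverse computation of the third step: correctly identifying the induced action of $\tau$ on the Kuranishi quadratic cone, where the combination of $\tau$ swapping $A$ and $B$, inverting the $\C^*$, and the sign $\mu\circ\tau=-\mu$ is exactly what upgrades the naive traceless slice to the full Veronese cone, together with checking (via the germ being isomorphic to its tangent cone, as in Proposition \ref{loc_sing}) that no higher order terms enlarge $P$, so that the codimension is exactly $D_1\cdot D_2$.
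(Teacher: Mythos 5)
Your proposal is correct and follows essentially the same route as the paper: identify $P_{sing}$ with the locus of polystable sheaves $F_1\oplus F_2$ supported on decompositions $D=D_1+D_2$, count the dimension of the maximal strata as products of lower-dimensional relative Pryms to get $\codim_P P_{sing}=D_1\cdot D_2=2\,C_1\cdot C_2$, and conclude via the appendix fact (Corollary \ref{two_curves}) that $\nu=1$ exactly in the hyperelliptic case. The extra material you include --- smoothness of the fixed locus on the stable locus and the Kuranishi/Veronese-cone transverse slice showing the polystable points are genuinely singular --- is not in the paper's own (terser) proof but reproduces what the paper establishes later in Propositions \ref{cones} and \ref{loc_sing}; note only that singularity already follows from the tangent cone being a cone over a Veronese (so you do not need the germ-equals-cone statement, which in any case requires $D_1\cdot D_2\ge 3$ and would not cover the hyperelliptic case $D_1\cdot D_2=2$).
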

\begin{proof} We can stratify $P_{sing}$ by locally closed subvarieties that are isomorphic to open subsets of products of symmetric power of lower dimensional relative Prym varieties. The maximal dimensional strata of $P_{sing}$ are isomorphic to open subsets (or finite quotients of subsets) of relative Prym varieties
of the form $P_{v_1,D}\times P_{v_2,D}$ corresponding to a decomposition  $D=D_1+D_2$. 
Set $h_1=g(D_1)$, $h_2=g(D_2)$. Then 
$h=h_1+h_2+2\nu-1$, where $2\nu =D_1\cdot D_2$. Hence 
\[
\dim(P_{v_1, D}\times P_{v_2, D})=h_1-1+h_2-1=h-1-2\nu .
\]
Since by Corollary \ref{two_curves} $\nu=1$ if and only if $|D|$ is hyperelliptic, the proposition follows.
\end{proof}

The next proposition shows that the smooth locus of the relative Prym  variety $P_{v, H}$ carries a natural symplectic structure.

\begin{prop} \label{prop jN}
The  birational involution (\ref{tau_birat})  $\tau$ is symplectic.
\end{prop}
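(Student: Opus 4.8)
The plan is to factor $\tau=j\circ\iota^*$ and handle the two involutions separately. By Lemma \ref{lem iota star} the involution $\iota^*$ is anti-symplectic, so it suffices to prove that $j$ is anti-symplectic as well: the composition of two anti-symplectic (birational) involutions then multiplies the symplectic form by $(-1)(-1)=+1$, which is the assertion. Since both the symplectic form and the birational type of $M_{v,H}$ are independent of $H$, I may choose $H$ to be $v$-generic, so that $M_{v,H}$ is a smooth irreducible holomorphic symplectic manifold of $K3^{[h]}$-type, as recalled above. On such a manifold $H^0(M_{v,H},\Omega^2)=\C\,\sigma_M$ is one-dimensional, and the birational involution $j$ acts on this line by a scalar $\lambda$ with $\lambda^2=1$; thus $j^*\sigma_M=\lambda\,\sigma_M$ with $\lambda$ a \emph{global} constant, and $j$ is anti-symplectic precisely when $\lambda=-1$. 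It therefore remains to compute $\lambda$ by a single evaluation at a conveniently chosen point.

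I would evaluate at a general point $[F]=[\mathcal{O}_D]$, with $D\in|D|$ a smooth integral curve, where $j$ is a local isomorphism. Here the tangent space is $\Ext^1(\mathcal{O}_D,\mathcal{O}_D)$ and $\sigma_M$ is the cup-product pairing (\ref{cup_ext}). The Lagrangian fibration $\pi\colon M_{v,H}\to|D|$ yields the local-to-global exact sequence
\[
0\longrightarrow H^1(\mathcal{O}_D)\longrightarrow \Ext^1(\mathcal{O}_D,\mathcal{O}_D)\stackrel{d\pi}{\longrightarrow} H^0(\mathcal{N}_{D/S})\longrightarrow 0,
\]
in which the fiber direction $\ker d\pi=H^1(\mathcal{O}_D)$ is Lagrangian, hence $\sigma_M$-isotropic. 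Two properties of $j$ pin down its differential relative to this sequence. First, by Lemma \ref{dual}, on the fiber over $[D]$ the map $j$ sends a line bundle $L$ to $\Shhom_D(L,\mathcal{O}_D)=L^{-1}$, i.e. it is the inversion of $\Jac^0(D)$; hence $dj=-\id$ on $\ker d\pi$. Second, $j$ preserves the determinantal support (the first equality in (\ref{mukai_j})), so $\pi\circ j=\pi$ and $d\pi\circ dj=d\pi$; thus $dj$ preserves $\ker d\pi$ and induces the identity on the quotient $H^0(\mathcal{N}_{D/S})$.

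Now take $\alpha\in\ker d\pi$ and an arbitrary $\beta\in\Ext^1(\mathcal{O}_D,\mathcal{O}_D)$, and set $\beta'=dj(\beta)$. Since $d\pi(\beta')=d\pi(\beta)$, we have $\beta'-\beta\in\ker d\pi$; as $\ker d\pi$ is isotropic and $\alpha\in\ker d\pi$, this gives $\sigma_M(\alpha,\beta')=\sigma_M(\alpha,\beta)$. Combining this with $dj(\alpha)=-\alpha$,
\[
\sigma_M\bigl(dj(\alpha),dj(\beta)\bigr)=\sigma_M(-\alpha,\beta')=-\sigma_M(\alpha,\beta')=-\sigma_M(\alpha,\beta).
\]
Because $\ker d\pi$ is Lagrangian, the pairing induced by $\sigma_M$ between $\ker d\pi$ and any complement is perfect, so $\alpha,\beta$ may be chosen with $\sigma_M(\alpha,\beta)\neq0$; this forces $\lambda=-1$. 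Hence $j$ is anti-symplectic, and $\tau=j\circ\iota^*$ is symplectic.

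The main obstacle is controlling the differential of $j$ on the whole of $\Ext^1(F,F)$, where $dj$ has a nontrivial off-diagonal term mixing fiber and base directions, so that a naive direct computation would require an extra symmetry argument. The device that circumvents this is to test $\sigma_M$ only against a vector $\alpha$ lying in the isotropic fiber direction, where $dj$ is the transparent scalar $-1$ and only the (unchanged) base component of $dj(\beta)$ can pair nontrivially against $\alpha$; the global one-dimensionality of $H^0(\Omega^2)$ then promotes this single nonzero evaluation to the identity $j^*\sigma_M=-\sigma_M$.
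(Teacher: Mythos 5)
Your proof is correct and follows essentially the same route as the paper's: reduce to showing $j$ is anti-symplectic via Lemma \ref{lem iota star}, then exploit the Lagrangian fibration $\pi$, on whose fibers $dj=-\mathrm{id}$ and on whose base $j$ induces the identity, so that the perfect pairing that $\sigma_M$ sets up between fiber and base directions must change sign. The only difference is a refinement rather than a new route: where the paper concludes directly from the $\sigma_M$-induced isomorphism $T_{D_0}|D|\cong(T_xJ)^{\vee}$, you first reduce to a $v$-generic polarization and use $h^{2,0}(M_{v,H})=1$ to know a priori that $j^*\sigma_M=\lambda\sigma_M$ for a global constant $\lambda$, which cleanly disposes of the possible off-diagonal (fiber-valued) component of $dj$ on horizontal vectors that the paper's terser pointwise argument leaves implicit.
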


\begin{proof} By Lemma \ref{lem iota star}, to prove that $\tau$ is symplectic it is enough to prove that
$j$ is anti-symplectic. We argue as follows. Set $M=M_{v, H}$, let $D_0\in |D|$ be a smooth curve, and set $J=\Jac^0(D_0)=\pi^{-1}(D_0)$. Since $\pi$ is a Lagrangian fibration, the isomorphism $T_M \cong\Omega_M^1$ induced by the symplectic form $\sigma_M$, yields an isormophism of short exact sequences,
\[
\xymatrix{
0 \ar[r] & T_J \ar[d]^{\cong} \ar[r] & {T_M}_{|J} \ar[d]^{\cong} \ar[r] & \norm{J}{M} \ar[d]^{\cong} \ar[r] & 0 \\
0 \ar[r] & \norm{J}{M}^{\dual} \ar[r] & {\Omega_M^1}_{|J} \ar[r] & \Omega_J^1 \ar[r] & 0.
}
\]
In particular for a point $x \in J$ we have the isomorphism
\be \label{lagrangian isomorphism}
T_{D_0} |D| \cong \norm{J}{M,x} \cong (T_x J)^{\dual}.
\ee
Since the second isomorphism in (\ref{lagrangian isomorphism}) is given by $\sigma_M$, and since $j^*$ acts as the identity on $T_{D_0} |D|$ and as $-1$ on $T_x J$, we  conclude that $j^*(\sigma_M)=-\sigma_M$. 
\end{proof}

Since $\tau$ respects the fibration $\pi: M_{v,H}\to |D|$, there is a commutative diagram
\be\label{prym_lagran}
\xymatrix{P_{v,H}\ar[d]_{\nu} \ar[r]&M_{v,H}\ar[d]\\
|C|\ar[r]&|D|
}
\ee
where, as usual, we identify $|C|$ with $f^*|C|\subset |D|$.

We sum up the results in the following Theorem

\begin{theorem}
The relative Prym variety $P_{v,H}$ is a $(2g-2)$-dimensional projective variety whose smooth locus $P_{v,H}$ carries a holomorphic two form. This two form is symplectic on a dense open subset and with respect to this symplectic form the morphism
\[
\nu: P_{v,H}  \to |C|,
\]
has a natural structure of Lagrangian fibration.
\end{theorem}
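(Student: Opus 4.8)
The plan is to derive all four assertions from the embedding $P_{v,H}\subset M_{v,H}$, using that $\tau$ is a symplectic birational involution respecting the Lagrangian fibration $\pi$ of (\ref{lagrang}).

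\emph{Dimension and projectivity.} Since $P_{v,H}$ is by definition the closure of an irreducible component of a fixed locus inside the projective variety $M_{v,H}$, it is automatically irreducible and projective, and only its dimension needs to be computed. I would work over the open set $U\subset|C|$ parametrizing smooth curves. For $C_0\in U$ and $D_0=f^{-1}(C_0)$ the fibre $\pi^{-1}(D_0)=\Jac^0(D_0)$ consists of sheaves stable with respect to any polarization, so $\tau$ is a regular involution along it; by Lemma \ref{dual} the restriction of $j$ to $\Jac^0(D_0)$ sends $L\mapsto L^\vee$, i.e. $j=-1$, whence $\tau=-\iota^*$ and the component of its fixed locus through the zero section $\O_{D_0}$ is precisely $\Prym(D_0/C_0)$, of dimension $g-1$. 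Thus $\nu^{-1}(U)$ is fibred in $(g-1)$-dimensional Pryms over the $(g-1)$-dimensional base $U$, giving $\dim P_{v,H}=2g-2$.

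\emph{The two-form.} Let $P^\circ\subset P_{v,H}$ be the locus of $H$-stable sheaves at which $\tau$ is a regular involution; it contains $\nu^{-1}(U)$, hence is dense, and it lies in the stable locus $M^{s}\subset M_{v,H}$, which is contained in the smooth locus. By Proposition \ref{prop jN} one has $\tau^{*}\sigma_M=\sigma_M$. At a point $x\in P^\circ$ the differential $d\tau_x$ splits $T_xM$ into eigenspaces $T_x^{+}\oplus T_x^{-}$ with $T_xP^\circ=T_x^{+}$; symplectic invariance forces $\sigma_M(v,w)=-\sigma_M(v,w)$ for $v\in T_x^{+}$, $w\in T_x^{-}$, so the two eigenspaces are $\sigma_M$-orthogonal and $\sigma_M|_{T_x^{+}}$ is non-degenerate. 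Hence $\sigma_P:=\sigma_M|_{P^\circ}$ is a holomorphic, closed (being the restriction of the closed form $\sigma_M$), non-degenerate two-form on $P^\circ$. To obtain a form on the whole smooth locus I would extend $\sigma_P$ across $P_{sm}\setminus P^\circ$ by the standard fact that a holomorphic form on a smooth variety extends over any closed subset of codimension at least two, the requisite codimension bound being supplied by the analysis underlying Proposition \ref{codim_sing} once one checks that the points of $P_{sm}$ outside $P^\circ$ lie over non-integral curves.

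\emph{Lagrangian property.} The commutative square (\ref{prym_lagran}) exhibits $\nu$ as the restriction of $\pi$, so each fibre of $\nu$ sits inside a fibre of $\pi$. Because $\pi$ is the Beauville--Mukai Lagrangian fibration, $\sigma_M$ vanishes identically on $\pi^{-1}(D_0)=\Jac^0(D_0)$, hence $\sigma_P$ vanishes on $\Prym(D_0/C_0)=\nu^{-1}(C_0)$ for general $C_0$. As these fibres have dimension $g-1=\tfrac12\dim P_{v,H}$ and $\sigma_P$ is symplectic on $P^\circ$, they are Lagrangian, so $\nu$ is a Lagrangian fibration. The hard part will be the extension in the second step: on $P^\circ$ everything is transparent because $\tau$ is a genuine symplectic involution of a smooth space, but when $H\neq D$ the map $\tau$ is only birational and $P_{v,H}$ is a closure, so one must rule out that the indeterminacy or degeneracy locus contains a divisor of $P_{sm}$; this is exactly where the codimension estimate of Proposition \ref{codim_sing} and a case analysis of the fibres of $\nu$ over reducible curves become indispensable.
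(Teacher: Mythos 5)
Your dimension count, your eigenspace argument (the $\pm 1$ decomposition of $T_xM$ under $d\tau_x$ showing that $\sigma_M$ restricts non-degenerately to the fixed locus $P^\circ$ of the regular symplectic involution), and your derivation of the Lagrangian property from the inclusion of the fibres of $\nu$ into the fibres of the Beauville--Mukai fibration (\ref{lagrang}) are all correct, and they follow the same route as the paper's very compressed proof, which invokes Proposition \ref{prop jN} for the two-form and the identification $\nu^{-1}(C_0)\cong\Prym(D_0/C_0)$ for the fibration statement. The genuine gap is in your extension step. You need $P_{sm}\setminus P^\circ$ to have codimension at least two in $P_{sm}$, and you propose to get this from ``the analysis underlying Proposition \ref{codim_sing}''. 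That proposition bounds the codimension of the \emph{singular} locus of $P_{v,D}$, for the special polarization $H=D$; it says nothing about the locus of smooth points of $P_{v,H}$ at which stability fails or $\tau$ fails to be a regular involution of a smooth ambient space, which is what $P_{sm}\setminus P^\circ$ is. Worse, the codimension bound you want is not available in general: as you observe, $P\setminus P^\circ$ lies over the non-integral members of $|C|$, but by Proposition \ref{red_cod_one} the reducible members of $|C|$ form a \emph{divisor} exactly when $|C|$ is hyperelliptic, and the fibres of $\nu$ over that divisor consist mostly of smooth points of $P$ (in the $g=2$ case of Theorem \ref{prym_surface} they are the $I_4$ fibres, honest divisors inside a smooth K3 surface). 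So Hartogs cannot be applied across this locus, and your argument breaks down precisely in the hyperelliptic case; rescuing it there requires the identification $\tau=k^*$ of Section \ref{hyp}, which is available only for $\iota^*$-invariant polarizations and is not part of your argument.

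The repair is simpler than what you attempt, and is what the paper implicitly does: there is no need to extend $\sigma_P$ from $P^\circ$ at all. Define the two-form on $P_{sm}$ as the restriction of $\sigma_M$, which exists on the whole smooth locus of $M_{v,H}$ regardless of whether $\tau$ is regular there or whether the point is fixed; one only needs $P_{sm}\subset M_{sm}$. When $H$ is $v$-generic this is automatic because $M_{v,H}$ is smooth, and when $H=D$ it follows from the stratification in the proof of Proposition \ref{codim_sing}, which exhibits the strictly semistable points of $P_{v,D}$ as singular points of $P_{v,D}$. Non-degeneracy is then claimed only on the dense open set $P^\circ$, by your eigenspace computation --- and this hedge is essential: by the remark following the theorem (citing Example 9.7 of \cite{Sacca13}), for $H\neq D$ the form may genuinely degenerate at smooth points of $P_{v,H}$, so no argument can promote non-degeneracy to all of $P_{sm}$.
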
 
\begin{proof} The first statement follows directly from the Proposition above. As for the second, notice that 
The map $\nu$ is a {\it Lagrangian fibration} , in the sense that it is such on the open locus of $P_{v,H}$ where the symplectic form is defined. The general fiber of this fibration is a principally polarized abelian variety. Indeed, given any smooth curve $C$ in $|C|$ we have:
\[
\nu_N^{-1}(C)\cong \Prym (D/C),\qquad D=f^{-1}({C})
\]
\end{proof}

We should  point out that if $H=D$ then the form is symplectic on the entire smooth locus of $P_{v,D}$ whereas this is not necessarily true if $H \neq D$ (cf. Example 9.7 of \cite{Sacca13}).

In Sections \ref{singular} and \ref{hyp} we will analyze the singularities of a relative Prym variety, and answer the natural question of whether they admit a symplectic resolution.

\subsection{Prym varieties of singular curves}\label{prym_sing_curv}

Looking at the Lagrangian fibrations (\ref{prym_lagran}),
we now describe the fibers of $\nu$  over points of  $|C|$ corresponding to class of (mildly)  singular curves. 
As usual, for a curve $C \subset T$ in the linear system $|D|$, we set $D=f^{-1}(C)$.
To fix notation, let $m: \wt C \to C$ and $n: \wt D \to D$ be the respective normalizations. Let $\wt f: \wt D \to \wt C$ be the induced double cover and let,
\[
\wt \iota: \wt D \to \wt D,
\]
be the corresponding involution. With this notation there is a commutative diagram
\[
\xymatrix{
\wt D \ar[r]^n \ar[d]_{\wt f} & D \ar[d]^f \\
\wt C \ar[r]^m & C
}
\]

\subsubsection{The irreducible (nodal) case}\label{prym_irr}

Here we consider the case where $C$ and $D=f^{-1}(C)$ are irreducible nodal curves. Notice that in this case, the polarization is irrelevant. Let $c_1, \dots, c_\delta$ be the nodes of $C$ and, and let $p_1, \iota(p_1), \dots, p_\delta, \iota(p_\delta)$,  $i=1,\dots,\delta$, with $f(p_i)=c_i$, be the nodes of $D$. For $i=1,\dots,\delta$ set 
\[
\{x_i, y_i\}=n^{-1}(p_i),  \quad \text{so that } \quad \{\wt \iota x_i, \wt \iota y_i\}=n^{-1}(\iota p_i).
\]

With this notation we have the following proposition.

\begin{prop} \label{components irreducible curves}
Let $C$ and $D=f^{-1}(C)$ be irreducible nodal curves. Then $\Fix(\tau)\subset \overline \Jac^0(D)$ has $4$ connected components, each of which is isomorphic to a rank $\delta$ degeneration of an abelian variety.
\end{prop}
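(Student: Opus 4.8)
The plan is to determine $\Fix(\tau)$ first on the dense locus of genuine line bundles, where the classical Prym reduction applies verbatim, and then to control its closure on the boundary of $\overline{\Jac}^0(D)$. On the open set $\Jac^0(D) \subset \overline{\Jac}^0(D)$ the involution $j$ is ordinary inversion $L \mapsto L^{-1}$ by Lemma \ref{dual}, so $\tau = j \circ \iota^*$ restricts to $L \mapsto (\iota^* L)^{-1}$ and hence
\[
\Fix(\tau) \cap \Jac^0(D) = \ker\bigl(1 + \iota^* \colon \Jac^0(D) \to \Jac^0(D)\bigr).
\]
Since $f \colon D \to C$ is \'etale, $\iota$ acts freely on $D$ and interchanges the two nodes $p_i, \iota(p_i)$ over each node $c_i$ of $C$; accordingly the generalized Jacobian sits in an $\iota^*$-equivariant extension
\[
0 \to T \to \Jac^0(D) \xrightarrow{\ n^*\ } \Jac^0(\wt D) \to 0,
\]
with $T \cong (\C^*)^{2\delta}$ the torus indexed by the $2\delta$ nodes of $D$, and with $\wt\iota$ acting on $\Jac^0(\wt D)$ as the covering involution of the \'etale double cover $\wt f \colon \wt D \to \wt C$.

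The number of components I would compute by applying the snake lemma to $1 + \iota^*$ acting on this extension. On the torus $\iota^*$ swaps the two factors attached to each pair $\{p_i, \iota(p_i)\}$, so $1 + \iota^*$ sends $(a,b) \mapsto (ab, ab)$ on each $(\C^*)^2$: its kernel is the connected antidiagonal torus, giving $\ker(1 + \iota^*|_T) \cong (\C^*)^{\delta}$, while its image is the full fixed subtorus $T^{\iota^*}$. For any $L$ lifting a class in $\ker(1+\iota^*|_{\Jac^0(\wt D)})$ one has $(1+\iota^*)L \in T$, and since $(\iota^*)^2 = \id$ this element is $\iota^*$-invariant, hence lies in $T^{\iota^*} = \im(1 + \iota^*|_T)$; subtracting a preimage shows that
\[
\ker\bigl(1 + \iota^*|_{\Jac^0(D)}\bigr) \longrightarrow \ker\bigl(1 + \iota^*|_{\Jac^0(\wt D)}\bigr)
\]
is surjective with connected kernel $(\C^*)^{\delta}$. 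By the classical computation (\ref{ker norm})--(\ref{components ker norm}) applied to the smooth \'etale cover $\wt D \to \wt C$, the target has exactly four connected components, each a translate of $\Prym(\wt D/\wt C)$. Therefore $\ker(1 + \iota^*|_{\Jac^0(D)})$ has four connected components, each a semiabelian variety with abelian part $\Prym(\wt D/\wt C)$ (of dimension $g-\delta-1$) and torus part of rank $\delta$, hence of total dimension $g-1$.

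Finally I would pass to the compactification. Both $\iota^*$ and $j$ preserve the stratification of $\overline{\Jac}^0(D)$ by the finite set of nodes at which the sheaf is not locally free, with $\iota^*$ permuting that set via $\iota$ and $j$ fixing it; thus $\Fix(\tau)$ meets only the strata indexed by $\iota$-invariant node-subsets, i.e. by subsets of $\{c_1, \dots, c_\delta\}$. The remaining task, which I expect to be the main obstacle, is to prove that $\Fix(\tau)$ is exactly the closure of the four open semiabelian pieces above and that this closure is a rank $\delta$ degeneration of the $(g-1)$-dimensional abelian variety $\Prym(D_0/C_0)$. This requires analysing $\tau$ on the non-locally-free strata, identifying the fixed sheaves there with the Prym data of the partial normalisations, and matching the resulting boundary with the toric degeneration attached to the semiabelian variety in the Oda--Seshadri picture \cite{Oda-Seshadri79}. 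Granting this local boundary analysis, no components are created or merged, the four pieces stay distinct and mutually isomorphic (being related by the $2$-torsion translations), and each is the asserted rank $\delta$ degeneration of an abelian variety.
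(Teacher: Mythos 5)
Your computation on the open locus $\Jac^0(D)$ is correct, and in fact your surjectivity argument is slicker than the paper's. Where you observe that for any lift $L$ of a class in $\ker(1+\wt\iota^{\,*})$ the element $(1+\iota^*)L$ lies in the torus $T$ and is automatically $\iota^*$-invariant, hence lies in $T^{\iota^*}=\im(1+\iota^*|_T)$, the paper instead invokes Proposition 2.14 of \cite{Sacca12} (every $\iota^*$-invariant sheaf on $D$ is pulled back from $C$) to get $(1+\iota^*)\Jac^0(D)\subset f^*\Jac^0(C)$, and then traps $\ker(\alpha)$ between $\im\gamma\cong(\C^*)^\delta$ and the $\delta$-dimensional kernel of $f^*\Jac^0(C)\to\wt f^*\Jac^0(\wt C)$. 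Both routes give the same conclusion: $\ker(1+\iota^*)\subset\Jac^0(D)$ has four connected components, each a $(\C^*)^\delta$-extension of a translate of $\Prym(\wt D/\wt C)$; your version has the merit of avoiding the external citation.

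The genuine gap is exactly where you say you expect ``the main obstacle'' and then proceed by ``granting'' it: the boundary analysis is not a routine verification but the substantive half of the proposition, and without it neither the count of four components nor the rank-$\delta$ degeneration structure follows. The danger that closures of distinct open pieces become connected through the boundary of $\overline \Jac^0(D)$ is not hypothetical: in the reducible case treated immediately after this proposition in the paper ($D=D_1\cup D_2$), the analogous open computation produces components of $\ker(1+\iota^*)$ whose closures merge in pairs under the boundary identifications, so disconnectedness at infinity is precisely what can fail. The paper settles the irreducible case as follows: in the Oda--Seshadri normalization $Z$ of $\overline \Jac^0(D)$, a $(\P^1)^{2\delta}$-bundle over $\Jac^0(\wt D)$, the closure of each component meets the boundary section $s_\Xi$ only when $\varepsilon_i=-\varepsilon_i'$ for all $i$ (because the torus part is the antidiagonal $(\lambda_i,\lambda_i^{-1})$); for such $\Xi$ the gluing bundle is $L_\Xi=\O_{\wt D}\left(\sum_i\varepsilon_i\bigl[(x_i-y_i)-\wt\iota\,(x_i-y_i)\bigr]\right)$, which lies in $(1-\wt\iota^{\,*})\Jac^0(\wt D)$, and by (\ref{components ker norm}) this is the identity component of $\ker(1+\wt\iota^{\,*})$; hence the identification $s_\Xi\sim s_{-\Xi}$, being translation by $L_\Xi$, preserves each of the four components instead of permuting them. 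This component-preservation computation is the missing idea in your proposal; it is also what exhibits each closure explicitly as a $(\P^1)^\delta$-bundle with zero and infinity sections glued by a twist, i.e.\ as a rank $\delta$ degeneration of an abelian variety, a statement your outline likewise leaves unproved.
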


\begin{proof}
To prove the Proposition, we first consider the intersection 
$$
\Fix(\tau)\cap\Jac^0(D)=\ker(1+\iota)
$$
 and prove that it has four connected components. 
Then we prove that these components stay disconnected even after passing to their closure, the last statement of the proposition will be clear from the construction.

For the first step we argue as follows.
Since $D$ and $C$ are irreducible and nodal there are natural short exact sequences,
\[\begin{aligned}
&0 \to  \prod_{i=1}^\delta \C_{p_i}^* \times \C_{\iota p_i}^* \to  \Jac^0(D) \stackrel{n^*}{\rightarrow} \Jac^0(\wt D) \to 0,\\
&0 \to  \prod_{i=1}^\delta \C_{c_i}^* \to  \Jac^0(C) \stackrel{m^*}{\rightarrow} \Jac^0(\wt C) \to 0.
\end{aligned}
\]
Consider the following commutative diagram with exact rows:
\be \label{irreducible case}
\xymatrix{
0 \ar[r] & \prod_{i=1}^\delta \C^* \ar[r] \ar@{^{(}->}[d]^{\zeta} & \ker(1+\iota^*) \ar[r]^{\beta} \ar@{^{(}->}[d] &\ker(1+\wt \iota^{\,*}) \ar@{^{(}->}[d] & \\
0 \ar[r] & \prod_{i=1}^\delta \C_{p_i}^* \times \C_{\iota p_i}^* \ar[d]^\gamma \ar[r] & \Jac^0(D) \ar@{->>}[d]^{(1+\iota^*)} \ar[r]^{n^*} & \Jac^0(\wt D) \ar[r] \ar@{->>}[d]^{1+\wt \iota^{\,*}} & 0\\
0 \ar[r] & \ker(\alpha) \ar[r] & (1+\iota^*)\Jac^0(D) \ar[r]^{\alpha} & (1+\wt \iota^{\,*}) \Jac^0(\wt D) \ar[r] & 0
}
\ee
Since $\prod_{i=1}^\delta \C^*$ is connected and since $\ker(1+\wt \iota^{\,*}) $ has four connected components (recall that $\wt D$ and $\wt C$ are smooth), to prove that $\ker(1+ \iota^*) $ has four connected components it is sufficient to prove that $\beta$ is surjective, i.e., that $\gamma$ is surjective.
By Proposition 2.14 of \cite{Sacca12}, it follows that every $\iota^*$-invariant sheaf on $D$ is the pull-back via $f^*$ of a sheaf on $C$. It follows that
\[
(1+\iota^*)\Jac^0(D) \subset f^*\Jac^0(C),
\]
so that we can consider the following commutative diagram
\[
\xymatrix{
0 \ar[r] & \ker(\alpha) \ar[r] \ar@{^{(}->}[d] & (1+\iota^*)\Jac^0(D) \ar@{^{(}->}[d] \ar[r]^{\alpha} & (1+\wt \iota^{\,*}) \Jac^0(\wt D) \ar@{=}[d] \ar[r] & 0\\
0 \ar[r] &\prod_{i=1}^\delta \C^* \ar[r] & f^*\Jac^0(C) \ar[r] & \wt f^* \Jac^0(\wt C) \ar[r] & 0.
}
\]
Here, the fact that the kernel of the surjection $f^*\Jac^0(C) \to \wt f^* \Jac^0(\wt C)$ is equal to $\prod_i \C^* $ follows from the fact that the kernel of $\Jac^0(C) \to f^*\Jac^0(C)$ and of $\Jac^0(\wt C) \to \wt f^* \Jac^0(\wt C)$ are both equal to $\Z/(2)$.
Notice, however, that $\im \gamma \cong \prod_{i=1}^\delta \C^*$, so that the series of inclusions
\[
\prod_{i=1}^\delta \C^* \subset \ker(\alpha) \subset \prod_{i=1}^\delta \C^*,
\]
is in fact a series of equalities. Thus   $\gamma$ is surjective and, as a consequence,   $\beta$ too is surjective.

To finish the proof, we just need to show that when we take the closure of $\ker(1+\iota)$ in the compactified Jacobian $\overline \Jac^0(D)$, the number of connected components does not change.
It is well known (cf. for example \cite{Oda-Seshadri79}) that, in order to compactify $\Jac^0(D)$,  one first compactifies the $(\prod_{i=1}^\delta \C_{p_i}^* \times \C_{\iota p_i}^*)$-bundle  over $\Jac^0(D)$ to a $(\prod_{i=1}^\delta \P^1\times \P^1)=(\prod_{s\in \operatorname{Sing}(D)} \P^1_s)$-bundle over $\Jac^0(D)$. In loc. cit. Oda and Seshadri show that this bundle is the normalization of $\overline \Jac^0(D)$. Let us denote by $Z$ this normalization.
In order to obtain $\overline \Jac^0(D)$ we identify the boundary components of $Z$ in the following way: for any $2\delta$-uple,
\[
\Xi=\{\varepsilon_1, \varepsilon_1', \dots,\varepsilon_\delta, \varepsilon_\delta' \}, \quad\text{with }\\\ \varepsilon_i, \varepsilon_i'\in\{1,-1\}, \\\\\ {i=1,\dots, \delta} 
\]
we consider the section
\[
s_\Xi:  \Jac^0(\wt D) \to Z,
\]
defined by taking the $0$-section of the $j$-th component of $ \P^1_{p_1}\times  \P^1_{\iota p_1}\times \cdots  \P^1_{p_\delta}\times  \P^1_{\iota p_\delta}$ if the $j$-th component of $\Xi$ is equal to $1$, and taking the $\infty$-section if it is equal to $-1$. The sections corresponding to $\Xi$ and to $-\Xi$ (which are both identified to $\Jac^0(\wt D)$)  are then glued under  a twist by the degree zero line bundle,
\[
L_\Xi:=\O_{\wt D}\left(\sum_i \varepsilon_i (x_i-y_i) +\sum_i \varepsilon_i '\,\wt \iota\,(x_i-y_i)\right).
\]
Now observe that the map $\zeta$ in diagram (\ref{irreducible case}) sends
$(\lambda_1,\dots,\lambda_\delta)$ to $(\lambda_1,\frac{1}{\lambda_1}\dots,\lambda_\delta, \frac{1}{\lambda_\delta})$. Sending the $\lambda_i$'s to zero, or infinity it follows that
the closure of $\ker(1+\iota^*)$ in the normalization of $\overline \Jac^0(D)$ has four connected components, each of which intersects the image of the section $s_{\Xi}$ if and only if
\[
\varepsilon_i=-\varepsilon_i', \quad \text{for} \\\ i=1, \dots, \delta.
\]
In this  the case then, by (\ref{components ker norm}), $L_\Xi$ belongs to the identity component of $\ker(1+\wt \iota^{\,*})$. It follows that tensoring by $L_\Xi$ preserves each connected component of ${\ker(1+\wt \iota^{\,*}) }\subset \Jac^0(\wt D)$, and hence we may conclude that the fixed locus of $\tau$ on $\overline \Jac^0(D)$ has four connected components.
\end{proof}

We highlight the following corollary.

\begin{cor} \label{rank one degeneration}
Let $C$ be an integral curve with one node and no other singularity, and let $D=f^{-1}(C)$ be the corresponding integral curve with two nodes. Then $\Prym (D/C)$ is a rank one degeneration of an abelian variety.
\end{cor}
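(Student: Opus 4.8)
The plan is to deduce the statement as the special case $\delta=1$ of Proposition \ref{components irreducible curves}. Since $C$ is integral with a single node $c_1$ and no other singularity, its preimage $D=f^{-1}(C)$ is integral with exactly two nodes, namely $p_1$ and $\iota(p_1)$ lying over $c_1$; in the notation fixed before the proposition this is precisely the case $\delta=1$. I would then invoke the proposition directly: it gives that $\Fix(\tau)\subset \overline{\Jac}^0(D)$ has four connected components, each isomorphic to a rank $\delta=1$ degeneration of an abelian variety.

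It remains only to identify $\Prym(D/C)$ with one of these four components. Because $C$ is integral the polarization plays no role and $\tau$ is regular over $[C]$, so by Definition \ref{def_rel_prym} the fiber $\Prym(D/C)$ is the connected component of $\Fix(\tau)$ containing the zero section, i.e. the identity component $\Fix^\circ(\tau)$. Reading off the proof of the proposition with $\delta=1$, the open part $\ker(1+\iota^{*})$ sits in an extension of $\ker(1+\wt\iota^{\,*})$ by the rank-one torus $\prod_{i=1}^{1}\C^{*}=\C^{*}$, so its identity component is a $\C^{*}$-extension of the abelian variety $\Prym(\wt D/\wt C)=\ker(1+\wt\iota^{\,*})^{\circ}$; the Oda--Seshadri compactification then realizes the closure of this identity component as a rank one degeneration of $\Prym(\wt D/\wt C)$. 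This is exactly the assertion.

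Since every nontrivial input is already supplied by Proposition \ref{components irreducible curves}, there is no real obstacle here; the corollary is a bookkeeping specialization. The one point I would be careful to verify is the count of the torus rank, namely that the single node of $C$ contributes \emph{one} factor $\C^{*}$ (rather than two, one for each of the two nodes of $D$) to the identity component. This is precisely what the surjectivity of the map $\zeta\colon\lambda_1\mapsto(\lambda_1,\lambda_1^{-1})$ established in the proof of the proposition guarantees, and it is what pins down the torus rank of $\Fix^\circ(\tau)$ as equal to $\delta=1$.
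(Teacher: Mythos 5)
Your proposal is correct and follows exactly the paper's route: the paper states this corollary with no separate proof, as an immediate specialization of Proposition \ref{components irreducible curves} to $\delta=1$, which is what you do. Your added bookkeeping (identifying $\Prym(D/C)$ with the component of $\Fix(\tau)$ containing the zero section, and checking via $\zeta\colon\lambda\mapsto(\lambda,\lambda^{-1})$ that the torus rank is $\delta=1$ and not $2$) matches the content of the proposition's proof and fills in precisely what the paper leaves implicit.
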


\subsubsection{A reducible case}\label{prym reducible}
The following example shows that the situation when $C$ and $D$ are not irreducible is slightly different.

Let us consider the case where  the curves $C=C_1\cup C_2$ and $D=D_1\cup D_2$ are union of two smooth components intersecting transversally in $\delta$ (resp. $2\delta$) points. For simplicity we assume $\delta=1$ so that $C_1\cdot C_2=1$.  Set
\[
D_1\cap D_2=\{ p, \iota p\},
\]
and let $\{p_1, \iota p_1\}$ and $\{p_2, \iota p_2\}$ be the pair $\{ p, \iota p\}$  viewed on $D_1$ and  $D_2$ respectively. We consider the case where the polarization is equal to $D$, so that $\overline \Jac^0_D(D)$ is irreducible. Also, for $i=1,2$ let 
\[
\iota_i: D_i \to D_i,
\]
be the involution corresponding to the double cover $D_i \to C_i$, and let $\eta_i$ be the line bundle defining the cover itself.
In this case diagram (\ref{irreducible case}) becomes
\[
\xymatrix{
0 \ar[r] & \C^* \ar[r] \ar@{=}[d] & \ker(1+ \iota^*) \ar[r]^{\beta} \ar@{^{(}->}[d] & \ker(1+\wt\iota^{\,*}) \ar@{^{(}->}[d] &  \\
0 \ar[r] & \C^* \ar[d]^{\gamma=0} \ar[r] & \Jac^0(D) \ar@{->>}[d]^{(1+\iota^*)} \ar[r]^{n^*} & \Jac^0(\wt D) \ar[r] \ar@{->>}[d]^{1+\wt\iota^{\,*}} & 0\\
0 \ar[r] & \ker(\alpha) \ar[r] & (1+\iota^*)\Jac^0(D) \ar[r]^{\alpha} & (1+\wt\iota^{\,*}) \Jac^0(\wt D) \ar[r] & 0
}
\]
and gives an exact sequence
\[
0 \to \C^* \to \ker(1+ \iota^*) \stackrel{\beta}{\rightarrow} \ker(1+\wt\iota^{\,*}) \to \ker(\alpha) \to 0,
\]
so that to determine the number of connected components of $\ker(1+\iota^*)\subset \Jac^0(D)$ we need to compute $\ker(\alpha)$.  Since  in the case at hand, $\Jac^0(\wt D)\cong \Jac^0(D_1)\times \Jac^0(D_2)$, the fixed locus $\ker(1+\wt\iota^{\,*})$ has $16$ connected components.
We claim that, $\ker(1+\iota^*)$ has $8$ connected components. By what we said, this is equivalent to showing that $\ker(\alpha)\cong \Z/(2)$. To this aim, consider the following diagram
\[
\xymatrix{
0 \ar[r] & \ker(\alpha) \ar[r] \ar@{^{(}->}[d] & (1+\iota^*)\Jac^0(D) \ar@{^{(}->}[d] \ar[r]^{\alpha} & (1+\wt\iota^{\,*}) \Jac^0(\wt D) \ar@{=}[d] \ar[r] & 0\\
0 \ar[r] & \ker(\rho)\cong \Z/(2) \ar[r] & f^*\Jac^0(C) \ar[r]^\rho & \wt f^* \Jac^0(\wt C) \ar[r] & 0\\
& 0 \ar[r] &\Jac^0(C) \ar[r]^\sim \ar@{->>}[u] & \Jac^0(\wt C) \ar[r] \ar@{->>}[u] &0 \\
& 0 \ar[r] &\Z/(2) \ar[u] \ar[r] & \Z/(2)\times \Z/(2) \ar[r]\ar[u] & \Z/(2).
}
\]
Suppose for a moment that $ \ker(\alpha)=(0)$, then $\alpha$ would be  an isomorphism and $\alpha^{-1}$ would give a section of the non trivial covering  $f^*\Jac^0(C) \to \wt f^* \Jac^0(\wt C)$. This, however, is absurd and hence
\[
\ker(\alpha)=\ker(\rho)\cong \Z/(2),
\]
and the claim is proved.

Finally we  observe that the closure in $\overline \Jac^0_D(D)$ of $\ker(1+\iota^*)$ is the union of $4$ connected components, each of which is the union of two irreducible components.
What we will show is that the closure every connected component of $\ker(1+ \iota^*)$ intersects the closure of exactly one other connected component.
Indeed, $\overline \Jac^0_D(D)$ is obtained from the $\C^*$-bundle over $\Jac^0(\wt D)=\Jac^0(D_1) \times \Jac^0(D_2)$ by first compactifying to a $\P^1$-bundle and then glueing the $0$ and $\infty$ sections, (both identified with $\Jac^0(\wt D)$),  via the twist by  line bundle  $L:=\O_{D_1}(p_1-\iota p_1) \otimes \O_{D_2}(p_2-\iota p_2)$. However,  by  (\ref{components ker norm}), the class of $L_1:=\O_{D_1}(p_1-\iota p_1) $ does not lie in the the identity component of $\ker(1+\iota^*_1)$, even though it lies $\operatorname{Nm}^{-1}(0) \subset \Jac^0(D_1)$. It follows that tensoring by $L_1$ preserves the two fibers $\operatorname{Nm}^{-1}(0)$ and $\operatorname{Nm}^{-1}(\eta_1)$ (cf.  (\ref{ker norm})), while interchanging the two components of each fiber. The analogous statement holds for $L_2:=\O_{D_2}(p_2-\iota p_2)$.
We may conclude that tensoring by $L^{\otimes 2}$ preserves every component of $\ker(1+\wt\iota^{\,*})=\ker(1+\iota^*_1) \times \ker(1+\iota^*_2)$. The action of tensoring by $L$ thus divides these components in pairs, and within each pair the zero and infinity sections of the two components are identified.

A final remark can be made about the points at infinity of $\Fix(\tau)$. When the parameter $\lambda$ of the $\C^*$-bundle $\ker(1+ \iota^*)$ goes to zero or  infinity the corresponding line bundle
tends to a torsion free sheaf which is $\mc S$-equivalent to a polystable sheaf of the form $F_1\oplus F_2$ where, for each $i=1,2$, the sheaf $F_i$ is a stable sheaf supported on $D_i$

The previous discussion can be repeated almost word by word in the case in which 
the two curves $C_1$ and $C_2$ meet transversally in $\delta\geq 1$ points. The main difference in the case $\delta \ge 2$ is that $\overline \Jac^0_{D}(D)$ is not irreducible any more. However, on can see (cf. \cite{Sacca13}) that among the $2\delta-1$ components of $\Jac^0_{H_D}(D)$ only one them, the identity component, contains $\ker(1+\iota^*)$.

\begin{prop} \label{polyst}Let
 $C=C_1\cup C_2$ (resp. $f^{-1}(C{)}=D=D_1\cup D_2$) be the  union of two smooth components intersecting transversally in $\delta$ (resp. $2\delta$) points.
Then $\Fix(\tau)\subset \overline \Jac^0_D(D)$ has $4$ connected maximal dimensional components, each consisting in two irreducible components meeting at the boundary of 
$ \overline \Jac^0_D(D)$. Moreover each connected component of 
$\Fix(\tau)$ contains points of the type $[F_1\oplus F_2]$ where 
for each $i=1,2$ the sheaf $F_i$ is a stable sheaf supported on $D_i$.
\end{prop}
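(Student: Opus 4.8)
The plan is to mirror, almost verbatim, the analysis carried out above for $\delta=1$: since $\Fix(\tau)\cap\Jac^0(D)=\ker(1+\iota^*)$, I would first determine the connected components of $\ker(1+\iota^*)$ inside the generalized Jacobian $\Jac^0(D)$, and then control how they behave when one passes to the closure inside $\ov\Jac^0_D(D)$.

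For the open part I would write down, for arbitrary $\delta$, the commutative diagram with exact rows coming from the normalization sequences of $D$ and $\wt D=D_1\sqcup D_2$, exactly as in (\ref{irreducible case}) and its reducible specialization treated above. Since $D_1,D_2$ are smooth and meet transversally in the $2\delta$ points $\{p_k,\iota p_k\}_{k=1}^{\delta}$, the toric part of $\Jac^0(D)$ is $(\C^*)^{2\delta-1}$, identified with the $H^1$ of the dual graph (two vertices, $2\delta$ edges), on which $\iota$ fixes the two vertices and swaps the edges $p_k\leftrightarrow\iota p_k$. A direct computation on this torus shows that the antiinvariant locus is the \emph{connected} subtorus cut out by ``$x_k\,x_k'$ independent of $k$'', isomorphic to $(\C^*)^{\delta}$; for $\delta=1$ this recovers the single $\C^*$ found above.

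Next I would run the snake lemma on this diagram. As before, Proposition 2.14 of \cite{Sacca12} gives $(1+\iota^*)\Jac^0(D)\subset f^*\Jac^0(C)$, and one is reduced to computing the cokernel of the vertical map $\gamma$. Although for $\delta\ge2$ the kernel $\ker(\alpha)$ now has a positive dimensional toric part $(1+\iota^*)(\C^*)^{2\delta-1}\cong(\C^*)^{\delta-1}$, the decisive point is that the \emph{finite} cokernel $\operatorname{coker}(\gamma)$ is still exactly the $\Z/(2)$ coming from $\langle\eta_1,\eta_2\rangle/\langle\eta\rangle$, because $\eta$ restricts to $\eta_i$ on each $C_i$ independently of $\delta$. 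The resulting four term exact sequence $0\to(\C^*)^{\delta}\to\ker(1+\iota^*)\to\ker(1+\wt\iota^*)\to\operatorname{coker}(\gamma)\to0$, together with the connectedness of the toric factor and the $16$ components of $\ker(1+\wt\iota^*)=\ker(1+\iota_1^*)\times\ker(1+\iota_2^*)$, shows that $\ker(1+\iota^*)$ has $8$ connected components, each a coset of the semiabelian identity component (an extension of $\Prym(D_1/C_1)\times\Prym(D_2/C_2)$ by $(\C^*)^{\delta}$).

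The genuinely new input, and the main obstacle, is the passage to the closure when $\delta\ge2$, where $\ov\Jac^0_D(D)$ is no longer irreducible. I would first invoke \cite{Sacca13} to reduce to its identity component, the only one meeting $\ker(1+\iota^*)$, and then describe, following \cite{Oda-Seshadri79}, the compactification of the $(\C^*)^{2\delta-1}$-bundle over $\Jac^0(\wt D)$, whose boundary is glued by twists by the degree zero line bundles $L_k=\O_{D_1}(p_k-\iota p_k)\otimes\O_{D_2}(p_k-\iota p_k)$. The key simplification is that all the $L_k$ define the \emph{same} class in the component group: indeed $\operatorname{Nm}(\O_{D_i}(p_k-\iota p_k))=0$ and, by (\ref{components ker norm}), $\O_{D_i}(p_k-\iota p_k)=(1-\iota_i^*)\O_{D_i}(p_k)$ lies in the non identity component of $\operatorname{Nm}_i^{-1}(0)$ for every $k$. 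Hence each gluing identifies a given open component with exactly one other, pairing the $8$ components into the $4$ maximal dimensional connected components of $\ov{\ker(1+\iota^*)}$, each a union of two irreducible pieces meeting along the boundary. Letting the $(\C^*)^{\delta}$ coordinates tend to $0$ or $\infty$ finally produces, as in Proposition \ref{components irreducible curves}, boundary points $\mc S$-equivalent to polystable sheaves $F_1\oplus F_2$ with $F_i$ stable on $D_i$, and checking that one such point lies in each of the four components gives the last assertion. The delicate book keeping is precisely this interaction, for $\delta\ge2$, between the higher dimensional toric directions and the reducibility of $\ov\Jac^0_D(D)$.
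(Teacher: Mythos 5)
Your proposal is correct and follows essentially the same route as the paper, which treats the case $\delta=1$ in detail and then asserts that the argument "can be repeated almost word by word" for $\delta\ge 2$ after invoking \cite{Sacca13} to single out the identity component of $\ov\Jac^0_D(D)$. In fact you supply precisely the details the paper compresses into that remark: the identification of the anti-invariant part of the toric factor as a connected $(\C^*)^\delta$, the computation that for $\delta\ge 2$ the map $\gamma$ is no longer zero but still has $\coker(\gamma)\cong\Z/(2)$ (so that $\ker(1+\iota^*)$ again has $8$ components), and the observation that all the gluing bundles $L_k$ share one nontrivial class in the component group of $\ker(1+\wt\iota^{\,*})$, which is what pairs the $8$ closures into the $4$ connected components containing polystable points $[F_1\oplus F_2]$.
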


\vskip 0.5 cm 

\subsection{Changing the involution and the  polarization} \label{choose_pol}

As we observed after the definition \ref{def_rel_prym} of relative Prym variety,
the choice of the polarization $H$ appears to lead to a dichotomy: either the ambient space 
$M_{v,H}$ is smooth and the involution $\tau$ is not regular, or $\tau$ is regular
and the ambient space is smooth. In this section we will introduce a ``twisted'' version of the involution $\tau$
and prove that, also in this more general setting, the above dichotomy can not be reconciled. 

Start with a Mukai vector
\[
v=(0,[D], \chi)\in H^*(S,\mathbb Z)
\]
where $\chi$ is not necessarily equal to $-g(D)+1$. 

The first remark is that Lemma \ref{dual} implies that \emph{for any} line bundle $N$ on $S$, we have
$$
\Shext^1_S(F, N)\cong  \Shhom_\Gamma(F, N\otimes {\mc O}_\Gamma(\Gamma))\,.
$$
We set
$$
j_N(F)=\Shext^1_S(F, N)
$$
We now proceed, step by step, exactly as in 
Subsection \ref{rel_prym_sub}. First  of all we notice that  $j_N^2 (F)\cong F$, then we set
\be\label{tau_n}
\tau_N=j_N\circ\iota^*\,,\qquad \text{i.e.}\quad\tau_N(F)=\Shext^1(\iota^*F,N)
\ee
With this notation we have $\tau=\tau_{-D}$.
Again, by virtue of Lemma \ref{base change ext}, we notice that, if conditions a) b) and c) of (\ref{cond_abc}) are satisfied with $\tau$ replaced by $\tau_N$, then
the assignment: $F\longmapsto \tau_N(F)$
yields a well defined involution 
\be\label{tau_N_moduli}
\tau_N: M_{v, H}\longrightarrow M_{v, H}
\ee
Condition a) is satisfied if $N$ is $\iota^*$-invariant.
For condition b) 
 we demand
\be\label{mukai_j_N}
c_1(F)=c_1(j_N(F))=c_1(\Shext^1_S(F, N))\,,\qquad \chi(F)=\chi(j_N(F))
\ee
The first of these two conditions is always satisfied,
 as for the second, exactly as in (\ref{hilb_j_N}), we have
\be\label{hilb_j_N}
p_{j_N(F)}(m, H)= -p_{F\otimes N^\vee }(-m, H).
\ee
and in particular for any pure sheaf of dimension one,
\be \label{chi j F}
\chi(j_{N}(F))=-\chi(F\otimes N^\vee  )=-\chi(F)+N\cdot c_1(F).
\ee
When $c_1(F)=D$, then  $\chi(j_N(F))=\chi(F)$ if and only if
\be \label{chi=chi'}
2\, \chi(F)=N\cdot D.
\ee

\vskip 0.3 cm

Let us take a closer look at the involution $\tau_N$. As for the case of $\tau=\tau_{-D}$ even if condition c) of  (\ref{cond_abc})  is not satisfied we still have a rational map
$$
\tau_N: M_{v, H}  \dasharrow M_{v, H}\,.
$$
Therefore, the natural question is whether there exists a pair $(N, H)$ satisfying conditions a), b), and c) of  (\ref{cond_abc}) (making $\tau_N$ regular), and such that, moreover, $H$ is $v$-generic, (making $M_{v,H}$ smooth).
In this subsection we show that such a pair {\it does not exist}. It follows that if we choose $H$ to be $v$-generic, it is not a priori clear whether the birational map $\tau_N$ extends to a regular morphism. In Section \ref{singular} we will show that $\tau_N$ does not extend when $|C|$ is non-hyperelliptic. In Section
 \ref{hyp} we will show that it does when $|C|$ is hyperelliptic.

 \begin{prop}\label{never_preserve}Let $T$ be an Enriques surface and $f: S\to T$ its universal cover and set $D=f^{-1}C$ for some irreducible curve  $C\subset T$. Consider a non zero integer $\chi$ such that $v=(0, [D], \chi)$ is primitive. Suppose that there exists a member $D_1+D_2$ of $f^*|C| \subset |D|$ that is the union of two integral curves $D_1$ and $D_2$
intersecting transversally and whose classes are $\iota^*$-invariant. Let $N$ be a line bundle on $S$, satisfying  a) and b) of (\ref{cond_abc}). 

If $H$ is a $v$-generic polarization on $S$, then $N$ then $(N, H)$ does not satisfy c) of (\ref{cond_abc}).
\end{prop}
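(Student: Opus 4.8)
The plan is to show that the two requirements---$\tau_N$ being regular and $H$ being $v$-generic---impose incompatible constraints coming from the splitting $D = D_1 + D_2$. First I would observe that since $D_1$ and $D_2$ are integral, $\iota^*$-invariant, and meet transversally, the decomposable sheaf $F = F_1 \oplus F_2$ (with $F_i$ a stable rank-one sheaf supported on $D_i$) can be arranged to be a genuine point of $M_{v,H}$, and the obstruction to regularity of $\tau_N$ will be detected precisely at such polystable points. The key is that $H$-stability and the action of $\tau_N$ both interact with the $\chi(F_i)$'s through the wall equations recalled after Example~\ref{D not v generic}.

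The main computation I would carry out is the following. Assume $H$ is $v$-generic, so $M_{v,H}$ is smooth and every $H$-semistable $F$ with Mukai vector $v$ is in fact $H$-stable. I would then produce, using the transverse splitting $D = D_1 + D_2$, a sheaf $F$ whose image $\tau_N(F)$ fails to be $H$-stable, contradicting condition~c). Concretely, I would use Lemma~\ref{stability sub curves} to translate $H$-stability of $F$ into the inequalities $\mu_H(F) < \mu_H(F_{D_i})$, and I would use formula~(\ref{chi j F}), namely $\chi(j_N(F)) = -\chi(F) + N \cdot c_1(F)$, together with the constraint~(\ref{chi=chi'}) forced by condition~b), to compute how $\tau_N$ acts on the invariants of the restrictions $F_{D_i}$. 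Because $j_N$ reverses the roles of sub- and quotient sheaves (as noted in the discussion of $D$-stability after~(\ref{tau_D})), applying $\tau_N$ to a sheaf that sits on the $H$-stable side of the wall will land it on the strictly unstable side, \emph{unless} $H$ lies exactly on the wall attached to the splitting $D = D_1 + D_2$.

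The crux is therefore to verify that the only polarizations $H$ under which $\tau_N$ preserves $H$-stability are those lying on the wall determined by $D_1 + D_2$. I expect this to reduce to an explicit sign analysis: the wall associated to the quotient of $F$ supported on $D_i$ is cut out by an equation of the form $(\chi(G)\,D - \chi\,D_i)\cdot x = 0$, and I would show that $\tau_N$ sends the $H$-stable chamber on one side of this wall to the chamber on the \emph{opposite} side. Hence for $\tau_N$ to be stability-preserving, $H$ must satisfy this wall equation, i.e.\ $H$ is \emph{not} $v$-generic---contradiction. The hard part will be handling the bookkeeping of $\chi(F_i)$ and $\chi((\tau_N F)_i)$ correctly: I must verify that for a $v$-generic $H$ there genuinely exists a splitting-type sheaf $F \in M_{v,H}$ realizing strict inequality on the relevant wall, so that its $\tau_N$-image violates stability rather than sitting harmlessly on the boundary. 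This is where the primitivity of $v$ and the hypothesis $\chi \neq 0$ must be used: primitivity guarantees the wall is a genuine hyperplane in $\Amp(S)$ (so the $v$-generic locus really avoids it), and $\chi \neq 0$ ensures the displacement induced by $\tau_N$ across the wall is nonzero.
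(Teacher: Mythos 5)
There is a genuine gap, and it sits at what you call the crux. Your claim that ``$\tau_N$ sends the $H$-stable chamber on one side of the wall to the chamber on the opposite side, hence stability-preservation forces $H$ onto the wall'' is false, and the paper's own computation shows why. Writing $\chi_1=\chi(F_{D_1})$, $q=\chi(Q)$ for the cokernel of $F\to F_{D_1}\oplus F_{D_2}$, $k_i=H\cdot D_i$, $k=k_1+k_2$, $n_1=N\cdot D_1$, and $m=\chi k_1/k=a+s$ with $0<s<1$ (here $m\notin\Z$ is exactly what $v$-genericity gives, via the nonexistence of a polystable $F_1\oplus F_2$ with vector $v$), Lemma \ref{stability sub curves} turns $H$-stability of $F$ into $a+1\le\chi_1\le a+q$, while Lemma \ref{restriction of j(F)} together with (\ref{chi j F}) shows that $j_N$ acts on this invariant by the reflection $\chi_1\mapsto n_1+q-\chi_1$, so that $j_N(F)$ is stable iff $a+1\le n_1+q-\chi_1\le a+q$. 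The reflection carries the stability interval into itself precisely when $n_1=2a+1$; it does \emph{not} automatically land on the ``unstable side.'' So wall-crossing alone yields no contradiction: for a suitable line bundle $N$ with $n_1$ odd, conditions b) and c) of (\ref{cond_abc}) can coexist with a $v$-generic $H$. Your argument, if it worked, would prove this impossible for \emph{every} $N$ satisfying b), which is a stronger and false statement.

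The missing idea is the parity argument, which is where hypothesis a) of (\ref{cond_abc}) and the $\iota^*$-invariance of the classes $[D_1],[D_2]$ are actually used --- your proposal never invokes them beyond the commutation of $j_N$ with $\iota^*$. Since $N$ is $\iota^*$-invariant and $[D_1]$ is $\iota^*$-invariant, both classes lie in $f^*\NS(T)$ (Namikawa's description of the invariant lattice), hence $n_1=N\cdot D_1$ is twice an intersection number on $T$, so it is even; this contradicts $n_1=2a+1$. A secondary but symptomatic error: your opening observation that the polystable sheaf $F_1\oplus F_2$ ``can be arranged to be a genuine point of $M_{v,H}$'' is impossible when $H$ is $v$-generic --- genericity means exactly that no such strictly semistable point exists, and the paper exploits this to conclude $m\notin\Z$, not to locate the failure of $\tau_N$ at such a point.
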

Before proving  Proposition \ref{never_preserve} we need to establish the following lemma.
 
\begin{lem} \label{restriction of j(F)} 
Consider a curve
\[
D_1 +D_2,
\]
in $S$ that is the union of two integral curves $D_1$ and $D_2$ meeting transversally, and let $F$ be a pure dimension one sheaf on such a curve. Let $N$ be a line bundle on $S$. Referring to {\rm Notation \ref{Fi}},
we have
\[
(j_N(F))_j=j_N(F_j\otimes {\mc O}(-\Delta_F)), \,\,\, j=1, 2,
\]
where, as in Lemma \ref{Fj}, $\Delta_F \subset D_1 \cap D_2$ is the set of nodes where $F$ is locally free.
\end{lem}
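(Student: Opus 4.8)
The idea is to apply the duality functor $j_N=\Shext^1_S(-,N)$ to the canonical decomposition sequence furnished by Lemma \ref{Fj}, and then to match the resulting extension with the canonical decomposition sequence of the sheaf $j_N(F)$ itself. Two elementary facts are used throughout. For any pure dimension one sheaf $P$ on the smooth surface $S$ one has $\Shhom_S(P,N)=\Shext^2_S(P,N)=0$, the first vanishing because $P$ is torsion and $N$ is locally free, the second because $P$ has homological dimension one (Proposition 1.1.10 of \cite{Huybrechts-Lehn}). Moreover, dualizing the two step resolution \eqref{loc_free} shows that $j_N(P)$ is again pure of dimension one, with the same support as $P$.

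Fix $\{i,j\}=\{1,2\}$. By Lemma \ref{Fj}, taking $\Gamma_j$ as the distinguished component, there is a short exact sequence
\[
0\to F_{\Gamma_j}(-\Delta_F)\to F\to F_{\Gamma_i}\to 0,
\]
with $F_{\Gamma_j}(-\Delta_F)$ pure on $\Gamma_j$ and $F_{\Gamma_i}$ pure on $\Gamma_i$. Applying $\Shext^\bullet_S(-,N)$, discarding the vanishing $\Shext^0$ terms and using $\Shext^2_S(F_{\Gamma_i},N)=0$, the long exact sequence collapses to
\[
0\to j_N(F_{\Gamma_i})\to j_N(F)\to j_N(F_{\Gamma_j}(-\Delta_F))\to 0,
\]
in which $j_N(F_{\Gamma_i})$ is supported on $\Gamma_i$ while $j_N(F_{\Gamma_j}(-\Delta_F))$ is pure on $\Gamma_j$.

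Now set $G=j_N(F)$, a pure dimension one sheaf on $\Gamma=\Gamma_1+\Gamma_2$, and let $G^{\Gamma_i}=\ker[G\to G_{\Gamma_j}]$ be the subsheaf appearing in its canonical decomposition sequence (Notation \ref{Fi}). I claim $G^{\Gamma_i}$ is the largest subsheaf of $G$ supported on $\Gamma_i$: any subsheaf of $G$ supported on $\Gamma_i$ must map to zero in the pure $\Gamma_j$-sheaf $G_{\Gamma_j}$, since otherwise its image would be a nonzero finite length subsheaf of a pure dimension one sheaf. As $j_N(F_{\Gamma_i})\subset G$ is supported on $\Gamma_i$, it is contained in $G^{\Gamma_i}$, and passing to quotients produces a surjection
\[
j_N(F_{\Gamma_j}(-\Delta_F))\twoheadrightarrow G_{\Gamma_j}
\]
whose kernel $K$ is simultaneously a quotient of the $\Gamma_i$-supported sheaf $G^{\Gamma_i}$ and a subsheaf of the pure $\Gamma_j$-sheaf $j_N(F_{\Gamma_j}(-\Delta_F))$. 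Thus $K$ is supported on $\Gamma_i\cap\Gamma_j$, hence of finite length, and being contained in a pure dimension one sheaf it vanishes. Therefore
\[
(j_N(F))_{\Gamma_j}=G_{\Gamma_j}\cong j_N(F_{\Gamma_j}(-\Delta_F)),
\]
which is the asserted identity; letting $j$ range over $1,2$ completes the proof.

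The one genuinely non formal step is this last identification. Since $j_N$ interchanges the $\Gamma_i$-quotient of $F$ with a $\Gamma_i$-subsheaf of $j_N(F)$, one obtains a priori only a surjection from $j_N(F_{\Gamma_j}(-\Delta_F))$ onto the pure $\Gamma_j$-restriction of $j_N(F)$; the vanishing of $K$, forced by purity, is precisely what promotes this surjection to an isomorphism. Everything else is the formal behaviour of $\Shext^1_S(-,N)$ on the sequence of Lemma \ref{Fj}.
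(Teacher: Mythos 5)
Your proof is correct and takes essentially the same approach as the paper's: both apply $j_N$ to the short exact sequence $0 \to F_j\otimes {\mc O}(-\Delta_F) \to F \to F_i \to 0$ furnished by Lemma \ref{Fj} and then identify the resulting quotient with $(j_N(F))_j$ by a purity-of-support argument. The only cosmetic difference is the direction of the comparison map: the paper factors the surjection $j_N(F) \to j_N(F_j\otimes {\mc O}(-\Delta_F))$ through $(j_N(F))_j$ and checks that the induced surjection is injective because it is a generic isomorphism between pure sheaves, whereas you factor $j_N(F) \to (j_N(F))_j$ through $j_N(F_j\otimes {\mc O}(-\Delta_F))$ and check that the kernel, being finite length inside a pure dimension one sheaf, vanishes -- the two arguments are mirror images of one another.
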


\begin{proof} Let $i\neq j$, and consider the short exact sequence
\be \label{short exact sequence}
0 \to F^j \to F \to F_i \to 0.
\ee
From Lemma \ref{Fj} it follows that
\[
F^j\cong F_j\otimes {\mc O}(-\Delta_F).
\]

Applying $j_N(\cdot)$, we get
\[
0 \to j_N(F_i) \to j_N(F) \stackrel{a}{\to} j_N( F_j\otimes {\mc O}(-\Delta_F)) \to 0.
\]
Moreover, since $ j_N( F_j\otimes {\mc O}(-\Delta_F)) $ is torsion free and supported on $D_j$, the morphism $a$ factors through a surjective morphism
\[
(j_N(F))_j \to j_N( F_j\otimes {\mc O}(-\Delta_F)).
\]
However, this morphism is also injective because it is a generic isomorphism, and thus the Lemma is proved.
\end{proof}

Clearly, if $F$ is locally free this Lemma is just the consequence of the fact that restriction to a subcurve is a group homomorphism of the Picard groups.

\begin{proof}{\sl (of Proposition \ref{never_preserve})}
Let  $g_1$ and $g_2$ be the genera of $D_1$ and $D_2$ respectively. Let $F$ be a pure sheaf of rank one on $D$, and as usual set $F_i=F_{D_i}$, for $i=1,2$. Set
\[
\chi=\chi(F),  \quad \chi_i=\chi(F_i),\,\,\,\text{for} \,\,i=1,2 \quad \text{and} \,\,\, q=\chi(Q),
\]
where $Q$ is the cokernel of the natural injection $F \to F_1\oplus F_2$. Then
\[
\Delta_F:=\supp(Q),
\]
is the locus of $D_1\cap D_2$ where $F$ is locally free.
Furthermore, let $H$ be a polarization, and set
\[
k_1=H\cdot D_1 , \,\,\, k_2=H\cdot D_2 \quad \text{and} \,\,\, k=H\cdot D=k_1+k_2.
\]
By Lemma \ref{stability sub curves}, we know that $F$ is $H$-stable if and only if 
\[ 
\Bigg\{\aligned  
&\ff{\chi}{k}<\ff{\chi_1}{k_1}, \\
&\ff{\chi}{k}<\ff{\chi_2}{k_2}.
\endaligned
\]
Equivalently, since $\chi=\chi_1+\chi_2-q$, the sheaf $F$ is $H$-stable if and only if 
\be \label{stability of F}
\ff{\chi}{k}<\ff{\chi_1}{k_1}< \ff{\chi}{k}+\ff{q}{k_1}.
\ee
Now, using Lemma \ref{restriction of j(F)} and formula (\ref{chi j F}), setting $n=D\cdot N$ and $n_i=D_i\cdot N$, for $i=1, 2$, we see that $j_N(F)$ is $H$-stable if and only if,
\[
-\ff{\chi}{k}+\ff{n}{k}<-\ff{\chi_1}{k_1}+\ff{q}{k_1}+\ff{n_1}{k_1}< -\ff{\chi}{k}+\ff{n}{k}+\ff{q}{k_1}.
\]
Suppose now that $\tau_N$ satisfies condition b) in  (\ref{cond_abc}), i.e. that $N$ is such that $2\chi(F)=N\cdot D$, then this last string of inequalities becomes,
\be \label{stability of jF}
\ff{\chi}{k}<-\ff{\chi_1}{k_1}+\ff{q}{k_1}+\ff{n_1}{k_1}<\ff{\chi}{k}+\ff{q}{k_1}.
\ee

On the other hand, that if $H$ is $v$-generic, then
\[
m=\ff{\chi}{k}{k_1},
\]
is not an integer. Indeed, if $m$ is an integer, we can set $\chi_1=m$, $\chi_2=\chi-m+q$, and find two $H$-stable sheaves $F_1$ and $F_2$ supported on $D_1$ and $D_2$ respectively, with $\chi(F_1)=\chi_1$ and $\chi(F_2)=\chi_2$. But then the sheaf $F_1\oplus F_2$ is $H$-polystable sheaf with Mukai vector $v$.

Let $a$ be the round down of $m$, so that we can write $m=a+s$ with $0<s<1$. Then (\ref{stability of F}) and (\ref{stability of jF}) become respectively,
\[
a+1\le \chi_1 \le a+q, \quad \text{and} \quad  a+1\le -\chi_1+n_1+q \le a+q,
\]
meaning that every $\chi_1$ satisfying the first two inequalities must also satisfy the remaining two. We then get 
\[
n_1=2a+1.
\]
The proposition follows noticing that since $N$ is $\iota^*$-invariant,  $n_1=N\cdot D_1$ has to be even.
\end{proof}

A few remarks are in order. First of all,  relaxing the requirement that $H$ be $v$-generic, we can indeed find a pair $(H,N)$ satisfiying conditions a), b) and c) in  (\ref{cond_abc}). For example, if $\pm N$ is ample, then we can choose $H=N$. Second, we highlight a corollary of the proof of Proposition \ref{never_preserve}.
 
\begin{cor} \label{existence polystable}
Consider the set up and the notation of Proposition \ref{never_preserve}. Let $H$ be a polarization on $S$ and suppose that there exist a line bundle $N$ on $S$ satisfying condition b) and  c) in (\ref{cond_abc}). Then there is an $H$-polystable sheaf with Mukai vector $v$ of the form $F_1\oplus F_2$ where for $i=1,2$, $\supp(F_i)=D_i$ and $F_i$ is $H$-stable.
\end{cor}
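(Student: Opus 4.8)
The plan is to extract the statement as the contrapositive of the computation already carried out in the proof of Proposition~\ref{never_preserve}, supplemented by an explicit splitting construction. Keeping all the notation of that proof, I write $k_i=H\cdot D_i$, $k=k_1+k_2$, $n_1=N\cdot D_1$, and I single out the rational number
\[
m=\frac{\chi\,k_1}{k},
\]
which is the Euler characteristic that a summand supported on $D_1$ must have in order that its slope equal the slope $\chi/k$ of a sheaf with Mukai vector $v$. Everything hinges on whether $m$ is an integer.

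First I would show that conditions b) and c) force $m\in\Z$. Suppose not, and write $m=a+s$ with $a=\lfloor m\rfloor$ and $0<s<1$. By Lemma~\ref{stability sub curves}, inequality (\ref{stability of F}) says that $F$ is $H$-stable precisely when $a+1\le\chi_1\le a+q$, while, using Lemma~\ref{restriction of j(F)}, formula (\ref{chi j F}) and hypothesis b) (that is, $2\chi=N\cdot D$), inequality (\ref{stability of jF}) says that $j_N(F)$ is $H$-stable precisely when $a+1\le -\chi_1+n_1+q\le a+q$. Condition c) demands that these two conditions on $\chi_1$ define the same set of integers; since each describes an interval of exactly $q$ consecutive integers, nesting forces coincidence, which in turn gives $n_1=2a+1$. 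But $N$ and $D_1$ are $\iota^*$-invariant, and the invariant sublattice $f^*\NS(T)\cong\NS(T)(2)$ has even intersection form, so $n_1=N\cdot D_1$ is even, a contradiction. Hence $m\in\Z$.

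Granting $m\in\Z$, the polystable sheaf is produced directly. I would take a pure rank one sheaf $F_1$ on the integral curve $D_1$ with $\chi(F_1)=m$ and a pure rank one sheaf $F_2$ on the integral curve $D_2$ with $\chi(F_2)=\chi-m$ (for instance line bundles of the appropriate degrees); both exist, and being supported on integral curves they are $H$-stable for every $H$. Then $F_1\oplus F_2$ has $c_1=[D_1]+[D_2]=[D]$ and $\chi(F_1)+\chi(F_2)=\chi$, so its Mukai vector is $v$; moreover $\mu_H(F_1)=m/k_1=\chi/k=\mu_H(F_2)$, and for pure dimension one sheaves equal slope means equal reduced Hilbert polynomial, whence $F_1\oplus F_2$ is $H$-polystable. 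This is exactly the assertion.

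The only point requiring care is the interval bookkeeping of the second paragraph: one must attribute the correct range of $\chi_1$ to the stability of $F$ and the correct one to the stability of $j_N(F)$, and verify that c) genuinely identifies the two ranges rather than merely nesting one inside the other (this is where the equality of their cardinalities is used). Since this is precisely the step already executed inside the proof of Proposition~\ref{never_preserve}, no essentially new difficulty arises; the corollary is that argument run in reverse, together with the elementary splitting construction.
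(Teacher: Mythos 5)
Your proof is correct and takes essentially the same route as the paper's: the interval bookkeeping from the proof of Proposition \ref{never_preserve}, combined with the evenness of $n_1=N\cdot D_1$ (coming from the $\iota^*$-invariance of $N$ and $[D_1]$), forces $m=\chi k_1/k\in\Z$, after which the polystable sheaf is the obvious direct sum with $\chi(F_1)=m$, $\chi(F_2)=\chi-m$. The only difference is presentational: you make explicit the case distinction that the paper leaves implicit, and you bypass relation (\ref{N e H}), which the paper records but which is not needed for the conclusion.
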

\begin{proof}
Indeed, using the notation as in the proof of Proposition \ref{never_preserve}, one can see that if $D\cdot N=2\chi$, and if $\chi k_1/k$ is an integer then, forcing (\ref{stability of F}) and (\ref{stability of jF}), yields,
\be \label{N e H}
\frac{D\cdot N}{D\cdot H}=\ff{n}{k}=\ff{n_i}{k_i}=\frac{D_i\cdot N}{D_i\cdot H}, \quad \text{for} \,\,\, i=1,2.
\ee
It follows that there exists an $H$-polystable sheaf $F=F_1\oplus F_2$, as requested, by choosing $\chi(F_i)=\ff{\chi k_i}{ k}=\ff{n_i}{2}$.
\end{proof}

Notice that for  the corollary to be true, we do not really need to require that $j_N$ be a regular morphism, but  only that it is regular in a neighborhood of $\pi^{-1}[D_1+D_2]$.

Of course, one could give a definition of the relative Prym variety based on the involution $\tau_N$, but a word of caution 
is in order. When $N\neq-D$, in general one can not assume that the image of the rational zero-section of $\Jac^0(|D|)\to |D|$
lies in the fixed locus of $\tau_N$. As a consequence there is no privileged irreducible component of 
$\ov{\Fix(\tau_N)}$. One could then content oneself in saying that any irreducible, maximal dimensional component of 
$\ov{\Fix(\tau_N)}$ is a relative Prym variety. Also in this context one could prove that an open subset of the regular part of the relative Prym variety carries a symplectic structure, and also in this context one could prove that, when $|D|$ is non-hyperelliptic,
the relative Prym variety admits no symplectic resolution. Thus nothing much is to be gained in putting oneself in this more general context. For this reason and also to lighten the exposition, in what follows we will only consider the case
$N=-D$.


\section{Kuranishi families and tangent cones 
}\label{kuranishi}

Let $T$ be an Enriques surface and $f: S\to T$ its universal covering. Let $D=f^{-1}(C{)}$ for some integral curve  $C\subset T$ of genus $g\geq2$. 
Consider the Mukai vector $v=(0, [D], -h+1)$, with $h=g(D)$.  Choose a polarization $H$ 
such that:

- {\it  There exists a point $[F]\in M_{v,H}$, with $F=F_1\oplus F_2$, where $F_1$ and $F_2$ are two non isomorphic $H$-stable (in particular $H$
is not $v$-generic).  }

Polystable sheaves of this type exist (cf.  Corollary \ref{existence polystable}) and correspond to the simplest singularities that can appear in $ M_{v,H}$.
Choose a divisor $N$ satisfying conditions 
a)  and b)  in (\ref{cond_abc}).
In this section we will study the tangent cones to $P_{v,H}$ and to $M_{v, H}$  at  singularities of the above type. To simplify  we adopt the following notation:
\be
\label{notation_kur}
M=M_{v, H}\,,\qquad \tau=\tau_N: M\dasharrow  M\,,\qquad P=P_{v,H}=\overline{\operatorname{Fix}^0(\tau)}
\ee
Given a point $[F]$ belonging to $M$ (to  $P$),  we denote by 
$$
\mc M\subset M\,\quad (\text {resp.}\quad \mc P\subset P)
$$
a suitable analytic neighborhood of $[F]$ in $M$ (resp. in $P$). Furthemore, when $[F]\in P$, we will always assume that $\tau$ is defined in a neighborhood of $[F]$.   The main example of this situation is the case:
$H=D$.

We next recall a few fundamental facts
about Kuranishi families. 

\vskip 0.5cm

\subsection{Kuranishi families}

To begin with,  let us look at any point $[F]\in M$, where $F$ is a polystable sheaf. Let us consider a Kuranishi family for  $F$
parametrized by a pointed analytic scheme $(B,b_0)$:

\be\label{kuran_di_base}
\xymatrix{\F\ar[d]^\xi\\
 S\times B}\qquad\qquad F=\F_{b_0}=\xi^{-1}(b_0)
\ee
Set
\[
G=\PP\Aut(F)
\]
By the universal property of the Kuranishi family, the group $G$ acts on $(B,b_0)$ and 
an analytic neighborhood $\mc M$ of $[F]$ in $ M$ may be identified with the quotient of $B$ by $G$:
\[
B\sslash G=\mc M\subset M_{v,H}
\]
Moreover, by Luna's slice \'etale theorem, the analytic space $B$ is algebraic in the following sense. Let $Q^{ss} \subset \operatorname{Quot}$ be the open subset parametrizing points $x=[{\mc O}_X(-kH)^{\oplus m}\to F]$ such that $F$ is $H$-semistable and such that the induced map $H^0({\mc O}_X^{\oplus m}) \to F (kH)$ on global sections is an isomorphism, and write 
\[
M=Q^{ss}/\PP GL(m),
\]
There is a point $x=[{\mc O}_X(-kH)^{\oplus m}\to F]\in Q^{ss}$
and a $G$-invariant subscheme $\mathcal T\subset Q^{ss}$ passing through $x$,
such that $\mathcal T\sslash G\to M$ is \'etale and $B$ is an analytic neighborhood of $x$ in $\mathcal T$. 
To study the tangent cones to $B$ at $b_0$  and  of $M$ at $[F]$,  it is best to study the completions $\wh{\mc O}_{B,b_0}$ and $\wh{\mc O}_{M,[F]}$, as 
these two rings can be efficiently
 described in terms of the {\it Kuranishi map}.  We follow the notation of 
 \cite {Kaledin-Lehn-Sorger06} and of \cite{Lehn-Sorger06}.
 Let $\Ext^2(F,F)_0$ be the kernel of the trace map  $\Ext^2(F,F) \to H^2(S, \O_S)$. The Kuranishi map
 is a formal map
\[
\kappa: \Ext^1(F,F)\longrightarrow\Ext^2(F,F)_0
\]
starting with a quadratic term: $\kappa=\kappa_2+\kappa_3+\cdots$.
(For a beautiful and very explicit construction of the Kuranishi family see the appendix in  \cite{Lehn-Sorger06})

The Kuranishi map has the following properties: 

a) $\kappa$ is equivariant under the natural action of $G$ on $\Ext^1(F,F)$ and on $\Ext^2(F,F)_0$.

b) $\kappa^{-1}(0)$ is (isomorphic to) a formal neighborhood of $b_0$ in $B$, while  and 
$\kappa^{-1}(0)\sslash G$ is (isomorphic to) a formal neighborhood of $[F]\in M$, 

c) The quadratic part $\kappa_2$ of $\kappa$, which is called the {\it moment map},  is given by the cup product
\be\label{moment}
\aligned
\kappa_2:\,\,\,  \Ext^1(F,F)&\longrightarrow\Ext^2(F,F)_0\\
&e\mapsto\kappa_2(e)= \frac{1}{2}e\cup e
\endaligned
\ee
From now on we will set 
\[
Q=\kappa_2^{-1}(0)
\]

Suppose now that  the point $[F]$ belongs to  $P$. Since we are assuming that $F$ is $H$-polystable, we have $F\cong\tau(F)$.
The symplectic involution $\tau$ on $M$ lifts (non uniquely) to an automorphism on the parameter space $B$ of the Kuranishi family (\ref{kuran_di_base}).
 Indeed, given such a family and fixing
an isomorphism
\[
\phi: F\cong\tau(F)\,.
\]
 we  get a new family 
 by applying $\tau$ to it
\[
\xymatrix{\tau(\F)\ar[d]^{\tau(\xi)}\\
 S\times B
}\,\qquad\qquad \varphi^{-1}: F\overset\cong\to\tau(F)=\tau(\F)_{b_0}
\]
and therefore, by universality, we obtain an automorphism 
\[
 \tau_\phi: B\to B,
\]
 whose first order term is uniquely defined.
This automorphism need not be an involution, but it is so at the infinitesimal level since 
\be\label{dtau}
d\tau_\phi=d\tau=\tau_*: \Ext^1(F,F)\longrightarrow \Ext^1(F,F)\,.
\ee
Sometimes, and when no confusion is possible, we will write $\tau$ instead of $\tau_*$ to indicate the 
homomorphism (\ref{dtau}).

\begin{rem} \label{tau_ext2}{\rm If $F$ is a stable sheaf, the action of $\tau_*$ on $\Ext^2(F, F)$ is equal to $-1$.
To see this it suffices to prove that 
$\tau_*(e\cup f)=-e\cup f$ for $e, f \in \Ext^1(F,F)$. 
Interpreting the cup product in terms of composition of short exact sequences,
we see that $\tau_* (e \cup f)= \tau_*(f) \cup \tau_*(e)$. The fact that $\tau$ is symplectic tells us that:
$\tr(e \cup f)=tr(\tau_*(e)\cup \tau_*(f))=\tr(\tau_*(f\cup e))$. 
Since  $\tau_*(f\cup e)=\lambda f\cup e$, $\lambda \in \C^*$, and 
$\tr(e \cup f)=-\tr(f\cup e)$, we get 
 $\lambda=-1$.}
\end{rem}

\vskip 0.5cm

\subsection{Tangent cones}

We now turn our attention to tangent cones. Let $C_{b_0}(B)$ and $C_{[F]}(M)$,   denote the tangent cones to $B$ at $b_0$ and   to $M$  at  $[F]$, respectively. Again we will assume that $F$ is polystable.
For simplicity write  
\[
C(B)=C_{b_0}(B)\,,\qquad C(M)=C_{[F]}(M)
\]
The description of  $C(B)$ and  $C(M)$ is particularly simple 
under the condition that the stable decomposition of $F$
\be\label{poly_F}
F=F_1\oplus F_2
\ee
has only two non isomorphic summands. Indeed this implies that
\be
\label{condition_star} \dim \Ext^2(F,F)_0=1,
\ee
that $G=\C^*$ and that
$B$, or better its formal neighborhood at $0$, is given by a single equation
$\kappa=0$ and, by point c) above, we have
\be\label{cono_B}
C(B)=Q=\kappa_2^{-1}(0)=\{e\in\Ext^1(F,F)\,\,|\,\,e\cup e=0\}
\ee
On the other hand, by point a),  $\kappa$ and $\kappa_2$ must be $\C^*$-invariant, 
 hence
\be\label{cono_M}
C(M)=Q\sslash \C^* \,\,\,\subset \,\,\Ext^1(F,F)\sslash \C^*\,,
\ee

We use the  non-degenerate bilinear form 
\be\label{cup_bilin}
\aligned
\mu: \Ext^1(F_2, F_1)\times&\Ext^1(F_1, F_2)\longrightarrow \C\\
&(f,f')\longmapsto\mu(f,f')=\tr(f\cup f')
\endaligned
\ee
to identify  $\Ext^1(F_2, F_1)$ with $\Ext^1(F_1, F_2)^\vee$ and we write
\be\label{u_w}
\aligned
U_1&:=\Ext^1(F_1, F_1)\,,\quad U_2:=\Ext^1(F_2, F_2)\,,\\
W&:=\Ext^1(F_1, F_2)\,,\quad W^\vee=\Ext^1(F_2, F_1)\,,
\endaligned
\ee
  We identify   $\Ext^2(F,F)_0$ with $\Ext^2(F_1,F_1)$, and  $\Ext^2(F_1,F_1)$ with $\C$ (via the trace).
Up to the constant factor $\frac{1}{2}$, the  moment map (\ref{moment}) is given by
\be\label{moment2}
\aligned
\Ext^1(F,F)=U_1\oplus U_2\oplus W^\vee 
 \oplus W&\longrightarrow \Ext^2(F,F)_0=\C\\
 e=(a, b, f, f')&\longmapsto \kappa_2(e) =\mu(f, f')\,.
 \endaligned
\ee
\begin{rem}
Using the notation of  \cite{Nakajima98}, p. 520, we may also write the non-degenerate bilinear form (\ref{cup_bilin}) as a Nakajima's moment map
\be\label{cup_bilin_nakaj}
\aligned
\mu: \Hom(W, \C)\oplus&\Hom(\C,W)\longrightarrow \C\\
&(i,j)\longmapsto\mu(i,j)=i j
\endaligned
\ee
 based on the trivial quiver whose graph consists of a single vertex $x$ and  
no edges, and the pair of vector spaces attached to $x$ is the pair $(V, W)$ with $V=\C$. We will freely pass from notation (\ref{cup_bilin}), to (\ref{cup_bilin_nakaj}) and viceversa.
\end{rem}
The action of $\C^*$ on $\Ext^1(F,F)$ is given by
\[
\lambda\cdot(a, b, f, f')= (a, b, f\lambda^{-1}, \lambda f')
\]
or, in Nakajima's notation, $\lambda\cdot (i,j)=(i\lambda^{-1}, \lambda j)$.
The natural map 
\[
\aligned
\Hom(W, \C)\oplus&\Hom(\C,W)\longrightarrow \End(W)\\
&(i,j)\longmapsto ji
\endaligned
\]
factors through the action of $\C^*$ and exhibits the quotient of  $\Hom(W, \C)\oplus\Hom(\C,W)$  by $\C^*$ as the set $ \End_1(W)$ of endomorphisms of $W$ of rank $\leq 1$. Thus,
\be\label{quot_ext}
\aligned
\Ext^1(F,F)/\C^*&\cong\left(U_1\oplus U_2\oplus\Hom(W, \C)\oplus\Hom(\C,W)\right)/\C^*\\
&\cong U_1\times U_2\times \End_1(W)
\endaligned
\ee
By choosing appropriate coordinates on $\Ext^1(F,F)$ and looking at $\C^*$-invariant polynomial functions on this vector space, one may deduce that the equation $k_2=0$ of $Q/\C^*\subset \Ext^1(F,F)/\C^*$ is   {\it linear}  in the invariant coordinates and is  of the form,
\be\label{kuran_eq}
A=0\,,\quad \text{where } \quad A\in W\otimes W^\vee=\End(W)^\vee\,.
\ee
In particular, $Q/\C^*$ is irreducible and of the same dimension as $C(M)$ so that,
\[
C(M)=Q/\C^*=U_1\times U_2\times \End_{1,A}(W)
\]
where
\[ \label{end1h}
 \End_{1,A}(W)=\{a\in
 \End_{1}(W)\,|\, A(a)=0\}.
\]

Let us now examine  the case in which  the point $[F]$, corresponding to the sheaf (\ref{poly_F}), lies in $P$, so that $F\cong\tau(F)$.
Let $D_1$  and $D_2$, be the supports  of $F_1$ and $F_2$, respectively. From now on  we proceed under the assumption that  $D_1$  and $D_2$ are two $\iota$-invariant integral curves meeting transversally. In particular,
\be \label{iota_inv}
\tau(F_i)\cong F_i\,,\qquad i=1,2\,.
\ee

Under these hypotheses, 
 we wish to describe the tangent cone
 $C{(}P)=C_{[F]}{(}P)$ to $P$ at $[F]$. We have
\be\label{cone_P}
 C{(}P)=C(M^\tau)\subseteq C(M)^\tau\subseteq (\Ext^1(F,F)/ \C^*)^\tau,
 \ee
 where the action of $\tau$ on $(\Ext^1(F,F)/ \C^*$ is the one induced by (\ref{dtau}) and where  $X^\tau$ stands for $\operatorname{Fix}_\tau(X)$, the fixed locus of $\tau$ in $X$. We will momentarily see that (\ref{cone_P}) is in fact a series of equalities.
Looking at (\ref{quot_ext}) we have

\be\label{quot_ext_tau}
(\Ext^1(F,F)/ \C^*)^\tau \cong U_1^\tau\times U_2^\tau\times\End_1(W)^\tau
\ee
Now
\[
\End_1(W)^\tau=\{\phi\otimes\tau(\phi)\ |\ {\phi\in W^\vee} \}/\C^*\subset \End_1(W) ,
\]
and this can be identified with the set $\End_1(W)^s $     of symmetric endomorphisms with respect to the non degenerate bilinear form on $W$ defined by
\[
B(v,w)=\tau^{-1}(v)(w)
\]
As is well known, $\End_1(W)^s $ is isomorphic to the affine cone over the degree two
Veronese embedding
\[
\PP W\hookrightarrow\PP S^2W.
\]
To prove that (\ref{cone_P}) is a sequence of equalities it suffices to show that
the right most term in (\ref{cone_P}) is irreducible of dimension equal to $g-1=\dim P$.
Since $\End_1(W)^s $ is irreducible we must only care about the dimensionality statement. 
Let $h_1=2g_1-1$ and $h_2=2g_2-1$ be the genera of $D_1$ and $D_2$ respectively.
The stability of $F_i$, $i=1,2$, and their $\tau$-invariance (\ref{iota_inv})
tell us that $[F_1]$ and $[F_2]$ are smooth points of relative Prym varieties of dimensions $g_1-1$ and $g_2-1$ respectively (cf. Remark (\ref{intr_cones}) below) so that $\dim U_i=2g_i-2$ for $i=1,2$. It follows that
\[
\dim(\Ext^1(F,F)/ \C^*)^\tau=2g_1-2+2g_2-2+\dim\End_1(W)^s=h_1+h_2-2+\dim W
\]
We must then compute the dimension of $W=\Ext^1(F_1,F_2)$. From the isomorphism
\[
\E xt^1(F_1, F_2)\cong\underset{p\in D_1\cap D_2}\oplus\underline\C_p
\]
and from the local to global spectral sequence we get
\be\label{dim_ext_mist}
\Ext^1(F_1, F_2)=H^0(S, \E xt^1(F_1, F_2))=\C^{D_1\cdot D_2}\,.
\ee
Hence
\[
\dim(\Ext^1(F,F)/ \C^*)^\tau=h_1+h_2-2+D_1\cdot D_2 =\frac{1}{2}D^2=h-1=\dim P
\]

\begin{rem} {\rm In proving that $C({P})=(\Ext^1(F,F)/ \C^*)^\tau$ we implicitly proved that the 
quadratic part of the Kuranishi  equation vanishes identically on $C({P})$, i.e. that the form $A$
given in (\ref{kuran_eq}) vanishes identically on $\End_1(W)^s$. This follows directly from the fact 
that $\kappa_2$ is $\tau$-equivariant and that $\tau$ acts as $-1$ on $\Ext^2(F, F)$ (cf. Remark \ref{tau_ext2}), so that}
\[
A(\phi\otimes\tau(\phi))=\phi\cup\tau(\phi)=-\tau(\phi\cup\tau(\phi))=-\phi\cup\tau(\phi)=0
\]
\end{rem}

We summarize the results obtained in this section in the following proposition.

\begin{prop}\label{cones} The assumptions and the notation being the ones introduced at the beginning of this section,  let
 $H$ be a polarization such that there exists an $H$-polystable sheaf $[F]\in M=M_{v,H}$ of the form
$F=F_1\oplus F_2$, with the $F_1$ and $F_2$ two non isomorphic $H$-stable sheaves. For $i=1,2$, let $D_i$ be the support of $F_i$.
Assume that $D_1$ and $D_2$ are integral $\iota$-invariant curves meeting transversely and let $h_i$ be the genus of $D_i$,  $i=1,2$. Set $W=\Ext^1(F_1, F_2)$. Then

a) The tangent cone
$C_{[F]}(M)$ to $M$ at $[F]$ is isomorphic to $\C^{2(h_1+h_2)}\times\End_{1,A}(W)$, where $\End_{1,A}(W)$
is defined by (\ref{end1h});

b) Suppose that $H$ is such that $\tau: M_{v,H} \dashrightarrow M_{v,H}$ is biregular in a neighborhood of $[F]$ (e.g. if $H=D$, )).
If $[F]\in P=P_{v, H}$, then the tangent cone
$C_{[F]}({P})$ to $P$ at $[F]$ is isomorphic to $\C^{h_1+h_2-2}\times\End_1^s(W)$, where $\End_1^s(W)$
denotes the set of symmetric (w.r.t. a suitable bilinear form) endomorphisms of $W$ of rank $\leq 1$.
\end{prop}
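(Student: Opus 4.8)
The plan is to reduce both statements to the local Kuranishi model and then assemble the computations already carried out in this section. For part a), I would first note that since $F=F_1\oplus F_2$ with $F_1\not\cong F_2$ both $H$-stable, the stabilizer is $G=\PP\Aut(F)=\C^*$ and $\dim\Ext^2(F,F)_0=1$ by (\ref{condition_star}). Consequently the formal neighbourhood of $b_0$ in the Kuranishi base $B$ is the single hypersurface $\kappa=0$, its tangent cone is the quadric $Q=\kappa_2^{-1}(0)$ of (\ref{cono_B}), and $C_{[F]}(M)=Q/\C^*$ by (\ref{cono_M}). I would then substitute the explicit moment map (\ref{moment2}) and the identification (\ref{quot_ext}) of $\Ext^1(F,F)/\C^*$ with $U_1\times U_2\times\End_1(W)$: the quadratic equation descends to the single linear condition $A=0$ of (\ref{kuran_eq}), giving $C_{[F]}(M)=U_1\times U_2\times\End_{1,A}(W)$. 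Finally I would extract the factor $\C^{2(h_1+h_2)}$ from $\dim U_i=\dim\Ext^1(F_i,F_i)=D_i^2+2=2h_i$, which holds because $F_i$ is stable with Mukai vector $(0,[D_i],\chi_i)$.

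For part b) I would impose the extra hypotheses that $\tau$ is biregular near $[F]\in P$ and that $D_1,D_2$ are $\iota$-invariant integral curves meeting transversally, so that $\tau(F_i)\cong F_i$ and $\tau$ acts on $\Ext^1(F,F)$ through the involution $\tau_*$ of (\ref{dtau}). The backbone is the chain of inclusions (\ref{cone_P}), $C_{[F]}(P)\subseteq C_{[F]}(M)^{\tau}\subseteq(\Ext^1(F,F)/\C^*)^{\tau}$, where the first inclusion is the general fact that the tangent cone of a fixed locus lands inside the fixed locus of the tangent cone. Using (\ref{quot_ext_tau}) I would identify the fixed factor $\End_1(W)^\tau$ with the affine cone $\End_1^s(W)$ over the Veronese $\PP W\hookrightarrow\PP S^2W$, writing a $\tau$-fixed rank-one endomorphism as $\phi\otimes\tau(\phi)$ via the pairing $\mu$ of (\ref{cup_bilin}). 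A dimension count then gives $\dim U_i^\tau=2g_i-2$ (since $[F_i]$ is a smooth point of a relative Prym variety) and $\dim\End_1^s(W)=\dim W=D_1\cdot D_2$ by (\ref{dim_ext_mist}), summing to $\tfrac{1}{2}D^2=h-1=\dim P$.

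The only step that is more than bookkeeping is promoting the inclusions in (\ref{cone_P}) to equalities, since forming a tangent cone does not in general commute with passing to a fixed locus. I would argue by dimension: the rightmost space $(\Ext^1(F,F)/\C^*)^\tau$ is irreducible (the Veronese cone is) of dimension exactly $\dim P$, while $C_{[F]}(P)$ is a closed subcone of pure dimension $\dim P$; being of full dimension inside an irreducible cone, it must fill it, so the three terms coincide. To confirm that this fixed cone really is contained in $C_{[F]}(M)$ — equivalently that the quadratic obstruction $A$ vanishes on $\End_1^s(W)$ — I would invoke that $\kappa_2$ is $\tau$-equivariant while $\tau_*$ acts as $-1$ on $\Ext^2(F,F)$ by Remark \ref{tau_ext2}, forcing $A(\phi\otimes\tau(\phi))=\phi\cup\tau(\phi)=-\phi\cup\tau(\phi)=0$. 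With the equalities in hand, part b) reads off as $C_{[F]}(P)=U_1^\tau\times U_2^\tau\times\End_1^s(W)=\C^{h_1+h_2-2}\times\End_1^s(W)$.
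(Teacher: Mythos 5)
Your proposal is correct and takes essentially the same route as the paper: Section \ref{kuranishi} establishes Proposition \ref{cones} by exactly this assembly — the Kuranishi hypersurface model with $G=\C^*$ and $\dim\Ext^2(F,F)_0=1$, the identification $\Ext^1(F,F)/\C^*\cong U_1\times U_2\times\End_1(W)$ with the single linear equation $A=0$, and, for part b), the chain (\ref{cone_P}) promoted to equalities by irreducibility of the Veronese cone together with the dimension count $\dim(\Ext^1(F,F)/\C^*)^\tau=h_1+h_2-2+D_1\cdot D_2=\tfrac{1}{2}D^2=\dim P$. Even your final verification that $A$ vanishes on $\End_1^s(W)$, using $\tau$-equivariance of $\kappa_2$ and Remark \ref{tau_ext2}, is precisely the remark the paper records immediately after that computation.
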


\begin{rem}{\rm We could drop the integrality assumption of $D_1$ and $D_2$ and just require that they have no common components. In fact one can check that also  in this case (\ref{dim_ext_mist}) holds, which is all that matters in the proof of the proposition above.}
\end{rem}
\begin{rem}\label{intr_cones}
{\rm Keeping the notation of of Proposition \ref{cones},
Set  $\chi_i=\chi(F_i)$, $v_i=(0,[D_i],\chi_i)$, $M_i=M_{v_i, H}(S)$,
 $C_i=D_i/\iota$, and $P_i=P_{v_i, H}(|D_i|/|C_i|)$, $i=1,2$.  By hypothesis  $[F_i]$ is a smooth point for both $M_i$ and $P_i$, $i=1,2$ so that $[F]$ is a smooth point in both $M_1\times M_2$ and $P_1\times P_2$. Thus we can write the above description of tangent cones in a more intrinsic way
 \be\label{more_intr}
 \aligned
 C_{[F]}(M)&\cong T_{[F]}(M_1\times M_2)\times\End_{1,A}(W)\,,\\
 C_{[F]}({P})&\cong T_{[F]}(P_1\times P_2)\times\End_1^s(W)\,
 \endaligned
 \ee
}
\end{rem}


\section{Analysis of  singularities and the non hyperelliptic case}\label{singular}

For the next proposition we keep the notation and the hypotheses introduced at the beginning of the preceding Section \ref{kuranishi}.
Here we look more closely at the singular points of $M$
and $P$, whose tangent cones were described in Proposition \ref{cones}. The first result we prove is the following.

\begin{prop}\label{loc_sing}Let $[F]\in M$ be a polystable sheaf as in the statement of Proposition \ref{cones}. Suppose that $D_1\cdot D_2\ge 3$.
If the polystable sheaf $[F]$ lies in $P$ then, locally around $[F]$, the relative Prym variety $P$  is isomorphic to its tangent cone $C_{[F]}(P)$.
\end{prop}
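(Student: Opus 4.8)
The plan is to show that the analytic germ $(\mathcal P, [F])$ is isomorphic to the germ of its tangent cone $C_{[F]}(P)$ by exhibiting the local equations of $P$ inside $\mathcal M$ and checking that they are already homogeneous (in fact defined by the quadratic part of Kuranishi alone). By the discussion preceding Proposition \ref{cones}, a formal (or analytic) neighborhood of $[F]$ in $M$ is $\kappa^{-1}(0)\sslash\C^*$, where $\kappa=\kappa_2+\kappa_3+\cdots$ is the Kuranishi map and $G=\C^*$. Since $\tau$ lifts to an automorphism $\tau_\phi$ of the Kuranishi base $B$ whose fixed locus (near $b_0$) cuts out $P$, a neighborhood $\mathcal P$ of $[F]$ is identified with $(\kappa^{-1}(0)\cap B^{\tau})\sslash\C^*$, i.e. with the $\tau$-fixed part of the zero locus of $\kappa$ inside $\Ext^1(F,F)\sslash\C^*$. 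The key structural input is that, by (\ref{condition_star}), the target $\Ext^2(F,F)_0$ is one-dimensional and $\tau$ acts on it by $-1$ (Remark \ref{tau_ext2}); thus every component $\kappa_m$ of the Kuranishi map is an \emph{anti}-invariant function.

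First I would restrict the whole Kuranishi map to the $\tau$-fixed locus. On $\Ext^1(F,F)^{\tau}$ the map $\kappa$ is a $\C$-valued formal function landing in $\Ext^2(F,F)_0$, on which $\tau$ acts by $-1$; since $\kappa$ is $\tau$-equivariant and the source is $\tau$-fixed, $\kappa|_{\Ext^1(F,F)^\tau}$ must take values in the $(+1)$-eigenspace of $\tau$ on $\Ext^2(F,F)_0$, which is zero. Hence $\kappa$ \emph{vanishes identically} on $\Ext^1(F,F)^{\tau}$. This is exactly the phenomenon already observed for the quadratic term in the Remark after the dimension count (there $A$ vanishes on $\End_1^s(W)$), and the same equivariance argument promotes it to all orders. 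Consequently $B^{\tau}$, the $\tau$-fixed part of $\kappa^{-1}(0)$, coincides near $b_0$ with the \emph{entire} $\tau$-fixed subspace $\Ext^1(F,F)^{\tau}$ of the ambient vector space — the Kuranishi obstructions impose no further conditions along the fixed directions.

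It then remains to pass to the $\C^*$-quotient and match dimensions. By (\ref{quot_ext_tau}) we have $(\Ext^1(F,F)/\C^*)^{\tau}\cong U_1^\tau\times U_2^\tau\times\End_1(W)^\tau$, and the identification $\End_1(W)^\tau\cong\End_1^s(W)$ together with the dimension count carried out before Proposition \ref{cones} shows this space is irreducible of dimension $h-1=\dim P$. Since the zero locus of $\kappa$ restricted to the fixed locus is all of $\Ext^1(F,F)^\tau$, its image $(\kappa^{-1}(0)\cap B^\tau)\sslash\C^*$ is all of $(\Ext^1(F,F)/\C^*)^\tau$, which is precisely $C_{[F]}(P)$ by Proposition \ref{cones}(b). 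Because $P$ is the closure of a component through the zero section and this fixed-point component already has the full dimension $h-1$, the germ $\mathcal P$ equals the germ of $(\Ext^1(F,F)/\C^*)^\tau$, i.e. $\mathcal P\cong C_{[F]}(P)$ as analytic germs.

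I expect the main obstacle to be the bookkeeping needed to make the lift $\tau_\phi$ genuinely compatible with the $\C^*$-action at the level of the quotient, so that "taking fixed points" and "taking the GIT quotient" commute in a neighborhood of the singular point; this is where the hypothesis $D_1\cdot D_2\ge 3$ (equivalently $\dim W\ge 3$), which guarantees that $\End_1^s(W)$ is normal and that $[F]$ is an isolated enough singularity of the expected type, is used to control the quotient and to ensure that no extraneous components of $\Fix(\tau)$ meet the germ. The linearization of $\tau$ on the various $\Ext$-groups (already recorded in (\ref{dtau}) and Remark \ref{tau_ext2}) handles the infinitesimal compatibility; the remaining work is to check that the higher-order discrepancy between $\tau_\phi$ and a true involution does not affect the fixed locus, which follows because along $\Ext^1(F,F)^\tau$ the defining function $\kappa$ is identically zero regardless of how $\tau_\phi$ is chosen.
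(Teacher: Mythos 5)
Your proof hinges on a claim that is not justified and that the paper's own argument is designed to avoid: that the \emph{full} Kuranishi map $\kappa=\kappa_2+\kappa_3+\cdots$ is $\tau$-equivariant, so that anti-invariance of the one-dimensional target forces $\kappa$ to vanish identically on $\Ext^1(F,F)^{\tau}$. Only the quadratic term $\kappa_2(e)=\frac{1}{2}\,e\cup e$ is canonical, and its anti-invariance is indeed the content of the Remark following the tangent-cone computation --- but that is all the paper ever uses. The higher-order terms of $\kappa$ depend on choices, and the only available lift $\tau_\phi\colon B\to B$ of $\tau$ is neither linear nor, as the paper stresses, an involution (only its differential is), so no averaging or naturality argument produces an equivariant $\kappa$. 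Moreover, your identification of the germ of $P$ with $\bigl(\kappa^{-1}(0)\cap\Ext^1(F,F)^{\tau}\bigr)\sslash\C^*$ conflates the fixed locus of the nonlinear automorphism $\tau_\phi$ (and of the induced map on $B\sslash\C^*$, which may also contain orbits that are only preserved up to the $\C^*$-twist) with the fixed subspace of its linearization; this is precisely the kind of linearization statement that would require proof. Your closing remark --- that the discrepancy between $\tau_\phi$ and a true involution is harmless ``because $\kappa$ vanishes on the fixed space regardless of how $\tau_\phi$ is chosen'' --- is circular, since that vanishing was itself deduced from the equivariance, which is a property of the chosen $\tau_\phi$.

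A symptom of the gap is that the hypothesis $D_1\cdot D_2\ge 3$ does no real work in your argument, whereas it is essential. The paper's proof runs along a different route: using the vanishing of the Kuranishi map on the pure part $\Ext^1(F_1,F_1)\oplus\Ext^1(F_2,F_2)$ (as in Lehn--Sorger), it produces a morphism $q\colon\mc P\to\mc P_1\times\mc P_2$ which, at the level of tangent cones, is the projection of $\mc P_1\times\mc P_2\times\End_1^s(W)$ onto the first two factors; $q$ is flat, hence a deformation of its central fiber, Grauert's theorem identifies $q^{-1}(0)$ with $\End_1^s(W)$, and finally the \emph{rigidity} of the cone over the degree-two Veronese embedding of $\PP W$ --- which holds exactly when $\dim W\ge 3$ --- forces this deformation to be trivial, yielding $\mc P\cong\mc P_1\times\mc P_2\times\End_1^s(W)$. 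When $\dim W=2$ the cone is the ordinary quadric cone, which is not rigid, and germs such as $\{xy-z^2=t^3\}$ have the expected tangent cone at the origin without being products; so an argument that never invokes rigidity, as yours does not, cannot establish the statement.
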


{\sl Proof.}
First of all, recall from formula (\ref{dim_ext_mist}) that $\dim W=D_1\cdot D_2$.

Let $Z$ denote the irreducible component of the singular locus of $M$ containing $[F]$. Then  a neighbourhood $\mc Z$ of $[F_1\oplus F_2]$ in $Z$ is  isomorphic to $\mc M_1\times \mc M_2$ where, for $i=1,2$,  $\mc M_i\subset M_i=M_{v_i, H}$ is a  suitable analytic neighborhood of $[F_i]$  and $v_i=v(F_i)$.
As in Proposition 4.2 in \cite{Lehn-Sorger06}, one can show that the Kuranishi map vanishes identically on $\Ext^1(F_1, F_1)\oplus \Ext^1(F_2, F_2)$ so that under the identification $(\mc M, [F])\cong (\kappa^{-1}(0)//G, \,\,0)$, the pointed space $(\mc Z, [F])$ is  identified with $(\Ext^1(F_1, F_1)\oplus \Ext^1(F_2, F_2), 0)$.
Consider the natural projection
\[
 \Ext^1(F,F) \to \Ext^1(F_1, F_1)\oplus \Ext^1(F_2, F_2).
 \]
Since this projection is $G$-equivariant, and the action of $G$ on $k^{-1}(0)$ is induced by the linear action on $ \Ext^1(F,F)$, the restriction
\[
\kappa^{-1}(0) \to\Ext^1(F_1, F_1)\oplus \Ext^1(F_2, F_2),
\]
is also $G$-equivariant.
It follows that there is an induced morphism (recall that the $G$-action on the pure part of $\Ext^1(F,F)$ is trivial),
\[ p: (\mc M, [F]) \to (\mc Z, [F])\cong (\Ext^1(F_1, F_1)\oplus \Ext^1(F_2, F_2), 0).
\]
The inclusion $(\mc Z, [F]) \subset (\mc M, [F])$ defines a section of $ p$, and that the fiber of $ p$ over $[F]$ is identified with $(\kappa^{-1}(0)\cap \Ext^1(F_1, F_2)\oplus \Ext^1(F_1, F_2)) //G $. 

Now let $\Sigma$ denote the irreducible component of the singular locus of $P$ containing $[F]$. Then locally around $[F]$,  $\Sigma$ is isomorphic to $\mathcal P_1\times \mathcal P_2\subset \mathcal M_1\times \mathcal M_2$ where, for $i=1,2$,   $\mc P_i$ is a suitable neighborhood of $[F_i]$  in the   relative Prym variety  $P_i\subset M_i$. From Remark \ref{intr_cones} we deduce that the tangent cone of $P$ in $[F]$ is locally isomorphic to
\be\label{tan_cone_p}
\mc P_1\times \mc P_2\times \End_1^s(W).
\ee
The projection $\mc M \to \mc M_1 \times \mc M_2$ induces a morphism $ q: \mc P \to \mc M_1\times \mc M_2$, and since the morphism induced by $ p$ at the level of tangent cones is equivariant with respect to the induced action of $\tau$, the image of $q$ in $\mc M_1\times\mc M_2$ has the same dimension as, and therefore is equal to, $\mc P_1\times \mc P_2$. Thus there is a morphism $ q:\mc P \to \mc P_1 \times \mc P_2$. At the level of tangent cones, $ q$ is just the projection onto the first two factors in (\ref{tan_cone_p}). In particular, the fibration is flat and is thus a deformation of the central fiber $ q^{-1}(0)$. Since the tangent cone to  $ q^{-1}(0)$ is isomorphic to $\End_1^s(W)$, we can apply the theorem of Grauert   (cf. \cite{Grauert} and \cite{Camacho-Movasati}, Theorem 4.4) and conclude that $ q^{-1}(0)\cong \End_1^s(W)$. Moreover, since $\End_1^s(W)$ is isomorphic to the cone over the degree two Veronese embedding of $\PP W$, it is rigid as soon as $\dim W\geq 3$. It follows, as in the previous case, that locally around $[F]$,
\[
\mc P\cong\mc  P_1\times \mc P_2\times \End_1^s(W).
\]
\hfill $\square$

The main goal of  this section is to prove that when $|D|$ is not a hyperelliptic system the relative Prym variety  does not admit a symplectic resolution.

\begin{theorem} \label{no symplectic resolution} Let $T$ be a general Enriques surface and $f: S\to T$ its universal covering. Let $|C|$ be a non-hyperelliptic linear system of genus $g$ on $T$.
Set $D=f^{-1}(C)$ and $v=(0, [D], -h+1)$, $h=2g+1=g(D)$. 
The singular variety $P_{v,H}=\Prym_{v, H}(\D/\CC)$ defined in Section \ref{rel_prym}, does not admit a symplectic resolution.
\end{theorem}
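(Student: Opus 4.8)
The plan is to localise the non-existence at a single polystable point and to show that there $P$ carries a $\Q$-factorial terminal (in fact Gorenstein) singularity, a type of singularity which can never be crepantly, hence symplectically, resolved. First I would reduce to the case $H=D$. The birational involution $\tau$ of (\ref{tau_birat}) is defined independently of the polarisation, so for every $\iota^*$-invariant $H$ the variety $P_{v,H}=\overline{\operatorname{Fix}^0(\tau)}$ is birational to $P_{v,D}$, and all of them carry a symplectic form on their smooth locus (Proposition \ref{prop jN}). Since a projective variety with symplectic singularities admits a symplectic resolution if and only if its terminalisation is smooth, and terminalisations are unique up to flops within a birational class, it suffices to exhibit a genuine $\Q$-factorial terminal point of $P_{v,D}$: such a point forces the terminalisation of $P_{v,D}$, and hence of every $P_{v,H}$ in its class, to be singular. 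Thus from now on I set $P=P_{v,D}$, for which $\tau$ is a regular involution.

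Next I would produce the relevant singular point and read off its local structure. Because $|C|$ is non-hyperelliptic, Corollary \ref{two_curves} forces every splitting $D=D_1+D_2$ into $\iota$-invariant integral curves to satisfy $D_1\cdot D_2=2\nu\ge 4$; choosing such a reducible member and a polystable sheaf $F=F_1\oplus F_2$ with $[F]\in P$ as in Proposition \ref{polyst}, we land exactly in the hypotheses of Propositions \ref{cones} and \ref{loc_sing}. As $D_1\cdot D_2\ge 4\ge 3$, Proposition \ref{loc_sing} identifies $P$ analytically near $[F]$ with its tangent cone, which by Proposition \ref{cones} is
\[
\C^{\,h_1+h_2-2}\times\End_1^s(W),\qquad n:=\dim W=D_1\cdot D_2=2\nu\ge 4.
\]
The factor $\End_1^s(W)$ is the affine cone over the second Veronese embedding $\PP W\hookrightarrow\PP S^2W$; via $w\mapsto w\otimes w$ it is the quotient $W/\{\pm1\}\cong\C^{\,n}/(\Z/2)$, the generator acting as $-1$. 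Hence, locally at $[F]$, $P$ is the quotient singularity $\C^{\,N}/(\Z/2)$ with $N=h_1+h_2-2+n$, the generator acting as $-1$ on the last $n$ coordinates and trivially on the rest. This model is $\Q$-factorial, being a quotient singularity; since $n=2\nu$ is even the generator lies in $\mathrm{SL}_N$, so the quotient is Gorenstein; and as the generator fixes a subspace of codimension $n\ge4$ it has no quasi-reflections and age equal to $n/2=\nu\ge 2>1$, so by the Reid--Tai criterion the singularity is terminal.

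It then remains to deduce non-existence. A symplectic resolution $\pi\colon Y\to P$ pulls the symplectic form on $P_{\mathrm{reg}}$ back to a symplectic form on $Y$, whose top exterior power is a nowhere-vanishing section of $K_Y$ that agrees over $P_{\mathrm{reg}}$ with the pullback of a local generator of $K_P$ (Gorenstein, hence locally trivial). Comparing these two sections of $K_Y$ gives $\sum_i a_iE_i=\operatorname{div}(\pi^*\sigma^{\wedge})=0$, so $\pi$ is crepant. But $P$ is terminal, so every $\pi$-exceptional divisor would have strictly positive discrepancy; crepancy then forces $\pi$ to have no exceptional divisor, i.e.\ to be small, and a small projective birational morphism onto the $\Q$-factorial variety $P$ is an isomorphism. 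This makes $[F]$ a smooth point, a contradiction.

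The step I expect to be the \emph{main obstacle} is precisely the reduction to $H=D$: for a $v$-generic $H$ the involution $\tau$ is only birational (Proposition \ref{never_preserve}) and $P_{v,H}$ is defined as a closure over which one has no direct local control, so the argument genuinely rests on the birational invariance of terminalisations (equivalently, of the existence of a symplectic resolution) within the class of projective symplectic varieties, rather than on a pointwise comparison of the singularities of $P_{v,H}$ and $P_{v,D}$.
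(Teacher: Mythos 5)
Your local analysis coincides with the paper's own: you locate a polystable point $[F]=[F_1\oplus F_2]$ over a reducible member of $f^*|C|$ via Corollary \ref{two_curves}, Corollary \ref{existence polystable} and Proposition \ref{polyst}, read off the local model $\C^{h_1+h_2-2}\times \End_1^s(W)$ with $\dim W=2\nu\ge 4$ from Propositions \ref{cones} and \ref{loc_sing}, and conclude that this germ is a $\Q$-factorial terminal quotient singularity. Your use of the Reid--Tai criterion (age $=\nu\ge2$, no quasi-reflections, determinant $(-1)^{2\nu}=1$) is an equivalent substitute for the paper's appeal to Example 1.5(ii) of \cite{Reid} (blow-up with discrepancy $\nu-1>0$), and your endgame for $H=D$ (a symplectic resolution is crepant; crepant over a terminal germ has no exceptional divisors; small plus $\Q$-factorial forces an isomorphism) is a clean equivalent of the paper's argument via Flenner's extension theorem.

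The genuine gap is the reduction to $H=D$, which you yourself flag as the crux. Your argument requires $P_{v,H}$, for an \emph{arbitrary} polarization, to be a projective variety with symplectic singularities --- normal, with a non-degenerate $2$-form on its regular locus extending to resolutions --- before one can speak of its terminalization or invoke ``terminalizations are unique up to flops within a birational class'' (a statement that itself presupposes all members of the class are $K$-trivial symplectic varieties, and whose smoothness-transfer part rests on Namikawa's deformation results). None of this is available for the closure $P_{v,H}=\overline{\Fix^0(\tau)}\subset M_{v,H}$: by Proposition \ref{never_preserve} the involution $\tau$ is only birational when $H$ is $v$-generic, the paper stresses that one has no control on the adherence of the fixed locus, normality of $P_{v,H}$ is never established, and --- most concretely --- your assertion that ``all of them carry a symplectic form on their smooth locus (Proposition \ref{prop jN})'' contradicts the paper's explicit caveat at the end of Section 3.3: for $H\neq D$ the $2$-form need \emph{not} be symplectic on the whole smooth locus of $P_{v,H}$ (Example 9.7 of \cite{Sacca13}); Proposition \ref{prop jN} only makes $\tau$ symplectic on a dense open set. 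Your scheme could be repaired by feeding into the MMP/Namikawa machinery only the \emph{hypothetical} symplectic resolution $Y\to P_{v,H}$ (which genuinely is a smooth projective symplectic variety, birational to $P_{v,D}$) together with a $\Q$-factorial terminalization of the normalization of $P_{v,D}$, but that is a real restructuring, still needs normality of $P_{v,D}$, and imports several black boxes (BCHM, Kawamata's flop connection, Namikawa's rigidity). The paper instead stays pointwise: using the wall-and-chamber decomposition of $\Amp(S)$ it produces a point $[F']\in P_{v,H}$ and a birational map $\sigma\colon P_{v,H}\dasharrow P_{v,D}$ defined in a neighborhood of $[F']$ with $\sigma([F'])=[F]$, and transports the local $\Q$-factorial terminal model through it, with no global hypotheses on $P_{v,H}$ whatsoever.
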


\begin{proof}
Using Corollary \ref{two_curves}, we can find a curve in $|D|$ which is the union of two smooth $\iota$-invariant curves $D_1$ and $D_2$ meeting transversally in $2\nu$ points. Recall that, from our assumption on $C$ and from the same corollary,  it follows that $\nu \ge 2$.
We first examine the case $H=D$.
By Corollary \ref{existence polystable}, we can find a $D$-polystable sheaf
\be \label{polystable}
F=F_1\oplus F_2,
\ee
with $\supp(F_i)=D_i$ for $i=1,2$. Notice that  we can choose $F_1$ and $F_2$ to be $\tau$-invariant. We claim that we can moreover choose them so that $[F]$ belongs to the identity component $P_{v,D}=\Fix^0(\tau)$.
Indeed, by Proposition \ref{polyst} {\it every}  maximal dimension component  of $\Fix(\tau)$  contains a polystable sheaf as in (\ref{polystable}).

We claim that  the singularity of $P$ at $[F]$ is $\Q$-factorial and terminal  (but not smooth). This  proves that $P$ does not admit any symplectic resolution.

By Proposition \ref{loc_sing} above it follows that locally around $[F]$, the relative Prym variety $P_{v,D}$ is isomorphic to the product of an affine space times  the space of symmetric endomorphisms of $W=\Ext^1(F_2,F_1)$ of rank $\le 1$.  Moreover,
\[
\dim W=2\nu,\,\,\,\, \text{ with} \,\,\, \nu \ge 2.
\]
To prove the theorem, it is thus sufficient to prove the claim for  $\End_1^s(W)$, in the case when  $\dim W\geq 4$. The fact that $\End_1^s(W)$ is $\Q$-factorial follows from the fact that it os isomorphic to the quotient of $W$ by $\Z/2\Z$ acting by multiplication by $-1$. As for the statement on the type of singularity, what we need is contained in Example 1.5 (ii) of \cite{Reid}. Following the notation of Reid, we have 
\[
n=2\nu, \,\,\,\text{and}\,\,\, k=2,
\]

so that $b=\nu$ and $a=1$.
Also notice  that the blow up $\wh \End_1^s(W)$ of $\End_1^s(W)$ at the origin is smooth. From Reid's computation, it follows that the discrepancy of the exceptional divisor of the blow up is $\nu-1$.
As $\End_1^s(W)$ is $\Q$-factorial,  any resolution factors via the blow up and therefore its discrepancy is bounded from below by the discrepancy of the blow up.
In conclusion,  the singularity is terminal if $\nu >1$. Moreover, when $\nu=1$ the singularity is canonical but not terminal.

By a theorem of Flenner \cite{Flenner}, which we can use since by Proposition \ref{codim_sing} below the codimension of the singular locus of $P_{v,D}$ is greater or equal to two,  we know that the symplectic form extends to a holomorphic form on any smooth resolution of $P_{v,D}$.
However, from what we proved above we also know that the top exterior power of this holomorphic form vanishes along some exceptional divisors in any resolution.
We may conclude that no resolution of $P_{v,D}$ admits a non-degenerate symplectic form. 

We now consider the case of an arbitrary polarization $H$.
This case can be reduced to the case $H=D$, by claiming that
there always exists a point $[F']\in P_{v,H}$ and a birational morphism $\sigma: P_{v,H}\dasharrow P_{v,D}$ which is defined in a neighborhood of $[F']$ and such that $\sigma([F'])=[F]$. 
To prove the claim we consider the wall and chamber decomposition of $\Amp(S)$ determined by $v$. Let
$\{U_\alpha\}_{\alpha\in A}$ be the set of open chambers whose closure
contains $D$, let $U$ be the interior of $\cup_{\alpha\in A}\ov U_\alpha$.
For every $H'$ belonging to $U$ there is a regular map $\sigma: P_{v,H'}\to P_{v,D}$  (typically one expects, for this map, a positive dimensional fiber over the point $[F]$). 
Suppose $H$ does not belong to $U$.
For each $\alpha\in A$ we  choose a point  $H_\alpha\in U_\alpha\subset U$
and a point $[F_\alpha]\in P_{v, H_\alpha}$ mapping to $[F]$ under a rational map $\sigma_\alpha: P_{v,H_\alpha}\dasharrow P_{v,D}$.  We may then choose a  path from $H$ to $H_\alpha$
which only crosses walls not passing through $[F]$. Each of this crossing correspond to a birational map which is defined
in a neighborhood of $[F_\alpha]$.

\end{proof}

Notice that from the proof of Theorem \ref{no symplectic resolution}, it follows that if $\nu=1$, then the singularity is canonical and moreover the blow up of the singularity is a (local) crepant resolution. In particular, the pullback of the symplectic form is non-degenerate along that divisor.
This case will be studied in the next section.

\begin{rem} {\rm
It was proved by Kaledin \cite{Kaledin06} and by Namikawa \cite{Namikawa2}, that if $w: X \to Y$ is a birational projective morphism from a smooth projective symplectic $n$-fold $X$ to a normal variety $Y$, then $w$ is a semi-small map, i.e. if $Y_i$ denotes the set of points  $y \in Y$ such that $\dim w^{-1}(y)=i$, then $\dim Y_i \le n-2i$. This, in particular, implies that if $Y$ is $\Q$-factorial then $\codim \operatorname{Sing} (Y) \le 2$. From this observation it follows that if we knew by other means that $P$ (notation as above) were normal and $\Q$-factorial at a general singular point, then we could conclude that $P$ has no symplectic resolution.
}\end{rem}


\section{The hyperelliptic case}\label{hyp}

In this section, we will prove that the degree zero relative Prym variety of a hyperelliptic linear system is birational to a hyperk\"ahler manifold, and we will highlight the cases in which it is a smooth compact hyperk\"ahler manifold and the ones in which it admits a symplectic resolution. The hyperk\"ahler manifolds arising from degree zero relative Prym varieties are all of K3$^{[n]}$ type.
As in the preceding sections we will use the following notation (see (\ref{notation_prym}))
$$
M=M_{v,H}\,,\qquad \tau:M \dashrightarrow M\,,\qquad P=P_{v,H}
$$
where  $v=(0,[D], -h+1)$, and where now $|D|$ is a hyperelliptic linear system which is the pull-back, via $f: S\to T$, of a hyperelliptic linear system $|C|=|ne_1+e_2|$ on $T$
(see the Appendix \ref{appendix} for the relevant definitions and notation).

The first result we want to prove is that, in the hyperelliptic case, the Prym involution $\tau$ on $M$ comes from a bona fide
involution on $S$.

\begin{prop}\label{involution_k}
Let $f:S\to T$ and $\iota:S\to S$ be as in (\ref{univ_cover}) and (\ref{iota}).
Let $|C|=|ne_1+e_2|$ be a hyperelliptic linear system on $T$  and let $|D|=|f^*(C{)}|$. Set $v=(0,[D], -h+1)$, $h=g(D)$. Then there exists a symplectic involution
\[
k: S \to S,
\]
such that, for any $\iota^*$-invariant polarization $H$, the birational involution
\[
\tau: M\dashrightarrow M
\]
defined in Section \ref{rel_prym} concides with the birational involution
\[
\aligned
k^*:  M& \dashrightarrow M\\
F & \mapsto k^* F.
\endaligned
\]
\end{prop}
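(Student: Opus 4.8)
The plan is to realise the ``$-1$'' involution $j$ geometrically, exactly as announced in the introduction, and then to assemble $\tau$ from two honest involutions on $S$. By the analysis of hyperelliptic systems in the Appendix \ref{appendix} (cf. Definition \ref{hyp_syst}) and since $S$ is unodal, the linear system $|D|$ is base-point free and the associated morphism $\phi=\phi_{|D|}\colon S\to\PP^h$ is a two-to-one (ramified) cover of a non-degenerate surface $R$ of minimal degree $D^2/2=h-1$; such a surface is rational and its general hyperplane section is a rational normal curve, so the general member $D_0\in|D|$ is a smooth integral hyperelliptic curve with $\phi(D_0)\cong\PP^1$. Let $\ell\colon S\to S$ be the covering involution of this double cover. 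Since $S/\langle\ell\rangle$ is birational to the rational surface $R$, the involution $\ell$ cannot be symplectic, whence $\ell^*\sigma=-\sigma$; that is, $\ell$ is anti-symplectic.

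First I would check that $\ell$ and $\iota$ commute. Because $f\circ\iota=f$ one has $\iota^{-1}(D)=\iota^{-1}f^{-1}(C)=f^{-1}(C)=D$, so $\iota$ preserves the divisor $D$ and hence acts on $H^0(S,{\mc O}_S(D))$, i.e. it preserves the linear system $|D|$. Therefore $\iota$ descends to an automorphism $\ov\iota$ of $R$ with $\phi\circ\iota=\ov\iota\circ\phi$. The conjugate $\iota\,\ell\,\iota^{-1}$ then satisfies $\phi\circ(\iota\,\ell\,\iota^{-1})=\phi$, so it is a deck transformation of the two-to-one cover $\phi$; being non-trivial, it must equal $\ell$. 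Thus $\iota\ell=\ell\iota$, and $k:=\ell\circ\iota=\iota\circ\ell$ is an involution of $S$. It is symplectic, since $k^*\sigma=\iota^*\ell^*\sigma=\iota^*(-\sigma)=\sigma$.

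The heart of the argument is the identification $\ell^*=j$ as birational involutions of $M=M_{v,H}$. It suffices to compare the two maps on the dense open subset $\pi^{-1}(U)$ lying over the locus $U\subset|D|$ of smooth integral curves, where every rank one torsion free sheaf is automatically stable for every polarization, so that both $\ell^*$ and $j$ are regular there. Each $D_0\in U$ is $\ell$-invariant, being the $\phi$-preimage of a hyperplane section of $R$, and $\ell$ restricts on $D_0$ to the hyperelliptic involution of the double cover $D_0\to\phi(D_0)\cong\PP^1$. For a line bundle $L\in\Jac^0(D_0)=\pi^{-1}(D_0)$ the relation $(\ell|_{D_0})^*L\otimes L=(\ell|_{D_0})^*L\otimes L$ coming from the norm map (as in the displayed formula $f^*(\operatorname{Nm}(L))=(1+\iota^*)L$) gives $(\ell|_{D_0})^*L\cong L^{-1}$, since the norm of $L$ lands in $\Jac^0(\PP^1)=0$. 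On the other hand $\chi=-h+1$ forces $\deg L=0$, and Lemma \ref{dual} yields $j(L)=\Shext^1_S(L,{\mc O}_S(-D))\cong\Shhom_{D_0}(L,{\mc O}_{D_0})\cong L^{-1}$. Hence $\ell^*$ and $j$ agree on $\pi^{-1}(U)$ and therefore coincide as birational involutions of $M$.

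Finally, using that $\iota$ and $\ell$ commute, I would conclude
\[
k^*=(\ell\circ\iota)^*=\iota^*\circ\ell^*=\ell^*\circ\iota^*=j\circ\iota^*=\tau,
\]
as birational self-maps of $M_{v,H}$, for any $\iota^*$-invariant polarization $H$; note that no $\ell^*$-invariance of $H$ is needed, since the comparison takes place over the integral locus where stability is automatic. The step I expect to be the real obstacle is the geometric input of the first paragraph: that a hyperelliptic $|D|$ presents $S$ as a double cover of a rational surface whose generic hyperplane-section curve is $\PP^1$, so that $\ell|_{D_0}$ genuinely is the hyperelliptic involution realising $L\mapsto L^{-1}$. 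This is precisely what forces $\ell^*=j$, and it is where the classification of hyperelliptic linear systems in the Appendix is indispensable; by comparison, the anti-symplecticity of $\ell$ and the commutation with $\iota$ are routine.
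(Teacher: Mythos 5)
Your proposal is correct and, at its core, is the paper's own construction: both take $\ell$ to be the covering involution of the degree-two morphism $\varphi_{|D|}\colon S\to R$ onto the rational normal scroll (Proposition \ref{hyperelliptic k3}), set $k=\ell\circ\iota$, and identify $\tau$ with $k^*$ on the dense locus of degree-zero line bundles on smooth members of $|D|$, where Lemma \ref{dual} gives $j(L)\cong L^{-1}$ and the hyperelliptic involution gives $\ell^*L\cong L^{-1}$. The one genuine divergence is the proof that $\ell$ and $\iota$ commute (the paper's Lemma \ref{involutions commute}): you descend $\iota$ to an automorphism $\ov\iota$ of $R$ via $\iota^*|D|=|D|$ and invoke uniqueness of the nontrivial deck transformation of $\varphi_{|D|}$, whereas the paper restricts $k^2$ to smooth $\iota$-invariant members $\Gamma\in f^*|C|$ (which sweep out $S$) and deduces $k^2|_{\Gamma}=\id_\Gamma$ from the fact that $(k^2)^*$ acts trivially on $\Pic^0(\Gamma)$, using $\ell^*=-\id$ there; your deck-transformation argument is cleaner and avoids the (implicit) faithfulness of $\Aut(\Gamma)\to\Aut(\Pic^0(\Gamma))$, while the paper's argument never needs to discuss the action of $\iota$ on the ambient $\PP^h$. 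You also supply proofs of two facts the paper simply quotes as standard — anti-symplecticity of $\ell$ (via rationality of $S/\langle\ell\rangle$) and $\ell^*=-\id$ on $\Pic^0$ (via the norm map) — both arguments are fine, though your displayed norm relation is a tautology as written: it should read $(\ell|_{D_0})^*L\otimes L\cong(\varphi|_{D_0})^*\operatorname{Nm}(L)$, which vanishes since $\operatorname{Nm}(L)\in\Jac^0(\PP^1)=0$. Finally, note that the paper's lemma also pins down the eight fixed points of $k$; this is not needed for the proposition itself but is used later in Section \ref{hyp}.
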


The proof of this proposition  consists in defining the involution $k$. In the  Appendix \ref{appendix}  we  recall the  notation and some basic results regarding the geometry of hyperelliptic linear systems on Enriques and K3 surfaces.
Let us then consider a hyperelliptic linear system
\[
|C|:=|ne_1+e_2|,
\]
where $e_1$ and $e_2$ are two primitive elliptic curves such that $e_1\cdot e_2=1$. The linear system has two simple base points (Proposition 4.5.1 of \cite{Cossec-Dolgachev}) and defines a degree two map of $T$ onto a degree $n-1$ surface in $\P^n$.
The two base points are
\be \label{base points odd}
e_1\cap e'_2 , \,\,\, \text{and }\,\, e'_1\cap e_2, \,\,\,\, \text{if} \,\, n \,\,\text{is odd,}
\ee
or
\be \label{base points even}
e_1\cap e_2 , \,\,\, \text{and }\,\, e'_1\cap e'_2, \,\,\,\, \text{if} \,\, n \,\,\text{is even.}
\ee
Following the notation in the Appendix \ref{appendix}  we have
\[
E_i=f^{-1}(e_i),  \,\, \text{and }\,\, E'_i=f^{-1}(e'_i), \,\,\,\, i=1,2.
\]

Set $h=2n+1$.
The genus $h$ linear system $|D|=|nE_1+E_2|$ is also hyperelliptic,  with the $g^1_2$ cut out by the elliptic pencil $|E_1|$. 
As in (\ref{phi_d}) consider the morphism
\[
\varphi=\varphi_D: S\to R\subset \PP^h
\]
attached to the linear system $|D|$. It is a degree two morphism onto a rational normal scroll of degree $h-1$.
Let us denote by
\[
\ell: S \to S,
\]
the anti-symplectic involution defined by $\varphi$.  Notice that any curve in $|D|$ is $\ell$-invariant, and that $\ell$ induces the hyperelliptic involution on every smooth member of the linear system.
In particular, we notice the well  known fact that if $D$ is smooth, $\ell^*$ acts as $-\id$ on $\Pic^0(D)$

Finally, observe that the sub-linear system 
\be\label{lin_syst_w}
W=f^*|C| \subset |D|
\ee 
has $4$ simple base points, that are the inverse image of (\ref{base points odd}) or of (\ref{base points even}),
By the same reasoning, the sub-linear system $W'=f^*|C'| \subset |D|$ has $4$ simple base points
that are the inverse image of (\ref{base points even}) or of (\ref{base points odd}) respectively.
Denote by $\{w_1, \dots, w_4\}$ the base points of $W$ and by $\{w'_1, \dots, w'_4\}$ the base points of $W'$.

\begin{lem} \label{involutions commute}
The two involutions $\ell$ and $\iota$ commute and their composition
\be \label{tau}
k=\iota \circ \ell,
\ee
is a symplectic involution with eight fixed points:
\be \label{basepoints}
\{w_1, \dots, w_4, w'_1, \dots, w'_4\}.
\ee
\end{lem}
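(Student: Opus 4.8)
The plan is to read off everything from the morphism $\varphi_D\colon S\to R\subset\PP^h$ attached to $|D|$, using the two relations $\varphi_D\circ\ell=\varphi_D$ and $\varphi_D\circ\iota=\bar\iota\circ\varphi_D$. The first holds because $\ell$ is by definition the deck involution of the degree two morphism $\varphi_D$. For the second, I would first observe that the class $[D]=f^*[C]$ is $\iota$-invariant and, since $\Pic(S)=\NS(S)$ is torsion free, that $\iota^*\O_S(D)\cong\O_S(D)$; hence $\iota^*$ acts on $H^0(S,\O_S(D))$ and induces a linear involution $\bar\iota$ of $\PP^h=\PP(H^0(S,\O_S(D))^\vee)$ preserving $R$ and intertwining with $\varphi_D$. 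It is also convenient to record the eigenspace decomposition $H^0(S,\O_S(D))=V_+\oplus V_-$ of $\iota^*$, for which $V_+=f^*H^0(T,\O_T(C))$ and $V_-=f^*H^0(T,\O_T(C'))$, so that $W=\PP(V_+)$, $W'=\PP(V_-)$, and $\Fix(\bar\iota)=\PP(V_+^\vee)\sqcup\PP(V_-^\vee)$.

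I would prove commutativity first. Set $\psi:=\ell\circ\iota\circ\ell\circ\iota$; since $\iota$ and $\ell$ are involutions, $\iota\circ\ell=\ell\circ\iota$ is equivalent to $\psi=\id$. Using the two relations above one computes $\varphi_D\circ\psi=\varphi_D$, so $\psi$ preserves every fiber of $\varphi_D$ and is therefore a deck transformation, i.e. $\psi\in\{\id,\ell\}$. (Here I use that $\varphi_D$ is finite, which holds because $S$ is unodal and hence $\varphi_D$ contracts no curve.) On the other hand $\psi$ is a product of four anti-symplectic maps, so $\psi^*\sigma=\sigma$, whereas $\ell^*\sigma=-\sigma$; thus $\psi\neq\ell$ and $\psi=\id$. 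Granting this, $k=\iota\circ\ell$ is an involution and, being a product of two anti-symplectic involutions, it is symplectic: $k^*\sigma=\ell^*\iota^*\sigma=\sigma$. Moreover $k\neq\id$, since $k=\id$ would force $\ell=\iota$, impossible because the Enriques involution $\iota$ is fixed-point free while $\ell$ is ramified. By Nikulin's theorem a non-trivial symplectic involution of a K3 surface has exactly eight fixed points, so it only remains to identify $\Fix(k)$ with the eight base points.

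The heart of the argument is to show that each $w_i$ and $w'_i$ is fixed by $k$. If $w$ is a base point of $W$, every section in $V_+$ vanishes at $w$, so the evaluation functional $\mathrm{ev}_w$ annihilates $V_+$, i.e. $\varphi_D(w)\in\PP(V_-^\vee)\subset\Fix(\bar\iota)$; symmetrically, base points of $W'$ map into $\PP(V_+^\vee)$. In either case $\varphi_D(\iota(w))=\bar\iota(\varphi_D(w))=\varphi_D(w)$, so $\iota(w)$ lies in the fiber $\varphi_D^{-1}(\varphi_D(w))=\{w,\ell(w)\}$; since $\iota$ has no fixed points, $\iota(w)=\ell(w)$, that is $k(w)=\iota(\ell(w))=\iota(\iota(w))=w$. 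The eight points are distinct, being the $f$-preimages of the four distinct points (\ref{base points odd}) and (\ref{base points even}) on $T$, and since $k$ has exactly eight fixed points these eight must constitute all of $\Fix(k)$. I expect the main obstacle to be precisely this last identification: the passage from "base point of the $\iota^*$-eigensubsystem'' to "point lying over $\Fix(\bar\iota)$,'' where the fixed-point freeness of $\iota$ is exactly what upgrades the a priori ambiguity $\iota(w)\in\{w,\ell(w)\}$ into the equality $\iota(w)=\ell(w)$.
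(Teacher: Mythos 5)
Your proof is correct, but it implements both substantive steps differently from the paper. For commutativity, the paper restricts $k^2=\iota\ell\iota\ell$ to a smooth $\iota$-invariant member $\Gamma$ of $|D|$, notes that $\ell|_\Gamma$ is the hyperelliptic involution so that $\ell^*=-\id$ on $\Pic^0(\Gamma)$, hence $(k^2)^*=\id$ on $\Pic^0(\Gamma)$ and $k^2|_\Gamma=\id_\Gamma$; this implicitly uses that such curves sweep out $S$ and that an automorphism of a genus $\geq 2$ curve acting trivially on $\Pic^0$ is trivial. You instead argue globally: $\psi=\ell\iota\ell\iota$ satisfies $\varphi_D\circ\psi=\varphi_D$, hence is a deck transformation of the finite degree-two map $\varphi_D$, and $\psi=\ell$ is excluded because $\psi^*\sigma=\sigma$ while $\ell^*\sigma=-\sigma$. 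For the identification of the fixed points, the paper is geometric: for a pair $\{p,q\}$ over a base point (e.g.\ $\{p,q\}=E_1'\cap E_2$) one has $\iota(p)=q$ by construction, and choosing a smooth $D_0\in|D|$ through $p,q$, the divisor $p+q=D_0\cap E_1'$ is a fiber of the $g^1_2$ cut out by the pencil $|E_1|$, whence $\ell(p)=q$; you reach the same key equality $\iota(w)=\ell(w)$ linear-algebraically, by noting that $\mathrm{ev}_w$ annihilates the eigenspace $V_+$, so $\varphi_D(w)\in\Fix(\bar\iota)$, so $\iota(w)\in\varphi_D^{-1}(\varphi_D(w))=\{w,\ell(w)\}$, and fixed-point-freeness of $\iota$ rules out $\iota(w)=w$. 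Both proofs finish identically, invoking the fact (Nikulin) that a nontrivial symplectic involution on a K3 has exactly eight fixed points. What your route buys: no auxiliary curves need to be chosen, the covering-space and eigenspace formalism makes every step mechanical, and you explicitly verify $k\neq\id$ before applying Nikulin (a point the paper glosses over). What the paper's route buys: it displays concretely how $|E_1|$ cuts the $g^1_2$ on members of $|D|$ and how $\ell$ acts on them, geometric information that the section reuses, and it never needs the dual projective space or a choice of linearization.
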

\begin{proof}

It is sufficient to prove that for any smooth  $\iota$-invariant curve $\Gamma$ in $|D|$, the identity  $k^2_{|\Gamma}=\id_\Gamma$ holds. But this is clear, since, $\ell^*=-\id$ on $\Pic^0(\Gamma)$ and hence $(k^2)^*$ is the identity on $\Pic^0(\Gamma)$. 

Since $k$ is the composition of two anti-symplectic involutions, it is symplectic. As such, $k$ has eight fixed points which we readily describe.  For simplicity, let us focus our attention on the two points $\{p, q\}=E'_1\cap E_2$. By construction, we know that $\iota(p)=q$. Notice, however, that we can choose a smooth curve $D$ in the linear system $|nE_1+E_2|$ passing through those two points so that $D\cap E_1=\{p, q\}$. Since $|E_1|$ induces the $g^1_2$ on $D$, it follows that $\ell(p)=q$ and thus
\[
k(p)=p, \,\,\text{and } \,\, k(q)=q.
\]
We can now argue in the same way for the remaining points of (\ref{basepoints}).
\end{proof}

This Lemma, together the fact that $\ell^*=-1$ on $\Pic^0(D)$ for a smooth curve $D$, shows that $\tau=k^*$ on a dense open subset of $M$, thus proving  Proposition \ref{involution_k}.

\begin{rem}
If we choose $H=D$, so that $\tau$ is a morphism, then so is $k^*$. In fact, in this case, the linear system $|D|$ is $k^*$-invariant, and hence pulling back via $k$ respects $D$-stability.
\end{rem}

Recall from Section \ref{rel_prym},  Proposition \ref{never_preserve} that
 it is not possible to choose $H$  such that $M=M_{v, H}$ is smooth and the assignment $F\mapsto \Shext^1(F, {\mc O}(-D))$ respects stability.
It is therefore natural to ask wether there are choices of $H$ such that  $M_{v, H}(S)$ is smooth and at the same time $k^*$ is a regular morphism.  Equivalently, we ask whether there exists a $k^*$-invariant ample class which is also $v$-generic.

To describe the $v$-generic polarizations, we need to identify the walls.

\begin{lem}
The equations defining the walls relative to a primitive Mukai vector $v=(0, D, \chi)$, with $D=n E_1 + E_2$, are of the form
\be \label{walls of D}
\ff{s E_1\cdot x+\epsilon E_2\cdot x}{n E_1\cdot x+ E_2\cdot x} \,\, \chi =m,
\ee
with $m$ ranging in a finite sets of integers, $\epsilon \in \{0, 1\}$ and  $s=0, \dots, n-1$.
\end{lem}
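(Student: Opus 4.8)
The plan is to specialize the general description of the walls recalled in Section~\ref{rel_prym} to the hyperelliptic class $D=nE_1+E_2$. By Yoshioka's theorem \cite{Yoshioka}, since $v=(0,[D],\chi)$ is primitive with $\chi\neq 0$, the non $v$-generic locus in $\Amp(S)$ is a finite union of hyperplanes, and each wall is attached to a subcurve $\Gamma\subset D$ together with a pure quotient $F\to G$ with $\supp(G)=\Gamma$ and $\mu_H(G)=\mu_H(F)$, cut out by $(\chi(G)\,D-\chi\,\Gamma)\cdot x=0$. So the lemma reduces to two tasks: classifying the subcurve classes $[\Gamma]$ that can occur, and then rewriting this linear equation.

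First I would pin down the possible $[\Gamma]$. Since $\Gamma$ is a subcurve of a member of $|D|=|nE_1+E_2|$, both $[\Gamma]$ and $[D]-[\Gamma]$ are effective. Using the description of this hyperelliptic system from the Appendix~\ref{appendix} --- where a reducible member of $|D|$ can only split off fibres of the two elliptic pencils $|E_1|$ and $|E_2|$, and $E_2$ enters $D$ with multiplicity one --- I expect to conclude that $[\Gamma]=s E_1+\epsilon E_2$ with $0\le s\le n$ and $\epsilon\in\{0,1\}$. A quotient supported on $\Gamma$ and the complementary subsheaf supported on $D-\Gamma$ determine the same hyperplane, so replacing $\Gamma$ by $D-\Gamma$ when needed lets me restrict to $0\le s\le n-1$.

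The remaining step is a direct substitution. Plugging $\Gamma=sE_1+\epsilon E_2$ and $D=nE_1+E_2$ into $(\chi(G)\,D-\chi\,\Gamma)\cdot x=0$ gives $\chi(G)\,(nE_1+E_2)\cdot x=\chi\,(sE_1+\epsilon E_2)\cdot x$, which upon setting $m=\chi(G)$ rearranges to $\ff{s E_1\cdot x+\epsilon E_2\cdot x}{n E_1\cdot x+E_2\cdot x}\,\chi=m$, exactly~(\ref{walls of D}). The finiteness of the set of integers $m$ is inherited from Yoshioka's finiteness of walls, since only finitely many of these hyperplanes actually meet $\Amp(S)$.

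The main obstacle is the classification step. The substitution and the finiteness are routine, but justifying that no subcurve class other than $sE_1+\epsilon E_2$ can arise requires the lattice-theoretic and linear-systems input about $|nE_1+E_2|$ on a general Enriques surface. I would lean on the Appendix's analysis of effective decompositions; the delicate point is ruling out exotic effective classes strictly between $0$ and $D$, which uses that $S$ is unodal (it contains no $(-2)$-curve) so that $|E_1|$ and $|E_2|$ are the only pencils producing such decompositions.
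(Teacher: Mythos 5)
Your proposal is correct and follows essentially the same route as the paper's own proof: Yoshioka's finiteness for primitive $v$ with $\chi\neq 0$, the classification of subcurve classes of $|D|$ as $sE_1+\epsilon E_2$ (which the paper, like you, asserts from the lattice geometry of a general/unnodal surface rather than proving in detail), substitution into the slope-equality hyperplane $(\chi(G)D-\chi\,\Gamma)\cdot x=0$, and the complementary-subsheaf symmetry to normalize the range of $(s,\epsilon)$ (the paper phrases this as passing to the residual series to force $\epsilon=1$, you instead cut $s$ down to $0,\dots,n-1$; these are the same trick). The one substantive difference is that the paper's proof does not stop at necessity: it also shows the \emph{converse}, namely that every equation of the form \eqref{walls of D} met by an ample class genuinely is a wall, by exhibiting a strictly $H$-semistable sheaf $F_\Gamma\oplus F_{\overline\Gamma}$ with $F_\Gamma$ an $H$-stable sheaf of Euler characteristic $m$ on a smooth $\Gamma\in|sE_1+E_2|$ and $F_{\overline\Gamma}$ an $H$-semistable sheaf of rank $n-s$ on a smooth elliptic fiber $\overline\Gamma\in|E_1|$ --- the higher-rank sheaf on a single fiber being the device that realizes the non-reduced complementary class $(n-s)E_1$. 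Your proof omits this direction; since the lemma as stated only claims that walls have the given form, this is not a gap in the literal statement, but it is content the paper relies on later (for instance, to conclude that $D$ itself lies on a wall, so that $M_{v,D}$ is singular and $P_{v,H}\to P_{v,D}$ is a genuine symplectic resolution), so a complete substitute for the paper's proof would want to include it.
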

\begin{proof} Since by assumption $D$ is primitive in $\NS(S)$, and since $\chi \neq 0$, the finiteness of the number of walls is proved in Section 1.4 of Yoshioka's paper \cite{Yoshioka}. Each sub-curve of $|D|$ belongs to a linear subsystem of type $|s E_1+ \epsilon E_2|$,
with $s$ ranging from $0$ to $n$, and $\epsilon \in \{0, 1\}$.  Up to passing to a residual series, we may assume that $\epsilon=1$. Clearly, equations  (\ref{walls of D}) are satisfied by   strictly semi-stable sheaves $F$ with $[c_1(F)]=[s E_1+  E_2]$ and $\chi(F)=m$. Notice, however, that the above equations are also sufficient for the existence of strictly semistable sheaves.
Indeed, consider a smooth curve $\Gamma \in |s E_1+ E_2|$ and a smooth curve $\overline \Gamma \in |E_1|$. If equation (\ref{walls of D}) holds for some ample class $H=x$ and some integer $m$, then we can choose a torsion free $H$-stable sheaf $F_\Gamma$ on $\Gamma$ with Euler characteristic equal to $m$, and an $H$-semi-stable sheaf $F_{\overline \Gamma}$  of rank $(n-s)$ on $\overline \Gamma$ such that $\chi(F_{\overline \Gamma})=\chi-m$. The sheaf $F=F_\Gamma \oplus F_{\overline \Gamma}$ is then strictly $H$-semi-stable. Since the above are all the possible sub curves of $|D|$, there are no other walls.\\
\end{proof}

We now turn to the question regarding the  $k^*$-invariance of the polarization.
With this in mind, we describe the action of $k^*$ on $\NS(S)$. Notice that since the involutions $\iota$, $\ell$ and $k$ commute, the induced actions on the N\'eron-Severi group are compatible.
For a lattice $\Lambda$ and an involution $\epsilon$ acting on $\Lambda$, we denote by $\Lambda^\varepsilon$ and by $\Lambda^{-\varepsilon}$ the invariant and anti-invariant sub-lattices, respectively. Note that $\Lambda^\varepsilon$ and $\Lambda^{-\varepsilon}$ are primitive in $\Lambda$.

Recall that $\NS(T)\cong U\oplus E_8(-1)$ and that $f^* \NS(T)$ is a primitive  $10$-dimensional sub-lattice of $H^2(S, \Z)$. By Proposition \ref{hyperelliptic k3},  it follows that $\NS(R)\cong U$ and that $\varphi^* \NS(R)=\langle E_1, E_2 \rangle$ is also primitive in $H^2(S, \Z)$. 
For simplicity we will indicate by $\NS(T)$ and by $\NS(R)$ their respective pull-backs in $\NS(S)$.

\begin{lem}\label{neron_severi}$\NS(S)^{k^*}=\langle E_1, E_2 \rangle\oplus (\NS(T)^\perp)^{k^*} $,  so that if  $T$ is a general Enriques surface then $\NS(S)^{k^*}=\langle E_1, E_2 \rangle$.
\end{lem}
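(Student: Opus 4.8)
The plan is to analyze the three commuting involutions $\iota^*$, $\ell^*$ and $k^*=\iota^*\ell^*$ acting on $\NS(S)$, using two structural facts: that $\iota^*$ fixes $f^*\NS(T)$ pointwise (Namikawa's result recalled in Section \ref{notation}), and that $k$ is a symplectic involution (Lemma \ref{involutions commute}), so that its anti-invariant lattice is pinned down by Nikulin's theory.

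First I would check that $\langle E_1,E_2\rangle\subseteq\NS(S)^{k^*}$. Since $E_i=f^*e_i$ and $f\circ\iota=f$ we have $\iota^*E_i=E_i$; since $E_i\in\varphi^*\NS(R)=\langle E_1,E_2\rangle$ and $\varphi\circ\ell=\varphi$ we have $\ell^*E_i=E_i$; hence $k^*E_i=\iota^*\ell^*E_i=E_i$. Moreover $\langle E_1,E_2\rangle$ is nondegenerate of rank two ($E_i^2=0$ and $E_1\cdot E_2=2$) and, by Proposition \ref{hyperelliptic k3}, primitive in $H^2(S,\Z)$, hence primitive in the subgroup $\NS(S)^{k^*}$.

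Next I would bring in the symplectic nature of $k$. A symplectic involution acts trivially on the transcendental lattice, so $\NS(S)^{-k^*}=H^2(S,\Z)^{-k^*}\cong E_8(-2)$; in particular it has rank $8$ and, being anti-invariant, is orthogonal to $\NS(S)^{k^*}$. For $T$ general this already finishes the proof: then $\NS(S)=f^*\NS(T)$ has rank $10$, so $\NS(S)^{k^*}$ has rank $10-8=2$, and a rank-two primitive sublattice $\langle E_1,E_2\rangle$ sitting inside the rank-two lattice $\NS(S)^{k^*}$ must be all of it.

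For the general decomposition I would use that $k^*$ commutes with $\iota^*$ and therefore preserves the eigenlattices of $\iota^*$ on $\NS(S)$, namely $f^*\NS(T)$ and its orthogonal complement $\NS(T)^\perp$, which gives $\NS(S)^{k^*}=(f^*\NS(T))^{k^*}\oplus(\NS(T)^\perp)^{k^*}$. On $f^*\NS(T)$ the involution $\iota^*$ is the identity, so there $k^*=\ell^*$ is the covering involution of the degree-two hyperelliptic map of $T$ attached to $|C|$, which fixes $\langle e_1,e_2\rangle\cong U$. To conclude $(f^*\NS(T))^{k^*}=\langle E_1,E_2\rangle$ one needs $k^*$ to act as $-1$ on the complementary $E_8(-1)$, i.e.\ that the rank-$8$ Nikulin lattice $\NS(S)^{-k^*}\cong E_8(-2)$ is entirely contained in $f^*\NS(T)$ (equivalently, that $k^*$ is trivial on $\NS(T)^\perp$, so that $(\NS(T)^\perp)^{k^*}=\NS(T)^\perp$). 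This containment, together with the integrality of the orthogonal splitting, is the only delicate point of the argument; for the general Enriques surfaces to which the paper applies the statement one has $\NS(T)^\perp=0$, so it holds trivially and $\NS(S)^{k^*}=\langle E_1,E_2\rangle$.
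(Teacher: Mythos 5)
Your proof of the half of the lemma that the paper actually uses --- the identification $\NS(S)^{k^*}=\langle E_1,E_2\rangle$ for $T$ general --- is correct, and it is a legitimate variant of the paper's argument: where the paper computes $(f^*\NS(T))^{k^*}$ directly, you combine Nikulin's result with the symplectic nature of $k$ (so that $\NS(S)^{-k^*}=H^2(S,\Z)^{-k^*}\cong E_8(-2)$, whence $\rk\,\NS(S)^{k^*}=10-8=2$) and then use primitivity of $\langle E_1,E_2\rangle$ to conclude. That rank-count-plus-primitivity step is complete and clean.

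For the first half of the lemma, however --- the decomposition $\NS(S)^{k^*}=\langle E_1,E_2\rangle\oplus(\NS(T)^\perp)^{k^*}$ for an arbitrary $T$ --- there is a genuine gap, which you yourself flag: you only prove the inclusion $\langle E_1,E_2\rangle\subseteq(f^*\NS(T))^{k^*}$ and present the reverse inclusion as a ``delicate point'' requiring control of where the Nikulin lattice $E_8(-2)$ sits inside $\NS(S)$. It is not delicate, and it is exactly what the first line of the paper's proof supplies: $\ell$ is the covering involution of the degree-two morphism $\varphi_D\colon S\to R$ onto the scroll, so the $\ell^*$-invariant part of $H^2(S,\Q)$ is $\varphi_D^*H^2(R,\Q)$, of rank two; since invariant sublattices of involutions are primitive and $\varphi_D^*\NS(R)=\langle E_1,E_2\rangle$ is itself a primitive rank-two sublattice of $\NS(S)$ contained in $\NS(S)^{\ell^*}$, one gets $\NS(S)^{\ell^*}=\langle E_1,E_2\rangle$ exactly. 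Since $\iota^*$ is the identity on $f^*\NS(T)$, this yields $(f^*\NS(T))^{k^*}=(f^*\NS(T))^{\ell^*}=f^*\NS(T)\cap\NS(S)^{\ell^*}=\langle E_1,E_2\rangle$ with no input from Nikulin's theorem and no hypothesis on $\NS(T)^\perp$; note also that the lemma's decomposition neither asserts nor needs that $k^*$ be trivial on $\NS(T)^\perp$, so your proposed ``equivalent'' condition is stronger than what is required. (The integrality of the eigenspace splitting, which you also mention, is treated at the same level of detail in the paper's own proof, so it is not a gap relative to the paper.) With the observation $\NS(S)^{\ell^*}=\NS(R)$ inserted, your argument proves the full statement.
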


{\sl Proof.} Since $\NS(S)^{\ell^*}=\NS(R{)}=\langle E_1, E_2\rangle\subset \NS(T)$, we have $\langle E_1, E_2 \rangle=\NS(T)^{\ell^*}\cong  \NS(S)^{\ell^*}$.
Moreover, by \cite{Nikulin76}, \cite{Morrison84}, \cite{vanGeemenSarti}
\[
H^2(S, \Z)^{k^*}\cong U^{\oplus 3} \oplus E_8(-2), \,\,\, \text{and} \,\,  H^2(S, \Z)^{-k^*}\cong E_8(-2).
\]
Since $k^*$ preserves the $(2,0)$ part of the Hodge decomposition we have 
$$
H^2(S, \Z)^{-k^*} \subset H^{1,1}(S)\cap H^2(S,\Z)=\NS(S)\,.
$$
In particular, since $\langle E_1, E_2 \rangle=\NS(T)^{\ell^*}=\NS(T)^{k^*}$, we have
\[
\NS(S)^{k^*}=\langle E_1, E_2 \rangle\oplus (\NS(T)^\perp)^{k^*}. 
\]
It follows that if the K3 surface satisfies $\NS(S)=\NS(T)$, then $\NS(S)^{k^*}$ is spanned by the classes of $E_1$ and $E_2$. This certainly happens if $T$ is general (but if the Picard number of $S$ is strictly greater than $10$, then there may be $k^*$ invariant classes that do not come from $T$).
\hfill $\square$

\begin{cor} Let $T$ be an Enriques surface.
Let $\mathcal W_v\subset \Amp(S)$ be the union of all the walls of $v=(0, D, \chi)$. Then
\[
\Amp (S) \cap \NS(S)^{k^*} \setminus \mathcal W_v \cap \NS(S)^{k^*},
\]
is non empty. 
\end{cor}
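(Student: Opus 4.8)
The plan is to view $\Amp(S)\cap\NS(S)^{k^*}$ as a non-empty open cone in the real vector space $V:=\NS(S)^{k^*}\otimes\R$ and to argue that the finitely many walls of $v$ cannot cover it. Non-emptiness is immediate: by the Remark following Proposition \ref{involution_k} the linear system $|D|$, and hence the class $D=nE_1+E_2$, is $k^*$-invariant, while $D$ is also ample; so $D\in\Amp(S)\cap V$. (Alternatively, for any ample $A$ the average $A+k^*A$ is ample, being a sum of ample classes, and $k^*$-invariant.)

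First I would invoke Yoshioka \cite{Yoshioka}: since $v=(0,[D],\chi)$ is primitive and $\chi=-h+1\neq0$, the locus $\mathcal W_v$ is a \emph{finite} union of hyperplanes $\xi^{\perp}\subset\NS(S)\otimes\R$, each defined by a nonzero class $\xi$. Because a non-empty open subset of a finite-dimensional real vector space cannot be contained in a finite union of proper linear subspaces, it suffices to prove that \emph{no single wall contains $V$}, i.e. that $\NS(S)^{k^*}\not\subseteq\xi^{\perp}$ for every wall class $\xi$.

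The decisive step is to locate the wall classes. By the Lemma computing the walls, every wall is cut out by an equation of the form (\ref{walls of D}); multiplying through by the denominator $D\cdot x$, which is strictly positive on $\Amp(S)$, turns it into the \emph{linear} equation $\xi\cdot x=0$ with
\[
\xi=\chi\,(sE_1+\epsilon E_2)-m\,D=(\chi s-mn)\,E_1+(\chi\epsilon-m)\,E_2\in\langle E_1,E_2\rangle .
\]
Thus every wall class lies in the rank-two sublattice $\langle E_1,E_2\rangle$, on which the intersection form is non-degenerate (it is isometric to $U(2)$ by Proposition \ref{hyperelliptic k3}). Since $\xi\neq0$ and $\xi\in\langle E_1,E_2\rangle$, non-degeneracy forces $\xi\cdot E_i\neq0$ for at least one $i\in\{1,2\}$. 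As $E_1,E_2\in\NS(S)^{k^*}$ by Lemma \ref{neron_severi}, this exhibits an element of $\NS(S)^{k^*}$ outside $\xi^{\perp}$, i.e. $\NS(S)^{k^*}\not\subseteq\xi^{\perp}$, exactly as required.

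Putting the pieces together, $\Amp(S)\cap V$ is a non-empty open cone, each $\mathcal W_v\cap V$ is a finite union of hyperplanes of $V$ none of which is all of $V$, and hence $\bigl(\Amp(S)\cap\NS(S)^{k^*}\bigr)\setminus\bigl(\mathcal W_v\cap\NS(S)^{k^*}\bigr)$ is non-empty. I expect the only real content to be the third paragraph — pinning down that the wall classes live in $\langle E_1,E_2\rangle$ and then exploiting the non-degeneracy of $U(2)$; once that is in hand the rest is a routine dimension count. For general $T$ one even has $V=\langle E_1,E_2\rangle_{\R}$, so the walls meet $V$ in finitely many lines through the origin, making the conclusion entirely transparent.
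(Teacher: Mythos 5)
Your proof is correct and takes essentially the same route as the paper: both arguments observe that the walls are finite in number and then reduce to showing that no wall equation vanishes identically on $\langle E_1,E_2\rangle\subseteq\NS(S)^{k^*}$. The paper does this by writing $H=aE_1+bE_2$ with $a,b>0$ and noting that the restricted equation is not identically zero in $(a,b)$, which is exactly your clearing-of-denominators observation that the wall class $\xi=\chi(sE_1+\epsilon E_2)-mD\in\langle E_1,E_2\rangle$ must pair nontrivially with $E_1$ or $E_2$ by non-degeneracy of the form on that sublattice.
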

\begin{proof}
Since there are finitely many walls, it is sufficient to check that none of the equations (\ref{walls of D}) vanish identically on $\NS(S)^{k^*}=\langle E_1, E_2 \rangle$. To this aim, consider $H \in \Amp (S) \cap \langle E_1, E_2 \rangle$. Then $H=aE_1+bE_2$ for some positive integers $a$ and $b$, so that the restrictions of the equations to $\Amp (S) \cap \NS(R)$ are
\be \label{wall restricted to NS(R)}
\ff{bk+\epsilon a}{bn+a} \chi=m,
\ee
and these are not identically zero.
\end{proof}

\begin{prop} \label{H k-invariant and v-generic}
Keeping the notation of Theorem \ref{involution_k}, we can always find an ample divisor $H$ which is $k^*$-invariant and $v$-generic. In other words, there exists a polarization $H$ such that $M_{v, H}$ is smooth and such that
\[
k^*:  M_{v, H} \longrightarrow M_{v, H},
\]
is a regular involution. In particular, if $H\in \NS(R) \setminus \mathcal W_v \cap \NS(R)$, the a priori birational involution $
\tau: M_{v, H} \dasharrow M_{v, H}$, extends to a regular morphism and the relative Prym variety $\Prym_{v, H}(\mc D/\mc C)$ is smooth and symplectic.
\end{prop}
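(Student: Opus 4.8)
The strategy is that this proposition is essentially a repackaging of the Corollary just established. Its substantive content — the existence of an ample class that is simultaneously $k^*$-invariant and $v$-generic — is precisely what that Corollary supplies, and this is the only real obstacle. Once it is granted, the smoothness of $M_{v,H}$, the regularity of $k^*$, and the smoothness and symplecticity of the relative Prym variety all follow from standard facts, so the remainder of the argument is assembly.

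First I would use the preceding Corollary to choose an ample class $H \in \Amp(S) \cap \NS(R)$ lying off every wall of $v$. Since $\NS(R) = \langle E_1, E_2 \rangle \subseteq \NS(S)^{k^*}$ — the equality being the content of the proof of Lemma \ref{neron_severi}, and the inclusion holding because $E_1, E_2$ are fixed by both $\iota^*$ and $\ell^*$ — any such $H$ is automatically $k^*$-invariant, as well as $\iota^*$- and $\ell^*$-invariant. Because $H$ avoids all walls it is $v$-generic, and as $v = (0,[D],-h+1)$ is primitive, the moduli space $M_{v,H}$ is then smooth, an irreducible holomorphic symplectic manifold. Here one should note that $v$-genericity of an $H \in \NS(R)$ is equivalent to avoiding the walls cut out inside $\NS(R)$, since a wall is a hyperplane of $\Amp(S)$ and $H \notin \mathcal W_v \cap \NS(R)$ forces $H \notin \mathcal W_v$.

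Next I would observe that $k^*$ is regular on $M_{v,H}$. The automorphism $k$ of $S$ fixes the class $[D] = n[E_1] + [E_2]$ and preserves Euler characteristics, hence preserves $v$; and since $H$ is $k^*$-invariant, pullback by $k$ carries $H$-stable sheaves to $H$-stable sheaves. Thus $k^* \colon M_{v,H} \to M_{v,H}$ is a biregular involution. As $H$ is $\iota^*$-invariant, Proposition \ref{involution_k} identifies $\tau$ with $k^*$ as birational maps, so $\tau$ in fact coincides with this regular $k^*$; in particular $\tau$ is everywhere regular and no closure is needed in the definition of $\Prym_{v,H}(\D/\CC)$.

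Finally I would conclude that the Prym variety is smooth and symplectic. By Proposition \ref{prop jN}, $\tau = k^*$ is a symplectic involution, so $k^{*}\sigma_M = \sigma_M$. Its fixed locus is smooth, being the fixed locus of an order-two automorphism of a smooth variety, and at each fixed point the $(+1)$- and $(-1)$-eigenspaces of $dk^*$ on the tangent space are $\sigma_M$-orthogonal (for $u$ in the $(+1)$-eigenspace and $w$ in the $(-1)$-eigenspace one has $\sigma_M(u,w) = \sigma_M(dk^*u, dk^*w) = -\sigma_M(u,w)$), whence $\sigma_M$ restricts nondegenerately to the $(+1)$-eigenspace, i.e. to the tangent space of the fixed locus. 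Therefore every component of $\Fix(k^*)$ — in particular $P_{v,H} = \Fix^0(\tau)$, the component through the zero section — is smooth and symplectic, which is the assertion of the proposition.
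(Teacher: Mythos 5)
Your proof is correct and takes essentially the same route as the paper: the paper's entire proof is the single observation that one may take $H \in \Amp(S)\cap \NS(S)^{k^*} \setminus \mathcal W_v \cap \NS(S)^{k^*}$, which is nonempty by the preceding Corollary. Your additional details (regularity of $k^*$ via $k^*$-invariance of $H$ and $[D]$, the identification $\tau = k^*$, and the eigenspace argument showing the fixed locus of a symplectic involution is smooth and symplectic) are exactly the standard facts the paper leaves implicit.
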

\begin{proof}
It is sufficient to consider $H \in \Amp(S)\cap \NS(S)^{k^*} \setminus  \mathcal W_v \cap \NS(S)^{k^*} $ which is non empty by the above Corollary.
\end{proof}

Since changing the polarization $H$ does not change the birational class of the relative Prym variety and the above proposition ensures the existence of smooth Prym varieties, we can sum up the results we obtained so far in the following

\begin{theorem} \label{smooth symplectic compactification}
Let $T$ be an Enriques surface and $f: S\to T$ the universal covering. Let $|C|=|ne_1+e_2|$ be a hyperelliptic linear system on $T$  and let $|D|=|f^*(C{)}|$. Set $v=(0,[D], -h+1)$, $h=g(D)$.
For any polarization $H$ on $S$, the relative Prym variety $P_{v, H}=\ov{\Fix^0(\tau)}=\ov{\Fix^0(k^*)}$  is birational to a projective hyperk\"ahler manifold.
\end{theorem}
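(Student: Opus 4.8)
The plan is to produce a single smooth projective holomorphic symplectic model of the relative Prym variety for one carefully chosen polarization, and then to deduce the statement for arbitrary $H$ from the fact that varying the polarization moves $P_{v,H}$ only within its birational class. First I would invoke the Corollary preceding Proposition \ref{H k-invariant and v-generic}: it guarantees that $\Amp(S)\cap \NS(R)\setminus(\mathcal W_v\cap \NS(R))$ is non-empty, so I may fix a polarization $H_0$ in this set. By Proposition \ref{H k-invariant and v-generic} this choice makes $M_{v,H_0}$ smooth while simultaneously turning $\tau$ into the \emph{regular} symplectic involution $k^*\colon M_{v,H_0}\to M_{v,H_0}$ supplied by Proposition \ref{involution_k}.

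Next I would argue that $P_{v,H_0}$ is a smooth projective hyperk\"ahler manifold. Since $k^*$ is a regular involution of the smooth projective variety $M_{v,H_0}$, the fixed locus $\Fix(k^*)$ is smooth and projective. As $\tau=k^*$ is symplectic (Proposition \ref{prop jN}), writing $\sigma$ for the symplectic form and splitting the tangent space at a fixed point into the $(\pm1)$-eigenspaces of $dk^*$, the identity $k^*\sigma=\sigma$ forces these eigenspaces to be $\sigma$-orthogonal; hence $\sigma$ restricts non-degenerately to the tangent space of $\Fix(k^*)$. Because $k^*$ is regular no closure is needed, so $P_{v,H_0}=\Fix^0(k^*)$ is a single connected component of $\Fix(k^*)$, a smooth projective manifold carrying a global holomorphic symplectic form; such a manifold is a projective hyperk\"ahler manifold.

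Then I would treat an arbitrary polarization $H$ by birational invariance. Let $U\subset |C|$ be the locus of smooth curves. Over $U$ every sheaf parametrized by $M_{v,H}$ is supported on an integral curve, hence is stable for every polarization, so the restriction of $\tau=j\circ\iota^*$ to $\pi^{-1}(U)$ and its fixed locus do not depend on $H$; by the construction of $\tau$ via $\Shext^1$ this restriction is the genuine relative $-\iota^*$ involution, whose identity fixed locus over $U$ is the family $\mathcal P\to U$ of honest Prym varieties $\Prym(f^{-1}(C_0)/C_0)$ through the zero section. Thus every $P_{v,H}$ contains $\mathcal P$ as a dense open subset, so all the $P_{v,H}$ are pairwise birational; in particular $P_{v,H}$ is birational to $P_{v,H_0}$, which is the asserted projective hyperk\"ahler manifold.

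I expect the main obstacle to be the careful verification that the various relative Prym varieties genuinely share the open family $\mathcal P\to U$, that is, that the birational identifications among the $M_{v,H}$ are $\tau$-equivariant and restrict to the honest Prym fibration over $U$. This reduces to the two facts used above---that sheaves over integral curves are stable for all polarizations, and that the zero section lands in $\Fix(\tau)$---but both must be checked against the precise definition of $P_{v,H}$ as the closure of the identity component of $\Fix(\tau)$. I would also stress that the \emph{irreducibility} of the holomorphic symplectic manifold (equivalently, that $P_{v,H_0}$ is of $\mathrm{K3}^{[g-1]}$-type rather than merely compact holomorphic symplectic) is not needed for the present statement; it is obtained separately, through the identification of $P_{v,H}$ with a moduli space of sheaves on the quotient K3 surface $\Sh$.
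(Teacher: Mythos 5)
Your proposal is correct and takes essentially the same route as the paper: the paper deduces the theorem from Proposition \ref{H k-invariant and v-generic} (existence of a $k^*$-invariant, $v$-generic polarization $H_0$, for which $\tau=k^*$ is regular and $P_{v,H_0}$ is smooth and symplectic, hence a projective hyperk\"ahler manifold) combined with the remark that changing the polarization does not change the birational class of the relative Prym variety. Your write-up simply makes explicit the two ingredients the paper compresses into the sentence preceding the theorem---the smoothness and non-degeneracy of the symplectic form on $\Fix(k^*)$ via the eigenspace decomposition, and the fact that all $P_{v,H}$ share the honest Prym fibration over the locus of smooth curves as a common dense open subset---and correctly observes that irreducibility (the $\mathrm{K3}^{[g-1]}$-type statement of Corollary \ref{hyp_K3_type}) is not needed here.
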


\begin{cor}
The singular symplectic variety $P_{v, D}$ admits a symplectic resolution.
\end{cor}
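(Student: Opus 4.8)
The plan is to produce an explicit smooth symplectic model of the relative Prym variety together with a birational morphism onto $P_{v,D}$, and to recognize this morphism as a symplectic resolution. The crucial structural point is that $D=nE_1+E_2$ itself lies in the $k^*$-invariant sublattice $\NS(R)=\langle E_1,E_2\rangle$, so that $D$ and the polarizations furnished by Proposition \ref{H k-invariant and v-generic} all inhabit the same two-dimensional cone $\Amp(S)\cap\NS(R)$; this is what lets me simultaneously control $k^*$-invariance and adjacency to $D$.

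First I would refine the choice of polarization. By Proposition \ref{H k-invariant and v-generic} there exist $k^*$-invariant $v$-generic ample classes, but I want one adjacent to $D$. Inside the two-dimensional cone $\Amp(S)\cap\NS(R)$ the walls $\mathcal W_v$ cut out the finitely many lines (\ref{wall restricted to NS(R)}), none identically zero, and $D$ lies on some of them since it is not $v$-generic. The finitely many wall-lines through $D$ divide a small neighborhood of $D$ in $\NS(R)$ into sectors; choosing $H\in\Amp(S)\cap\NS(R)$ inside one such sector and close enough to $D$ to avoid every wall-line not passing through $D$, I obtain a class that is $k^*$-invariant, $v$-generic, and lies in a chamber $U$ of $\Amp(S)$ with $D\in\overline U$ (the segment $[H,D]$ meets $\mathcal W_v$ only at $D$). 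By Proposition \ref{H k-invariant and v-generic}, $M_{v,H}$ is then smooth, $k^*$ acts on it as a regular symplectic involution, and $P_{v,H}=\Fix^0(k^*)$ is smooth and symplectic, the form being non-degenerate on the whole fixed locus because the $\pm1$-eigenspaces of a symplectic involution are symplectic-orthogonal.

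Next I would invoke the wall-crossing morphism. Since $D\in\overline U$, every $H$-stable sheaf of Mukai vector $v$ is $D$-semistable, and sending such a sheaf to the $\mathcal S$-equivalence class of its $D$-associated graded defines a projective birational morphism $M_{v,H}\to M_{v,D}$; this is precisely the map $\sigma$ constructed, for polarizations adjacent to $D$, in the proof of Theorem \ref{no symplectic resolution}. Because $H$ and $D$ are both $k^*$-invariant, $\sigma$ commutes with $k^*$ and carries the zero-section to the zero-section, so it restricts to a projective birational morphism $\sigma\colon P_{v,H}\to P_{v,D}$ onto the distinguished component.

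Finally I would verify that $\sigma$ is a symplectic resolution. The source $P_{v,H}$ is smooth and carries an everywhere non-degenerate symplectic form, while $\sigma$ is an isomorphism over the dense open locus (e.g. where the support is integral) on which the two Prym varieties coincide and their Mukai symplectic forms are identified. Hence $\sigma^*$ of the symplectic form on the smooth locus of $P_{v,D}$ is a holomorphic two-form on $P_{v,H}$ agreeing with the given symplectic form on a dense open set, therefore equal to it; being non-degenerate everywhere, it exhibits $\sigma$ as a symplectic resolution of the singular variety $P_{v,D}$. I expect the only genuinely delicate step to be the simultaneous choice of $H$ in the second paragraph, namely arranging $k^*$-invariance, $v$-genericity, and adjacency to $D$ at once, and this is exactly where the containment $D\in\NS(R)$ is indispensable.
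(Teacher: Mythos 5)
Your proof is correct and follows essentially the same route as the paper: choose a $k^*$-invariant, $v$-generic polarization $H$ in a chamber adjacent to the wall(s) containing $D$, use the induced projective birational morphism $M_{v,H}\to M_{v,D}$, observe that it commutes with the involutions and hence restricts to a birational morphism $P_{v,H}\to P_{v,D}$, and conclude from the smoothness and symplecticity of $P_{v,H}$. Your additional details (the sector argument inside $\Amp(S)\cap\NS(R)$ for the adjacency of $H$ to $D$, and the identification of the pulled-back form) merely make explicit what the paper leaves implicit.
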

\begin{proof}
First, recall that $D$ is not $v$-generic. In fact, since the genus $h$ of $D$ is odd, $\chi=-h+1$ is even, and hence $D$ lies on the wall with equation 
\[
\frac{E_2\cdot x}{n E_1\cdot x+ E_2\cdot x}\,\, \chi =2.
\]
It is possible to choose an ample $H$ as in Proposition \ref{H k-invariant and v-generic} and such that, moreover, it lies in a $v$-chamber adjacent to the wall (or the intersection of walls) where $D$ lies.
Under these assumptions, there is a natural projective birational morphism
\[
\varepsilon:M_{v, H} \to M_{v, D},
\]
which is a resolution of the singularities of $M_{v, D}$. Since $\varepsilon$ commutes with both $k^*:M_{v, H}\to M_{v, H}$ and $\tau: M_{v, D}\to M_{v, D}$, there is an induced proper morphism
\[
\varepsilon:P_{v, H} \to P_{v, D}.
\]
which is still birational. Since $P_{v, H}$ is smooth and symplectic, it is a symplectic resolution of the singularities of $P_{v, D}$.
\end{proof}

The natural question is now to determine the deformation class of these smooth relative Pryms. As one can expect they are birational, and thus by what Huybrechts proves in \cite{Huybrechts2} deformation equivalent, to some moduli spaces of sheaves on the minimal resolution of the quotient of $S$ by $k$.

To fix notation, let
\[
\rho: S \to \Sb:=S/\langle k \rangle,
\]
be the quotient morphism. Then $\Sb$ is a singular K3 surface with $8$ rational double points and we let
\[
\eta: \Sh \to \Sb,
\]
be its minimal resolution. It is well known that $\Sh$ is a K3.
Observe that a divisor in $|D|$ is $k$-invariant if and only if it is $\iota$-invariant. In particular, any  $D$ in $W$ is $k$-invariant so that if we set $\Db=D/k$  there is an obvious bijection
$W\cong|\Db|$.
The general curve $D$ in $W$ does not contain the points $\{w'_1, \dots, w'_4\}$ so that for $D\in W$,  the double cover  $D\to \Db$ ramifies only in $\{w_1, \dots, w_4\}$ and
\[
g(\Db)=\ff{h-1}{2}=g-1.
\]
Moreover, if $D\in W$ is smooth, then so is $\Db$. In this case, the proper transform
\[
\Dh:=\eta^{-1}_*(\Db)\subset \Sh,
\]
is also smooth and isomorphic to $\Db$.  Consider now the following commutative diagram,
\[
\xymatrix{
Z  \ar@(ul,dl)[]|{\wh k} \ar[r]^{\rh} \ar[d]_{\eh} & \Sh \ar[d]^\eta \\
S\ar@(ul,dl)[]|{ k} \ar[r]_\rho & \Sb=S/k \,,
} 
\]
where $Z \to S$ is the blow up of the surface at the $8$ fixed points of the involution, and $Z \to \Sh$ is the double cover ramified along the eight exceptional curves of $\eta$.
We denote by 
$$
\wh k : Z \to Z
$$ 
the lift of $k$ to $Z$ so that $\wh k$ is an involution on $Z$ whose fixed locus is the union of the exceptional divisors.
Let $R_1, \dots, R_4$ be the exceptional divisors mapping to $\rho(w_1), \dots, \rho(w_4)$,  and  $R'_1, \dots, R'_4$ the exceptional divisors mapping to $\rho(w'_1), \dots, \rho(w'_4)$. Since the general curve in $|D|$ does not pass through $w_1', \dots, w_4'$ and is smooth,
\be \label{diramazione}
\wh D \cdot R_i=1,
\ee
and $\wh D\cdot R_i'=0$.

The general curve $\Gamma$ in $\wh \rho^{\,*} |\wh D|$
is a smooth double cover of a curve in $|\wh D|$,
and, via $\eh$, maps isomorphically to its image in $S$. 
Indeed, $\eh$ induces an isomorphism
\be \label{Gamma e W}
|\Gamma| \supset \wh \rho^{\,*}|\wh D|\cong W \subset |D|.
\ee

\begin{theorem} Set $w=(0, \wh D, -g+2)$, and let $H$ and $\wh H$ be two ample line bundles on $S$ and $\Sh$ respectively.
There is a rational map
\be \label{rational psi}
\begin{aligned}
\psi:M_{w, \wh H}(\Sh\,) &\dashrightarrow M_{v,H}(S),\\
F &\mapsto \wh \eta_* \wh \rho^{\,*} F.
\end{aligned}
\ee
defined on the open set of $M_{w, \wh H}(\Sh\,)$  parametrizing sheaves supported on irreducible curves. This map factors via the inclusion $P_{v,H}\subset M_{v,H}(S)$, and the induced map
\be \label{rational phi}
\phi:M_{w, \wh H}(\Sh\,) \dashrightarrow P_{v,H},
\ee
is birational.
\end{theorem}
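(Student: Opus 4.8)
The plan is to study $\psi$ fiberwise over $|\Dh|\cong W$, reducing the whole statement to a classical Prym--Jacobian identification for the ramified double cover $D_0\to\Dh_0$, and then to globalize.

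First I would verify that $\psi$ is well defined as a rational map with image in $M_{v,H}(S)$. Take $F\in M_{w,\wh H}(\Sh)$ supported on a smooth curve $\Dh_0\in|\Dh|$, recalling that $g(\Dh_0)=g-1$. Its preimage $\Gamma=\rh^{-1}(\Dh_0)$ is the double cover $\rh|_\Gamma\colon\Gamma\to\Dh_0$, which by (\ref{diramazione}) ramifies exactly over the four points where $\Dh_0$ meets the branch locus; being ramified it is connected, and Riemann--Hurwitz gives $g(\Gamma)=2g-1=h$. As $\rh$ is finite flat, $\rh^*F$ is pure of dimension one, and since by (\ref{Gamma e W}) the map $\eh|_\Gamma$ is an isomorphism onto a curve $D_0\in W\subset|D|$, the sheaf $\eh_*\rh^*F$ is pure of dimension one, supported on the integral curve $D_0$, hence $H$-stable for every $H$. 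From $\chi(F)=-g+2$ one gets $\deg F=0$, so $\deg\rh^*F=0$ and $\chi(\eh_*\rh^*F)=\chi(\rh^*F)=1-g(\Gamma)=-h+1$; together with $c_1=[D_0]=[D]$ this shows the Mukai vector is $v$. Carrying out this pull-back/push-forward in families over the open locus of irreducible curves --- using flatness of $\rh$ and the base-change of Lemma \ref{base change ext}, exactly as was done for $\iota^*$ and $j$ --- produces the rational map $\psi$.

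Next I would show the image lies in $P_{v,H}$. Since $\rh\circ\wh k=\rh$ and $k\circ\eh=\eh\circ\wh k$, we get $k^*(\eh_*\rh^*F)\cong\eh_*(\wh k^*\rh^*F)=\eh_*\rh^*F$, so $\psi(F)\in\Fix(k^*)=\Fix(\tau)$. Taking $F=\O_{\Dh_0}$, which has $\chi=-g+2$ and hence Mukai vector $w$, gives $\eh_*\rh^*\O_{\Dh_0}=\O_{D_0}$, the value of the zero-section $s$; as $M_{w,\wh H}(\Sh)$ is irreducible, its image meets, hence is contained in, the single irreducible component $\Fix^0(\tau)=P_{v,H}$. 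This yields the factorization $\phi\colon M_{w,\wh H}(\Sh)\dashrightarrow P_{v,H}$. Finally I would prove $\phi$ birational by a fiberwise analysis over the dense open set $U\subset|\Dh|\cong W$ of smooth curves. Over $\Dh_0$, set $\pi_k=\rh|_\Gamma\circ(\eh|_\Gamma)^{-1}\colon D_0\to\Dh_0$; then $\phi$ restricts to $\pi_k^*\colon\Jac^0(\Dh_0)\to\Jac^0(D_0)$. This is the crux: because $\pi_k$ is ramified, $\pi_k^*$ is injective, and because $k^*=\ell^*\circ\iota^*=-\iota^*$ on $\Pic^0(D_0)$ (Lemma \ref{involutions commute}), its image is the connected group $\Fix^0(k^*)=\Fix^0(-\iota^*)=\Prym(D_0/C_0)$. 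Since $\dim\Jac^0(\Dh_0)=g-1=\dim\Prym(D_0/C_0)$, the homomorphism $\pi_k^*$ is an isomorphism onto the fiber of $P_{v,H}\to|C|$. As $\dim M_{w,\wh H}(\Sh)=\langle w,w\rangle+2=\Dh^2+2=2g-2=\dim P_{v,H}$, and $\phi$ is a fiberwise isomorphism over $U$ compatible with the base identification $W\cong|\Dh|$, the map $\phi$ is an isomorphism over $U$, hence birational.

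The main obstacle I anticipate is not the fiberwise statement, which is classical, but making the construction functorial in families so that $\psi$ is genuinely a rational map of moduli spaces rather than a bijection on closed points: one must check that $\rh^*$ (finite flat pull-back) and $\eh_*$ (an isomorphism along the generic support) commute with base change and preserve flatness over the locus of irreducible supports. The second delicate point is the identification of $\im\pi_k^*$ with the identity component $\Fix^0(k^*)$ --- this is exactly where the ramifiedness of $D_0\to\Dh_0$, and thus the injectivity of $\pi_k^*$, is indispensable.
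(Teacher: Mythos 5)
Your proof is correct, but your route to birationality is genuinely different from the paper's. The paper argues sheaf-theoretically at a point $[F]$ with smooth support: it first shows that the tangent map $\Ext^1_{\Sh}(F,F)\to \Ext^1_S(\eh_*\rhs F,\eh_*\rhs F)$ is injective --- a nontrivial extension $0\to F\to G\to F\to 0$ cannot split after pullback to $Z$, because $\rh_*\rhs G$ decomposes as the direct sum of the original extension and its twist by the line bundle $L$ with $2L=-\sum R_i-\sum R_i'$, and both summands are nonsplit --- so $\phi$ is a local isomorphism; it then gets $\deg\phi=1$ from the projection formula and stability: $\rhs F_1\cong \rhs F_2$ forces $F_1\oplus(F_1\otimes L)\cong F_2\oplus(F_2\otimes L)$, and degree considerations separate the summands. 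You instead argue fiberwise over the smooth-curve locus: $\phi$ restricts to $\pi_k^*:\Jac^0(\Dh_0)\to\Jac^0(D_0)$, which is injective because the cover is ramified, and whose image, being a connected $(g-1)$-dimensional abelian subvariety of $\Fix(k^*)$, equals $\Fix^0(-\iota^*)=\Prym(D_0/C_0)$, i.e.\ the fiber of $P_{v,H}\to|C|$; fiberwise bijectivity over a dense open set plus characteristic zero then gives birationality. The two mechanisms are avatars of the same fact --- the ramification line bundle $L$ is exactly what makes $\pi_k^*$ injective --- but the paper's version is infinitesimal, avoids using the group structure of the fibers, and yields the slightly stronger conclusion that $\phi$ is immersive at every sheaf with smooth support, while yours is classical Prym theory and re-derives, along the way, the identification of the general fibers of $P_{v,H}$ with Prym varieties. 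You are also more careful than the paper about the factorization through the identity component: the paper only checks $k^*$-invariance, whereas your zero-section argument pins down $\Fix^0(\tau)$ (just note that ``meets, hence is contained in'' a component requires irreducibility of the image, which you do have since the relevant open locus of $M_{w,\wh H}(\Sh\,)$ is irreducible). Two minor quibbles: Lemma \ref{base change ext} concerns $\Shext^1$ and is not literally the statement needed for flatness and base change of the family $\eh_{B*}\wh\rho_B^{\,*}\F$ (what one needs is base change along the finite maps $\rh$ and $\eh|_\Gamma$), and ``isomorphism over $U$'' is more than birationality requires or than your argument establishes without invoking normality --- but both points are at the same level of detail as the paper's own treatment.
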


\begin{proof}
For our purposes, it is enough to restrict our attention to the open subset of $M_{w, \wh H}(\Sh)$ parametrizing sheaves with smooth support. 
Let $\F$ a  family of pure sheaves of dimension one on $S$ with Mukai vector $w$, supported on smooth curves, and parametrized by a scheme $B$. Then $\wh \eta_{B*} {\wh\rho_B}^{\,*} \F$ is a flat family of $H$-stable sheaves.
Clearly, $\wh \rho_B^{\,*}\F$ is flat over $B$ and by formula (\ref{diramazione})  the support $\Gamma_b$ of $\rhs \F_b$ is smooth.
If $\E$ is any flat family, parametrized by $B$,  of pure dimension one sheaves on $Z$, with support on smooth curves belonging to $|\Gamma|$ then $\wh \eta_{B*} \E$ is a flat family of $H$-stable sheaves with support in $W$. 
This defines the rational map (\ref{rational psi}).
Since $\wh k$ is a lift of $k$, the two involutions coincide where $\eh$ is an isomorphism. Let $[F]$ be a point in $M_{w, \wh H}(\Sh\,)$, since $ \rhs F$ is $\wh k^*$-invariant, it follows that 
\[
\wh \eta_* \wh \rho^{\,*} F \in \Fix(k^*).
\]
Hence  $\psi$ factors through the inclusion $P_{v, H}\subset M_{v,H}(S)$.
The last assertion to prove is that the induced map $\phi:M_{w, \wt H}(\St) \dashrightarrow P_{v,H}$ is birational.
As above, we assume that $\supp(F)$ is smooth. First, we show that  $\phi$ is a local isomorphism at $[F]$. We claim that the induced map on tangent space
\be \label{tangent map}
d \psi: \Ext^1_{\wh S}(F, F) \to \Ext^1_Z(\rhs F ,\rhs F ) \cong \Ext^1_S(\eh_* \rhs F ,\eh_* \rhs F ).
\ee
is injective. In fact, given a non trivial extension $0 \to   F \to G \to   F \to 0
$, we can pull it back to $Z$ to obtain a short exact sequence
\be \label{short exact sequence}
0 \to \rhs F \to \rhs G \to  \rhs F \to 0.
\ee
If this sequence were split, the same would be true for
\[
0 \to  \rh_*  \rhs F \to \rh_*  \rhs G \to  \rh_*  \rhs F \to 0.
\]
However, this sequence is the direct sum of (\ref{short exact sequence}) and of
\[
0 \to  \rhs F\otimes L \to \rhs G \otimes L \to  \rhs F\otimes L \to 0,
\]
where\footnote{Recall that $\sum R_i +\sum R_i'$ is divisible by two in $\NS(\wt S)$.}
\[
L:= \ff{1}{2} {\mc O}_{\wt S} (-\sum R_i -\sum R_i').
\]
Since these two exact sequence are non split by assumption, we get a contradiction. Hence, the induced map (\ref{tangent map}) is injective and  $\phi$ is a local isomorphism. 
To end the proof of the theorem, we just need to prove that the degree of $\phi$ is one.
It is enough to prove that if $F_1$ and $F_2$ are two sheaves on $\Sh$ with Mukai vector $w$, then
\[
\rhs F_1 \cong \rhs F_2, \,\,\,\, \text{if and only if} \,\,\,\,F_1 \cong F_2.
\]
This follows from the projection formula. In fact, if $\rhs F_1 \cong \rhs F_2$, then
\[
F_1\oplus (F_1 \otimes L)\cong \rh_* \rhs F_1 \cong \rh_* \rhs F_2\cong F_2\oplus (F_2 \otimes L).
\]
Since, for $i=1,2$, both  $F_i$ and  $F_i \otimes L$ are stable and since $\deg F_i \otimes L \neq \deg F_j$, $i, j=1,2$ we must have an isomorphism
$F_1 \cong F_2$.

\end{proof}

\begin{cor}\label{hyp_K3_type} Let $D=f^*C$ and $v=(0, [D], -h+1)$. If $|C|$ is a hyperelliptic linear system, and  $H$ is $v$-generic and $k^*$-invariant, the symplectic variety $P_{v,H}$ is an irreducible holomorphic symplectic manifold of type $\Hilb^{g-1}(K3)$.
\end{cor}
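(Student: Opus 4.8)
The plan is to deduce the statement from the birational equivalence established in the preceding theorem, combined with the structure theory of moduli spaces of sheaves on K3 surfaces. By that theorem the induced map $\phi : M_{w,\wh H}(\Sh) \dashrightarrow P_{v,H}$ is birational, where $\Sh$ is the K3 surface obtained as the minimal resolution of $S/\langle k\rangle$ and $w=(0,\wh D,-g+2)$. So it suffices to identify $M_{w,\wh H}(\Sh)$ as an irreducible holomorphic symplectic manifold of $\Hilb^{g-1}(K3)$-type and then to transport this property along $\phi$.

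First I would settle the numerics on $\Sh$. Since $\wh D$ is a smooth curve of genus $g-1$ on the K3 surface $\Sh$, we have $\wh D^2 = 2g(\wh D)-2 = 2g-4$, and hence the Mukai self-pairing is $\langle w,w\rangle = \wh D^2 = 2g-4$ (the rank of $w$ being zero kills the cross terms). Choosing $\wh H$ to be $w$-generic, which is possible because $w$ is primitive (this one checks from the primitivity of $C$, the middle component $\wh D$ being primitive in $\NS(\Sh)$), the theorem of Yoshioka \cite{Yoshioka} guarantees that $M_{w,\wh H}(\Sh)$ is a smooth projective irreducible holomorphic symplectic manifold of dimension $\langle w,w\rangle+2 = 2g-2 = 2(g-1)$, deformation equivalent to $\Hilb^{g-1}(\Sh)$, that is, of $K3^{[g-1]}$-type.

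Next I would invoke the smoothness of the target. Since $H$ is by hypothesis $v$-generic and $k^*$-invariant, Proposition \ref{H k-invariant and v-generic} ensures that $P_{v,H}=\ov{\Fix^0(\tau)}=\ov{\Fix^0(k^*)}$ is smooth; moreover, as $\tau=k^*$ is a regular symplectic involution of the smooth holomorphic symplectic manifold $M_{v,H}$ (Proposition \ref{prop jN}), its fixed locus is a symplectic submanifold, so the holomorphic two-form on $P_{v,H}$ is everywhere non-degenerate and $K_{P_{v,H}}$ is trivial. Thus $\phi$ is a birational map between two smooth projective holomorphic symplectic manifolds, one of which, $M_{w,\wh H}(\Sh)$, is irreducible holomorphic symplectic of $K3^{[g-1]}$-type. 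By Huybrechts' theorem \cite{Huybrechts2}, a smooth projective variety birational to an irreducible holomorphic symplectic manifold is again irreducible holomorphic symplectic and deformation equivalent to it. Therefore $P_{v,H}$ is of $\Hilb^{g-1}(K3)$-type.

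The main point requiring care is the identification of the deformation type of $M_{w,\wh H}(\Sh)$: this rests on the primitivity of $w$ together with the existence of a $w$-generic $\wh H$, so that Yoshioka's smoothness and irreducibility results apply, and on the dimension count $\langle w,w\rangle = 2g-4$ which pins down the exponent $g-1$. Once these are in place, the passage from $M_{w,\wh H}(\Sh)$ to $P_{v,H}$ is purely formal, using the smoothness and symplectic non-degeneracy of $P_{v,H}$ supplied by Proposition \ref{H k-invariant and v-generic} and the birational invariance of the deformation class of hyperk\"ahler manifolds.
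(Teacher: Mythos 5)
Your proposal is correct and takes essentially the same route as the paper's (largely implicit) proof: the corollary is obtained by combining the preceding theorem (birationality of $P_{v,H}$ with $M_{w,\wh H}(\Sh)$), Yoshioka's identification of that moduli space as an irreducible holomorphic symplectic manifold of $K3^{[g-1]}$-type for primitive $w$ and $w$-generic $\wh H$, Proposition \ref{H k-invariant and v-generic} for the smoothness and non-degeneracy of the symplectic form on $P_{v,H}$, and Huybrechts' birational invariance of the deformation class. The only point stated more strongly than the literature warrants is your version of Huybrechts' theorem (which literally requires both varieties to be irreducible holomorphic symplectic), but this is harmless here: since you have shown $P_{v,H}$ is smooth projective with an everywhere non-degenerate two-form, simple connectedness and $h^{2,0}=1$ follow from the birational invariance of $\pi_1$ and $h^{p,0}$ for smooth projective varieties, after which Huybrechts applies verbatim -- the same shorthand the paper itself uses.
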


\begin{cor} \label{hyp_K3_type2}
Let $D=f^*C$, $v=(0, [D], -h+1)$ and let $H$ be a non $v$-generic polarization. If $|C|$ is a hyperelliptic linear system, then any resolution of $P_{v,H}$ is an irreducible holomorphic symplectic manifold that is of type $\Hilb^{g-1}(K3)$.
\end{cor}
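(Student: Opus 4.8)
The plan is to exhibit a single smooth model of $P_{v,H}$ that is simultaneously a symplectic resolution and visibly of $K3^{[g-1]}$-type, and then to propagate the deformation type along birational maps by Huybrechts' theorem. The starting point is that the birational map $\phi\colon M_{w,\wh H}(\Sh)\dashrightarrow P_{v,H}$ of the preceding theorem is available for \emph{every} ample $\wh H$ on $\Sh$. Choosing $\wh H$ to be $w$-generic, the space $M_{w,\wh H}(\Sh)$ is a smooth projective moduli space of pure dimension one sheaves on the K3 surface $\Sh$ with primitive Mukai vector $w=(0,\wh D,-g+2)$; since $g(\wh D)=g-1$, its square is $\wh D^2=2(g-1)-2$ and its dimension is $2(g-1)$, so it is an irreducible holomorphic symplectic manifold of $\Hilb^{g-1}(K3)$-type. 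Hence $P_{v,H}$ is birational to an IHS manifold of the desired type, and the entire problem reduces to producing a \emph{symplectic} resolution of the singular variety $P_{v,H}$.

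To build such a resolution I would reduce the non-$v$-generic case to the smooth case of Corollary \ref{hyp_K3_type}, following verbatim the strategy of the corollary after Theorem \ref{smooth symplectic compactification}. By Proposition \ref{H k-invariant and v-generic} there is a polarization $H'$ that is at once $v$-generic and $k^*$-invariant, and I would choose $H'$ inside a $v$-chamber adjacent to the wall (or intersection of walls) on which the non-$v$-generic $H$ lies. Wall-crossing for moduli of sheaves on a K3 then provides a projective birational morphism $\varepsilon\colon M_{v,H'}\to M_{v,H}$ resolving the singularities of $M_{v,H}$; since $\varepsilon$ intertwines the two regular involutions $k^{*}$, it restricts to a proper birational morphism $\varepsilon\colon P_{v,H'}\to P_{v,H}$. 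By Corollary \ref{hyp_K3_type} the source $P_{v,H'}$ is smooth and symplectic, so $\varepsilon$ is a symplectic resolution.

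It then remains to pin down the deformation class. The smooth symplectic variety $P_{v,H'}$ is birational to $P_{v,H}$, and hence to $M_{w,\wh H}(\Sh)$; two birational IHS manifolds are deformation equivalent by \cite{Huybrechts2}, so $P_{v,H'}$ is of $\Hilb^{g-1}(K3)$-type. To see that this is the resolution relevant to the statement, I would invoke the remark following Theorem \ref{no symplectic resolution}: in the hyperelliptic case $\nu=1$, so the transverse singularity of $P_{v,H}$ along $P_{sing}$ (of codimension $2$ by Proposition \ref{codim_sing}) is the Du Val $A_1$ surface singularity, whose blow-up is crepant and along which the pulled-back symplectic form stays non-degenerate. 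Thus $\varepsilon$ is the crepant (minimal) resolution; any resolution factors through it, and since $h^{2,0}$ and $\pi_1$ are birational invariants of smooth projective varieties, the symplectic $\Hilb^{g-1}(K3)$-type model is realised by $\varepsilon$.

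The step I expect to be the main obstacle is the descent of the wall-crossing morphism to the Prym variety together with the verification that it is genuinely \emph{crepant} rather than merely a resolution. Concretely, one must check that $\varepsilon$ is $k^{*}$-equivariant, that it does not contract the distinguished component carrying the zero-section (so that it indeed restricts to $P_{v,H'}\to P_{v,H}$ and is birational there), and that the pullback of the symplectic form remains non-degenerate on the exceptional locus — this last point being exactly what the $\nu=1$ computation in the remark after Theorem \ref{no symplectic resolution} secures, and what distinguishes the present hyperelliptic situation from the non-hyperelliptic case of Theorem \ref{no symplectic resolution}.
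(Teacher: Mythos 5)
Your overall strategy is the one the paper leaves implicit (it states this corollary without proof): produce a smooth model via Corollary \ref{hyp_K3_type}, connect it to $P_{v,H}$ by the wall-crossing morphism as in the corollary following Theorem \ref{smooth symplectic compactification}, and identify the deformation type through the birational map to $M_{w,\wh H}(\Sh\,)$ and Huybrechts' theorem. Steps 1 and 3 of your proposal are exactly right. However, the pivotal step 2 contains a genuine gap as written: you need the polarization $H'$ to satisfy \emph{two} conditions at once, namely $k^*$-invariance (so that Corollary \ref{hyp_K3_type} applies and $P_{v,H'}$ is smooth; for a $v$-generic but non-invariant polarization the paper explicitly has ``no control'' on the closure $\ov{\Fix^0(\tau)}$) and membership in a $v$-chamber whose closure contains $H$ (so that the morphism $\varepsilon: M_{v,H'}\to M_{v,H}$ exists). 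Proposition \ref{H k-invariant and v-generic}, which is all you cite, gives only the first: it produces an invariant $v$-generic class \emph{somewhere} in $\Amp(S)$, with no control on which chamber it lies in. In the paper's own corollary this issue is invisible because there $H=D$, and $D$ itself lies in the invariant cone $\langle E_1,E_2\rangle$, so a small perturbation of $D$ inside that cone lands in an adjacent chamber. Your $H$ is an arbitrary non-$v$-generic polarization, a priori far from the rank-two invariant cone inside the rank-ten ample cone, and a chamber adjacent to $H$ has no a priori reason to meet $\langle E_1,E_2\rangle$; for a non-hyperelliptic system the analogous claim would simply be false.

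The gap is repairable, but it requires an idea your proposal does not supply: the paper's lemma on the wall equations (\ref{walls of D}). In the hyperelliptic case every wall is of the form $w^\perp\cap\Amp(S)$ with $w\in\langle E_1,E_2\rangle_{\Q}$, i.e.\ each wall equation depends on $x$ only through the pair $(E_1\cdot x,\, E_2\cdot x)$. Hence the whole chamber decomposition is pulled back, under the linear map $x\mapsto (E_1\cdot x, E_2\cdot x)$, from a fan in $\R^2_{>0}$; since this map sends $\Amp(S)\cap\langle E_1,E_2\rangle$ onto all of $\R^2_{>0}$, \emph{every} chamber, in particular every chamber adjacent to $H$, contains $k^*$-invariant ample classes. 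With this in place your $\varepsilon$ exists and restricts to a proper birational morphism $P_{v,H'}\to P_{v,H}$ --- though note that when $H$ is not $k^*$-invariant the involution on $M_{v,H}$ is only birational, so the phrase ``the two regular involutions'' is inaccurate; what actually yields the restriction is equivariance on the dense locus of sheaves supported on smooth irreducible curves together with properness, exactly as in the paper's $H=D$ case. Finally, your step 4 is both shaky (in dimension $>2$ it is not true that every resolution factors through a given crepant one) and unnecessary: the literal statement ``any resolution is an irreducible holomorphic symplectic manifold'' cannot hold (blow up a point of a good resolution), and the way the corollary is used in Sections \ref{fund_group} and \ref{hilb} only requires that \emph{some} resolution be an IHS manifold of type $\Hilb^{g-1}(K3)$, the simple connectedness and $h^{2,0}=1$ of any other resolution then following from birational invariance --- which your steps 1--3, once repaired, already deliver.
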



\subsubsection{The surface case}\label{surface_case} 
In \cite{Sacca13}, the case of the relative Prym variety associated to a genus $2$ linear system $|C|=|e_1+e_2|$ on an Enriques surface $T$ is studied in greater detail. Here we report the  results of that analysis.

 Recall that in this case $|C|\cong \P^1$, and that the linear system has two simple base points. If we assume the pair $(T,C)$ to be general  (cf. \cite{Sacca12}) the linear system $|C|=\P^1$ has exactly $16$ irreducible curves with one single node, and $2$ reducible curves $e_1+e_2$ and $e_1'+e_2'$ that are the union of two smooth elliptic curves  meeting transversally in one point.

As usual we look at the 2-sheeted K3 cover $f: S\to T$ and we set $|D|=|f^*C|$, and $v=(0,[D], -2)$. We also choose   a $v$-generic polarization $H$.
We consider the involution 
$\tau=\tau_{-D}: M_{v,H}\to M_{v,H}$ and 
our goal is to describe $P_{v,H}$. The following theorem holds. 

\begin{theorem}\label{prym_surface}
The relative Prym variety $P_{v,H}$ is an elliptic K3 surface whose singular fibers consist in $16$ irreducible curves with one node, and two fibers of type $\operatorname{I}_4$  (i.e. a closed chain of $4$ rational curves $R_1,\dots,R_4$ with $R_i\cdot R_{1+1}=1$, $i=1,\dots, 3$ and $R_4\cdot R_1=1$ ).
\end{theorem}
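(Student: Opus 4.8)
The plan is to recognize $P_{v,H}$ as an elliptic K3 surface and then read off its singular fibres member by member of $|C|$, using an Euler-number count as the global bookkeeping. Since $g=2$ we have $\dim P_{v,H}=2g-2=2$, and for $H$ both $v$-generic and $k^*$-invariant (so that $\tau=k^*$ is a regular involution, cf. Proposition \ref{H k-invariant and v-generic}) Corollary \ref{hyp_K3_type} identifies $P_{v,H}$ with an irreducible holomorphic symplectic surface of type $\Hilb^{g-1}(\mathrm{K3})=\Hilb^{1}(\mathrm{K3})$; hence $P_{v,H}$ is a K3 surface with $e(P_{v,H})=24$. The Lagrangian fibration $\nu:P_{v,H}\to|C|\cong\P^1$ is therefore a relatively minimal elliptic fibration (a K3 carries no $(-1)$-curve), with section given by the zero-section $s$ and general fibre the smooth elliptic curve $\Prym(D_0/C_0)$. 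As the Prym of a smooth curve is smooth, singular fibres occur only over singular members of $|C|$, which by the assumed genericity of $(T,C)$ are exactly the $16$ one-nodal irreducible curves and the two reducible members $e_1+e_2$ and $e_1'+e_2'$. Over each of the $16$ nodal curves the double cover $D_0=f^{-1}(C_0)$ has two nodes, and Corollary \ref{rank one degeneration} shows that $\Prym(D_0/C_0)$ is a rank-one degeneration of a one-dimensional abelian variety, i.e. an irreducible rational curve with a single node: these give $16$ fibres of type $\mathrm{I}_1$, each of Euler number $1$.

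The heart of the argument is the analysis over the two reducible members, which I would carry out first in the regular case $H=D$ and then transport to the chosen $v$-generic $H$. Write $D=E_1+E_2$, so that $E_1\cdot E_2=2$ because $e_1\cdot e_2=1$. Proposition \ref{polyst} applies with $\delta=1$: the fibre of $\nu:P_{v,D}\to|C|$ over $[e_1+e_2]$ is the closure of the identity component of $\Fix(\tau)$, a single $\C^*$ that compactifies to a cycle of two rational curves meeting at the two boundary points, where the limiting sheaves are $\mc S$-equivalent to polystable $F_1\oplus F_2$; thus this fibre is of type $\mathrm{I}_2$ and its two nodes are the two polystable points. At each such point Proposition \ref{cones} computes the tangent cone: here $h_1=h_2=1$ and $\dim W=E_1\cdot E_2=2$, so $C_{[F]}(P_{v,D})\cong\End_1^s(W)$ is the affine cone over the degree-two Veronese of $\P W\cong\P^1$, i.e. the quadric cone, an $A_1$ surface singularity.

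Passing to $P_{v,H}$, I would use the symplectic resolution $\varepsilon:P_{v,H}\to P_{v,D}$ constructed in this section, taking $H$ in a $v$-chamber adjacent to the wall containing $D$. This $\varepsilon$ is the minimal resolution of the $A_1$ points and inserts exactly one $(-2)$-curve $R$ over each. Since $\nu\circ\varepsilon=\nu$, each such $R$ maps to $[e_1+e_2]$ and is therefore a component of the fibre $\nu^{-1}([e_1+e_2])$; resolving both nodes of the $\mathrm{I}_2$ cycle thus adds two components, producing a reduced cycle of $2+2=4$ rational curves, i.e. a fibre of type $\mathrm{I}_4$ (the unique Kodaira type that is a reduced $4$-cycle). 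The identical analysis over $e_1'+e_2'$ yields a second $\mathrm{I}_4$. Finally the Euler-number identity $24=e(P_{v,H})=16\cdot 1+2\cdot 4$ is exactly saturated, which confirms simultaneously that there are no further singular fibres and that the two reducible fibres have Euler number $4$, consistent with $\mathrm{I}_4$.

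The step I expect to be the main obstacle is the passage from the $\mathrm{I}_2$ fibre of $P_{v,D}$ to the $\mathrm{I}_4$ fibre of $P_{v,H}$: one must know not merely that the tangent cone at a polystable point is the quadric cone, but that the singularity is genuinely $A_1$, so that its minimal resolution contributes exactly one rational curve to the fibre, with the incidences of a $4$-cycle. Because $\dim W=2$, Proposition \ref{loc_sing} (which required $\dim W\ge 3$) is unavailable, so the local model must instead be obtained from the fact that $P_{v,D}$ is a normal symplectic surface whose singularities are Du Val; a Du Val point with a quadric tangent cone resolved by a single blow-up can only be $A_1$. The component count together with the Euler-number bookkeeping then excludes any other Kodaira type.
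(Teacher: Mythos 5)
Your proposal follows the paper's own proof almost step for step: the $16$ nodal fibres come from Corollary \ref{rank one degeneration}; the fibres of $P_{v,D}$ over the two reducible members are the $\mathrm{I}_2$ cycles of Proposition \ref{polyst}, with nodes at the polystable points $[F_1\oplus F_2]$; the singularities there are $A_1$; and the resolution $\varepsilon\colon P_{v,H}\to P_{v,D}$ turns each $\mathrm{I}_2$ into an $\mathrm{I}_4$. The only genuine additions are the explicit identification of $P_{v,H}$ as a K3 via Corollary \ref{hyp_K3_type} and the Euler-number bookkeeping $24=16\cdot 1+2\cdot 4$; the paper does not need the latter because the genericity assumption on $(T,C)$ already says the $16$ nodal curves and the $2$ reducible ones are the only singular members, but it is a sound consistency check.

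The one place where you diverge is the justification of the $A_1$ claim, and your diagnosis is correct: Proposition \ref{loc_sing} requires $D_1\cdot D_2\ge 3$, whereas here $\dim W=E_1\cdot E_2=2$, so it cannot be quoted as stated (the paper's appeal to ``the analysis carried out in Section \ref{singular}'' is terse on exactly this point). However, your substitute argument is not self-contained: the premise that $P_{v,D}$ is a \emph{normal} surface with \emph{Du Val} (equivalently, symplectic) singularities is established nowhere, and the standard tool for extending the symplectic form to a resolution, Flenner's theorem, requires the singular locus to have codimension at least $4$ and is therefore unavailable in dimension $2$. A cleaner repair stays inside the proof of Proposition \ref{loc_sing}: the hypothesis $\dim W\ge 3$ is used there only to invoke rigidity of the Veronese cone so as to trivialize the family $q\colon \mc P\to \mc P_1\times\mc P_2$; in the surface case $g_1=g_2=1$, so $\mc P_1\times \mc P_2$ is a reduced point, there is no family to trivialize, and the Grauert step alone yields $\mc P\cong \End_1^s(W)$, the quadric cone, i.e.\ an $A_1$ point. (Alternatively, normality follows from the fact that the tangent cone $\End_1^s(W)\cong \C^2/\{\pm 1\}$ is a normal domain, and a normal double point whose tangent cone is an irreducible rank-three quadric is $A_1$ by the holomorphic Morse lemma.) With that repair your argument is complete and coincides with the paper's.
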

\begin{proof} The fact that there exist 16 irreducible rational nodal curves follows from Corollary \ref{rank one degeneration}. To show that there are two fibers of type $I_4$ we proceed as follows. 
From Proposition \ref{polyst}, applied to $P_{v,D}$, there are two fibers that are the union of two rational curves meeting at two points. These points of intersection correspond to polystable sheaves of type $F_1\oplus F_2$
and by the analysis carried out in Section \ref{singular}, the resulting singularities are of type $A_1$.
The proof follows from the fact that $P_{v,H}\to P_{v,D}$ is a resolution of singularities.
\end{proof}


\section{The fundamental group}
\label{fund_group}

Let $T$ be a general Enriques surface and $f: S\to T$ its universal covering.
Let $C\subset T$ be a smooth, primitive, curve of genus $g\geq 2$ and set $D=f^{-1}{(}C)$. Consider the Mukai vector $v=(0, [D], -h+1)$, with $h=g(D)=2g-1$.

From Section \ref{hyp}, we  know that if $|C|$ is hyperelliptic, then 
$P_{v,H}$ is either an  irreducible symplectic manifold, and thus  simply connected,  or else has a resolution 
which is one such. 
 We can thus restrict our attention to the non-hyperelliptic case. 
 
 \begin{theorem} \label{simple_conn} Let $|C|$ be a non-hyperelliptic system on a general Enriques surface. Let $P_{v,H}$ be the relative
 Prym variety associated to $|C|$ (cf. (\ref{notation_prym}) ).  Any resolution $\wt P_{v,H}$ of the singularities of $P_{v,H}$ is simply connected.
\end{theorem}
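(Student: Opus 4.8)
The plan is to exploit the Lagrangian fibration $\nu\colon P_{v,H}\to |C|\cong\PP^{g-1}$ and to show that the fundamental group of any resolution is generated by $1$-cycles coming from a single smooth fibre, all of which turn out to be vanishing cycles and hence die in the total space. First I would reduce to one resolution: since the fundamental group of a smooth projective variety is a birational invariant, all resolutions $\wt P_{v,H}$ have isomorphic $\pi_1$. Moreover, the chosen resolution $r\colon\wt P_{v,H}\to P_{v,H}$ is an isomorphism over the regular locus $P_{reg}$, and writing $E=r^{-1}(P_{sing})$ we have $\wt P_{v,H}\setminus E\cong P_{reg}$; as $E$ has real codimension $\ge 2$, the inclusion induces a surjection $\pi_1(P_{reg})\twoheadrightarrow\pi_1(\wt P_{v,H})$. (Recall that by Proposition \ref{codim_sing} the singular locus has complex codimension $\ge 4$ here, so this poses no difficulty.) Thus it suffices to prove $\pi_1(\wt P_{v,H})=1$.

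Next I would set $\tilde\nu=\nu\circ r\colon\wt P_{v,H}\to\PP^{g-1}$. Its general fibre is a smooth Prym variety $F=\Prym(D_0/C_0)$, which lies in $P_{reg}$ and is therefore untouched by $r$, and $\tilde\nu$ inherits the zero-section $s$. Since the base $\PP^{g-1}$ is simply connected, $\tilde\nu$ has connected fibres, and a section exists, the homotopy exact sequence of the fibration shows that the inclusion of a general fibre induces a surjection $\pi_1(F)\twoheadrightarrow\pi_1(\wt P_{v,H})$. In particular $\pi_1(\wt P_{v,H})$ is an abelian quotient of $H_1(F,\Z)\cong\Z^{2(g-1)}$, and the problem is reduced to showing that every class in $H_1(F,\Z)$ maps to $0$ in $H_1(\wt P_{v,H},\Z)$.

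The geometric input is a vanishing-cycle argument. Moving $C_0$ to an irreducible curve with a single node and travelling along a small disc $\Delta\subset\PP^{g-1}$ meeting the discriminant transversally at that point, Corollary \ref{rank one degeneration} shows that $F$ degenerates to a \emph{rank-one} degeneration of an abelian variety. The local Picard--Lefschetz picture then produces a vanishing cycle $\delta\in H_1(F,\Z)$ bounding a thimble in $\tilde\nu^{-1}(\Delta)$, so that $\delta\mapsto 0$ in $H_1(\wt P_{v,H},\Z)$. Concretely, the two nodes $p,\iota p$ of $D_0=f^{-1}(C_0)$ carry paired vanishing cycles $\delta_p,\delta_{\iota p}$ in $H_1(D_0,\Z)$, swapped by $\iota$, and $\delta$ is the $\iota^*$-anti-invariant combination $\delta_p-\delta_{\iota p}$ under the identification of $H_1(\Prym(D_0/C_0),\Z)$ with the anti-invariant part of $H_1(D_0,\Z)$.

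The main obstacle is the generation statement: one must show that, as the node varies over the $1$-nodal stratum of $|C|$, the monodromy orbit of these anti-invariant vanishing cycles generates all of $H_1(F,\Z)$. This is a monodromy computation for the family of curves in $|D|=|f^*C|$, where the point is to prove that the monodromy representation on the $\iota^*$-anti-invariant homology is large enough (irreducibility, or transitivity on vanishing cycles) to span. One delicate further issue is integrality: the vanishing cycles may a priori generate only a finite-index subgroup, so to exclude residual torsion in the abelian group $\pi_1(\wt P_{v,H})$ I expect to bring in the more degenerate fibres, using the description of the reducible and multinodal degenerations in Propositions \ref{components irreducible curves} and \ref{polyst} to supply the extra relations that pin down the full integral lattice.
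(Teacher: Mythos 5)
Your reduction is essentially sound, and it parallels the paper's more closely than you may realize: after the thimble argument, everything comes down to the single statement that the anti-invariant vanishing cycles $\delta_s=\alpha_s-\iota\alpha_s$ generate $H_1\bigl(\Prym(D_0/C_0),\Z\bigr)$ \emph{integrally} as $s$ runs over the branches of the discriminant. This is exactly the step you leave open and call ``the main obstacle,'' and it is the actual content of the theorem, not a technical remainder. Your proposed route to it is a hope rather than an argument: an irreducibility or transitivity statement for the monodromy on the $\iota$-anti-invariant homology of the family $|D|$ is nowhere available and would be genuinely hard to prove directly; and the idea of extracting ``extra relations'' from the reducible degenerations of Propositions \ref{components irreducible curves} and \ref{polyst} cannot work as stated, because by Proposition \ref{red_cod_one} the reducible curves occur in codimension $\geq 2$ in $|C|$ (this is where non-hyperellipticity enters), so those fibres are invisible to $\pi_1$ of the open part of the family your argument uses, and they cannot repair a finite-index defect in the lattice generated by the $\delta_s$.

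The paper closes this gap by a bootstrap that avoids any direct monodromy computation. It applies a theorem of Leibman (Theorem \ref{leib}) twice. First, to the Beauville--Mukai system $\pi: M_{v,H}\to |D|$ restricted over the locus $V'$ of irreducible curves: since both $V'$ and $J_{V'}=\pi^{-1}(V')$ are simply connected (the latter because $M_{v,H}$ is of K3$^{[h]}$-type and the removed locus has codimension $\geq 2$), Leibman's sequence forces $\pi_1(\Jac^0(D_0))=H_1(D_0,\Z)$ to coincide with the commutator subgroup $[\pi_1(\Jac^0(D_0)),\pi_1(V)]$, which by Picard--Lefschetz is precisely the integral span of the vanishing cycles. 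Second, a geometric argument shows that a simple loop $\gamma_s$ in $|C|$ around the discriminant maps in $\pi_1(V)$ to a product $[\lambda_s][\iota\lambda_s]$ (the two nodes of $D_p=f^{-1}(C_p)$ give two branches of $\Delta_D$ exchanged by $\iota$), and a line-section argument shows that the loops $\{\lambda_s,\iota\lambda_s\}$ generate $\pi_1(V,u)$; hence the cycles $\{\alpha_s,\iota\alpha_s\}$ generate $H_1(D_0,\Z)$ over $\Z$, and applying $1-\iota$ gives exactly the integral generation of the anti-invariant lattice that you need. (The paper also needs anti-invariant classes $c_s$ with $c_s\cdot\alpha_s=1$, constructed from the irreducibility of the two-nodal curves $D_p$ via Lemma \ref{prim_irr}, because in Leibman's sequence the relations appear in the form $(c\cdot\alpha_s)(\alpha_s-\iota\alpha_s)$ as in \eqref{pl5}; in your thimble formulation the classes $\delta_s$ are killed directly, so this last step could be skipped --- a small genuine simplification, but one that does not touch the generation problem itself.) Finally, a minor point: your appeal to ``the homotopy exact sequence of the fibration'' for $\wt\nu$ is not literal, since $\wt\nu$ has singular fibres; one needs a Nori-type lemma, or the paper's device of restricting to $\eta^{-1}(U')$ over the simply connected locus $U'$ of irreducible curves and invoking the Fulton--Lazarsfeld/Koll\'ar surjectivity statement for $\pi_1(P')\to\pi_1(\wt P_{v,H})$.
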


We set,
\be\label {p_fund}
M=M_{v,H}\,,\qquad P=P_{v,H}.
\ee
Consider the support morphism
\be\label{jac_fibr}
\pi: J=M\longrightarrow |D|.
\ee 
 and look at its  restriction to $P$ 
\[
 \eta: P\to |C|\cong\P^{g-1}
\]
Let $U'$ be the locus of irreducible curves in $|C|$, $Z=|C|\smallsetminus U'$, and set
\[
P'=\eta^{-1}(U). 
\]
Let $\gamma: \wt P \to P$ be any resolution of singularities. Since $P' $ is contained in the smooth locus of $P$, by \cite{Fulton-Lazarsfled} 0.7.B  (cf. also \cite{Kollar-Sha} Proposition 2.10), the natural morphism $\pi_1(P') \to \pi_1(\wt P)$ is surjective and hence to prove the theorem it is enough to prove that
\[
\pi_1(P')=\{1\}. 
\]
Notice that the same reasoning applies to show the simple connectivity of the normalization of $P$.

We will deduce Theorem \ref{simple_conn} from the simple connectivity of $M=M_{v,D}$ and from Picard-Lefschetz theory. Similarly to what is done in \cite{Mark_Tik},  we will use a theorem of Leibman \cite{Leibman} which we state in a form directly suited to our needs.

\begin{theorem}[Leibman]\label{leib}
Let $p \colon E\to B$ be a surjective morphism of connected smooth manifolds. Assume $p$ has a section $s$. Let $W \subset B$ be a  closed submanifolds of real codimension at least two. Set $U = B \setminus W$ and $E_U = p^{-1}(U)$ and assume that $E_U \to U$ is a locally trivial fibration with fiber $F$. Consider the exact sequence
\begin{equation} \label{leibman_seq1}
1 \to \pi_1(F) \overset {j_*}{\to} \pi_1(E_U) \overset{s_*}{\leftrightarrows} \pi_1(U) \to 1.
\end{equation}
Set $H = \ker (\pi_1(U)\to \pi_1(B))$. 
Via $j_*$, consider $\pi_1(F)$ as a normal subgroup of $\pi_1(E_U)$ and 
Let $R = [\pi_1(F), H]$ be the commutator subgroup of $\pi_1(F)$ and $ H$ in $\pi_1(E_U)$. Then there is an exact sequence
\begin{equation}\label{leibman_seq2}
1\to R\to\pi_1(F)\to\pi_1(E)\overset{s_*}\leftrightarrows\pi_1(B)\to 1.
\end{equation}
\end{theorem}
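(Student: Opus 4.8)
The plan is to realize both $\pi_1(E)$ and $\pi_1(B)$ as explicit quotients of $\pi_1(E_U)$ and $\pi_1(U)$, and then to carry out the resulting group-theoretic computation. The two topological inputs are the standard facts about excising a locus of real codimension at least two. First, since $W\subset B$ has real codimension $\geq 2$, the inclusion $U\hookrightarrow B$ induces a surjection $\pi_1(U)\twoheadrightarrow\pi_1(B)$ whose kernel is exactly $H$, the normal closure of the meridians $\mu_1,\mu_2,\dots$ of $W$: general position lets one sweep a loop off $W$, and a disc bounding a loop of $U$ in $B$ meets $W$ transversally in finitely many points, each contributing a meridian. Second, because the fibres of $p$ over $W$ have the same dimension as the generic fibre $F$ (the fibration degenerates but does not drop dimension), the locus $p^{-1}(W)=E\setminus E_U$ also has real codimension $\geq 2$ in $E$; hence $\pi_1(E_U)\twoheadrightarrow\pi_1(E)$ is surjective with kernel $N$, the normal closure in $\pi_1(E_U)$ of the meridians of $p^{-1}(W)$. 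Thus the theorem reduces to a single identification of $N$.

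\textbf{The target group.} Using the splitting $s_*$ of (\ref{leibman_seq1}), write $\pi_1(E_U)=\pi_1(F)\rtimes\pi_1(U)$, where $\pi_1(U)$ acts on $\pi_1(F)$ by the monodromy of the fibration. A routine commutator calculation shows that $R=[\pi_1(F),H]$ (with $H$ viewed via $s_*$) is a normal subgroup of $\pi_1(F)$, invariant under the monodromy action, and that $H$ acts trivially on $\pi_1(F)/R$; hence the action descends to an action of $\pi_1(B)=\pi_1(U)/H$ on $\pi_1(F)/R$. This produces a natural surjection
\[
q\colon \pi_1(E_U)=\pi_1(F)\rtimes\pi_1(U)\longrightarrow G:=(\pi_1(F)/R)\rtimes\pi_1(B),
\]
and reading off the semidirect-product structure of $G$ gives precisely the exact sequence (\ref{leibman_seq2}), the map $\pi_1(F)\to\pi_1(E)$ factoring as $\pi_1(F)\twoheadrightarrow\pi_1(F)/R\hookrightarrow G$. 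So it suffices to prove $\ker q=N$, i.e. that $\pi_1(E_U)/N\cong G$ compatibly with the sections.

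\textbf{The meridian computation (the crux).} I would first show that $N$ equals the normal closure of the section-lifts $s_*(\mu_i)$ of the base meridians. For $\langle\langle s_*(\mu_i)\rangle\rangle\subseteq N$: each $\mu_i$ bounds a small normal $2$-disc $\delta_i$ meeting $W$ once transversally, and since $s$ is a section, $s(\delta_i)$ meets $p^{-1}(W)$ transversally in the single point $s(w)$, so $\partial s(\delta_i)=s_*(\mu_i)$ is a genuine meridian of $p^{-1}(W)$. Conversely, over the smooth locus of $W$, where $p$ is a submersion and $p^{-1}(W)$ is a smooth codimension-two submanifold, the local product structure shows every meridian of $p^{-1}(W)$ projects under $p_*$ to a meridian of $W$ and, after moving its base point onto the section, is conjugate in $\pi_1(E_U)$ to some $s_*(\mu_i)^{\pm 1}$. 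Granting $N=\langle\langle s_*(\mu_i)\rangle\rangle$, the quotient $\pi_1(E_U)/N$ is computed directly: killing the $s_*(\mu_i)$ collapses the base factor to $\pi_1(U)/H=\pi_1(B)$, while the semidirect-product relation $s_*(\mu_i)\,a\,s_*(\mu_i)^{-1}=\phi_{i,*}(a)$ forces $\phi_{i,*}(a)=a$ for all $a\in\pi_1(F)$, i.e. kills every commutator $[\pi_1(F),s_*(\mu_i)]$; the normal closure of these is exactly $R=[\pi_1(F),H]$, since the $\mu_i$ normally generate $H$. Therefore $\pi_1(E_U)/N=(\pi_1(F)/R)\rtimes\pi_1(B)=G$, as desired.

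\textbf{Main obstacle.} The genuinely delicate point is the second half of the meridian computation: verifying that \emph{every} meridian of the degeneration locus $p^{-1}(W)$ is conjugate to a section-lift $s_*(\mu_i)$, and that the singular or reducible fibres lying over $W$ introduce no additional meridional relations. This is where the codimension-two hypothesis must be invoked a third time, to confine the non-submersive and singular part of $p^{-1}(W)$ to codimension $\geq 3$, so that it is invisible to $\pi_1$ and the clean semidirect-product bookkeeping producing $R$ is controlled entirely by the generic points of $W$.
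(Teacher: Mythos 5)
First, a point of comparison: the paper does not prove this statement at all --- it is quoted from Leibman's article \cite{Leibman} ``in a form directly suited to our needs'' --- so your attempt can only be measured against Leibman's own argument and against the truth of the statement. Your group-theoretic skeleton is sound and is indeed how such proofs are organized: writing $\pi_1(E_U)=\pi_1(F)\rtimes\pi_1(U)$ via the section, and \emph{granting} that $N:=\ker\bigl(\pi_1(E_U)\to\pi_1(E)\bigr)$ is the normal closure of the section lifts $s_*(\mu_i)$, the quotient computation correctly produces $(\pi_1(F)/R)\rtimes\pi_1(B)$ with $R=[\pi_1(F),H]$. But the two topological inputs are where the proof lives, and both have gaps. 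The lesser one: your claim that $p^{-1}(W)$ has real codimension $\geq 2$ ``because the fibration does not drop dimension'' does not follow from the stated hypotheses --- $p$ is merely a surjective morphism of smooth manifolds, and nothing stated controls the fibre dimension over $W$; one needs properness/flatness-type input (in the paper's application it holds because the support map is equidimensional), so this must be added as a hypothesis or derived from one.

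The serious gap is the crux you yourself flag, and your proposed remedy (confine the bad locus to codimension $\geq 3$) cannot close it, because the failure lives on the codimension-two smooth strata themselves: when a degenerate fibre has several components, the meridians around components not met by the section need \emph{not} be conjugate to any $s_*(\mu_i)^{\pm1}$. Concretely, take a relatively minimal elliptic surface with section over a disc $B$, with a single $I_2$ fibre over $W=\{0\}$. All hypotheses as stated hold: $F=T^2$, $\pi_1(F)=\mathbb{Z}a\oplus\mathbb{Z}b$ with monodromy $\left(\begin{smallmatrix}1&2\\0&1\end{smallmatrix}\right)$, $H=\pi_1(U)=\mathbb{Z}\mu$, so $R=[\pi_1(F),H]=2\mathbb{Z}a$. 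The critical locus (the two nodes) has real codimension $4$, so generic discs meet only the two smooth codimension-two components $A_1,A_2$ of the central fibre, exactly as in your transversality argument; yet the section meets only $A_1$, and the meridian around $A_2$ differs from $s_*(\mu)$ by the vanishing cycle $a$. Hence $N$ kills $a$, not merely $2a$: indeed $E$ retracts onto the $I_2$ fibre, so $\pi_1(E)=\mathbb{Z}$, whereas $(\pi_1(F)/R)\rtimes\pi_1(B)=\mathbb{Z}\oplus\mathbb{Z}/2$. In other words, the statement as reproduced in the paper tacitly relies on an extra assumption on the degenerate fibres (present in Leibman's original and satisfied in the paper's application, where the fibres over generic discriminant points are irreducible --- compactified Jacobians of integral curves, rank-one degenerations of abelian varieties), which guarantees that the smooth codimension-two stratum of $p^{-1}(W)$ over each branch of $W$ is connected, so that all its meridians form a single conjugacy class containing $s_*(\mu_i)$. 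That connectivity statement --- not a codimension count --- is the missing ingredient; identifying and verifying it is precisely the content of Leibman's proof, and without it your step ``$N=\langle\langle s_*(\mu_i)\rangle\rangle$'' is not just unproven but false in the generality you work in.
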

The commutator subgroup  $R$ of the statement of the theorem, should be understood as generated by elements of type 
\begin{equation} \label{conj}
c^{-1}\tilde{\lambda}^{-1} c \tilde{\lambda},
\end{equation}
where $c \in \pi_1(F)$ and $\tilde{\lambda} = s_{*}(\lambda)$ is a lifting of $\lambda \in \pi_1(U)$ to $\pi_1(E_U)$.

Before proving Theorem \ref {simple_conn}, 
let us  apply right away  Liebman's theorem to the support morphism 
\be\label{jac_fibr}
\pi: J=M\longrightarrow |D|.
\ee
To be more precise, we let $\Delta_D\subset |D|$ be the discriminant locus, $V=|D|\smallsetminus \Delta_D$ the locus of smooth curves, $V'\supset V$ the locus of irreducible curves, and we apply it the restriction of $\pi$ to the open subset $V'$ where the rational section $s$ of (\ref{rational section}) is defined. Recall that the complement of this open subset has codimension greater or equal to two.
Set $W=V' \cap \Delta_D$.

In this case both $E=J_{V'}$ and $B=V'$ are simply connected and, by the above theorem, we get 
$R=\pi_1(J{(}D_0))$ where $D_0$ is a smooth curve in $|D|$.
To unravel what this means, we first observe that, given $\lambda\in \pi_1(U)$,
the element $\tilde{\lambda}^{-1} c \tilde{\lambda}$ is 
the result of applying  the Picard-Lefschetz transformation,
attached to the loop $\lambda$, to the cycle $c$:
\begin{equation}\label{PL}
\aligned
\operatorname{PL}:\, \pi_1(U, &u)\longrightarrow  \Aut(H_1(D_0, \Z))=\Aut(\pi_1(J(D_0, \Z)))\\
&[\lambda]\mapsto \{c\mapsto \tilde{\lambda}^{-1} c \tilde{\lambda}\}
\endaligned
\end{equation}
To visualize $\pi_1(V)$, take a generic two plane $\Sigma \subset |D|$ and
consider the discriminant curve $\Gamma=\Delta_D\cap E$. By a classical theorem of Zariski
$\pi_1(\Sigma \smallsetminus \Gamma)= \pi_1(V)$ (\cite[Theorem $4.1.17$]{Dimca}).
Generators for $\pi_1(\Sigma \smallsetminus \Gamma)$ can be obtained by fixing a smooth point $z$ on the discriminant curve $\Gamma$ and taking the boundary of a small one dimensional disk contained in $|D|$ and meeting $\Gamma$ only in $z$ and transversally there. The family of curves parametrized by this disk is a family of smooth curves acquiring a simple node.
Let $\alpha_\lambda$ be the vanishing cycle of this family. It is a classical result that  the Picard-Lefschetz homomorphism (\ref{PL}) is given by
(see, for instance, \cite[Section $X.9$]{GACII})
\be\label{PL_alpha}
PL_{\lambda}(c)= c + (c \cdot \alpha_\lambda) \alpha_\lambda.
\ee
Going back to (\ref{conj}) and using additive notation (since $\pi_1(J{(}C)) = H_1(C, \Z)$), we get 
\begin{equation} \label{pl4}
c^{-1}\widetilde{\lambda}^{-1} c \widetilde{\lambda} = -c + PL_{\lambda(c)}
 =(c \cdot \alpha_\lambda) \alpha_\lambda.
\end{equation}
Thus, the simple connectivity of $J$, i.e. the equality $R=\pi_1(J{(}D_0))$, simply means that $\pi_1(J{(}D_0))$
is generated by vanishing cycles, as expected.

\begin{proof} [Proof of Theorem \ref {simple_conn}] Recall that $U'\subset |C|$ is the locus of irreducible curves.
 We want to apply Theorem \ref{leib} to the morphism
 \[
\eta: P'\longrightarrow U'
\]
(throughout, when there is no confusion, we use the same symbol for a morphism and its restrictions). Recall that $\eta: P' \to U'$ has a section induced by $s: V' \to J_{V'}$.
Let $W'=U'\smallsetminus U=\Delta\cap U'$ be the discriminant locus of $U'$.
As usual, via $f: S\to T$, we consider $|C|$ as a linear subspace of $|D|$.
Pick a point $u \in U$ corresponding to an unramified two-sheeted cover 
\begin{equation*}
f \colon D_0 \to C_0,
\end{equation*}
where $C_0 \subset T$ is a smooth member of $|C|$ while $D_0=f^{-1}{(}C_0) \subset S$ is a smooth member of $|D|$. We also set
\begin{equation*}
P_0= \Prym(D_0/C_0).
\end{equation*}
In the present case the sequence \eqref{leibman_seq1} is given by
\begin{equation*}
0 \to \pi_1(P_0, 0) \to \pi_1(P_{U}, 0) \to \pi_1 (U, u) \to 0,
\end{equation*}
where $P_{U} = \eta^{-1}(U)$ is the restriction to the smooth locus.

 By Proposition \ref{red_cod_one} $Z$ is of codimension $\geq 2$ in $|C|$ and hence the complement  $U'$ is simply connected. Thus, to prove the simple connectivity of $P'$ it suffices to prove that
\begin{equation} \label{da_fare}
\pi_1(P_0, 0) = [\pi_1(P_0, 0), \pi_1 (U, u) ];
\end{equation}
It will be useful to identify the first homotopy group of $F_P$ with the $\iota$-anti-invariant subspace of $H^1(D, \Z)$:
\begin{equation*}
\pi_1(P_0, 0) = H^1(D_0, \Z)_{-}
\end{equation*}
To prove \eqref{da_fare} we must make explicit the conjugation action of $\pi_1 (U', u)$ on $\pi_1(P_0, 0)$. We have a commutative diagram
\begin{equation}\label{fiber_prym}
\xymatrix{
0 \ar[r] & \pi_1(P_0, 0) \ar[d] \ar[r]^{j_*}\ & \pi_1(P_{U}, 0) \ar[d] \ar[r] & \pi_1(U, u) \ar[d] \ar[r] \ar@/_1pc/[l]_{{s'}_*} & 0
\\
0 \ar[r] &\pi_1(J(D_0), 0) \ar[r] ^{j_*}& \pi_1(J_V, 0)\ar[r]& \pi_1(V,u) \ar[r] \ar@/_1pc/[l]_{s_*} & 0.
}
\end{equation}
where $J_V=\pi^{-1}(V)$.
Let us look at a simple closed loop $\gamma$ in $U$ going around one of the smooth branches of $W'$. First of all we want to determine the image of $[\gamma]$ in $\pi_1(V, u) $. Let $W'_\gamma$ be the local branch of $W'$ around which $\gamma$ goes.
A general point $p$ in $W'_\gamma$ corresponds to an irreducible curve $C_p$ on the Enriques surface $T$ having one node and no other singularities. It also corresponds to a $\iota$-invariant curve $D_p$ on the K3 surface having exactly two nodes $a$ and $b$ as singularities which, by Lemma \ref{prim_irr} is irreducible. These two nodes are exchanged by the involution; in fact $C_p = D_p/\iota$.
Smoothing the node $a$ or the node $b$ corresponds to moving away from $p$ on two smooth local branches of $W$ meeting transversally along $W'_\gamma$. These two branches are exchanged by the involution $\iota$. 

Making a section with a generic 2-dimensional plane $\Sigma$,  we may assume that, locally we have 
\[
W\cap \Sigma \underset{loc}=\{(x,y) \in \C^2 \mid |x|<\epsilon\,, |y|<\epsilon\,, \,\, xy = 0 \}
\] 
while $W'_\gamma$ is the origin.  We may think that the image of  $\gamma$ in $V$ is given by $\gamma(t) = (\epsilon_0e^{2\pi it}, \epsilon_0e^{2\pi it})$. In 
\[
V\cap \Sigma=\{(x,y) \in \C^2 \mid |x|<\epsilon\,, |y|<\epsilon\,, \,\, xy \neq 0 \}
\]
$\gamma$ is homotopic to the composition of one loop $\lambda$ going around the $x$-axis and 
one loop $\mu$ going around the $y$-axis. Since the two branches of $W$ meeting in $W'_\gamma$ are exchanged by the involution, we may as well assume that $\mu=\iota\lambda$. 

In conclusion, there is a system of generators $\{[\gamma_s]\}_{s \in K}$ for $\pi_1(U,u)$ such that, for each $s$, $\gamma_s$ is a simple closed loop  having the property that, under the inclusion $j \colon U \into V$, one has 
\begin{equation} \label{jstar}
j_{*}([\gamma_s]) = [\lambda_s][\iota\lambda_s]
\end{equation}
where $\lambda_s$ is a simple closed loop. We claim that the elements
  $\{\lambda_s, \iota \lambda_s\}_{s\in K}$ generate $\pi_1(U,u)$. To prove this first observe 
that, if $l$ is a line in $U'$ meeting $W'$ transversally, then there is a surjection $\pi_1(l\setminus l\cap W', u) \to \pi_1(U, u)$. Now move the line $l$ in $|D|$ to get a line $m$, very close to $l$, and meeting $W$ transversally. Set $l \cap W' = \{ x_1, \dots, x_N \}$ where $N  = \deg W'$. Then  we may set $m \cap W = \{ y_1, \dots, y_{2N} \}$. Moreover we may assume that, for $s = 1, \dots, N$, $y_{2s}$ and $y_{2s - 1}$ belong each to one of the two local branches of $W$ meeting in the branch of $W'$ to which $x_s$ belongs. The claim follows from observing that also $\pi_1(m \setminus m \cap W, u) \to \pi_1(V, u)$ is surjective.

Going back to diagram \eqref{fiber_prym} we may now identify the action of $[\gamma]$ on $\pi_1(P_0, 0)$ as the action of $[\lambda][\iota\lambda]$ on the $\iota$-anti-invariant subspace $H^1(D, \Z)_{-} \subset H^1(D, \Z) = \pi_1(J(C), 0)$. Let $\alpha$ be a vanishing cycle on $D_0$ such that (\ref{PL_alpha}) holds. Recalling that $(\alpha \cdot \iota \alpha) = 0$ we have, as in \eqref{pl4}
\begin{equation} \label{pl5}
c^{-1} (\widetilde{\lambda} \cdot \widetilde{\iota \lambda})^{-1} c (\widetilde{\lambda} \cdot \widetilde{\iota\lambda}) = -c + P_{\lambda} P_{\iota\lambda}(c)
= (c \cdot \iota \alpha) \iota \alpha + (c \cdot \alpha) \alpha =
 (c \cdot \alpha)(\alpha - \iota\alpha).
\end{equation}
Let now $\{ \lambda_s, \iota\lambda_s\}_{s \in K}$ be as in  (\ref{jstar}) and let $\alpha_s$ be the vanishing cycle on $D_0$ corresponding to $\lambda_s$. Since this is a set of generators for $\pi_1(V, u)$ and since $J$ is simply connected,  we may assume that $\{\alpha_s, \iota\alpha_s\}_{s\in K}$ generate $\pi_1(J(D_0), 0) = H_1(D_0, \Z)$. In conclusion $[\pi_1 (U, u), \pi_1(P_0, 0)]$ is generated by elements of the form
\begin{equation*}
(c \cdot \alpha_s)(\alpha_s - \iota \alpha_s)\,, \quad s \in K,
\end{equation*}
where $c$ runs in $H^1(D, \Z)_{-} = \pi_1(P_0, 0)$. Since $\{\alpha_s, \iota\alpha_s\}_{s\in K}$ generate $H_1(D_0, \Z)$ the set $\{\alpha_s - \iota \alpha_s \}_{s\in K}$ generates $H^1(D_0, \Z)_{-}$. Thus, in order to prove \eqref{da_fare} it suffices to prove that for each $s \in K$ there exists $c_s \in H^1(D, \Z)_{-}$ such that $(c_s \cdot \alpha_s) = 1$. For  this it suffices to find, for each $s \in K$, a simple closed loop $\beta_s$ on $D$ such that $((\beta_s - \iota \beta_s) \cdot \alpha_s) = 1$; we will find one such that $(\beta_s \cdot \alpha_s) = 1$ and $(\beta_s \cdot \iota \alpha_s) = 0$. Both $\alpha_s$ and $\iota \alpha_s$ are vanishing cycles and by construction there is a curve $C_0 \in |H|$, having exactly two nodes, resulting from the vanishing of $\alpha_s$ and $\iota\alpha_s$, and no other singularities than the two nodes. But  $C_0$ is irreducible and therefore $C \setminus \{ \alpha_s, \iota \alpha_s \}$ is connected and $\beta_s$ can be readily constructed.
\end{proof}


\section{Computation of $h^{2,0}$}\label{hilb}

From the last corollary of the preceding section we deduce that, in the hyperelliptic case, the $h^{2,0}$-number of any desingularization of the relative Prym variety $P_{v,H}$ is equal to 1.

We next examine the non-hyperelliptic case.
Fix a general Enriques surface $T$ with its universal cover $f:S\to T$. Fix
a non-hyperelliptic genus $g$ system $|C|$ on $T$ and let
$D=f^*{(}C)$,  $\chi=-h+1=-2g+2$ and  $v=(0, D, \chi)$. In this section we set
\[
P=P_{v, H}.
\]

\begin{theorem}Suppose that $g$ is odd. Let $\wt P$ be any desingularization of $P$. Then
 $h^{2,0}(\wt P) = 1$. 
\end{theorem}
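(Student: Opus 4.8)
The plan is to reduce the statement to a cohomological computation on the base of the Prym--Lagrangian fibration and then to feed in the results of Section~\ref{fund_group}. First, since a holomorphic form on a smooth projective variety is a birational invariant, $h^{2,0}(\wt P)$ depends neither on the chosen resolution nor on the polarization (the paper already notes that changing $H$ does not alter the birational class of the relative Prym variety); I may therefore take $H=D$ and work with $P=P_{v,D}$, whose $2$-form is symplectic on all of $P^{sm}$. By Hodge symmetry it is equivalent to prove $\dim H^2(\wt P,\O_{\wt P})=1$. The lower bound $h^{2,0}(\wt P)\ge 1$ is immediate: by Proposition~\ref{codim_sing} the singular locus of $P$ has codimension $\ge 4$, so Flenner's extension theorem \cite{Flenner} produces on $\wt P$ a holomorphic $2$-form extending the symplectic form of Proposition~\ref{prop jN}, and this form is nonzero.

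For the upper bound I would run the Leray spectral sequence of $\wt\eta=\eta\circ\gamma\colon \wt P\to |C|\cong\P^{g-1}$, where $\gamma\colon\wt P\to P$ is the resolution. The total space being a compact K\"ahler manifold and $\wt\eta$ being projective, the spectral sequence degenerates at $E_2$, so that
\[
H^2(\wt P,\O_{\wt P})=\bigoplus_{p+q=2}H^p\bigl(\P^{g-1},R^q\wt\eta_*\O_{\wt P}\bigr).
\]
The term $H^2(\P^{g-1},R^0\wt\eta_*\O_{\wt P})=H^2(\P^{g-1},\O)$ vanishes. Moreover Theorem~\ref{simple_conn} gives $b_1(\wt P)=0$, hence $h^{0,1}(\wt P)=0$; reading this off the degree-one Leray decomposition (and using $H^1(\P^{g-1},\O)=0$) forces $H^0(\P^{g-1},R^1\wt\eta_*\O_{\wt P})=0$. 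Thus only $H^1(\P^{g-1},R^1\wt\eta_*\O_{\wt P})$ and $H^0(\P^{g-1},R^2\wt\eta_*\O_{\wt P})$ survive, and it remains to show that the first is one-dimensional and the second vanishes.

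Over the locus $U\subset|C|$ of smooth Pryms, let $\mathbb V=R^1\eta_*\C$ be the weight-one Prym variation of Hodge structure, of rank $2(g-1)$, with Hodge bundle $R^1\eta_*\O=\mathbb V\otimes\O_U/\mathcal F^1$. Diagram~(\ref{fiber_prym}) realizes $\mathbb V$ as the $\iota$-anti-invariant part of the analogous local system for the Jacobian fibration $\pi\colon M\to|D|$. Passing to a $v$-generic $H'$, the space $M_{v,H'}$ is of $\mathrm{K3}^{[h]}$-type, so Matsushita's theorem \cite{Matsushita1} gives $R^q\pi_*\O\cong\Omega^q_{|D|}$; taking $\iota$-anti-invariants and using the Lagrangian identification $N^*_{P_0/P}\cong T^*_{P_0}$ furnished by $\sigma$, I would conclude $R^1\wt\eta_*\O_{\wt P}\cong\Omega^1_{\P^{g-1}}$, whence
\[
H^1(\P^{g-1},R^1\wt\eta_*\O_{\wt P})\cong H^1(\P^{g-1},\Omega^1_{\P^{g-1}})\cong\C .
\]
Finally $H^0(\P^{g-1},R^2\wt\eta_*\O_{\wt P})$ is controlled, via Deligne's global invariant cycle theorem, by the monodromy-invariant part of the $(0,2)$-piece of $\wedge^2\mathbb V$; using the generators $\{\alpha_s,\iota\alpha_s\}$ of $H_1(D_0,\Z)$ and the Picard--Lefschetz formula~(\ref{PL_alpha}) exactly as in Section~\ref{fund_group}, one checks that no nonzero invariant $(0,2)$-class survives, so this group vanishes. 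Combining the three points gives $\dim H^2(\wt P,\O_{\wt P})=1$.

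The main obstacle is precisely this last vanishing together with the global identification $R^1\wt\eta_*\O\cong\Omega^1_{\P^{g-1}}$: both require pushing the analysis from $U$ across the discriminant and controlling the direct images at the reducible and nodal degenerate fibres described in Subsections~\ref{prym_irr}--\ref{prym reducible}, as well as through the resolution $\gamma$. It is here, in the monodromy analysis of $\wedge^2\mathbb V$, that the parity of $g$ is genuinely used, in the same parity-sensitive way as the vanishing-cycle argument of Section~\ref{fund_group}; for even $g$ this step, and hence the conclusion $h^{2,0}=1$, is not available by this method.
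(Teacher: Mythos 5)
Your lower bound ($h^{2,0}(\wt P)\ge 1$ via Proposition~\ref{codim_sing}, Hartogs and Flenner's theorem) is exactly the paper's argument and is fine. The upper bound, however, has genuine gaps, and it is precisely the part you yourself flag as "the main obstacle" without resolving it. First, the $E_2$-degeneration you invoke for the Leray spectral sequence of $\wt\eta$ with coefficients in $\O_{\wt P}$ is not justified: Deligne's degeneration theorem applies to $R\wt\eta_*\Q$ (constructible coefficients), not to coherent direct images, and $\wt\eta$ is very far from smooth, so no general degeneration statement is available. Second, and more seriously, the identification $R^1\wt\eta_*\O_{\wt P}\cong\Omega^1_{\P^{g-1}}$ via Matsushita-type results is inapplicable: those results require the total space to be a holomorphic symplectic manifold with a Lagrangian fibration, whereas in the non-hyperelliptic case (the case at issue) Theorem~\ref{no symplectic resolution} shows $P$ admits \emph{no} symplectic resolution, so the $2$-form on $\wt P$ is necessarily degenerate along some exceptional divisor and $\wt\eta$ is not a Lagrangian fibration of a hyperk\"ahler manifold. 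Passing to a $v$-generic $H'$ does not repair this, because the involution $\tau$ on the smooth model $M_{v,H'}$ is only birational, so "taking $\iota$-anti-invariants" of $R^q\pi_*\O$ does not descend to a statement about $\wt P$. Third, the vanishing of $H^0(\P^{g-1},R^2\wt\eta_*\O_{\wt P})$ is asserted but never carried out; the global invariant cycle theorem controls monodromy invariants of the topological local system over the smooth locus, not the coherent sheaf $R^2\wt\eta_*\O_{\wt P}$, which also receives contributions from singular fibers and from the exceptional locus of $\gamma$.

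You have also mislocated where the parity of $g$ enters. The monodromy/vanishing-cycle analysis of Section~\ref{fund_group} is parity-insensitive (Theorem~\ref{simple_conn} holds for all $g$). In the paper, oddness of $g$ is used at a completely different point: the upper bound is proved by constructing a dominant rational map $\phi:\Hilb^{g-1}(S)\dashrightarrow P$, sending a general $(g-1)$-tuple $\{p_1,\dots,p_{g-1}\}$ to $[\Delta-\iota\Delta]$ on the curve $D$ cut out by the hyperplane spanned by the points and the center $\P V_+$ of the projection $S\to T$; by (\ref{components ker norm}), the class $(1-\iota^*)\Delta$ with $\deg\Delta=g-1$ lies in the \emph{identity} component of $\ker(1+\iota^*)$ exactly when $g-1$ is even, i.e.\ $g$ odd. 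Since pullback of holomorphic $2$-forms under a dominant rational map between varieties of the same dimension is injective, this gives $h^{2,0}(\wt P)\le h^{2,0}(\Hilb^{g-1}(S))=1$ at once, with no spectral sequence or monodromy computation needed. As written, your proposal does not constitute a proof; to salvage your route you would need to actually establish the coherent degeneration, the computation of $R^1\wt\eta_*\O_{\wt P}$, and the vanishing of $H^0(R^2\wt\eta_*\O_{\wt P})$ by other means, none of which is straightforward in this singular, non-symplectic setting.
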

\begin{proof} 
We first show that $h^{2,0}(\wt P) \leq 1$.
Following  an idea already used in  \cite{Mark_Tik}, we construct a dominant rational map
\begin{equation} \label{hilbsch}
\phi \colon \Hilb^{g-1}(S) \tto P
\end{equation}
Set $V = H^0(C, {\mc O}_S(D)^{\dual}$. As $S$ is un-nodal, the linear system $|D|$ is very ample (cf. Theorem 6.1, (iii) of \cite{Saint-Donat}), 
so that $S \subset \P V \cong \P^{2g-1}$. After choosing a linearization, the involution $\iota$ induces a decomposition $V = V_{+} \oplus V_{-}$ into $\pm 1$ eigenspaces. The Enriques surface $T$ is contained in $\P V_{-} \cong \P^{g-1}$.
We may think of the double cover $f \colon S \to T$ as obtained by projecting from $\P^{2g-1}$ to $\P^{g-1}$ with center the $(g-1)$ linear subspace $\Lambda = \P V_{+}$. Consider the open subset $U$ of $\Hilb^{g-1}(S)$ consisting $(g-1)$-tuples $\{ p_1, \dots, p_{g-1} \}$ of distinct points on $S$, such that 

a) 
the linear span $\Sigma = \langle p_1, \dots, p_{g-1} \rangle$ is $(g-2)$-dimensional,

b) 
$\Sigma \cap \Lambda = \emptyset$.

d) If $H_\Sigma \subset \P^{2g-1}$ is  the linear span of $\Lambda$ and $\Sigma$ (which , by b) is a hyperplane) then $\quad D:=H_\Sigma \cap S$ is a smooth curve.

We have a natural fibration
\begin{equation*}
\aligned
\beta \colon &U\longrightarrow\P V^{\dual}_{-}
\\
\{p_1, \dots,\, & p_{g-1}\}\mapsto H_\Sigma \cap \P V_{-}
\endaligned
\end{equation*}

Moreover, we set $C = f(D)$ and we observe that a point $\{p_1, \dots, p_{g-1}\} \in U \subset \Hilb^{g-1}(S)$ uniquely defines a divisor $\Delta = p_1 +\cdots + p_{g-1}$ on $D$ and therefore, since $g$ is odd, by (\ref{components ker norm}), the 
point $[\Delta - \iota \Delta]$ belongs to $P$. 
Thus one may define a morphism
\begin{equation} \label{phi}
\aligned
\phi \colon &U\longrightarrow P
\\
\{p_1,\dots,p_{g-1}&\} \mapsto[\Delta-\iota \Delta].
\endaligned
\end{equation}
This is how the rational map (\ref{hilbsch}) is defined.
We claim that 
the rational map $\phi$ is dominant.

By the way it is defined, the morphism $\phi$ defined in \eqref{phi} commutes with the two fibrations $\beta \colon U \to \P V^{\dual}_{-}$ and $p \colon P\to \P V^{\dual}_{-}$. Moreover, $\Hilb^{g-1}(S)$ and $P$ have the same dimension. Thus it suffices to show that the morphism
\begin{equation} \label{psi}
\aligned
\psi \colon &D_{g-1}\longrightarrow P
\\
\{p_1, \dots,& p_{g-1}\}\mapsto  [\Delta-\iota \Delta]
\endaligned
\end{equation}
where $D = p_1+ \cdots + p_{g-1}$, is dominant. In order to do this we show that the differential is an isomorphism at one point. Here, as usual, $D_{g-1}$ stands for the $(g-1)$-fold symmetric product of $D$. The morphism $\psi$ is the composition of the Abel-Jacobi morphism $u \colon D_{g-1} \to J(D)$ and the projection $1 - \iota \colon J(D) \to P$.
If $\omega_1, \dots, \omega_{2g-1}$ is a basis of $H^0(D, \omega_{D})$ and if the points $p_1, \dots, p_{g-1}$ are distinct, the rank of $u_{*}$ at the point $D = p_1+ \cdots + p_{g-1}$ is the rank of the Brill-Noether matrix $(\omega_i(p_j))$, $i=1, \dots, 2g - 1$, $j = 1, \dots, g - 1$. Let us now assume, as we may, that $\omega_1, \dots, \omega_{g}$ are $\iota$-invariant while $\omega_{g+1}, \dots, \omega_{2g-1}$ are $\iota$-anti-invariant. Then the rank of $\psi_{*}$ at $D$ is nothing but the rank of the $(g-1) \times (g-1)$ matrix $(\omega_i(p_j))$, $i = g + 1, \dots, 2g - 1$, $j = 1, \dots, g - 1$. But this matrix must be of maximal rank otherwise the linear span $\Sigma$ of the points $p_1, \dots, p_{g-1}$ would intersect the vertex $\Lambda = \P V_{+}$ contrary to the assumptions.
The existence of a dominant rational map from $\Hilb^{g-1}(S)$ implies that, if $\gamma: \wt P \to P$ is any desingularization of $P$, then
\begin{equation}\label{20_ineq}
h^{2,0}( \wt P) \leq h^{2,0}(\Hilb^{g-1}(S)) = 1,
\end{equation}
Let $\sigma$  be the holomorphic $2$-form defined on $P_{reg}$.
In order to prove that  (\ref{20_ineq}) is  an equality, 
it is enough  to show that the pull- back of $\sigma$ to $\gamma^{-1}(P_{reg})$ extends to $\wt P$. Let $\nu: \wh P\to P$ the normalization, and let $\wh \sigma$ be the pull-back of $\sigma$ to $\nu^{-1}(P_{reg})$. Using Proposition
\ref{codim_sing} and Hartog's theorem we  extend $\wh \sigma$ to $\wh P_{reg}$.
We reach the conclusion, using again
 Proposition
\ref{codim_sing} and a theorem of Flenner \cite{Flenner}
which guarantees that, given a normal variety $X$, a resolution of singularities $\alpha: \wt X\to X$,
and a holomorphic $2$-form $\omega$ on $X_{reg}$, then $\alpha^*(\omega)$ extends to
$\wt X$ as soon as 
$\codim_XX_{sing}\geq 4$. 

\begin{rem}{\rm As Voisin pointed out to us, we do not need to use Flenner's theorem to prove that the symplectic form $\sigma$ 
extends to any resolution  $\wt P$ of $P$. Indeed the isomorphism between $H^{2,0}(S)$ and  $H^{2,0}(M)$ is induced, up to a multiplicative constant,  by the correspondence $\Gamma\in \operatorname{CH}_2(S\times M)$, where $\Gamma$ is the second Chern character of a semi-universal family (see \cite{Kieran7}). This correspondence induces one in $\operatorname{CH}_2(S\times \wt P )$ giving a non-zero homomorphism  from $H^{2,0}(S)$ to $H^{2,0}(\wt P)$.
}
\end{rem}

\end{proof}


 \section{The discriminant }\label{discriminant}
 
 This section is devoted to the discussion of a numerological curiosity.
 
 The starting point of the story is Hitchin and Sawon's  study \cite{Hitchin-Sawon},  \cite{Sawon3} of the Rozanski-Witten invariant of a compact hyperk\"ahler manifold and the discovery of the following  remarkable formula 
linking the $L^2$- norm of the Riemann curvature
tensor $R$ of an irreducible  compact hyperk\"ahler manifold $X$
of real dimension $2n=4k$ with the characteristic number of $\sqrt{\wh A}[X]$ coming from the square root of the
${\wh A}$-polynomial:
$$
\frac{1}{(192\pi^2k)^k}\frac{||R||^{2k}}{(vol(X)^{k-1}}=\sqrt{\wh A}[X]
$$
 In  \cite{Sawon2},  \cite{Sawon4}, Sawon considers the case of a compact hyperk\"ahler manifold of complex dimension $n$ which is  a lagrangian fibration $\nu: X\to \PP^n$  by principally polarized abelian
varieties and looks at the {\it discriminant } $\Delta\subset \PP^n$
 (i.e. the set of points where $\nu$ is not a smooth morphism). The fibration should have {\it good singular fibers}  meaning that  the generic singular fibre $X_t$ for $t\in\Delta_{sm}=\Delta\smallsetminus\Delta_{sing}$ is obtained by gluing
together the zero and infinity sections of a $\PP^1$-bundle over a principally polarized
abelian variety of dimension $n-1$, i.e. a {\it rank one degeneration of an abelian variety}.  He then proves another remarkable formula:
 $$
 \deg \Delta=24\left(n!\sqrt{\wh A}[X]\right)^{\frac{1}{n}}
 $$
which makes the degree of the discriminant into a deformation invariant (this formula can be generalized to the case of non-principal polarizations).
He then proceeds to compute this invariant in the
two classical cases considered by Beauville. The first case is the Beauville-Mukai integrable system $X_n=J(|\Gamma|)\to \PP^n$ where $|\Gamma|$ is an $n$-dimensional linear system on K3 surface.
$$
 \deg \Delta_{X_n}=6(n+3)
 $$
 Using Lefschetz pencils this  is an easy Euler-Poincar\`e characteristic computation.
The second case is the generalized Kummer variety $K_n$,
 introduced by Beauville, and here the degree of the discriminant is given by
 $$
 \deg \Delta_{K_n}=6(n+1)
 $$
We now come to our (singular) Prym lagrangian fibrations
$$
P_n\to \PP^n
$$
Here, as usual we start with an Enriques surface $T$ and its universal covering $f: S\to T$. We take an irreducible curve $C$  of genus $g=n+1$ on $T$ and let $D=f^{-1}({C})$, $h=g(D)$ and $v=(0, D, -h+1)$.  We let $H$ be a polarization, and consider the relative Prym variety $P_n=\Prym_{v,H, N}$ for some line bundle $N$. We also assume that
$|C|$ contains a Lefschetz pencil.  Under these hypotheses we have:

\begin{prop}
a) If $|C|$ is non-hyperelliptic then
$$
\deg \Delta_{P_n}=6(n+2).
$$
b) If $|C|=|ne_1+e_2|$ is hyperelliptic then
$$
\deg \Delta_{P_n}=6(n+3).
$$
\end{prop}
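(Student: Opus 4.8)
The plan is to read off $\deg\Delta_{P_n}$ as the number of points in which a generic line $\ell\cong\PP^1\subset|C|$ meets the discriminant. Since by hypothesis $|C|$ contains a Lefschetz pencil, I take $\ell$ to be such a pencil, so that its singular members are irreducible curves with a single node and no other singularity. For such a member $C_0$ the preimage $D_0=f^{-1}(C_0)$ is an irreducible curve with two nodes, and by Corollary \ref{rank one degeneration} the fibre $\Prym(D_0/C_0)=\nu^{-1}([C_0])$ is a rank one degeneration of an abelian variety, hence a ``good'' singular fibre in Sawon's sense. Thus over the locus of smooth curves the fibration is smooth, while the codimension one part of $\Delta_{P_n}$ is exactly the locus of singular members of $|C|$; being $\ell$ generic, $\deg\Delta_{P_n}$ equals the number $\delta$ of singular curves in the pencil $\ell$. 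Note that one cannot extract $\delta$ from the total space $\nu^{-1}(\ell)$ directly, since both the smooth fibres (abelian varieties) and the rank one degenerations have vanishing topological Euler characteristic; instead $\delta$ must be counted on the surface carrying the curves.

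The basic bookkeeping is the following. For a Lefschetz pencil of smooth genus $\gamma$ curves on a smooth projective surface $\Sigma$ with $b$ simple base points, blowing up the base locus produces a fibration $\widetilde\Sigma\to\PP^1$ with $\chi_{\mathrm{top}}(\widetilde\Sigma)=\chi_{\mathrm{top}}(\Sigma)+b$; additivity of $\chi_{\mathrm{top}}$, together with $\chi_{\mathrm{top}}(\text{smooth fibre})=2-2\gamma$ and $\chi_{\mathrm{top}}(\text{nodal fibre})=3-2\gamma$, gives
\[
\delta=\chi_{\mathrm{top}}(\Sigma)+b+4\gamma-4.
\]
In the non-hyperelliptic case I apply this directly on $\Sigma=T$, with $\chi_{\mathrm{top}}(T)=12$, $\gamma=g=n+1$, and $b=C^2=2n$ (by adjunction, $K_T$ being numerically trivial). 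This yields $\delta=12+2n+4(n+1)-4=6(n+2)$, which is $\deg\Delta_{P_n}$ in this case.

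In the hyperelliptic case I instead pass to the K3 surface $\Sh$, the minimal resolution of $S/\langle k\rangle$, using the results of Section \ref{hyp}. Choosing $H$ to be $v$-generic and $k^*$-invariant, the relative Prym $P_{v,H}$ is, as a Lagrangian fibration over $|C|\cong|\Dh|$, birational to the Beauville--Mukai system $M_{w,\wh H}(\Sh)\to|\Dh|$ with $w=(0,\Dh,-g+2)$ and $g(\Dh)=g-1=n$ (cf. Corollary \ref{hyp_K3_type}); this birational map, being an isomorphism over the locus of irreducible curves, identifies the two discriminants over the generic points of $\Delta$. Since $\deg\Delta$ depends only on the fibration in a neighbourhood of the generic point of each codimension one component of the discriminant, it may be computed on $\Sh$. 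Applying the formula with $\Sigma=\Sh$, $\chi_{\mathrm{top}}(\Sh)=24$, $\gamma=n$ and $b=\Dh^2=2n-2$ gives $\delta=24+(2n-2)+4n-4=6(n+3)$.

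The hard part will be the hyperelliptic case: one must explain why the naive count carried out on $T$ (which would again give $6(n+2)$) is not the correct answer, and justify the passage to $\Sh$. The extra $6=6(n+3)-6(n+2)$ singular fibres reflect degenerations of the family $\{\Dh\}$ on $\Sh$ that have no counterpart among the singular members of $|C|$ on $T$, arising from the passage through the exceptional locus of $\Sh\to S/\langle k\rangle$. The verification needed is that the birational identification of Section \ref{hyp} is fibration preserving and restricts to an isomorphism over the generic point of each component of $\Delta$, so that the two discriminants genuinely coincide there, that $\Delta$ is reduced with $\ell$ transverse to it, and that all the relevant singular fibres remain ``good'' rank one degenerations, so that $\deg\Delta$ equals the count $\delta$ and Sawon's numerological framework applies.
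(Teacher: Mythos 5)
Part a) of your proposal is correct and is essentially the paper's own argument: identify the codimension-one part of $\Delta_{P_n}$ with the one-nodal locus of $|C|$ (via Corollary \ref{rank one degeneration} and the absence of reducible members in codimension one), then do the Euler-characteristic count on $T$, which the paper leaves implicit and you spell out.

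Part b) has a genuine gap, and it is exactly at the step you defer to the end. The verification you call for --- that on $\Sh$ the discriminant is reduced, a generic pencil is transverse (Lefschetz), and all relevant singular fibres are good rank one degenerations --- is provably false. By Proposition \ref{discriminant_hyp} the reduced discriminant in $|C|$ has degree $1+1+(n-1)+(4n+12)=5n+13$; since your identification $|C|\cong|\Dh|$ is linear and identifies the singular loci set-theoretically, the reduced discriminant on the $\Sh$ side also has degree $5n+13\neq 6n+18$. Hence your Euler count $6n+18$ on $\Sh$ cannot be the number of singular members of a generic pencil: $|\Dh|$ necessarily has non-Lefschetz degenerations in codimension one. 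Concretely, the fibres of the Prym fibration over $\Delta_1,\Delta_2$ (resp.\ $\Delta_3$) are closed chains of four (resp.\ two) components, as described in the paper's own proof, so the corresponding Beauville--Mukai fibres on $\Sh$ are of type $\operatorname{I}_4$ (resp.\ $\operatorname{I}_2$); the corresponding members of $|\Dh|$ are cycles of four (resp.\ two) components --- they pick up exceptional $(-2)$-curves of $\eta:\Sh\to\Sb$, which is the geometric source of your ``extra six'' --- and are certainly not irreducible one-nodal curves. This also exposes a tension with your opening definition: counted as points of $\ell\cap\Delta$, the hyperelliptic answer would be $5n+13$, not $6(n+3)$; the stated value is the degree with multiplicities $4,4,2,1$ on the four components (equivalently, the Euler-weighted count), which is what your computation on $\Sh$ actually produces.

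So your number is right but the justification cannot be completed as written. To repair it along your lines you would need (i) the component-by-component matching of fibres and multiplicities over the generic points of all four components of $\Delta$ --- this is precisely the content of Proposition \ref{discriminant_hyp} together with the fibre analysis sketched in the paper and carried out in \cite{Sacca13} --- and (ii) the fact that the birational map $M_{w,\wh H}(\Sh)\dashrightarrow P_{v,H}$ identifies the two fibrations near those generic points; the paper's theorem only gives a map defined on sheaves with irreducible support and a local isomorphism over smooth curves, so it says nothing over the reducible-curve loci, which is exactly where the discrepancy between $6(n+2)$ and $6(n+3)$ lives. The paper bypasses all of this by a different mechanism: for suitable $H$ the relative Prym is smooth of $K3^{[n]}$-type (Corollary \ref{hyp_K3_type}), and Sawon's deformation invariance then forces the (suitably interpreted) degree to equal that of the Beauville--Mukai system, namely $6(n+3)$; the four-component computation with multiplicities is then given as the geometric counterpart.
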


\begin{proof}
Part a) follows once we prove the claim that, under our assumptions, the discriminant locus of $\nu$ coincides with the discriminant locus $\Delta_{|C|}$ of the linear system $|C|$ (recall that since we are assuming the existence of a Lefschetz pencil, the discriminant locus of the linear system is reduced). Indeed, if this is the case and easy Euler characteristic calculation shows that $\deg \Delta_{|C|}=6(g+1)$, so that substituting $n=g-1$ we get our result.
As for the claim, it follows from Corollary \ref{rank one degeneration} that the fiber of $\nu$ over a point corresponding to an irreducible curve with one node is a rank one degeneration of an abelian variety, and from the fact that there are no reducible curves in codimension one.

\vskip 0.2 cm 
As far as part b) is concerned, from Section \ref{hyp} the relative Prym  $P_n$ is smooth and of $K3^{[n]}$-type.
The statement follows then directly from Sawon's result.
However,  it is interesting to compute, by geometrical means 
 the degree of the discriminant $\Delta_{P_n}$ in this case.   This is done in \cite{Sacca13}. Here we only give a sketch of this analysis. As proved in Proposition \ref{discriminant_hyp}, the discriminant locus in the hyperelliptic case is the union of four irreducible components. One must then describe the fibers of $P_n=\Prym_{v,H, N}\to |C|$ over the general point of each of these components. This is done under the assumption that $H$ is a general polarization. Over the general point of
 $\Delta_1$ and $\Delta_2$ the Prym variety consists in  closed chain of  four
 irreducible components $R_1,\dots, R_4$ each meeting the successive one (in a cyclic order) transversally along an abelian variety. Since  $\Delta_1$ and $\Delta_2$ are hyperplanes their contribution to the degree
 of the discriminant is equal to 8. Over the general point of
 $\Delta_3$  the Prym variety is the union of two
 irreducible components meeting  transversally along two abelian varieties.
 Since  the degree of $\Delta_3$ is $n-1$,  its contribution to the degree
 of the discriminant is equal to $2(n-1)$. Finally, over the 
 general point of
 $\Delta_4$  the Prym variety is a rank one degeneration of an abelian variety, therefore  the  contribution of $\Delta_4$ to the degree
 of the discriminant is equal to its degree i.e. to $4n+12$. 
 Summing up we get $6(n+3)$.

 \end{proof}


\section{Further remarks}\label{concl_remarks}

1) It should be remarked that, \emph{in the non-hyperelliptic case} and for sufficiently high value of the genus, the Prym varieties in the fibers of $P_{v,H}\to |C|$ are definitely not Jacobians. In fact to make sure that this is so, according to Mumford's Theorem in section 7 of \cite{MumfordPryms}, we only have to make sure that $C$ is neither trigonal nor a double cover of an elliptic curve. On the other hand Corollary 1 in the paper \cite{Knutsen-Lopez-09} of Knutsen and Lopez tells us that the gonality  of general member of $|C|$ is equal to 
$2\phi(|C|)$. One can choose the linear system $|C|$ to make this number as large as one wishes.

2) Singular points of moduli spaces of sheaves have been extensively studied by
Kaledin, Lehn, Sorger, and Zhang, among others \cite{Kaledin-Lehn-Sorger06}, \cite{Kaledin-Lehn07} \cite{Lehn-Sorger06} \cite{Zhang12}. These authors are,  by and large,  interested in the case of   sheaves of rank $\geq 1$. The way they carry out their analysis consists in two basic techniques (with the notation of Section \ref{kuranishi}):

a) Prove that the Kuranishi family is {\it formal} which implies  that $B=Q$, or else

b) Prove directly that there is a local isomorphism $M\supset U\cong Q/G$

This reduces the local study of $M$ to the study of quotients $Q/G$; these are far more transparent objects, which are often similar to the {\it quiver varieties} of Nakajima \cite{Nakajima98},  \cite{Nakajima99}.

In a forthcoming paper \cite{Arbarello-Sacca12}, the authors will study  the local structure of
moduli  of rank zero sheaves by settling  the question of formality in some cases
and by establishing that in some of these  cases the quotients $Q/G$ turn out to be {\it exactly} the quiver varieties of Nakajima.

In  the present paper, we  only needed to examine  Kuranishi families $B$ which are hypersurfaces in $\Ext^1(F,F)$. In these cases, relations (\ref{cono_B}) and (\ref{cono_M})
hold and, as we  saw in Section \ref{kuranishi}, the local analysis is  quite  straightforward.

3) When $\tau$ is regular, it is natural to look at the quotient $M/\tau$
and to ask if it admits a symplectic resolution. It does not. Indeed it is enough to check that $M_{reg}/\tau$ has  no symplectic resolution. By Lemma 2.11 in Kaledin's paper \cite{Kaledin06} a symplectic resolution  $Z\to M_{reg}/\tau$  would be semismall
and this can not be the case since  $M_{reg}/\tau$
 is  $\Q$-factorial and the codimension of its singular locus is equal to $2g\geq 4$. When $g=1$, the moduli space $M$ is a K3 surface
 and $\tau$ is a symplectic involution with 8 fixed points.


\appendix
\section{Curves on K3 and Enriques surfaces}
\label{appendix}
We keep the notation and the assumptions of Section \ref{notation}.
 We start with the following Lemma,

\begin{lem}\label{prim_irr} Let $T$ be a general Enriques surface, and let $C\subset T$ be an irreducible curve. If the class of $C$ is not divisible by two in $\NS(T)$, then $D=f^{-1}({C})$ is irreducible.
\end{lem}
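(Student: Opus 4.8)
The plan is to prove the contrapositive: if $D=f^{-1}(C)$ is reducible, then the class of $C$ is divisible by two in $\NS(T)$. Since $f$ is an \'etale double cover with fixed-point-free covering involution $\iota$, the preimage $D$ is an $\iota$-invariant divisor and $\iota|_D$ is again fixed-point free. The first thing I would do is reduce irreducibility to connectedness. The involution $\iota$ permutes the irreducible components of $D$. If $\iota$ preserved a component $D_1$ setwise, then $D_1\to C$ would already be a (degree two) surjection, since $\iota$ acts freely on its fibers; as $f|_D$ has total degree two this forces $D=D_1$, i.e. $D$ irreducible. Otherwise $\iota$ must interchange two components $D_1,D_2$, and these are necessarily disjoint: a point of $D_1\cap D_2$ would lie over a point of $C$ with a single, ramified preimage, contradicting that $f$ is \'etale. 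Hence reducibility of $D$ is equivalent to a decomposition $D=D_1+D_2$ with $D_2=\iota(D_1)$ and $D_1\cap D_2=\emptyset$.

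The heart of the argument is then purely lattice-theoretic. Because $T$ is general, $\NS(S)\cong f^*\NS(T)$ by (\ref{Neron-Severi}), so the class $[D_1]\in\NS(S)$ can be written as $[D_1]=f^*\alpha$ for a unique $\alpha\in\NS(T)$. Since $f\circ\iota=f$, pullback satisfies $\iota^*\circ f^*=f^*$, whence
\[
[D_2]=\iota^*[D_1]=\iota^*f^*\alpha=f^*\alpha=[D_1]
\]
in $\NS(S)$. Therefore
\[
f^*[C]=[D]=[D_1]+[D_2]=2f^*\alpha=f^*(2\alpha),
\]
and the injectivity of $f^*:\NS(T)\to\NS(S)$ gives $[C]=2\alpha$, i.e. $[C]$ is divisible by two. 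This contradicts the hypothesis, so $D$ is connected and hence, by the reduction above, irreducible.

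The proof is short, and the only points requiring care are the topological reduction of the first paragraph (ruling out that $\iota$ fixes a component, and that interchanged components could meet) and the genuinely essential use of the generality of $T$ through $\NS(S)=f^*\NS(T)$. I expect this last point to be the real obstacle in the sense that the statement is \emph{false} without it: for special $T$ there may exist classes in $\NS(S)$ that are not pulled back from $T$, and then the identity $[D_1]=\iota^*[D_1]$ can fail, so that a primitive class $[C]$ could split. Thus the hypothesis that $T$ be general is exactly what makes the divisibility conclusion available, and the argument is organized so as to isolate and exploit it.
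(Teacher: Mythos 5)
Your proof is correct and is essentially the paper's own argument: the paper's proof is the single line that the statement ``follows immediately from the fact that $\NS(S)=f^*\NS(T)$'', and your write-up is the natural unwinding of that remark ($\iota$ must interchange the two components, their classes agree because every class on $S$ is pulled back from $T$ and hence $\iota^*$-invariant, so $f^*[C]=2f^*\alpha$ and $[C]=2\alpha$ by injectivity of $f^*$).

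One side claim in your topological reduction is false, though harmless because you never use it: components interchanged by $\iota$ need \emph{not} be disjoint, and a point of $D_1\cap D_2$ does not force ramification. If $C$ is irreducible but nodal, the two components of a reducible \'etale double cover can meet above the nodes: the standard example is the cycle of two rational curves mapping two-to-one and \'etale onto a nodal cubic, where each point $p\in D_1\cap D_2$ still has the full two-point fibre $\{p,\iota p\}$ over its image, both points lying on both components. Indeed, your own lattice conclusion shows disjointness is impossible whenever $C$ moves: if $D=D_1+D_2$ with $[D_1]=[D_2]=f^*\alpha$ and $\alpha^2>0$, then $D_1\cdot D_2=2\alpha^2>0$. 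Since the computation at the heart of your proof only uses the decomposition $D=D_1+D_2$ with $D_2=\iota(D_1)$, the argument stands; simply delete the disjointness assertion.
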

\begin{proof}
This follows immediately from the fact that, by assumption, $\NS(S)= f^* \NS(T)$.
\end{proof}

Let $A$ and $B$ be two effective classes on $T$, or on $S$. By the Hodge index theorem it follows that (cf. \cite{Knutsen-Lopez-07}), if $A^2,B^2\ge 0$, then $A\cdot B \ge 0$. Moreover, $A\cdot B =0$, if and only if, $\Z A=\Z B$ in  $\NS(T)$ and $A^2=B^2=0$.
By the Nakai-Moishezon-Kleiman criterion and the Hodge index theorem, it follows that if $S$ and $T$ are unodal, then 
\[
\Amp(T)=\mathcal Q_T^+,\,\,\, \text{ and } \Amp(S)=\mathcal Q_S^+,
\]
where $\mathcal  Q_T^+$ (and $\mathcal  Q_S^+$) is the connected component of the cone of classes with positive self intersection in $\NS(T)$, and $\NS(S)$ respectively, containing one ample class. Moreover, for both surfaces the cone of effective curves is equal to the closures of $ Q_T^+$ and $ Q_S^+$ in $\NS(T)$ and $\NS(S)$ respectively.

Unless otherwise specified, we will denote by $e$ or by $e_i$ primitive elliptic curves on $T$. Notice that $e'$ and $e_i'$ are also primitive elliptic curves on $T$. The curves $e$ and $e'$ are called the half-fibers of the elliptic pencil $|2e|=|2e'|$. The Kodaira classification of the singular fibers of an elliptic fibration implies that $e$ and $e'$ are either smooth or isomorphic to a closed chain of $b \ge 0$ smooth rational curves. It follows that if $T$ is general, then this forces $e$ and $e'$ to be smooth.
We will denote by $E$ and $E_i$, respectively,  their preimages in $S$.

The following two definitions  play an important role in the whole paper.
\begin{defin} [\cite{Bombieri73}] Let $m$ be a positive integer.
An effective divisor $C \subset T$ is said to be $m$-connected, if for every decomposition $C=C_1+C_2$ into the sum of two effective divisors we have $C_1 \cdot C_2 \ge m$. A linear system $|L|$ is said to be $m$-connected if all its members are $m$-connected.
\end{defin}

\begin{defin}[\cite{Saint-Donat}] \label{hyp_syst}A linear system $|L|$ on a K3 or an Enriques surface is said to be hyperelliptic if $L^2=2$ or if the associated morphism $\varphi_L$  is of degree $2$ onto a rational normal scroll of degree $n-1$ in $\P^n$.
\end{defin}

We now proceed to state a characterization of hyperelliptic linear systems on K3 or Enriques surfaces. We start with the following proposition (cf. Proposition 4.5.1 of \cite{Cossec-Dolgachev}).

\begin{prop}[\cite{Cossec-Dolgachev}] \label{hyperelliptic enriques}
Let $T$ be an Enriques surface, and let $C\subset T$ be an irreducible curve with $C^2 \ge 2$. The following are equivalent,
\begin{enumerate}
\item $|C|$ is a hyperelliptic curve;
\item $|C|$ has base points;
\item There exists a primitive elliptic curve $e$ such that $C\cdot e =1$.
\end{enumerate}
\end{prop}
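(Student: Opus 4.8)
The plan is to prove the chain (3) $\Rightarrow$ (1), (3) $\Rightarrow$ (2) directly, and then close the logic with the two converses (1) $\Rightarrow$ (3) and (2) $\Rightarrow$ (3); I expect the latter to be the crux. Throughout I would exploit the standard numerics of an Enriques surface. Since $K_T$ is numerically trivial and $\chi(\mathcal O_T)=1$, adjunction gives $g(C)=\tfrac12 C^2+1$ and Riemann--Roch gives $\chi(\mathcal O_T(C))=1+\tfrac12 C^2$; as $C$ is irreducible with $C^2\ge 2$ it is nef and big, so $h^2(\mathcal O_T(C))=0$ and the (standard) vanishing $h^1(\mathcal O_T(C))=0$ holds, whence $\dim|C|=\tfrac12 C^2=g-1=:n$. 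The case $C^2=2$ is hyperelliptic by definition (cf. Definition \ref{hyp_syst}) and will be disposed of first, so that I may assume $C^2\ge 4$.

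For (3) $\Rightarrow$ (1), given a primitive elliptic curve $e$ with $C\cdot e=1$, the pencil $|2e|$ defines an elliptic fibration $p\colon T\to\P^1$ whose general fibre $f$ is smooth of genus one with $C\cdot f=2$. Hence $\mathcal O_f(C)$ has degree two and cuts a base-point-free $g^1_2$ on $f$, so $\varphi_C$ maps each fibre $2{:}1$ onto a line; the images of these fibres sweep out $\varphi_C(T)$, forcing $\varphi_C$ to be $2{:}1$ onto a surface ruled by lines, i.e. a scroll of minimal degree $n-1$, which is exactly hyperellipticity. For (3) $\Rightarrow$ (2), I would restrict via $0\to\mathcal O_T(C-e)\to\mathcal O_T(C)\to\mathcal O_e(C)\to 0$: using $(C-e)^2=C^2-2$ together with the vanishing of $h^1(\mathcal O_T(C-e))$ one computes $h^0(\mathcal O_T(C-e))=\tfrac12 C^2=h^0(\mathcal O_T(C))-1$, so the image of $H^0(\mathcal O_T(C))$ in $H^0(\mathcal O_e(C))$ is one-dimensional. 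Since $\mathcal O_e(C)$ has degree one on the elliptic curve $e$, its unique section vanishes at a single point $x_e\in e$, which is then a common point of all members of $|C|$, i.e. a base point.

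For the converses, (1) $\Rightarrow$ (3) follows by reversing the scroll argument: the ruling of the scroll $R=\varphi_C(T)$ pulls back under the $2{:}1$ map to a pencil $\{f_t\}$ of curves with $f_t^2=0$ and $C\cdot f_t=2$; since half-fibres are rigid, a moving isotropic class is a full fibre $2e$ of a genus-one pencil, so $C\cdot e=1$. The hard part is (2) $\Rightarrow$ (3), where I must manufacture an elliptic pencil out of the mere existence of a base point. The plan is to apply Reider's theorem to $L:=C+K_T$ (numerically $L\equiv C$, and $|K_T+L|=|C|$): if $C^2=L^2\ge 5$ and $x$ is a base point of $|C|$, there is an effective divisor $E$ through $x$ with either $L\cdot E=0,\ E^2=-1$ or $L\cdot E=1,\ E^2=0$. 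Here the Enriques geometry is decisive: since $\NS(T)\cong U\oplus E_8(-1)$ is an \emph{even} lattice, $E^2=-1$ cannot occur, leaving $E^2=0$ and $C\cdot E=1$; as $C$ is nef this forces the relevant part of $E$ to be a primitive half-fibre $e$ with $C\cdot e=1$. This settles $C^2\ge 6$, and I would clear the two low cases $C^2=2,4$ by a direct analysis of $\varphi_C$, or by citing the corresponding computations in \cite{Cossec-Dolgachev} and \cite{Saint-Donat}.

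The main obstacle is precisely this implication (2) $\Rightarrow$ (3): converting a base point into an isotropic class meeting $C$ exactly once. Reider's theorem is tailored to this task, and it is the parity $E^2\in 2\Z$ peculiar to Enriques surfaces that removes the exceptional $(-1)$-alternative and pins down the half-fibre. The only genuine loose ends are the two numerical vanishings $h^1(\mathcal O_T(C))=h^1(\mathcal O_T(C-e))=0$ needed for the dimension counts, and the sub-threshold cases $C^2\le 4$; both are routine but must be checked separately.
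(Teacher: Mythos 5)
You should know at the outset that the paper contains no proof of this proposition: it is imported verbatim from Proposition 4.5.1 of \cite{Cossec-Dolgachev}, so there is no internal argument to compare yours against, and your proof must stand on its own. Its skeleton is sound, and the key idea for the crux (2)$\Rightarrow$(3) --- apply Reider's theorem to $L=C+K_T$, observe $|K_T+L|=|C|$ because $2K_T=0$, and use the evenness of the Enriques intersection form to exclude the branch $(L\cdot E,E^2)=(0,-1)$ --- is correct and genuinely different in flavor from the classical projective-geometric analysis in \cite{Cossec-Dolgachev}. The numerical preliminaries also check out: $h^1(\O_T(C))=0$ follows from Kawamata--Viehweg applied to the nef and big divisor $C+K_T$ (write $C=K_T+(C+K_T)$), and in (3)$\Rightarrow$(2) you can even avoid the vanishing $h^1(\O_T(C-e))=0$, since $h^0(\O_T(C-e))\ge\chi(\O_T(C-e))=\tfrac12 C^2$ already bounds the restriction image by $1$, and it is nonzero because $e$ cannot be a fixed component of $|C|$.

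There are, however, genuine gaps. The serious one is that the proposition is stated for an \emph{arbitrary} Enriques surface, while your step ``as $C$ is nef this forces the relevant part of $E$ to be a primitive half-fibre'' only works when $T$ has no $(-2)$-curves. What nefness of $C$ actually gives is: exactly one component $e_0$ of $E$ satisfies $C\cdot e_0=1$ (with coefficient one), and every other component is orthogonal to $C$, hence by Hodge index has negative self-intersection, hence is a $(-2)$-curve; moreover Hodge index forces $e_0^2\le 0$, so $e_0$ is either a half-fibre (done) or itself a $(-2)$-curve. On a general (unnodal) $T$ the second alternative and the extra components are impossible, so $E=e_0$ is irreducible, primitive, isotropic, of arithmetic genus one, and you are done; but on a nodal $T$ nothing you wrote excludes, say, $E=e_0+R$ with $e_0^2=R^2=-2$, $e_0\cdot R=2$, $C\cdot e_0=1$, $C\cdot R=0$ (an $I_2$-type configuration), where \emph{no component} is a half-fibre. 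One must then argue on the class $E$ itself: if $E$ is nef it is a primitive nef effective isotropic class, hence the class of a (possibly reducible) half-fibre; if not, reflect in the $(-2)$-curves meeting $E$ negatively --- this preserves effectivity, $E^2=0$ and $C\cdot E=1$, and strictly decreases the degree against a fixed ample class, so it terminates at a nef representative. Without some such reduction, (2)$\Rightarrow$(3) is proved only for unnodal $T$. Secondly, in (3)$\Rightarrow$(1) the phrase ``$2{:}1$ onto a surface ruled by lines, i.e.\ a scroll of minimal degree $n-1$'' is a non sequitur: surfaces ruled by lines need not have minimal degree. Minimality must come from the identity $\deg\varphi_C\cdot\deg R=C^2-\sum m_i^2$ on the blow-up of the base locus, combined with $\deg R\ge n-1$ and the count of base points (two, one on each half-fibre $e$, $e'$, produced by your (3)$\Rightarrow$(2) argument); so this step cannot be cleanly separated from (2), and the implications must be organized accordingly. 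Finally, as you acknowledge, Reider requires $L^2\ge5$, so $C^2=4$ (and $C^2=2$) in (2)$\Rightarrow$(3) need a replacement argument, not mere bookkeeping.
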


By Corollary 4.5.1 of \cite{Cossec-Dolgachev} it  follows that the general member of a hyperelliptic linear system is a smooth hyperelliptic curve. The following proposition is a collection of results from \cite{Saint-Donat}.

\begin{prop} \label{hyperelliptic k3}
Let $S$ be a K3 surface and let $D\subset S$ be an irreducible curve, with $D^2 \ge 4$. The following are equivalent,
\begin{enumerate}
\item $|D|$ is a hyperelliptic;
\item the general member of $|D|$ is a smooth hyperelliptic curve;
\item there exists an elliptic pencil $|E_1|$ such that $C\cdot E_1=2$.
\end{enumerate}
Suppose, moreover, that $S$ is unodal. If one of the above conditions is satisfied,  then there exist an  integer $n \ge1$, and a primitive elliptic curve $E_2$ such that
\[
D=nE_1+E_2, \,\,\, \text{with} \,\, E_1\cdot E_2=2.
\]
Moreover, the morphism
\be\label{phi_d}
\varphi_D: S \to R\subset \P^{2n+1},
\ee
is of degree two and maps $S$ onto a rational normal scroll $R$ of degree $2n$ in $ \P^{2n+1}$, and $R$ is isomorphic to a quadric surface whose two rulings are the images under $\varphi_D$ of the elliptic pencils $|E_1|$ and $|E_2|$.
\end{prop}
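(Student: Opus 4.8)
The plan is to treat the proposition in two stages: first the chain of equivalences (1)\,$\Leftrightarrow$\,(2)\,$\Leftrightarrow$\,(3), which is a direct transcription of Saint-Donat's analysis of projective models of K3 surfaces \cite{Saint-Donat}, and then the finer structural statement, which is the part where the special arithmetic of our surfaces (namely $\NS(S)\cong\NS(T)(2)$, established in Section \ref{notation}) does the real work. For the equivalences I would argue as follows. To see (3)\,$\Rightarrow$\,(2), I restrict the elliptic pencil $|E_1|$ to a general member $D_0\in|D|$ (smooth by Bertini): since $E_1\cdot D=2$ this cuts out a base-point-free $g^1_2$ on $D_0$, exhibiting it as a smooth hyperelliptic curve. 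The implications (2)\,$\Rightarrow$\,(1) and (1)\,$\Rightarrow$\,(3) are exactly Saint-Donat's dichotomy: for $D^2\ge 4$ and $|D|$ base-point free, $\varphi_D$ is either birational onto a surface of degree $D^2$ or two-to-one onto a surface of minimal degree, the latter occurring precisely when the general hyperplane section is hyperelliptic, and then the ruling of the image scroll pulls back to an elliptic pencil $|E_1|$ with $E_1\cdot D=2$. I would simply cite \cite{Saint-Donat} here rather than reproduce the argument.

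For the structural statement I would use the hypothesis that $S$ is unodal in the strong sense of Section \ref{notation}, i.e. $\NS(S)\cong\NS(T)(2)$, so that \emph{every} intersection number on $\NS(S)$ is even and in particular $D^2\equiv 0\pmod 4$. Set $n:=D^2/4\ge 1$ and define $E_2:=D-nE_1$. A one-line computation gives $E_2^2=D^2-2n(E_1\cdot D)+n^2E_1^2=0$ and $E_1\cdot E_2=E_1\cdot D=2$. To see that $E_2$ is effective I compute $\chi(\O_S(E_2))=2+\tfrac12E_2^2=2$ by Riemann-Roch; since $D$ is nef and $(nE_1-D)\cdot D=2n-4n<0$, the class $-E_2$ cannot be effective, whence $h^0(E_2)\ge 2$ and $E_2$ moves. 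Because $S$ is unodal and $E_2^2=0$, the class $E_2$ is nef and $|E_2|$ is an honest elliptic pencil with smooth general member. Primitivity of both $E_1$ and $E_2$ then follows from the evenness of the form: if $E_i=mE_i'$ with $E_i'$ primitive, then $m(E_i'\cdot E_j)=E_i\cdot E_j=2$ with $E_i'\cdot E_j$ even, forcing $m=1$. This yields $D=nE_1+E_2$ with $E_1\cdot E_2=2$ and $n\ge 1$, as claimed.

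For the geometry of $\varphi_D$: by the equivalences $\varphi_D$ is two-to-one onto a surface $R$ of minimal degree in $\P^{\dim|D|}=\P^{2n+1}$, hence a rational normal scroll of degree $2n$. The point I would stress is that both pencils behave the same way: the restriction $\varphi_D|_{E_i}$ is two-to-one (an elliptic curve cannot map birationally onto a rational curve), so each $\varphi_D(E_i)$ is a \emph{line}, as $E_i\cdot D=2$. A rational normal scroll carrying two distinct rulings of lines must be the balanced scroll $\cong\P^1\times\P^1$, i.e. a smooth quadric, with its two rulings the images of $|E_1|$ and $|E_2|$. This is exactly where the arithmetic is indispensable: the congruence $4\mid D^2$ excludes the Veronese case ($D=2B$, $B^2=2$, $D^2=6$) and guarantees the scroll is balanced.

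The step I expect to be least mechanical is this last one: pinning down that $R$ is the smooth quadric rather than a singular or unbalanced scroll, and matching its two rulings with $|E_1|$ and $|E_2|$. Everything else is either a citation to \cite{Saint-Donat} or a short Riemann-Roch-and-lattice computation; the genuinely paper-specific input throughout is the evenness of the intersection form coming from $\NS(S)\cong\NS(T)(2)$.
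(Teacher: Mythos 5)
The paper itself offers no argument for this proposition --- it is quoted wholesale as ``a collection of results from \cite{Saint-Donat}'' --- so your proposal has to stand on its own, and its structural part does not. The decisive error is in the step you yourself flagged: the identification of $R$ with a quadric. You assert that the members of \emph{both} pencils map to lines ``as $E_i\cdot D=2$'', but from $D=nE_1+E_2$, $E_1\cdot E_2=2$, $E_2^2=0$ one computes $E_2\cdot D=2n$, not $2$. For $n\ge 2$ the members of $|E_2|$ map two-to-one onto rational curves of degree $n$, not onto lines, so the principle ``a surface with two rulings of lines is a smooth quadric'' has nothing to apply to; indeed the balanced scroll $S(n,n)\subset\P^{2n+1}$ carries only one ruling of lines, and the proposition accordingly claims only that $R$ is \emph{abstractly} isomorphic to a quadric. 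The standard repair is to consider $\psi=(\varphi_{E_1},\varphi_{E_2}):S\to\P^1\times\P^1$, a morphism of degree $E_1\cdot E_2=2$, to note that $\psi^*\O(n,1)=\O_S(D)$, and to compare $h^0(\P^1\times\P^1,\O(n,1))=2n+2=h^0(S,\O_S(D))$: since $\psi^*$ is injective on sections, $|D|$ is the pullback of the complete system $|\O(n,1)|$, so $\varphi_D$ factors as $\psi$ followed by the embedding of $\P^1\times\P^1$ by $|\O(n,1)|$. This identifies $R$ with the degree-$2n$ balanced scroll and the two rulings with the images of $|E_1|$ (lines) and $|E_2|$ (degree-$n$ curves) in one stroke.

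A second, logically prior, problem is that you replace the stated hypothesis by a stronger one: ``unodal'' in this paper means that $S$ contains no smooth rational curve; it does \emph{not} mean $\NS(S)\cong\NS(T)(2)$, which is the genericity assumption on an Enriques quotient and is unavailable here (the proposition concerns an arbitrary K3). Hence your derivations of $4\mid D^2$ and of the primitivity of $E_2$ from evenness of the intersection form are not proofs under the stated hypothesis. Both can be rescued using unodality correctly: on an unodal K3 every irreducible curve has non-negative square and is therefore nef, so every effective class has square $\ge 0$; thus if $D^2\equiv 2\pmod 4$ one writes $D=nE_1+B$ with $B^2=2$, $B\cdot E_1=2$, and then $(B-E_1)^2=-2$ while $\pm(B-E_1)$ is effective by Riemann--Roch, a contradiction; likewise $E_2=2F$ would force $F\cdot E_1=1$ and produce the effective $(-2)$-class $\pm(E_1-F)$. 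Finally, your Veronese discussion contains a genuine slip: for $D=2B$, $B^2=2$ one has $D^2=8$, which \emph{is} divisible by $4$, so your congruence does not exclude this case. What excludes it, given hypothesis (3), is that the $E_1$-fibres map to lines while the Veronese surface contains none; without hypothesis (3) (e.g.\ in the implication (2)$\Rightarrow$(1)) the Veronese case $D=2B$ on an unodal K3 with $\NS(S)=\Z B$ is a genuine exception in Saint-Donat's classification, so citing the ``dichotomy'' there without addressing it leaves a gap as well.
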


Thus, if $|C|$ is hyperelliptic,  so is $|D|$. In particular, if   $T$ is general, the class of any hyperelliptic curve $C$ on $T$ is of the form
\[
ne_1+e_2,
\]
with $n \ge 1$ and $e_1\cdot e_2=1$. 
And hence we see that a hyperelliptic linear system is $1$-connected. It turns out that also the converse holds

\begin{prop} \label{nice decomposition on T}
Let $T$ be a general Enriques surface and let $L$ be an effective line bundle on $T$, with $L^2 >0$. Then $|L|$
contains a member that is the union of two smooth curves meeting transversally in $\nu\ge 1$ points. Moreover, $\nu=1$ if and only if $|L|$ is hyperelliptic.
\end{prop}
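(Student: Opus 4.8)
The plan is to produce the decomposition by peeling off an elliptic half-pencil and to read $\nu$ off the Cossec--Dolgachev $\phi$-invariant. Since $T$ is general it is unodal, so $\Amp(T)=\mathcal{Q}_T^+$ and any effective class of positive square is ample; in particular $L$ is ample. Set $\phi(L)=\min\{L\cdot e : 0\neq e\in\NS(T),\ e^2=0\}$; by the theory of Cossec--Dolgachev this minimum is attained by a primitive isotropic class $e_1$, which on a general $T$ is represented by a smooth elliptic half-pencil, and one has $\phi(L)^2\le L^2$. I would set $A=e_1$ and $B=L-e_1$, so that $\nu:=A\cdot B=L\cdot e_1=\phi(L)$. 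Note $\nu\ge1$, since $L\cdot e_1=0$ would force $e_1$ proportional to $L$, impossible because $L^2>0=e_1^2$.

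Next I would check that $B$ is effective and has a smooth irreducible general member. First, $B^2=L^2-2\nu$; because $L^2$ is even and positive and $\nu^2\le L^2$, one gets $L^2\ge 2\nu$ in both ranges $\nu=1$ and $\nu\ge 2$, so $B^2\ge 0$. Since $B\cdot e_1=\nu>0$ and $e_1$ lies in the closure of $\mathcal{Q}_T^+$, the class $B$ lies in the positive cone; Riemann--Roch on $T$ (where $\chi(\mathcal{O}_T)=1$ and $K_T$ is numerically trivial) then gives $h^0(B)\ge 1+\tfrac12 B^2\ge 1$, so $B$ is effective. For smoothness I would split into two cases. If $B^2>0$ then $B$ is ample on the unodal surface $T$, and the general member of $|B|$ is a smooth irreducible curve (Cossec--Dolgachev; recall that the general member of a hyperelliptic system is smooth, cf.\ the remark following Proposition \ref{hyperelliptic enriques}). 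If $B^2=0$ then $B=k\,e$ for a primitive isotropic $e$ and some $k\ge 1$; but then $L\cdot e=e_1\cdot e=\nu/k$, and minimality of $\nu=\phi(L)$ forces $k=1$, so $B$ is itself a smooth half-pencil.

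It then remains to realise $A\cup B$ as a member of $|L|$ that is a transverse union. When $|B|$ is positive-dimensional I would move the general member of $|B|$ so that it meets the fixed smooth curve $A=e_1$ transversally (Bertini away from the finitely many base points, whose position relative to $A$ is controlled by the generality of $B$); when both $A$ and $B$ are rigid half-pencils I would use the generality of $T$ to ensure the two smooth elliptic curves meet transversally, noting that for $\nu=1$ transversality is automatic, since a single intersection point of two smooth curves has multiplicity one. The union $A\cup B$ is then a member of $|L|$ of the required type with $\nu=A\cdot B$ nodes. Finally, $\nu=\phi(L)=1$ means precisely that there is a primitive elliptic curve $e_1$ with $L\cdot e_1=1$, which by Proposition \ref{hyperelliptic enriques}, applied to the irreducible general member of $|L|$, is equivalent to $|L|$ being hyperelliptic.

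The main obstacle is the input from the structure theory of linear systems on Enriques surfaces: that $\phi(L)$ is attained by an effective \emph{smooth} half-pencil, that $\phi(L)^2\le L^2$, and that a nef class of positive square has a smooth irreducible general member even in the presence of base points. Granting these, the numerical bookkeeping that yields $B^2\ge 0$ and excludes the non-reduced case $B=k\,e$ with $k\ge 2$ is immediate, and the transversality is a genericity statement; the delicate equivalence $\nu=1\Leftrightarrow|L|$ hyperelliptic is exactly Proposition \ref{hyperelliptic enriques}.
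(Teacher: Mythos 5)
Your construction is, in substance, an unpacking of what the paper black-boxes: the paper's own proof of Proposition \ref{nice decomposition on T} is a two-line citation to Corollary 3.2.2 and Proposition 4.3.4 of \cite{Cossec-Dolgachev}, together with the remark that on a general $T$ any $L$ with $L^2>0$ is ample with no base component. Your bookkeeping correctly reconstructs the mechanism behind that citation: with $e_1$ a smooth half-fiber computing $\phi(L)$ and $B=L-e_1$, one gets $\nu=e_1\cdot B=\phi(L)\ge 1$; the inequality $\phi(L)^2\le L^2$ (plus $L^2\ge 2$ when $\nu=1$) gives $B^2\ge 0$; minimality of $\phi$ correctly rules out $B=ke$ with $k\ge 2$; effectivity of $B$ follows from Riemann--Roch as you say; and $\nu=1\Leftrightarrow\phi(L)=1\Leftrightarrow |L|$ hyperelliptic is exactly Proposition \ref{hyperelliptic enriques}. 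This even yields the sharper form used later in Corollary \ref{two_curves}, namely $\nu=\phi(L)\ge 2$ in the non-hyperelliptic case.

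The genuine gap is the transversality step, and your own numerics locate precisely where it cannot be waved away. The rigid case $B^2=0$ occurs exactly when $L^2=2\phi(L)$, hence $\phi(L)\le 2$; for $\phi(L)=1$ transversality is indeed automatic, but for $\phi(L)=2$, $L^2=4$ (a genus $3$ non-hyperelliptic system, which does occur on a general $T$) \emph{every} reducible member of $|L|$ is a union $f+g$ of two \emph{rigid} half-fibers with $f\cdot g=2$: decompositions with $A^2>0$ are excluded by the Hodge index theorem since $L^2=4$, and $L\cdot f\ge\phi(L)=2$ forces $(L-f)^2=0$, while $B=2f$ would make $(L-2f)^2=-4$, impossible on an unodal surface. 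So there is no linear system to move, Bertini is unavailable, and ``use the generality of $T$'' amounts to asserting that among the finitely many pairs of half-fibers summing to $L$ at least one pair meets at two distinct points --- a true statement, but one that needs an actual argument (a dimension count in the moduli of Enriques surfaces, or precisely the Cossec--Dolgachev input the paper cites), not an appeal to genericity of $T$, which is fixed once and for all. A milder instance of the same issue arises when $|B|$ moves but is hyperelliptic: its two simple base points are intersections of half-fibers and can perfectly well lie on $e_1$ (e.g.\ for $L=ne_1+e_2$ hyperelliptic, $B=(n-1)e_1+e_2$ has a base point on $e_1$ itself), so ``Bertini away from the base points'' does not settle transversality there; one must either show the tangent directions of members at a simple base point vary, or note, as your $\nu=1$ computation does in that particular case, that intersection multiplicity one is forced. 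As written, your proof asserts these points rather than proving them; everything else is correct.
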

\begin{proof}
This follows from Corollary 3.2.2 and Proposition 4.3.4 of \cite{Cossec-Dolgachev}, and the fact that since $T$ is general any linear system with positive self intersection is ample and has no base component.
\end{proof}

 \begin{cor} \label{one connected} Let $T$ be a general Enriques surface and let $L$ be a line bundle on $T$. The linear system $|L|$ is hyperelliptic if and only if it is $1$-connected.
 \end{cor}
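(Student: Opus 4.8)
The plan is to reduce Corollary \ref{one connected} to the two preceding results, Proposition \ref{nice decomposition on T} and Proposition \ref{hyperelliptic enriques}, after clarifying what $1$-connectedness amounts to here. First I would record the basic positivity facts on the unodal surface $T$: every effective divisor has non-negative (indeed even) self-intersection, and by the Hodge index remark at the start of this Appendix any two effective classes $A,B$ satisfy $A\cdot B\ge 0$, with equality only when $A$ and $B$ are proportional isotropic classes. Hence whenever $L^2>0$ no decomposition $L=C_1+C_2$ can have $C_1\cdot C_2=0$, so such a system is automatically $1$-connected in the literal sense. The actual content of the corollary is therefore the dichotomy between connectivity exactly $1$ (equivalently, failure of $2$-connectedness, i.e. the existence of a decomposition with $C_1\cdot C_2=1$) and $2$-connectedness.

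For the implication hyperelliptic $\Rightarrow$ $1$-connected I would simply invoke Proposition \ref{nice decomposition on T}: a hyperelliptic $|L|$ has $L^2>0$ and contains a member $C_1+C_2$ which is the union of two smooth curves meeting transversally in $\nu=1$ point, and this exhibits a decomposition with $C_1\cdot C_2=1$.

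For the converse, which is the substantive direction, I would start from a decomposition $L=C_1+C_2$ with $C_1\cdot C_2=1$ and run a short case analysis using Hodge index on $T$. If both $C_i^2>0$, then each $C_i$ lies in the positive cone $\mathcal Q_T^+=\Amp(T)$ and is therefore ample, so $C_i^2\ge 2$ by parity; Hodge index gives $(C_1\cdot C_2)^2\ge C_1^2\,C_2^2\ge 4$, contradicting $C_1\cdot C_2=1$. Hence some component, say $C_1$, is isotropic, and being effective it must be a positive multiple $m\,e$ of a primitive elliptic half-fiber $e$. Then $1=C_1\cdot C_2=m\,(e\cdot C_2)$ forces $m=1$ and $e\cdot C_2=1$, whence $L\cdot e=e^2+e\cdot C_2=1$. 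By condition (3) of Proposition \ref{hyperelliptic enriques}, applied to the (irreducible, since $L^2>0$ on general $T$) general member of $|L|$, the existence of a primitive elliptic curve with $L\cdot e=1$ means precisely that $|L|$ is hyperelliptic.

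The step I expect to be the main obstacle is not a computation but the bookkeeping around the definition: every $|L|$ with $L^2>0$ is $1$-connected under the literal reading, so one must interpret ``$1$-connected'' as ``of connectivity exactly one'', equivalently ``not $2$-connected'', and verify that the case analysis above genuinely certifies the distinguished elliptic curve. I would also make explicit the standard fact, implicit in the cone description recalled in this Appendix, that an effective isotropic class on a general (unodal) Enriques surface is a non-negative multiple of an elliptic half-fiber, since this is what licenses the final elliptic-curve conclusion.
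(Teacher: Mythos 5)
Your proposal is correct, but it takes a genuinely different route from the paper's for the substantive direction. The paper never writes out a proof of Corollary \ref{one connected}: the forward implication is the sentence just before Proposition \ref{nice decomposition on T} (a hyperelliptic class on a general $T$ is $ne_1+e_2$, so splitting off $e_1$ gives a decomposition with intersection $1$), and the converse is presented as being Proposition \ref{nice decomposition on T} itself, which in turn is only a citation to Cossec--Dolgachev (Corollary 3.2.2 and Proposition 4.3.4). You use Proposition \ref{nice decomposition on T} only for the easy direction, and for the hard direction (``some decomposition has $C_1\cdot C_2=1$ $\Rightarrow$ hyperelliptic'') you give a self-contained lattice argument: Hodge index rules out $C_1^2,C_2^2>0$ (both squares are even, so $(C_1\cdot C_2)^2\ge 4$), hence one summand is effective isotropic, hence numerically $m\,e$ with $e$ a half-fiber and $m=1$, so $L\cdot e=1$, and criterion (3) of Proposition \ref{hyperelliptic enriques} applied to the general (irreducible) member concludes. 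This buys something real: Proposition \ref{nice decomposition on T} only asserts the existence of \emph{one} member splitting into two smooth transversal curves, with $\nu=1$ iff $|L|$ is hyperelliptic, so it does not formally yield ``not $2$-connected $\Rightarrow$ hyperelliptic'' for an \emph{arbitrary} decomposition; your Hodge-index step supplies exactly that missing argument, at the cost of the two standard facts you rightly flag (effective isotropic classes on a general $T$ are multiples of half-fiber classes; general members of $|L|$ with $L^2>0$ are irreducible). Your reading of the statement is also the correct one: with Bombieri's definition taken literally, every $|L|$ with $L^2>0$ on an unodal surface is $1$-connected, so the corollary is only meaningful as ``hyperelliptic $\iff$ of connectivity exactly one, i.e.\ not $2$-connected,'' which is precisely how the paper uses it in the proof of Proposition \ref{red_cod_one}.
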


 \begin{cor} \label{two_curves}
Let $T$ be a general Enriques surface, and let $C\subset T$ be an irreducible curve. Set $D=f^{-1}(C)$. Then, there is a member of $f^*|C| \subset |D|$ that is a union of two smooth  $\iota$-invariant curves $D_1$ and $D_2$ meeting in $2\nu$ points. Moreover, we  have  $\nu\ge 2$, unless $|D|$ is hyperelliptic, in which case $\nu=1$.
\end{cor}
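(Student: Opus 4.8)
The plan is to reduce the statement to the decomposition on $T$ furnished by Proposition \ref{nice decomposition on T} and then transport it to $S$ through the étale double cover $f$. First I would apply Proposition \ref{nice decomposition on T} to the line bundle ${\mc O}_T(C)$, whose self-intersection $C^2=2g-2$ is positive (the relevant case being $g\ge 2$). This produces a member $C_1+C_2\in|C|$ with $C_1,C_2$ smooth and meeting transversally in some number $\nu_0\ge 1$ of points, together with the crucial dichotomy that $\nu_0=1$ if and only if $|C|$ is hyperelliptic. Setting $D_i=f^{-1}(C_i)$, the divisor $D_1+D_2=f^*(C_1+C_2)$ is a member of $f^*|C|\subset|D|$, which is the candidate for the desired reducible member of $|D|$.

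Next I would verify the geometric properties claimed for the $D_i$. Since $f$ is an unramified double cover, the preimage of a smooth curve is smooth and the preimage of a transversal intersection is transversal, while each $D_i=f^{-1}(C_i)$ is tautologically $\iota$-invariant. The intersection locus $D_1\cap D_2$ equals $f^{-1}(C_1\cap C_2)$, and because $f$ has degree two and is étale, each of the $\nu_0$ points of $C_1\cap C_2$ has exactly two preimages. Hence $D_1$ and $D_2$ meet transversally in $2\nu_0$ points, so the integer $\nu$ in the statement is simply $\nu_0$. (If one wants the $D_i$ to be integral rather than merely smooth, as is needed in the applications, this follows from Lemma \ref{prim_irr} once the classes of $C_1,C_2$ are primitive in $\NS(T)$, which can be arranged for general $T$.)

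It then remains to replace the hyperellipticity of $|C|$ by that of $|D|$, that is, to establish the equivalence ``$|C|$ hyperelliptic $\iff$ $|D|$ hyperelliptic''. The forward implication is already recorded after Proposition \ref{hyperelliptic k3}: if $C=ne_1+e_2$ with $e_1\cdot e_2=1$, then $D=nE_1+E_2$ with $E_1\cdot E_2=2$, so $D\cdot E_1=2$ and Proposition \ref{hyperelliptic k3} applies. For the converse I would use that $T$ is general, so that $\NS(S)=f^*\NS(T)$ by (\ref{Neron-Severi}): if $|D|$ is hyperelliptic there is an elliptic pencil $|E_1|$ on $S$ with $D\cdot E_1=2$, and since its class descends to a class $\bar e$ on $T$ with $\bar e^2=0$ and $C\cdot\bar e=1$ (the intersection form being multiplied by the degree two under $f^*$), Proposition \ref{hyperelliptic enriques} shows $|C|$ is hyperelliptic.

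Combining these steps gives $\nu=\nu_0=1$ exactly when $|C|$, equivalently $|D|$, is hyperelliptic, and $\nu=\nu_0\ge 2$ otherwise, which is the assertion. The main obstacle is the converse direction of the hyperelliptic equivalence: one must ensure that an elliptic pencil witnessing the hyperellipticity of $|D|$ is $\iota^*$-invariant and hence descends to $T$, and this is precisely where the generality hypothesis $\NS(S)=f^*\NS(T)$ is essential. Everything else is a formal consequence of $f$ being an étale degree-two map.
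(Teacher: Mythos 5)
Your overall strategy is the same as the paper's (pull back the decomposition of Proposition \ref{nice decomposition on T} through $f$, and transfer the hyperelliptic dichotomy from $|C|$ to $|D|$ using $\NS(S)=f^*\NS(T)$), and your treatment of the equivalence ``$|C|$ hyperelliptic $\iff$ $|D|$ hyperelliptic'' is fine, indeed more explicit than what the paper records. But there is a genuine gap at the crux of the argument: you never establish that $D_i=f^{-1}(C_i)$ is \emph{connected}. For an \'etale double cover the preimage of a smooth curve is smooth but can perfectly well be disconnected, namely two disjoint copies of $C_i$ interchanged by $\iota$; this actually happens, e.g., for a smooth member $A$ of an elliptic pencil $|2e|$ on $T$, since $\omega_T|_A\cong\O_A$, so $f^{-1}(A)$ splits as two disjoint fibers of $|E|$. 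If some $D_i$ splits in this way, the member $f^*(C_1+C_2)$ is a union of three or four components, none of the resulting integral curves is $\iota$-invariant, and the statement (as well as its later uses, where $D_1$, $D_2$ must be integral) collapses. Verifying connectedness is precisely the entire content of the paper's proof: one can choose each $C_i$ so that either $C_i^2>0$, in which case $D_i$ is connected by the Hodge index theorem (two disjoint components would give orthogonal classes of positive square), or $C_i$ is a \emph{primitive} elliptic curve, in which case Lemma \ref{prim_irr} gives irreducibility.

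Your parenthetical remark does invoke Lemma \ref{prim_irr}, but it does not close the gap, for two reasons. First, you present integrality as an optional refinement ``needed in the applications,'' whereas it is needed for the statement itself, as explained above. Second, the claim that the classes of both $C_1$ and $C_2$ ``can be arranged to be primitive in $\NS(T)$'' is asserted without proof, and Lemma \ref{prim_irr} by itself cannot handle a component whose class has positive square but is divisible by two; for that case one needs the Hodge index argument, which never appears in your proposal. The correct and provable choice criterion is the paper's disjunction ($C_i^2>0$, or $C_i$ primitive elliptic), not primitivity of both classes.
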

\begin{proof}
Keeping the notation in the proof of the previous proposition, it is enough to check that
the curves $D_1=f^{-1}(C_1)$ and $D_2=f^{-1}(C_2)$ are connected. This, however, follows from the fact that in Proposition \ref{nice decomposition on T} one can choose, for $i=1,2$, the curve $C_i$ such that either $C_i^2>0$ or $C_i$ is a primitive elliptic curve.
\end{proof}

Finally, we have the following proposition.

\begin{prop}\label{red_cod_one}
Let $X$ be a general Enriques surface or a K3 surface covering a general Enriques surface, and let $L$ be a line bundle on $X$ with $L^2>0$. Then $|L|$ has reducible members in codimension one if and only if $|L|$ is hyperelliptic.\\
\end{prop}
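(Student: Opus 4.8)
The plan is to translate ``having reducible members in codimension one'' into a purely numerical optimization on $\NS(X)$, and then to carry out that optimization using unodality of $X$ together with the divisibility of the intersection form. First I would describe the reducible locus. Write $N=\dim|L|$, so that $|L|\cong\P^N$. Every reducible member lies in the image of an addition map $|A|\times|B|\to|L|$, $(A',B')\mapsto A'+B'$, for some effective decomposition $L\equiv A+B$ with $A,B\neq 0$, and each such image is closed of dimension at most $\dim|A|+\dim|B|$. Since $X$ is unodal, every effective class lies in the closed positive cone $\overline{\mathcal Q_X^+}$, so the classes $A$ with both $A$ and $L-A$ effective form a finite set (integral points of a compact slice of a convex cone); hence the reducible locus is a finite union of closed subvarieties. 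As $L^2>0$ the general member of $|L|$ is smooth and irreducible (cf. the discussion after Proposition \ref{hyperelliptic enriques} and Proposition \ref{hyperelliptic k3}), so this locus is proper. Consequently $|L|$ has reducible members in codimension one if and only if
\[
m(L):=\max_{0<A<L}\bigl(\dim|A|+\dim|L-A|\bigr)=N-1,
\]
the maximum ranging over effective decompositions $L\equiv A+B$.

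Next I would bound $\dim|A|+\dim|B|$ case by case, using $\dim|C|=\tfrac12 C^2+\epsilon$ when $C^2>0$ (with $\epsilon=0$ on an Enriques surface and $\epsilon=1$ on a K3), and $\dim|mE|=\lfloor m/2\rfloor$ (Enriques) resp. $\dim|mE|=m$ (K3) for a primitive elliptic curve $E$. When $A^2>0$ and $B^2>0$ one gets $\dim|A|+\dim|B|=(N-1)-A\cdot B+2\epsilon$; since self-intersections are even on a general Enriques surface and divisible by $4$ on its K3 cover, the reverse Cauchy--Schwarz inequality coming from the Hodge index theorem gives $A\cdot B\ge\sqrt{A^2B^2}\ge 2$ (Enriques) resp. $\ge 4$ (K3), so these decompositions contribute codimension $\ge 2$ (resp. $\ge 3$) and never realise codimension one. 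When one summand is isotropic, say $A\equiv mE$, a short computation with the genus formula gives the uniform identity $\dim|A|+\dim|B|=N-\bigl(m(E\cdot L)-\dim|mE|\bigr)$, and the bracket is nondecreasing in $m$, hence minimised at $m=1$, giving $N-(E\cdot L)$ on $T$ and $N-(E\cdot L-1)$ on $S$. Since $E\cdot L\ge 1$ on $T$ whereas $E\cdot L$ is a positive even integer on $S$, the equality $m(L)=N-1$ forces the existence of a primitive elliptic curve $E$ with $E\cdot L=1$ (Enriques) resp. $E\cdot L=2$ (K3); the remaining subcase, where both summands are isotropic, reduces to the same condition.

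Finally I would conclude by invoking the classification of hyperelliptic systems. By Proposition \ref{hyperelliptic enriques} (resp. Proposition \ref{hyperelliptic k3}) the existence of a primitive elliptic curve $E$ with $E\cdot L=1$ on $T$ (resp. $E\cdot L=2$ on $S$) is precisely the statement that $|L|$ is hyperelliptic; and in that case $L=ne_1+e_2$ (resp. $L=nE_1+E_2$), so taking $A=e_1$ (resp. $A=E_1$) produces an explicit codimension-one family, namely $e_1+|(n-1)e_1+e_2|$, of reducible members, compatibly with Corollary \ref{two_curves}. This yields both implications.

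The step I expect to be most delicate is the bookkeeping for the isotropic components: because $\dim|mE|$ grows with $m$ on both surfaces, one cannot use a uniform bound of the form $\dim|C|\le\tfrac12 C^2+1$ (which already fails for $C=4e$ on an Enriques surface), and one must instead insert the exact value of $h^0(mE)$ to check that no higher multiple of an elliptic pencil beats the $m=1$ contribution. The positive-definite summands are harmless thanks to the Hodge index estimate, so essentially all the care goes into the elliptic-pencil decompositions and the two-isotropic reduction.
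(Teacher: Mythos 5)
Your argument is correct, and it shares the paper's basic skeleton --- stratify the reducible locus by images of addition maps $|A|\times|B|\to|L|$ and compare $\dim|A|+\dim|B|$ with $\dim|L|$ --- but the way you close the case analysis is genuinely different. The paper works only on the Enriques surface (declaring the K3 case analogous), splitting into the case where both summands satisfy $\dim|C_i|=g_i-1$, where the codimension is $C_1\cdot C_2$ and Corollary \ref{one connected} (hyperelliptic $\Leftrightarrow$ $1$-connected, resting on Cossec--Dolgachev via Proposition \ref{nice decomposition on T}) finishes, and the case $C_1=se_1$, $s\ge2$, settled by floor-function computations. You instead organize by signature of the summands: positive--positive decompositions are killed outright by the Hodge index inequality $A\cdot B\ge\sqrt{A^2B^2}$ together with the evenness of $\NS(T)$ and the divisibility $\NS(S)\cong \NS(T)(2)$, and the isotropic decompositions are reduced, via the monotone bracket $m(E\cdot L)-\dim|mE|$, to the numerical criteria of Propositions \ref{hyperelliptic enriques} and \ref{hyperelliptic k3}. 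Your route never needs Corollary \ref{one connected} and treats $T$ and $S$ uniformly (the K3 side is actually written down); the paper's route is shorter on the Enriques side but leaves the K3 bookkeeping to the reader. One slip to correct: the displayed identity $\dim|A|+\dim|B|=(N-1)-A\cdot B+2\epsilon$ is only right for $\epsilon=1$; on the Enriques surface it should read $N-A\cdot B$, so the codimension there is $A\cdot B$, not $A\cdot B+1$. The bounds you actually invoke ($\ge2$ on $T$, $\ge3$ on $S$) are the ones coming from the corrected identity, so nothing downstream is affected.
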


\begin{proof}
We will prove the proposition when $X$ is an Enriques surface, the proof for K3 surfaces is analogous. Since the irregularity of the surface is zero,  any component of the discriminant parameterizing reducible curve is birational to a product of linear systems. Consider a reducible member $C_1+C_2$, and set $\nu=C_1\cdot C_2$. For $i=1,2$, letting $g_i$ be the arithmetic genus of $C_i$, we have
\[
g=g_1+g_2+\nu-1.
\]
First assume that if, for some $i=1,2$, the genus $g_i$ is equal to one the corresponding curve $C_i$ is a primitive elliptic curve.
 Then $\dim|C_i|=g_i-1$, $\dim |C|= \dim |C_1|+ \dim |C_2|+\nu$, and $\codim (|C_1|\times |C_2|, |C|)=\nu$. From Corollary  \ref{one connected} it follows that $\nu \ge 1$ and that $\nu=1$ if and only if $C$ is hyperelliptic.

Next, consider the case $C_1\in |s e_1|=\P^{\lfloor \ff{s}{2}\rfloor}$ for some primitive elliptic curve $e_1$ and some integer $s \ge 2$. Then $\dim |C|= g_2-1+\nu$. Thus,  if $g_2 \ge 2$, we have $\dim |C_1|\times |C_2|=\lfloor \ff{s}{2}\rfloor+g_2-1$, while $\dim |C_1|\times |C_2|=\lfloor \ff{s}{2}\rfloor+\lfloor \ff{t}{2}\rfloor$, if $C_2=te_2$ with $t \ge 1$. It follows that 
\[
\codim (|C_1|\times |C_2|, |C|)=\left\{\begin{aligned} & \nu-\lfloor \ff{s}{2}\rfloor, && \text{if} \,\, g_2\ge 2\\ &\nu-\lfloor \ff{s}{2}\rfloor-\lfloor \ff{t}{2}\rfloor, && \,\text{if} \,\, g_2=1. \end{aligned}   \right.
\]
In the first case, since $\nu=s\nu'$, with $\nu' \ge 1$ we are done, unless $s=2$ and $\nu'=1$. However if $s=2$ and $\nu'=1$, the   curve $C_2$ is  hyperelliptic  of the form $\nu e_1+e_2$, with $e_1\cdot e_2=1$, and hence $L={\mc O}_T((\nu+s)e_1+e_2)$ is hyperelliptic. 

As for the second case, we can set $\nu=st\nu'$ and thus we are done unless $\nu'=1$, $s=2$ and $t=1$. This means that $C_2=e_2$, with $e_1\cdot e_2=1$ and, again, $L=2e_1+e_2$ is hyperelliptic.
\end{proof}

\begin{prop}\label{discriminant_hyp}
Let $T$ be an Enriques surface, and let $|C|=|ne_1+e_2|$ be a genus $g=n+1\ge 3$ hyperelliptic linear system on $T$. The discriminant locus $\Delta \subset |C|$ is the union of four irreducible components $\Delta_1, \Delta_2, \Delta_3, \Delta_4$. The first two are hyperplanes and their general point parametrizes curves that are the union of two smooth curves meeting transversally in one point. The third component, whose general point parametrizes curves that are union of two smooth curves meeting transversally in two points, is of degree $n-1$. The forth component is of degree $4n+12$ and it parametrizes singular but irreducible curves. Moreover, its general point  corresponds to an irreducible  curve with a single node.
\end{prop}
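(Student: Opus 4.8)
The plan is to write $\Delta\subset|C|\cong\PP^n$ (with $n=g-1$, so that $C^2=2n$) as the union of its reducible locus and its irreducible–singular locus, to identify the former explicitly from the classification of splittings of $C$, and then to recover the degree of the latter by a topological Euler characteristic count on a general pencil.

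For the reducible locus I would run through the effective decompositions $C=C_1+C_2$ and compute, for each, the codimension of $|C_1|\times|C_2|\hookrightarrow|C|$ using the formula derived in the proof of Proposition~\ref{red_cod_one}. Writing $C_1=ae_1+be_2$ one has $C_1\cdot C_2=a+bn-2ab$, and a direct check shows that the only splittings giving a codimension-one family are $C_1=e_1$, $C_1=e_1'$ (the two half-fibers of the pencil $|2e_1|$, each with $C_1\cdot C_2=1$) and $C_1\in|2e_1|$ (a full fiber, with $C_1\cdot C_2=2$, where the pencil $|2e_1|\cong\PP^1$ lowers the codimension by one); every other splitting is checked to have codimension $\ge2$, so by Proposition~\ref{red_cod_one} (reducible members occur only in codimension one because $|C|$ is hyperelliptic) this list is complete. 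The two half-fiber families are the \emph{linear} images of $|(n-1)e_1+e_2|\cong\PP^{n-1}$ under multiplication by the section cutting out $e_1$ (resp.\ $e_1'$), hence are hyperplanes $\Delta_1,\Delta_2$ of degree $1$ whose general member is a smooth curve meeting a half-fiber transversally in one point. The third family $\Delta_3$ is the image of the bilinear map $|2e_1|\times|(n-2)e_1+e_2|\cong\PP^1\times\PP^{n-2}\to|C|$, $(F,C_2')\mapsto F+C_2'$; this map pulls back $\O_{\PP^n}(1)$ to $\O(1,1)$ and is birational onto its image, so
\[
\deg\Delta_3=\int_{\PP^1\times\PP^{n-2}}(h_1+h_2)^{n-1}=n-1,
\]
its general member being a smooth elliptic fiber plus a smooth residual curve meeting in two points.

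I would then set $\Delta_4$ equal to the closure of the locus of irreducible singular members and prove it irreducible by means of the incidence variety $I=\{(x,[C_0]):C_0\text{ is singular at }x\}$, which is a $\PP^{n-3}$-bundle over $T$, hence irreducible of dimension $n-1$; its image is $\Delta_4$, and over a general $x\in T$ the member is an irreducible one-nodal curve, giving the stated description of the general point. Together with the previous paragraph this shows $\Delta=\Delta_1\cup\Delta_2\cup\Delta_3\cup\Delta_4$.

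The degree of $\Delta_4$ is the main computation, and the bookkeeping here is the chief obstacle. I would choose a general (Lefschetz) pencil $\ell\subset|C|$, blow up its $C^2=2n$ base points to obtain a fibration $X\to\PP^1$ with $\chi_{\mathrm{top}}(X)=\chi_{\mathrm{top}}(T)+2n=12+2n$, and apply $\chi_{\mathrm{top}}(X)=2\chi_{\mathrm{top}}(F)+\sum_t(\chi_{\mathrm{top}}(X_t)-\chi_{\mathrm{top}}(F))$ for a smooth member $F$ with $\chi_{\mathrm{top}}(F)=2-2g=-2n$. The point is that $\ell$ also meets the reducible components, so the local corrections must be computed per type: a transverse node contributes $+1$, a member of $\Delta_1$ or $\Delta_2$ (two smooth curves meeting in one point) also contributes $+1$, and a member of $\Delta_3$ (meeting in two points) contributes $+2$. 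Since $\ell$ meets $\Delta_1,\Delta_2,\Delta_3,\Delta_4$ in $1,1,n-1,\deg\Delta_4$ reduced points, this yields
\[
12+2n=-4n+\bigl[1+1+2(n-1)+\deg\Delta_4\bigr],
\]
whence $\deg\Delta_4=4n+12$. The delicate part is justifying the genericity inputs to this count: that a general pencil is Lefschetz away from its reducible members, that those members are precisely generic points of $\Delta_1,\Delta_2,\Delta_3$ in the stated transverse configurations, and that $\ell$ meets each $\Delta_i$ transversally; this is where the assumed existence of a Lefschetz pencil in $|C|$ and the unnodality of $T$ enter.
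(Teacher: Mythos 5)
Your treatment of the reducible part of $\Delta$ (the identification of $\Delta_1,\Delta_2,\Delta_3$, the completeness check via the codimension formula, and the degree $n-1$ of $\Delta_3$) and your Euler-characteristic computation of $\deg\Delta_4$ are correct and essentially coincide with the paper's argument; indeed you make explicit the multiplicity-$2$ weighting of the intersections with $\Delta_3$ that the paper leaves implicit in its count $6(n+2)$. The genuine gap is in the irreducibility of $\Delta_4$ and the description of its general point. Your incidence variety $I=\{(x,[C_0]) : C_0\ \text{singular at}\ x\}$ is \emph{not} a $\P^{n-3}$-bundle over $T$, it is not irreducible, and the component your dimension count produces does not dominate $\Delta_4$. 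The obstruction is precisely the hyperelliptic structure, which your argument never uses: $\phi_{|C|}$ maps $T$ two-to-one onto a scroll $R\subset\P^n$ of degree $n-1$, so for $x$ general in $T$ (off the ramification curve and off $e_1\cup e_1'$) a member $C_0=\phi_{|C|}^*(H)$ is singular at $x$ if and only if $H\cap R$ is singular at $\phi_{|C|}(x)$; since hyperplane sections of a scroll have arithmetic genus $0$, a singular one is necessarily \emph{reducible} and contains a ruling line, whence $C_0$ splits off a member of $|2e_1|$. Thus the general curve singular at a general point of $T$ lies in $\Delta_3$, not $\Delta_4$: the closure of the part of $I$ over general $x$ is an irreducible $(n-1)$-fold mapping onto $\Delta_3$ (generically two-to-one, via the two nodes), and its general member is reducible, contrary to your claim. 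Moreover the fiber of $I\to T$ jumps to $\P^{n-2}$ along the ramification curve of $\phi_{|C|}$ (there, singularity of the pullback only requires that $H\cap R$ be tangent to the branch curve, i.e.\ two linear conditions instead of three) and along $e_1\cup e_1'$ (where every curve singular at $x$ must contain the half-fiber); it is the component of $I$ lying over the ramification curve, not the ``generic'' one, that dominates $\Delta_4$.

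This failure is exactly why the paper's proof goes through the double cover. There, $\Delta_4$ is identified with the locus of hyperplanes tangent to the branch curve $B\subset R$, which by the Cossec--Dolgachev description is irreducible, with two tacnodes at the images of $e_1,e_1'$ and smooth elsewhere; irreducibility of $\Delta_4$ then follows because the incidence correspondence $\{(p,H): H\ \text{tangent to}\ B\ \text{at}\ p\}$ is a projective-space bundle over (the smooth locus of) the irreducible curve $B$, and the statement about the general point follows because a general tangent hyperplane is simply tangent to $B$ at a single point, so its pullback is irreducible with exactly one node. To repair your proof you should replace your incidence variety over $T$ by this incidence variety over $B$ (equivalently, restrict $I$ to the locus over the ramification curve and analyze the other loci separately); without some such use of the $2:1$ map, the irreducibility of $\Delta_4$ --- which is the main geometric content of the proposition --- remains unproven.
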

\begin{proof}

First of all,  via a straightforward   Euler characteristic count, one  shows that the degree of $\Delta$ is equal to $6(n+2)$.
It is clear that the two hyperplanes,
\[
\Delta_1:=[e_1]\times |(n-1)e_1+e_2|,\quad \text{ and } \quad \Delta_2:=[e_1']\times |(n-1)e_1+e_2'|,
\]
constitute two components of the discriminant locus, and also that they parametrizes curves of the form $e_1\cup \Gamma$ with $\Gamma \in |(n-1)e_1+e_2|$ (resp. $e_1'\cup \Gamma$ with $\Gamma \in |(n-1)e_1+e'_2|$.  Secondly,  the natural map
\[
\phi : \P^1 \times \P^{n-2}=|2e_1| \times |(n-2)e_1 \times e_2| \to \Delta \subset \P^n.
\]
is just the composition of the Segre embedding $ \P^1 \times \P^{n-2} \to \P^{2n-3}$ with the projection $ \P^{2n-3} \to \P^n$ induced by the natural map
\[
H^0(T, \O(2e_1)) \otimes H^0(T, \O((n-2) e_1+e_2)) \to H^0(T, \O(ne_1+e_2)).
\]
It follows that,
\[
\P^1 \times \P^{n-2} \to \phi(\P^1 \times \P^{n-2}),
\]
is finite and generically one to one, and that $\Delta_3:=\phi(\P^1 \times \P^{n-2})$ is a degree $n-1$ component of $\Delta$ whose general point parametrizes curves as in the statement of the theorem.
We just need to prove that the remaining part $\Delta_4$ of the discriminant is irreducible. Recall that by definition the rational map $\phi_{|C|}$ associated to the linear system maps $T$ generically $2:1$ onto a degree $n-1$ rational surface $R \subset \P^{n}$, and contracts the curves $e_1$ and $e_1'$.
A curve in $|C|$ is singular in the following three cases.
If it covers a singular (hence reducible) hyperplane section of $R$,  if it covers a smooth curve that is tangent to the ramification curve, or else if its image contains one of the two points $P$ and $Q$ to which either $e_1$ or $e_1'$ is contracted. The ramification  curve is described in Theorem 4.5.2 of \cite{Cossec-Dolgachev}.
It consists of the union of two lines $\ell_1$ and $\ell_2$ belonging  the ruling that is the image of $|2e_1|$ and of an irreducible curve $B \subset R$. The irreducible curve $B$ has two tacnodes in $P$ and $Q$ and is otherwise non singular (we are in the un-nodal case). 
The preimages, under $\phi_{|C|}$, of the lines  through $P$ and $Q$ are the curves in $\Delta_1$ and $\Delta_2$, respectively. 
The set of hyperplane sections of $R$ passing through $\ell_1$ or $\ell_2$ form a set of codimension $2$, whereas the set of hyperplane sections of $R$ that are tangent to $B$ form an irreducible divisor $\Delta_4 \subset \P^n$.
Moreover, the general hyperplane in this component is tangent to $B$ in just one point and thus corresponds to curves in $|C|$ with only one node and no other singularity.
\end{proof}

 \phantom{\cite{OGrady03} \cite{OGrady99}}

\bibliographystyle{abbrv}
\bibliography{bibliografia}

\end{document}